\tikzset{>=stealth',
  head/.style = {fill = white, text=black},
  plaque/.style = {draw, rectangle, minimum size = 10mm}, 
  pil/.style={->,thick},
  junct/.style = {draw,circle,inner sep=0.5pt,outer sep=0pt, fill=black}
  }
\newtheorem{theorem}{Theorem}[section]
\newtheorem{lemma}[theorem]{Lemma}
\newtheorem{proposition}[theorem]{Proposition}
\newtheorem{corollary}[theorem]{Corollary}
\theoremstyle{definition}
\newenvironment{example}
  {\pushQED{\qed}\examplex}
  {\popQED\endexamplex}
\theoremstyle{remark}
\newtheorem{remark}[theorem]{Remark}
\numberwithin{equation}{section}
\newcommand{\id}{\ensuremath{\mathrm{id}}}
\newcommand{\Schub}{\mathfrak{S}}
\newcommand{\Groth}{\mathfrak{G}}
\newcommand{\SymGp}{S}
\newcommand{\Mute}{{\sf Mute}}
\newcommand{\mydef}[1]{{\bf #1}}
\newcommand{\bpd}[1]{{\sf BPD}(#1)}
\newcommand{\mbpd}[1]{{\sf MBPD}(#1)}
\newcommand{\ncbpd}[1]{{\sf NCBPD}(#1)}
\newcommand{\hbpd}[1]{{\sf HBPD}(#1)}
\newcommand{\code}{{\sf c}}
\newcommand{\des}{{\sf des}}
\newcommand{\mc}{{\sf mc}}
\newcommand{\asmtransition}{{\sf t}}
\newcommand{\wt}[1]{{\tt wt}({#1})}
\newcommand{\yd}[1]{{\mathbb Y}(#1)}
\newcommand{\asm}{{\sf ASM}}
\newcommand{\pipes}[1]{{\sf Pipes}(#1)}
\newcommand{\rpipes}[1]{{\sf RPipes}(#1)}
\newcommand{\mpipes}[1]{{\sf MPipes}(#1)}
\newcommand{\paths}{{\tt Paths}}
\newcommand{\FSSYT}{{\sf FSYT}}
\newcommand{\FSet}{\overline{{\sf FSYT}}}
\newcommand{\SatFSet}{{\overline{{\sf FSYT}}_S}}
\newcommand{\DT}{{\sf DT}}
\newcommand{\RWT}{{\sf RWT}}
\newcommand{\mkey}[1]{\kappa(#1)}
\newcommand{\demprod}[1]{\partial(#1)}
\newcommand{\flatten}{{\tt flatten}}
\newcommand{\asmtobpd}{\Phi}
\newcommand{\flaggedtovexmarkedbpd}{\overline{\gamma}}
\newcommand{\FSSYTtovexbpd}{\gamma}
\newcommand{\HeckeBPDtodectab}{\Omega}
\newcommand{\vexBPDtodectab}{\Gamma}
\newcommand{\restrictBPD}{{\tt res}}
\newcommand{\completebpd}{{\tt comp}}
\begin{document}


\title{Bumpless Pipe Dreams and Alternating Sign Matrices}

\author[A. Weigandt]{Anna Weigandt}
\address[AW]{Department of Mathematics, University of Michigan, Ann Arbor, MI 48109}
\email{weigandt@umich.edu}


\date{\today}


\keywords{Grothendieck polynomials, bumpless pipe dreams, alternating sign matrices}

\begin{abstract}
In their work on the infinite flag variety, Lam, Lee, and Shimozono (2018) introduced objects called bumpless pipe dreams and used them to give a formula for double Schubert polynomials. We extend this formula to the setting of K-theory, giving an expression for double Grothendieck polynomials as a sum over a larger class of bumpless pipe dreams.  Our proof relies on techniques found in an unpublished manuscript of Lascoux (2002).  Lascoux showed how to write double Grothendieck polynomials as a sum over alternating sign matrices.
We explain how to view the Lam-Lee-Shimozono formula as a disguised special case of Lascoux's alternating sign matrix formula. 
  
Knutson, Miller, and Yong (2009) gave a tableau formula for  vexillary Grothendieck polynomials.  We recover this formula by showing vexillary marked bumpless pipe dreams and flagged set-valued tableaux are in weight preserving bijection. Finally, we give a bijection between Hecke bumpless pipe dreams and decreasing tableaux.  The restriction of this bijection to Edelman-Greene bumpless pipe dreams solves a problem of Lam, Lee, and Shimozono.

\end{abstract}

\maketitle

\section{Introduction}
\label{s:intro}

Lascoux and Sch\"utzenberger \cite{Lascoux.Schutzenberger} introduced double Grothendieck polynomials, which represent classes in the equivariant K-theory of the complete flag variety.  The initial definition was in terms of divided difference operators, but in the intervening years, authors have put forward many combinatorial models to study Grothendieck polynomials (see, e.g.,\ \cite{Fomin.Kirillov,Knutson.Miller,Lenart.Robinson.Sottile}).  One purpose of this article is to shine a spotlight on a lesser known formula of Lascoux \cite{Lascoux:ice}.  Lascoux's formula realizes each Grothendieck polynomial as a weighted sum over alternating sign matrices (ASMs). Our goal is to make explicit connections between Lascoux's formula and subsequent work in the literature.

In the context of back stable Schubert calculus, Lam, Lee, and Shimozono \cite{Lam.Lee.Shimozono} introduced bumpless pipe dreams (BPDs) and used them to give a formula for double Schubert polynomials.
We extend this formula to the K-theoretic setting and give a bumpless pipe dream formula for double Grothendieck polynomials.  This formula is closely related to Lascoux's formula in terms of ASMs.  There is a natural, weight preserving bijection between  bumpless pipe dreams and ASMs.  Indeed, bumpless pipe dreams are transparently in bijection with the osculating lattice paths of statistical mechanics.  
  We  connect Lascoux's ASM formula to bumpless pipe dreams by observing that the  key of an ASM is the same as the Demazure product of its corresponding bumpless pipe dream (see Theorem~\ref{theorem:demkey}).

The proof of the K-theoretic bumpless pipe dream formula  follows the outline set out in \cite{Lascoux:ice} in terms of ASMs.  These techniques translate in a natural way to BPDs.  We provide additional details which were omitted by Lascoux.  
The main ingredient  is  a transition formula for double Grothendieck polynomials (see Theorem~\ref{thm:transition}).  Lascoux stated this formula without proof.
We provide one in Appendix~\ref{appendix:transitionproof}.  
The bumpless pipe dream formula for Grothedieck polynomials follows by observing the weights on bumpless pipe dreams are compatible with transition (see Proposition~\ref{prop:asmtransition} and Lemma~\ref{lemma:bpdtransitionweights}).

For the remainder of the introduction, we proceed with a summary of our main results.

\subsection{Bumpless pipe dreams}
  Start with the six tiles pictured below.
\begin{equation}
\label{eqn:sixtiles}
\raisebox{-.5em}{
	\begin{tikzpicture}[x=1.5em,y=1.5em]
	\draw[color=black, thick](0,1)rectangle(1,2);
	\draw[thick,rounded corners,color=blue] (.5,1)--(.5,1.5)--(1,1.5);
	\end{tikzpicture}
	\hspace{2em}
	\begin{tikzpicture}[x=1.5em,y=1.5em]
	\draw[color=black, thick](0,1)rectangle(1,2);
	\draw[thick,rounded corners,color=blue] (.5,2)--(.5,1.5)--(0,1.5);
	\end{tikzpicture}
	\hspace{2em}
	\begin{tikzpicture}[x=1.5em,y=1.5em]
	\draw[color=black, thick](0,1)rectangle(1,2);
	\draw[thick,rounded corners,color=blue] (0,1.5)--(1,1.5);
	\draw[thick,rounded corners,color=blue] (.5,1)--(.5,2);
	\end{tikzpicture}
	\hspace{2em}
	\begin{tikzpicture}[x=1.5em,y=1.5em]
	\draw[color=black, thick](0,1)rectangle(1,2);
	\end{tikzpicture}
	\hspace{2em}
	\begin{tikzpicture}[x=1.5em,y=1.5em]
	\draw[color=black, thick](0,1)rectangle(1,2);
	\draw[thick,rounded corners,color=blue] (0,1.5)--(1,1.5);
	\end{tikzpicture}
	\hspace{2em}
	\begin{tikzpicture}[x=1.5em,y=1.5em]
	\draw[color=black, thick](0,1)rectangle(1,2);
	\draw[thick,rounded corners,color=blue] (.5,1)--(.5,2);
	\end{tikzpicture}}
\end{equation}
These tiles are the building blocks for a network of pipes. We interpret each ``plus'' tile as a place where two pipes cross, one pipe running horizontally and the other vertically.  
A  {\bf bumpless pipe dream} is a tiling of the $n\times n$ grid with the tiles in (\ref{eqn:sixtiles}) so that
\begin{enumerate}
	\item there are $n$ total pipes,
	\item each pipe starts vertically at the bottom edge of the grid, and
	\item pipes end  horizontally at the right edge of the grid.
\end{enumerate}
Write $\bpd{n}$ for the set of $n\times n$  bumpless pipe dreams.

Bumpless pipe dreams are in transparent bijection with certain  \emph{osculating lattice paths}  from statistical mechanics (see, e.g.,\ \cite{Behrend}).  Indeed, osculating lattice paths have nearly the same definition, though the crossing tile is often represented as an ``osculating'' tile \raisebox{-.2em}{
	\begin{tikzpicture}[x=1em,y=1em]
	\draw[color=black, thick](0,1)rectangle(1,2);
	\draw[thick,rounded corners, color=blue] (.5,2)--(.5,1.5)--(0,1.5);
	\draw[thick,rounded corners, color=blue] (1,1.5)--(.5,1.5)--(.5,1);
	\end{tikzpicture}  
}.  In the terminology of Lam-Lee-Shimozono, this tile is called a ``bumping'' tile.  This is  the reason for calling these new pipe dreams ``bumpless.''

The bijection with osculating lattice paths yields a  bijection between bumpless pipe dreams and alternating sign matrices\footnote{This map was described  in \cite[Figure~1]{BousquetMelou.Habsieger}.  The connection to osculating paths is explained in \cite{brak1997osculating}.  See also \cite[Section~4]{Behrend}.}.  Here, we prefer the pipe dream interpretation of these objects,  which considers each BPD to be a planar history of a (possibly non-reduced) word in the symmetric group $\SymGp_n$.  Thus, our use of the crossing tile is crucial.

  Given $\mathcal P\in \bpd{n}$, we write $\demprod{\mathcal P}$ for the \emph{Demazure product} of the \emph{column reading word} of $\mathcal P$ (see Section~\ref{section:planar}).  Roughly, $\demprod{\mathcal P}$ is the permutation obtained by tracing out the paths of each pipe.  Furthermore, if a pair of pipes crosses more than once, we ignore all crossings after the first.  See Figure~\ref{figure:demprod}. 
\begin{figure}[h]
\begin{tikzpicture}[x=1.5em,y=1.5em]
\draw[step=1,gray!40, thin] (0,0) grid (7,7);
\draw[color=black, thick](0,0)rectangle(7,7);
\draw[thick,rounded corners, color=blue](.5,0)--(.5,1.5)--(2.5,1.5)--(2.5,5.5)--(7,5.5);
\draw[thick,rounded corners, color=blue](1.5,0)--(1.5,2.5)--(5.5,2.5)--(5.5,6.5)--(7,6.5);
\draw[thick,rounded corners, color=blue](2.5,0)--(2.5,.5)--(7,.5);
\draw[thick,rounded corners, color=blue](3.5,0)--(3.5,3.5)--(7,3.5);
\draw[thick,rounded corners, color=blue](4.5,0)--(4.5,4.5)--(7,4.5);
\draw[thick,rounded corners, color=blue](5.5,0)--(5.5,1.5)--(7,1.5);
\draw[thick,rounded corners, color=blue](6.5,0)--(6.5,2.5)--(7,2.5);
\draw[](.5,-.5) node {1};
\draw[](1.5,-.5) node {2};
\draw[](2.5,-.5) node {3};
\draw[](3.5,-.5) node {4};
\draw[](4.5,-.5) node {5};
\draw[](5.5,-.5) node {6};
\draw[](6.5,-.5) node {7};
\draw[](7.5,6.5) node {5};
\draw[](7.5,5.5) node {2};
\draw[](7.5,4.5) node {4};
\draw[](7.5,3.5) node {1};
\draw[](7.5,2.5) node {7};
\draw[](7.5,1.5) node {6};
\draw[](7.5,.5) node {3};
\end{tikzpicture}
\hspace{4em}
\begin{tikzpicture}[x=1.5em,y=1.5em]
\draw[step=1,gray!40, thin] (0,0) grid (7,7);
\draw[color=black, thick](0,0)rectangle(7,7);
\draw[thick,rounded corners, color=blue](.5,0)--(.5,1.5)--(2.5,1.5)--(2.5,5.5)--(7,5.5);
\draw[thick,rounded corners, color=blue](1.5,0)--(1.5,2.5)--(5.5,2.5)--(5.5,6.5)--(7,6.5);
\draw[thick,rounded corners, color=blue](2.5,0)--(2.5,.5)--(7,.5);
\draw[thick,rounded corners, color=blue](3.5,0)--(3.5,3.5)--(7,3.5);
\draw[thick,rounded corners, color=blue](4.5,0)--(4.5,4.5)--(7,4.5);
\draw[thick,rounded corners, color=blue](5.5,0)--(5.5,1.5)--(7,1.5);
\draw[thick,rounded corners, color=blue](6.5,0)--(6.5,2.5)--(7,2.5);

\filldraw[color=gray!40, fill=white, thin](2,2)rectangle(3,3);
\draw[color=blue,thick,rounded corners] (2.5,1.9)--(2.5,2.5)--(3.1,2.5);
\draw[color=blue,thick,rounded corners] (2.5,3.1)--(2.5,2.5)--(1.9,2.5);

\filldraw[color=gray!40, fill=white, thin](5,3)rectangle(6,4);
\draw[thick,color=blue](5.5,0)--(5.5,1);
\draw[thick,color=blue](5,.5)--(6,.5);

\filldraw[color=gray!40, fill=white, thin](5,4)rectangle(6,5);
\draw[color=blue,thick,rounded corners] (5.5,2.9)--(5.5,3.5)--(6.1,3.5);
\draw[color=blue,thick,rounded corners] (5.5,4.1)--(5.5,3.5)--(4.9,3.5);

\draw[color=blue,thick,rounded corners] (5.5,3.9)--(5.5,4.5)--(6.1,4.5);
\draw[color=blue,thick,rounded corners] (5.5,5.1)--(5.5,4.5)--(4.9,4.5);
\draw[](.5,-.5) node {1};
\draw[](1.5,-.5) node {2};
\draw[](2.5,-.5) node {3};
\draw[](3.5,-.5) node {4};
\draw[](4.5,-.5) node {5};
\draw[](5.5,-.5) node {6};
\draw[](6.5,-.5) node {7};
\draw[](7.5,6.5) node {5};
\draw[](7.5,5.5) node {2};
\draw[](7.5,4.5) node {4};
\draw[](7.5,3.5) node {1};
\draw[](7.5,2.5) node {7};
\draw[](7.5,1.5) node {6};
\draw[](7.5,.5) node {3};
\end{tikzpicture} 
\caption{If $\mathcal P$ is the BPD pictured on the left, $\demprod{\mathcal P} =5241763$.  The diagram on the right indicates which crossings should be ignored.}
\label{figure:demprod}
\end{figure}
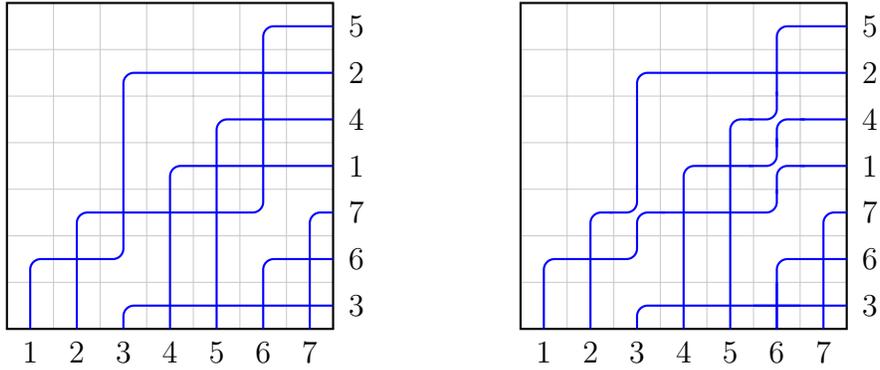

Fix $w\in \SymGp_n$ and let \[\pipes{w}=\{\mathcal P\in \bpd{n}:\demprod{\mathcal P}=w\}.\] A bumpless pipe dream is  \mydef{reduced} if any pair of pipes crosses at most once.
 Write \[\rpipes{w}=\{\mathcal P\in \pipes{w}:\mathcal P \enspace \text{is reduced}\}.\]  

Lam, Lee, and Shimozono showed that $\rpipes{w}$ is connected by \emph{droop moves} on BPDs.  We define \emph{K-theoretic droops} and show that $\pipes{w}$ is connected by droop moves combined with K-theoretic droops (see Proposition~\ref{prop:droops}).

\subsection{The bumpless pipe dream formula for $\beta$-double Grothendieck polynomials}

We work in the setting of the $\beta$-double Grothendieck polynomials of \cite{Fomin.Kirillov}.  See Section~\ref{subsec:betagroth} for this definition.  These polynomials represent classes in connective K-theory \cite{Hudson}.  Certain specializations recover (double) Schubert and Grothendieck polynomials.  
In this section, we introduce the bumpless pipe dream formula for $\beta$-double Grothendieck polynomials.

Let
\[D(\mathcal P):=\{(i,j): \mathcal P \enspace \text{has a} \, 
\raisebox{-.2em}{
	\begin{tikzpicture}[x=1em,y=1em]
	\draw[color=black, thick](0,1)rectangle(1,2);
	\end{tikzpicture}  
}
\, \text{tile in row $i$ and column $j$}\} \] 
and
\[U(\mathcal P):=\{(i,j): \mathcal P \enspace \text{has a} \, 
\raisebox{-.2em}{
	\begin{tikzpicture}[x=1em,y=1em]
	\draw[color=black, thick](0,1)rectangle(1,2);
	\draw[thick,rounded corners, color=blue] (.5,2)--(.5,1.5)--(0,1.5);
	\end{tikzpicture}  
}
\, \text{tile in row $i$ and column $j$}\}.  \] 
Define $x_i\oplus y_j=x_i+y_j+\beta x_iy_j$.  We associate to $\mathcal P$ the weight
\[\wt{\mathcal P}=\left( \prod_{(i,j)\in D(\mathcal P)}\beta(x_i\oplus y_j)\right) \left(\prod_{(i,j)\in U(\mathcal P)} 1+\beta(x_i\oplus y_j) \right).\]

\begin{theorem}
\label{thm:main}
The $\beta$-double Grothendieck polynomial for $w\in \SymGp_{n}$ is the weighted sum
\[\Groth^{(\beta)}_w(\mathbf x;\mathbf y)=\beta^{-\ell(w)}\sum_{\mathcal P\in \pipes{w}}\wt{\mathcal P}.\]
\end{theorem}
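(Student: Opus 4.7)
The plan is to argue by induction, modeling the proof on Lascoux's ASM derivation in \cite{Lascoux:ice} but with BPDs replacing ASMs throughout. The two main ingredients, both already signposted in the introduction, are the transition formula for $\beta$-double Grothendieck polynomials (Theorem~\ref{thm:transition}) and a matching combinatorial transition on bumpless pipe dreams which shows that the weighted enumeration $\beta^{-\ell(w)}\sum_{\mathcal{P}\in\pipes{w}}\wt{\mathcal{P}}$ obeys the same recurrence.

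First I would fix a well-order on $S_n$ compatible with the transition (for instance the one used in Lascoux's argument, where the recursion terminates at dominant permutations) and establish the base case. For a dominant $w$, the set $\pipes{w}$ reduces to a single explicit BPD built from the Rothe diagram, with no nontrivial U-turn tiles appearing in unexpected places, so a direct computation identifies $\beta^{-\ell(w)}\wt{\mathcal{P}}$ with the known product formula for $\Groth^{(\beta)}_w$. Alternatively, starting from the identity one gets an equally transparent base case. Either way the verification is a finite hand computation.

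For the inductive step, Theorem~\ref{thm:transition} expresses $\Groth^{(\beta)}_w$ as a linear combination (with coefficients that are specific polynomials in $x_i, y_j, \beta$) of $\Groth^{(\beta)}_{w'}$ for permutations $w'$ strictly earlier in the chosen order. I would then prove the same identity on the BPD side by constructing, via Proposition~\ref{prop:asmtransition}, a decomposition of $\pipes{w}$ indexed by the $w'$ appearing in the transition: one locates the distinguished active corner dictated by the transition rule, performs a local surgery there (involving ordinary droops together with the K-theoretic droops introduced in Proposition~\ref{prop:droops} to handle the non-reduced terms), and reads off which $\pipes{w'}$ the modified BPD belongs to. Lemma~\ref{lemma:bpdtransitionweights} then asserts that the $\wt{\phantom{x}}$ statistic behaves multiplicatively under this surgery, so that the blank- and U-turn-tile contributions of the removed local configuration reproduce exactly the coefficient supplied by Theorem~\ref{thm:transition}, with the $\beta$-exponents matching the change in $\ell$.

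With these pieces in place, the inductive hypothesis plus the matching recurrences yields the equality for $w$, completing the induction. The main obstacle I expect is the combinatorial content of the inductive step: checking that the local surgery around the active corner really gives a \emph{bijective} partition of $\pipes{w}$ onto the disjoint union of the relevant $\pipes{w'}$, and that the weight produced by the removed tiles matches the transition coefficient coefficient-by-coefficient, including the $\beta x_i y_j$ deformation in $x_i\oplus y_j$. A secondary subtlety is that, unlike in the Schubert case, the transition in K-theory involves contributions from non-reduced words, so one must track K-theoretic droops and the associated $1+\beta(x_i\oplus y_j)$ factors carefully; this is exactly what distinguishes the K-theoretic statement from the Lam–Lee–Shimozono specialization recovered at $\beta=0$.
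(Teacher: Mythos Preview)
Your proposal is correct and follows essentially the same route as the paper: induction (the paper uses lexicographic order on $\mc(w)$ with base case $\mc(w)=(1,1)$), invoking Theorem~\ref{thm:transition} and then matching it on the BPD side via Proposition~\ref{prop:asmtransition} and Lemma~\ref{lemma:bpdtransitionweights}. One small clarification: the bijection $\widetilde{\asmtransition}$ in Proposition~\ref{prop:asmtransition} is not built from droops or K-theoretic droops but is a single local tile replacement at $\mc(w)$ (a blank becomes a downward elbow, or an upward elbow becomes a crossing); droops enter only to argue that $\pipes{w}$ is connected by deflations, which is what makes the bijection and weight computation go through.
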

We prove Theorem~\ref{thm:main} in Section~\ref{section:pfmain}.
\begin{example}
 The elements of $\pipes{2143}$ are pictured below.
\[
\begin{tikzpicture}[x=1.5em,y=1.5em]
\draw[step=1,gray, thin] (0,1) grid (4,5);
\draw[color=black, thick](0,1)rectangle(4,5);
\draw[thick,rounded corners, color=blue] (.5,1)--(.5,3.5)--(4,3.5);
\draw[thick,rounded corners, color=blue] (1.5,1)--(1.5,4.5)--(4,4.5);
\draw[thick,rounded corners, color=blue] (2.5,1)--(2.5,1.5)--(4,1.5);
\draw[thick,rounded corners, color=blue] (3.5,1)--(3.5,2.5)--(4,2.5);
\end{tikzpicture}
\hspace{1.5em}
\begin{tikzpicture}[x=1.5em,y=1.5em]
\draw[step=1,gray, thin] (0,1) grid (4,5);
\draw[color=black, thick](0,1)rectangle(4,5);
\draw[thick,rounded corners, color=blue] (.5,1)--(.5,2.5)--(2.5,2.5)--(2.5,3.5)--(4,3.5);
\draw[thick,rounded corners, color=blue] (1.5,1)--(1.5,4.5)--(4,4.5);
\draw[thick,rounded corners, color=blue] (2.5,1)--(2.5,1.5)--(4,1.5);
\draw[thick,rounded corners, color=blue] (3.5,1)--(3.5,2.5)--(4,2.5);
\end{tikzpicture}
\hspace{1.5em}
\begin{tikzpicture}[x=1.5em,y=1.5em]
\draw[step=1,gray, thin] (0,1) grid (4,5);
\draw[color=black, thick](0,1)rectangle(4,5);
\draw[thick,rounded corners, color=blue] (.5,1)--(.5,3.5)--(4,3.5);
\draw[thick,rounded corners, color=blue] (1.5,1)--(1.5,2.5)--(2.5,2.5)--(2.5,4.5)--(4,4.5);
\draw[thick,rounded corners, color=blue] (2.5,1)--(2.5,1.5)--(4,1.5);
\draw[thick,rounded corners, color=blue] (3.5,1)--(3.5,2.5)--(4,2.5);
\end{tikzpicture}
\hspace{1.5em}
\begin{tikzpicture}[x=1.5em,y=1.5em]
\draw[step=1,gray, thin] (0,1) grid (4,5);
\draw[color=black, thick](0,1)rectangle(4,5);
\draw[thick,rounded corners, color=blue] (.5,1)--(.5,2.5)--(2.5,2.5)--(2.5,4.5)--(4,4.5);
\draw[thick,rounded corners, color=blue] (1.5,1)--(1.5,3.5)--(4,3.5);
\draw[thick,rounded corners, color=blue] (2.5,1)--(2.5,1.5)--(4,1.5);
\draw[thick,rounded corners, color=blue] (3.5,1)--(3.5,2.5)--(4,2.5);
\end{tikzpicture}\]
Thus, applying Theorem~\ref{thm:main}, we see that
\begin{align*}
\Groth^{(\beta)}_{2143}(\mathbf x;\mathbf y)&=(x_1\oplus y_1)(x_3\oplus y_3)+(x_1\oplus y_1)(x_2\oplus y_1)(1+\beta (x_3\oplus y_3))\\ &\quad +(x_1\oplus y_1)(x_1\oplus y_2)(1+\beta(x_3\oplus y_3))\\
& \quad +\beta(x_1\oplus y_1)(x_1 \oplus y_2) (x_2\oplus y_1)(1+\beta(x_3\oplus y_3)). \qedhere
\end{align*}
\end{example}

As a corollary to Theorem~\ref{thm:main}, we obtain the following.
\begin{theorem}[\cite{Lam.Lee.Shimozono}]
	\label{theorem:bumplessSchubert}
	The double Schubert polynomial for $w\in \SymGp_n$ is a sum over reduced bumpless pipe dreams:
	\[\Schub_w(\mathbf x;\mathbf y)=	\sum_{\mathcal P\in {\sf RPipes}(w)} \prod_{(i,j)\in D(\mathcal P)}(x_i-y_j).\]
\end{theorem}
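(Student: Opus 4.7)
The plan is to derive Theorem~\ref{theorem:bumplessSchubert} as the $\beta = 0$ specialization of Theorem~\ref{thm:main}. First I would rewrite the weight of a bumpless pipe dream as
\[\wt{\mathcal P} = \beta^{|D(\mathcal P)|}\prod_{(i,j)\in D(\mathcal P)}(x_i \oplus y_j)\prod_{(i,j)\in U(\mathcal P)}\bigl(1+\beta(x_i\oplus y_j)\bigr),\]
pulling the factor of $\beta$ attached to each blank tile out in front. Multiplying by $\beta^{-\ell(w)}$, the summand from Theorem~\ref{thm:main} becomes
\[\beta^{|D(\mathcal P)|-\ell(w)}\prod_{(i,j)\in D(\mathcal P)}(x_i\oplus y_j)\prod_{(i,j)\in U(\mathcal P)}\bigl(1+\beta(x_i\oplus y_j)\bigr).\]

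Next I would verify that for every $\mathcal P \in \pipes{w}$ the exponent $|D(\mathcal P)|-\ell(w)$ is a nonnegative integer, and that it equals zero exactly when $\mathcal P\in\rpipes{w}$. One clean way to see this: Theorem~\ref{thm:main} with $\mathbf{y}=0$ specializes $\Groth^{(\beta)}_w(\mathbf x;0)$ to a polynomial whose lowest-degree-in-$\beta$ term is the Schubert polynomial $\Schub_w(\mathbf x)$, which has degree $\ell(w)$; the nonvanishing lowest-order terms come precisely from reduced BPDs, forcing $|D(\mathcal P)|\geq \ell(w)$ with equality iff $\mathcal P$ is reduced. (Alternatively this is immediate from the structural theory of reduced BPDs in \cite{Lam.Lee.Shimozono}, where each blank tile corresponds to an inversion of $w$.)

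Setting $\beta=0$ then kills every contribution from non-reduced BPDs. For each $\mathcal P\in\rpipes{w}$ the U-tile factors collapse to $1$, and each factor $(x_i\oplus y_j)$ collapses to $x_i+y_j$, giving
\[\Groth^{(0)}_w(\mathbf x;\mathbf y) = \sum_{\mathcal P\in\rpipes{w}}\prod_{(i,j)\in D(\mathcal P)}(x_i+y_j).\]

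Finally, I would invoke the standard identity $\Groth^{(0)}_w(\mathbf x;\mathbf y) = \Schub_w(\mathbf x;-\mathbf y)$ from the definition of the $\beta$-double Grothendieck polynomials in Section~\ref{subsec:betagroth} and substitute $y_j \mapsto -y_j$ to obtain the stated formula. The only genuine content beyond specialization is the claim $|D(\mathcal P)|=\ell(w)$ for reduced $\mathcal P$; this is the one step I would make sure to justify carefully, since the rest of the argument is a direct substitution.
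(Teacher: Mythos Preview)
Your overall approach matches the paper's exactly: specialize Theorem~\ref{thm:main} at $\beta=0$ after pulling out $\beta^{|D(\mathcal P)|}$, then substitute $\mathbf y \mapsto -\mathbf y$. The only delicate point, as you note, is the claim $|D(\mathcal P)| \geq \ell(w)$ with equality iff $\mathcal P$ is reduced.

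Your first proposed justification for this has a gap. Knowing that $\Groth^{(\beta)}_w$ is a polynomial in $\beta$ with $\beta^0$-term $\Schub_w$ does not by itself force each individual exponent $|D(\mathcal P)|-\ell(w)$ to be nonnegative: contributions from hypothetical BPDs with $|D(\mathcal P)| < \ell(w)$ could in principle cancel. (In fact they cannot, because after setting $\mathbf y = 0$ every summand has nonnegative coefficients in $\mathbb Z[\beta,x_1,\ldots]$; but you did not make this positivity observation.) Even granting the inequality, you have not explained why equality characterizes reduced BPDs: saying ``the nonvanishing lowest-order terms come precisely from reduced BPDs'' presupposes exactly the identification $|D(\mathcal P)| = \ell(w) \Leftrightarrow \mathcal P$ reduced that you want to establish.

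The paper handles this step directly and self-containedly. It uses the fact (from Lemma~\ref{lemma:uniquelydet} and the diagonal-counting lemma following it) that in any BPD the number of blank tiles equals the number of crossing tiles, so $|D(\mathcal P)| = |C(\mathcal P)| = |\mathbf a_{\mathcal P}|$, the length of the reading word. Since $\demprod{\mathbf a_{\mathcal P}} = w$, any Hecke word for $w$ has length at least $\ell(w)$, with equality iff the word is reduced, i.e., iff $\mathcal P$ is reduced. This is the argument you should supply at the step you flagged; it is shorter and more transparent than either of your proposed routes.
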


\begin{example}
Pictured below are the elements of ${\sf RPipes}(2143)$.
	\[
	\begin{tikzpicture}[x=1.5em,y=1.5em]
	\draw[step=1,gray, thin] (0,1) grid (4,5);
	\draw[color=black, thick](0,1)rectangle(4,5);
	\draw[thick,rounded corners, color=blue] (.5,1)--(.5,3.5)--(4,3.5);
	\draw[thick,rounded corners, color=blue] (1.5,1)--(1.5,4.5)--(4,4.5);
	\draw[thick,rounded corners, color=blue] (2.5,1)--(2.5,1.5)--(4,1.5);
	\draw[thick,rounded corners, color=blue] (3.5,1)--(3.5,2.5)--(4,2.5);
	\end{tikzpicture}
	\hspace{3em}
	\begin{tikzpicture}[x=1.5em,y=1.5em]
	\draw[step=1,gray, thin] (0,1) grid (4,5);
	\draw[color=black, thick](0,1)rectangle(4,5);
	\draw[thick,rounded corners, color=blue] (.5,1)--(.5,2.5)--(2.5,2.5)--(2.5,3.5)--(4,3.5);
	\draw[thick,rounded corners, color=blue] (1.5,1)--(1.5,4.5)--(4,4.5);
	\draw[thick,rounded corners, color=blue] (2.5,1)--(2.5,1.5)--(4,1.5);
	\draw[thick,rounded corners, color=blue] (3.5,1)--(3.5,2.5)--(4,2.5);
	\end{tikzpicture}
	\hspace{3em}
	\begin{tikzpicture}[x=1.5em,y=1.5em]
	\draw[step=1,gray, thin] (0,1) grid (4,5);
	\draw[color=black, thick](0,1)rectangle(4,5);
	\draw[thick,rounded corners, color=blue] (.5,1)--(.5,3.5)--(4,3.5);
	\draw[thick,rounded corners, color=blue] (1.5,1)--(1.5,2.5)--(2.5,2.5)--(2.5,4.5)--(4,4.5);
	\draw[thick,rounded corners, color=blue] (2.5,1)--(2.5,1.5)--(4,1.5);
	\draw[thick,rounded corners, color=blue] (3.5,1)--(3.5,2.5)--(4,2.5);
	\end{tikzpicture}\]
	Therefore,
	$		\mathfrak S_{2143}(
	\mathbf x;\mathbf y)= (x_1-y_1)(x_3-y_3)+(x_1-y_1)(x_2-y_1)+(x_1-y_1)(x_1-y_2). 
	$
\end{example}

We will also study \mydef{marked bumpless pipe dreams}, that is \[\mbpd{n}:=\{(\mathcal P,\mathcal S):\mathcal P\in \bpd{n} \enspace \text{and} \enspace \mathcal S\subseteq U(\mathcal P)\}.\]  
Graphically, we represent  $(\mathcal P,\mathcal S)$ by drawing $\mathcal P$ as usual and shading each cell which belongs to $\mathcal S$.
Write \[\mpipes{w}=\{(\mathcal P,\mathcal S)\in \mbpd{n}:\mathcal P\in \pipes{w}\}.\]
As an immediate corollary to Theorem~\ref{thm:main}, we have
\begin{corollary}
\label{cor:main}
The $\beta$-double Grothendieck polynomial is the weighted sum
\[\Groth^{(\beta)}_w(\mathbf x;\mathbf y)=\sum_{(\mathcal P,\mathcal S)\in \mpipes{w}}\beta^{|D(\mathcal P)|+|\mathcal S|-\ell(w)}\left(\prod_{(i,j)\in D(\mathcal P)\cup \mathcal  S}(x_i\oplus y_j)\right).\]
\end{corollary}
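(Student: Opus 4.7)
The plan is to obtain Corollary~\ref{cor:main} by a direct algebraic expansion starting from the weighted sum in Theorem~\ref{thm:main}. The key observation is that the factor $\prod_{(i,j)\in U(\mathcal P)}(1+\beta(x_i\oplus y_j))$ appearing in $\wt{\mathcal P}$ distributes over its terms, and each choice of summand ($1$ or $\beta(x_i\oplus y_j)$) to pick at each cell $(i,j)\in U(\mathcal P)$ corresponds to a subset $\mathcal S\subseteq U(\mathcal P)$, i.e.\ to a marking. This is precisely the combinatorial meaning of passing from $\pipes{w}$ to $\mpipes{w}$.

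First, I would apply the distributive law to rewrite
\[\prod_{(i,j)\in U(\mathcal P)}(1+\beta(x_i\oplus y_j)) = \sum_{\mathcal S \subseteq U(\mathcal P)} \beta^{|\mathcal S|}\prod_{(i,j)\in \mathcal S}(x_i\oplus y_j).\]
Multiplying by $\prod_{(i,j)\in D(\mathcal P)}\beta(x_i\oplus y_j)$ and using that $D(\mathcal P)\cap U(\mathcal P)=\emptyset$ (they correspond to disjoint tile types from the list in (\ref{eqn:sixtiles}), so their union is a disjoint union), one gets
\[\wt{\mathcal P} = \sum_{\mathcal S\subseteq U(\mathcal P)} \beta^{|D(\mathcal P)|+|\mathcal S|}\prod_{(i,j)\in D(\mathcal P)\cup \mathcal S}(x_i\oplus y_j).\]

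Second, I would substitute this expression into the identity of Theorem~\ref{thm:main} and swap the order of summation. Since $\mpipes{w}$ is defined to be exactly the set of pairs $(\mathcal P,\mathcal S)$ with $\mathcal P\in\pipes{w}$ and $\mathcal S\subseteq U(\mathcal P)$, the double sum $\sum_{\mathcal P\in\pipes{w}}\sum_{\mathcal S\subseteq U(\mathcal P)}$ collapses to a single sum over $\mpipes{w}$. Absorbing the $\beta^{-\ell(w)}$ prefactor into the $\beta$-exponent produces $\beta^{|D(\mathcal P)|+|\mathcal S|-\ell(w)}$, which matches the exponent in the statement.

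There is no substantive obstacle here: the corollary is a formal re-expression of Theorem~\ref{thm:main} once one recognizes that the product over $U$-tiles is the generating function for markings of those tiles. The only bookkeeping items are the disjointness of $D(\mathcal P)$ and $U(\mathcal P)$ and the tracking of $\beta$-exponents, both of which are immediate.
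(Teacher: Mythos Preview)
Your proposal is correct and is precisely the intended derivation: the paper states Corollary~\ref{cor:main} as an ``immediate corollary'' of Theorem~\ref{thm:main} without writing out a proof, and the expansion of $\prod_{(i,j)\in U(\mathcal P)}(1+\beta(x_i\oplus y_j))$ over subsets $\mathcal S\subseteq U(\mathcal P)$ is exactly that immediate step.
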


For the reader familiar with pipe dreams (also known as RC-graphs) in the sense of \cite{Bergeron.Billey,Fomin.Kirillov.1996,Knutson.Miller}, the BPD formulas for double Schubert and Grothendieck polynomials are genuinely different from the pipe dream formulas. In small examples, such as $w=132$, we see obstructions to finding a weight preserving bijection which explains the equality of these expressions (see Example~\ref{ex:132}). However,  after specializing the $\mathbf y$ variables to $0$, it should, in principle, be possible to find a weight preserving bijection from pipe dreams to marked BPDs.  Finding a direct bijection is expected to be difficult.  See Section~\ref{section:comparisons} for further discussion.

\subsection{Vexillary bumpless pipe dreams and flagged tableaux}

 A \mydef{partition} is a weakly decreasing sequence of nonnegative integers $\lambda=(\lambda_1,\,\lambda_2,\,\ldots,\,\lambda_k)$.  Write \[\yd{\lambda}:=\{(i,j):1\leq j\leq \lambda_i \enspace \text{and} \enspace 1\leq i\leq k\}.\]  A \mydef{tableau of shape $\lambda$} associates a positive integer to each $(i,j)\in \yd{\lambda}$.  We will discuss \emph{semistandard tableaux}, \emph{flagged tableaux}, and \emph{set-valued tableaux} in this section.  See Section~\ref{section:partandtab} for these definitions.

In Section~\ref{s:vex}, we restrict our attention to bumpless pipe dreams for \emph{vexillary permutations}, i.e.,\ those permutations which avoid the pattern 2143.
We show $\pipes{v}=\rpipes{v}$ if and only if $v$ is vexillary (see Lemma~\ref{lemma:vexnoncrossing}).  
Furthermore, if $v$ is vexillary, $\pipes{v}$ is in transparent bijection with the sets of non-intersecting lattice paths studied by Kreiman \cite{Kreiman}.  

 As a consequence of Kreiman's work, we obtain a bijection \[\FSSYTtovexbpd:\FSSYT(v)\rightarrow \pipes{v}\] where   $\FSSYT(v)$ is the set of flagged semistandard tableaux for $v$.  
Explicitly,  the map takes a tableau $T$ of shape $\mu^{(v)}$ to the unique $\mathcal P\in \pipes{v}$ so that 
\begin{equation}
\label{eqn:bpdmaptabmapdiagram}
D(\mathcal P)=\{(T(i,j),T(i,j)+j-i):(i,j)\in\yd{\mu^{(v)}}\}.
\end{equation}
In particular, $\pipes{v}$ is in  bijection with certain sets of \emph{excited Young diagrams} \cite{Ikeda.Naruse}, as well as the (reduced) \emph{diagonal pipe dreams} of \cite{Knutson.Miller.Yong}.

Continuing Kreiman's lattice path story, we define a map
\[\flaggedtovexmarkedbpd:\FSet(v)\rightarrow \mpipes{v}\]
 where $\FSet(v)$ is the set of flagged set-valued tableaux for $v$. 
If $\mathbf T\in \FSet(v)$, the minimum element of each cell of $\mathbf T$ determines the diagram of the underlying BPD in the same way as (\ref{eqn:bpdmaptabmapdiagram}).  The other elements indicate which upward elbow tiles are marked.  We make this precise in Section~\ref{section:tabpipes}.  See Figure~\ref{figure:markedbpdvex} for an example.
\begin{theorem}
\label{theorem:flaggedtovexmarkedbpdbij}
Fix $v\in \SymGp_{n}$ so that $v$ is vexillary.  The map \[\flaggedtovexmarkedbpd:\FSet(v)\rightarrow \mpipes{v}\] is a weight preserving bijection.
\end{theorem}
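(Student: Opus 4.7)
The plan is to extend the bijection $\FSSYTtovexbpd \colon \FSSYT(v) \to \pipes{v}$ coming from Kreiman's non-intersecting lattice paths by using the excess entries of a set-valued tableau to mark upward elbow tiles on the appropriate diagonal. Concretely, given $\mathbf T \in \FSet(v)$ I would set $\flaggedtovexmarkedbpd(\mathbf T) = (\mathcal P, \mathcal S)$ where $\mathcal P := \FSSYTtovexbpd(T^-)$ for $T^-(i,j) := \min \mathbf T(i,j)$, and $\mathcal S := \{(a,\, a + j - i) : (i,j) \in \yd{\mu^{(v)}},\ a \in \mathbf T(i,j) \setminus \{T^-(i,j)\}\}$. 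This matches the informal description immediately preceding the theorem and is manifestly weight preserving: by (\ref{eqn:bpdmaptabmapdiagram}), the product of $(x_i \oplus y_j)$ over $D(\mathcal P) \cup \mathcal S$ equals $\prod_{(i,j) \in \yd{\mu^{(v)}}} \prod_{a \in \mathbf T(i,j)} (x_a \oplus y_{a + j - i})$, and since $\mathcal P$ is reduced (Lemma~\ref{lemma:vexnoncrossing}) we have $|D(\mathcal P)| = \ell(v)$, so $|D(\mathcal P)| + |\mathcal S| - \ell(v) = |\mathcal S|$ is exactly the number of excess entries, matching Corollary~\ref{cor:main}.

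The step I expect to be the main obstacle is showing that $\flaggedtovexmarkedbpd$ actually lands in $\mpipes{v}$, i.e.\ that every marked position $(a, a+j-i)$ produced above is genuinely an upward elbow of $\mathcal P$. Here I would lean on vexillarity: because $\pipes{v} = \rpipes{v}$, the BPD $\mathcal P$ is the superposition of a family of non-intersecting NE lattice paths indexed by the rows of $\mu^{(v)}$, and along the fixed antidiagonal $\{(r, r + j - i) : r \geq 1\}$ the tiles of $\mathcal P$ alternate between blank tiles (recorded by $D(\mathcal P)$) and maximal runs of upward-elbow and vertical tiles belonging to the neighbouring pipe. The row-strictness of $\mathbf T$ and the flag bounds are designed precisely to pin each excess entry $a \in \mathbf T(i,j) \setminus \{T^-(i,j)\}$ into the upward-elbow rows lying strictly between $T^-(i,j)$ and the next blank along the same diagonal. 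I would verify this by a short local analysis on a single pipe together with an induction down the rows of $\mu^{(v)}$, using a ladder/droop move to reduce to the case of pushing one cell at a time.

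The inverse map is then forced by this diagonal analysis: given $(\mathcal P, \mathcal S) \in \mpipes{v}$, first recover $T^- = \FSSYTtovexbpd^{-1}(\mathcal P)$, and then for each $(r, c) \in \mathcal S$ adjoin $r$ to the unique cell $(i, j) \in \yd{\mu^{(v)}}$ with $i - j = r - c$ whose blank value $T^-(i,j)$ is the largest one on that diagonal satisfying $T^-(i,j) < r$. The same local argument that controls the forward direction shows that the resulting filling is a legal flagged set-valued tableau and that it inverts $\flaggedtovexmarkedbpd$. Combined with the weight identity from the first paragraph, this completes the proof.
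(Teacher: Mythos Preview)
Your plan is correct in its architecture and matches the paper's definition of $\flaggedtovexmarkedbpd$ exactly: take $T^- = \flatten(\mathbf T)$, set $\mathcal P = \FSSYTtovexbpd(T^-)$, and let $\mathcal S$ record the excess entries shifted onto their diagonals. Your weight computation is also the paper's. (Two small slips: the relevant diagonals are $\{(r,r+j-i)\}$, not antidiagonals, and semistandardness here is column-strict, not row-strict.)

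Where you and the paper diverge is the proof that $\mathcal S \subseteq U(\mathcal P)$ and that the inverse is well-defined. You propose a direct local analysis of what tiles can occur along a diagonal between consecutive blanks, combined with an induction down the rows of $\mu^{(v)}$. The paper instead introduces an auxiliary object, the \emph{saturated} flagged set-valued tableaux $\SatFSet(v)$: those $\mathbf T$ in which no cell can absorb any additional entry above its minimum. It shows (Lemma~\ref{lemma:satflattenbij}) that $\flatten\colon \SatFSet(v)\to \FSSYT(v)$ is a bijection, and then proves by induction on local moves from the Rothe BPD (Lemma~\ref{lemma:elbows}) that for saturated $\mathbf T$ one has $\mathcal S_{\mathbf T} = U(\mathcal P)$ \emph{exactly}. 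The bijection then becomes formal: arbitrary $\mathbf T \in \FSet(v)$ is a saturated tableau with a chosen subset of its excess entries deleted, and $(\mathcal P,\mathcal S)\in\mpipes{v}$ is $\mathcal P$ together with a chosen subset of $U(\mathcal P)$.

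Your route can be made to work, but the saturated-tableau trick buys a cleaner argument: rather than analyzing which rows on a diagonal carry upward elbows, you prove once that the \emph{maximal} filling over $T^-$ hits all of $U(\mathcal P)$, and the induction step is a direct bookkeeping check on a single local move (one entry decreases by $1$ in $T^-$, and three cells of the saturated tableau change in a prescribed way). This avoids having to characterize the tile pattern between consecutive blanks along a diagonal, which is where your sketch is vaguest.
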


\begin{figure}
\[	\raisebox{6em}{
\begin{ytableau}
1&1\\
2 3\\
4
\end{ytableau}}\hspace{4em}\begin{tikzpicture}[x=1.5em,y=1.5em]
	\draw[step=1,gray, thin] (0,0) grid (5,5);
	\draw[color=black, thick](0,0)rectangle(5,5);
	\draw[color=black,fill=lightgray, thick](1,2)rectangle(2,3);
	\draw[thick,rounded corners, color=blue] (.5,0)--(.5,2.5)--(1.5,2.5)--(1.5,3.5)--(2.5,3.5)--(2.5,4.5)--(5,4.5);
	\draw[thick,rounded corners, color=blue] (2.5,0)--(2.5,2.5)--(5,2.5);
	\draw[thick,rounded corners, color=blue] (1.5,0)--(1.5,.5)--(5,.5);
	\draw[thick,rounded corners, color=blue] (3.5,0)--(3.5,3.5)--(5,3.5);
	\draw[thick,rounded corners, color=blue] (4.5,0)--(4.5,1.5)--(5,1.5);
	\end{tikzpicture} 
\]
\caption{ Pictured on the left is a  set-valued tableau in $\FSet(14352)$.  On the right is its corresponding  marked bumpless pipe dream.}
\label{figure:markedbpdvex}
\end{figure}
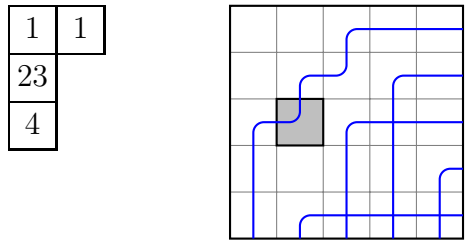
This bijection  provides a new combinatorial proof of the formula for vexillary double Grothendieck polynomials given in \cite{Knutson.Miller.Yong}.  See Theorem~\ref{thm:KMY}.

\subsection{Decreasing tableaux and Hecke bumpless pipe dreams}

Write $\DT(n-1)$ for the set of \emph{decreasing tableaux} with entries in $\{1,2,\ldots,n-1\}$.  Each decreasing tableau has a \emph{column reading word} formed by reading its labels within columns from bottom to top, starting at the left (see Figure~\ref{figure:hecketab}).

Say that $\mathcal P\in \bpd{n}$ is a \mydef{Hecke} BPD if $D(\mathcal P)$ forms a northwest justified partition shape. Write $\hbpd{n}$ for the set of Hecke BPDs in $\bpd{n}$.  If $D(\mathcal P)$ corresponds to the partition $\lambda$, say $\mathcal P$ has \mydef{shape} $\lambda$.  If $\mathcal P$ is Hecke and reduced, then it is an \mydef{Edelman-Greene} BPD, as defined in \cite{Lam.Lee.Shimozono}.

We define a (shape preserving) map from $\hbpd{n}$ to $\DT(n-1)$ as follows.  First, notice that within any diagonal of the grid, $\mathcal P$ has as many crossing tiles as blank tiles (see Lemma~\ref{lemma:uniquelydet}).  We pair the first blank tile within a diagonal with the last crossing tile, the next with the second to last, and so on.  With this convention, if a blank tile sits in row $i$ and its paired crossing tile in row $i'$, we assign the corresponding cell the value $i'-i$.
Write \[\HeckeBPDtodectab:\hbpd{n}\rightarrow \DT(n-1)\]
for this map.  See Figure~\ref{figure:hecketab} for an example.

\begin{theorem}
\label{thm:hecketabs}
The  map 
$\HeckeBPDtodectab:\hbpd{n}\rightarrow \DT(n-1)$
is a shape preserving bijection. Furthermore, if $\mathcal P\in \pipes{w}$, then the reading word of $\HeckeBPDtodectab(\mathcal P)$ is a Hecke word for $w$.
\end{theorem}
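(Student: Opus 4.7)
The plan is to prove the theorem in three stages: first, verify that $\Omega(\mathcal P)$ is a well-formed decreasing tableau whose shape matches that of $\mathcal P$; second, construct an explicit inverse from $\DT(n-1)$ back to $\hbpd{n}$; and third, identify the column reading word of $\Omega(\mathcal P)$ with a Hecke word for $w = \demprod{\mathcal P}$.

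For the first stage, shape preservation is immediate, since the entries of $\Omega(\mathcal P)$ populate exactly the cells of $\yd{\lambda} = D(\mathcal P)$. The substantive check is strict row and column decrease. Working one diagonal at a time, Lemma~\ref{lemma:uniquelydet} guarantees the pairing of blanks with crossings is well-defined. For horizontally adjacent blanks $(i,j)$ and $(i,j{+}1)$ (which lie on neighboring diagonals), I would argue that the pipes separating these two diagonals travel monotonically from bottom to right, so the rows $i'$ and $i''$ of their respective paired crossings are forced into the ordering $i' - i > i'' - i$, giving the required strict inequality. The column-decreasing condition follows from the same monotonicity principle applied to the vertical pipe structure between diagonals. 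Bounding all values in $\{1, \ldots, n-1\}$ reduces to noting that no paired crossing can lie outside the $n \times n$ grid.

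For the second stage, given $T \in \DT(n-1)$ of shape $\lambda$, I would construct the preimage by placing a blank tile at every $(i,j) \in \yd{\lambda}$, a crossing tile at the unique position in the diagonal through $(i,j)$ whose row differs from $i$ by $T(i,j)$, and filling the remaining cells with the unique elbow tiles determined by the BPD boundary conditions (pipes entering from the bottom, exiting on the right). The strict decrease of $T$ in rows and columns rules out crossing collisions, and the diagonal balance built into the construction guarantees that the elbow fill-in produces a valid tiling. Direct comparison with the definition of $\Omega$ then shows the two maps are mutually inverse, establishing the shape-preserving bijection.

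The third stage is the main obstacle. The column reading word of $\mathcal P$ records a specific sequence of simple transpositions whose Demazure product is $w$ by definition; I must show the column reading word of $\Omega(\mathcal P)$, read through the tableau entries bottom-to-top within each column, also has Demazure product $w$. The natural approach is induction on $|\lambda|$: peel off an outermost blank-crossing pair using droop and K-theoretic droop moves from Proposition~\ref{prop:droops} to reduce to a smaller Hecke BPD, and match the removed entry with the corresponding simple transposition appended to the reading word. The subtlety is that the orderings of the two words do not literally coincide, since a crossing paired with the blank at $(i,j)$ sits in BPD column $j{+}k$ while the tableau entry remains in column $j$, so the two words list the same multiset of simple transpositions in genuinely different orders. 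Reconciling this will require a careful reordering argument using the commutation, braid, and idempotent relations that govern the Demazure product, showing that the two reading orders are equivalent modulo these relations. Carrying out this reordering analysis diagonal-by-diagonal is where the bulk of the technical work will sit.
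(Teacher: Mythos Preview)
Your overall structure is reasonable, but the paper takes a quite different route, and your third stage has a genuine gap.

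For the bijection, the paper does \emph{not} argue directly. Instead it observes (Lemma~\ref{lemma:pipesinvolution}) that reflecting an ASM across the antidiagonal swaps blank and crossing tiles, which gives a bijection $\hbpd{\lambda,n}\leftrightarrow \pipes{v(\lambda,n)}$ for a specific vexillary permutation $v(\lambda,n)$ (Lemma~\ref{lemma:pipesinvolutioncor}). Then the already-established bijection $\FSSYTtovexbpd:\FSSYT(v)\to\pipes{v}$ from Section~\ref{s:vex} yields the formula $D(i,j)=n-j+1-T(i,j)$ relating the decreasing tableau $D$ to the flagged semistandard tableau $T$; the decreasing property of $D$ is then immediate from semistandardness of $T$. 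Your direct approach in stages~1 and~2 could likely be made rigorous, but the ``monotonicity of pipes between diagonals'' claim is not a proof as stated, and verifying that your reconstructed tiling is a valid BPD would duplicate work already encapsulated in the vexillary machinery.

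The real gap is in stage~3. Your proposed induction on $|\lambda|$ by peeling off a blank--crossing pair via droops does not obviously produce another Hecke BPD, so the inductive step is not well-posed. More importantly, you anticipate needing braid and idempotent relations, but the paper proves something stronger: the reading words of $\mathcal P$ and $\HeckeBPDtodectab(\mathcal P)$ lie in the same \emph{commutation} class (Lemma~\ref{lemma:heckecommutationclass}). The key input you are missing is Lemma~\ref{lemma:rankcrossing}: the label of a crossing at $(i,j)$ equals $r_A(i,j)-1$. The paper inducts on $n$, removes the first column of the tableau, and shows that the crossings paired with first-column blanks can be commuted to the front of $\mathbf a_{\mathcal P}$ because any crossing preceding them in reading order but not among them lies strictly northwest, hence has rank at least $2$ smaller, hence a label differing by at least $2$. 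This rank argument is what makes the reordering go through cleanly with commutation alone; without it, your ``careful reordering argument'' has no obvious mechanism.
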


\begin{figure}
\[
	\raisebox{9em}{
\begin{ytableau}
6&5&4&2&1\\
4&3&2\\
3&2&1\\
1
\end{ytableau}} \hspace{4em}
\begin{tikzpicture}[x=1.5em,y=1.5em]
	\draw[step=1,gray, thin] (0,0) grid (7,7);
	\draw[color=black, thick](0,0)rectangle(7,7);
	\draw[thick,rounded corners, color=blue] (.5,0)--(.5,2.5)--(7,2.5);
	\draw[thick,rounded corners, color=blue] (1.5,0)--(1.5,3.5)--(5.5,3.5)--(5.5,6.5)--(7,6.5);
	\draw[thick,rounded corners, color=blue] (2.5,0)--(2.5,1.5)--(7,1.5);
	\draw[thick,rounded corners, color=blue] (3.5,0)--(3.5,5.5)--(7,5.5);
	\draw[thick,rounded corners, color=blue] (4.5,0)--(4.5,4.5)--(7,4.5);
	\draw[thick,rounded corners, color=blue] (5.5,0)--(5.5,.5)--(7,.5);
	\draw[thick,rounded corners, color=blue] (6.5,0)--(6.5,3.5)--(7,3.5);
	\end{tikzpicture} \]
\caption{The decreasing tableau  on the left corresponds to the Hecke BPD $\mathcal P$ on the right.
The tableau has reading word $\mathbf a=(1,3,4,6,2,3,5,1,2,4,2,1)$.
The reader may verify that $\demprod{\mathbf a}=5427136=\demprod{\mathcal P}$. }
\label{figure:hecketab}
\end{figure}
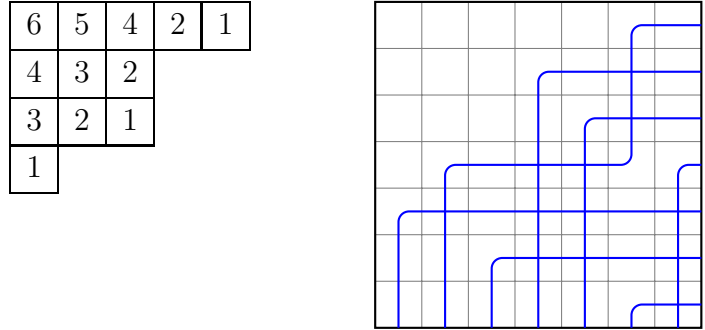

The name Hecke BPD is meant to emphasize the relationship with the Hecke insertion of \cite{BKSTY.factorsequ}.  Each $w\in \SymGp_n$ has an associated formal power series $G_w(x_1,\,x_2,\,\ldots)$, known as a \emph{stable Grothendieck polynomial}.  Buch \cite{Buch} proved we can write $G_w$  as a finite linear combination of stable Grothendieck polynomials corresponding to partitions:
\[G_w=\sum_{\lambda}a_{w\, \lambda}G_\lambda.\]  In particular, the coefficients are  integers.  Lascoux \cite{Lascoux:transition} showed these coefficients alternate in sign with degree.  Buch, Kresch, Shimozono,  Tamvakis, and  Yong proved that $|a_{w\,\lambda}|$ counts increasing tableaux of shape $\lambda$ whose reading words are Hecke words for $w$.
As a consequence of Theorem~\ref{thm:hecketabs} and \cite[Theorem~1]{BKSTY.factorsequ},
$|a_{w\,\lambda}|$ also counts the number of Hecke BPDs of shape $\lambda$ in $\pipes{w}$.

Restricting the map $\HeckeBPDtodectab$ to Edelman-Greene BPDs provides a solution to  \cite[Problem 5.19]{Lam.Lee.Shimozono}. This question was phrased in terms of increasing tableaux. However, we find BPDs to be naturally compatible with decreasing tableaux, so we focus on these objects instead. This is merely a convention shift (see \cite[Section~3.8]{BKSTY.factorsequ}).  A previous bijection from Edelman-Greene BPDs to (increasing) reduced word tableaux  was given by Fan, Guo, and Sun in \cite{fan2018bumpless}.  The statement of their bijection relies on Edelman-Greene insertion and the Lascoux-Sch\"utzenberger transition tree. Our bijection has the advantage that it is more direct to state. 

\subsection{Organization}
 In Section~\ref{s:background}, we recall the necessary background on  reduced words in the symmetric group and Grothendieck polynomials. 
In Section~\ref{section:ice} we review relevant objects from the literature which are in natural bijection with ASMs (and hence BPDs).  We collect certain facts for use in subsequent sections.

In the next two sections, we translate Lascoux's work on ASMs into the language of BPDs.  
Section~\ref{section:key} discusses keys of ASMs, Lascoux's inflation moves, and K-theoretic droops. In particular, we show Lascoux's notion of the key of an ASM is compatible with taking the Demazure product of the associated BPD.  In Section~\ref{section:transitionBPD}, we show that BPDs satisfy the transition equations for Grothendieck polynomials.  This allows us to prove Theorem~\ref{thm:main}.  The foundation of the proofs in Section~\ref{section:transitionBPD} was laid out by Lascoux, but we provide additional details to make this discussion self-contained.  We prove transition in Appendix~\ref{appendix:transitionproof}.

In Section~\ref{section:comparisons}, we recall (ordinary) pipe dreams (see \cite{Bergeron.Billey,Fomin.Kirillov.1996,Fomin.Kirillov,Knutson.Miller}) and compare the pipe dream formulas for Schubert and Grothendieck polynomials to the BPD formulas.
Section~\ref{s:vex} discusses vexillary Grothendieck polynomials.  Finally, in Section~\ref{section:hecketabs}, we show Hecke BPDs are in shape preserving  bijection with decreasing tableaux.

\section{Background}
\label{s:background}

\subsection{The symmetric group}

In this section, we review  necessary definitions related to the symmetric group.  We refer the reader to \cite{Manivel} for further background.
Write $\mathbb N=\{0,1,2,\ldots\}$ and $\mathbb P=\{1,2,\ldots\}$.  Define $[n]=\{1,2,\ldots,n\}$. 

Let $\SymGp_n$ be the \mydef{symmetric group on $n$ letters}, i.e.,\ the group of bijections from $[n]$ to itself.  We often represent permutations in one-line notation, writing $w=w_1 \, w_2 \, \ldots\, w_n$ where $w_i:=w(i)$ for all $i\in[n]$. We will also use cycle notation.  We write  $w_0=n\,n-1\,\ldots\,1$ for the {\bf longest permutation}.

The \mydef{Rothe diagram} of $w$ is the set
\begin{equation}
\label{eqn:rothe}
D(w):=\{(i,j):w_i>j,w^{-1}_j>i \enspace \text{for all} \enspace i,j\in[n]\}.
\end{equation}
The \mydef{Coxeter length} of $w$ is $\ell(w):=|D(w)|$.
The \mydef{Lehmer code} of $w$ is \[\code_w=(\code_w(1),\,\code_w(2),\,\ldots,\,\code_w(n))\] where $\code_w(i)=|\{j:(i,j)\in D(w)\}|$.  The map $w\mapsto \code_w$ defines a bijection from $\SymGp_n$ to \[\{(c_1,\,c_2,\,\ldots,\,c_n)\in \mathbb N^{n}: 0\leq c_i\leq n-i \enspace \text{for all} \enspace i\in[n]\}.\] 
The Lehmer code is well behaved under the natural inclusion $\iota:\SymGp_{n}\rightarrow \SymGp_{n+1}$; we have $\code_{\iota(w)}=(\code_w(1),\ldots,\code_w(n),0)$.  As such, we conflate these finite codes with sequences taking entries in $\mathbb N$ where all but finitely many terms are $0$.  This allows us to extend the map $w\mapsto \code_w$ to  $\SymGp_{\infty}$.

A permutation $v\in \SymGp_n$ is \mydef{vexillary} if it avoids the pattern 2143, i.e.,\ if there are no indices $1\leq i_1<i_2<i_3<i_4\leq n$ so that $v_{i_2}<v_{i_1}<v_{i_4}<v_{i_3}$.  A permutation is \mydef{dominant} if it avoids the pattern 132, i.e.,\ there are no positions $1\leq i_1<i_2<i_3\leq n$ so that $v_{i_1}<v_{i_3}<v_{i_2}$.  A permutation is dominant if and only if its code is a weakly decreasing sequence. 
\subsection{Words in the symmetric group}  In this section, we follow \cite{Knutson.Miller.subword}.
 Write $s_i$ for the transposition $(i\; i+1)$ and $t_{i\,j}$ for $(i \, j)$. The $s_i$'s satisfy \mydef{braid relations} 
\[s_is_{i+1}s_i=s_{i+1}s_is_{i+1} \enspace \text{for all} \enspace i\in[n-2]\]
and \mydef{commutation relations}
\[s_is_j=s_js_i \enspace \text{if} \enspace |i-j|>1.\]
Furthermore, $s_i^2=\id$ for each $i\in [n-1]$.

A \mydef{word} for $w$ is a tuple $\mathbf a=(a_1,\ldots,a_k)$ such that $w=s_{a_1}\cdots s_{a_k}$. This word is \mydef{reduced} if $k=\ell(w)$.
The \mydef{Demazure algebra} is the free $\mathbb Z$-module generated by $\{e_w:w\in \SymGp_n\}$ with multiplication defined by 
\begin{equation}
e_we_{s_i}=
\begin{cases}
e_{ws_i} & \text{if} \enspace \ell(ws_i)>\ell(w) \enspace \text{and}\\
e_{w} & \text{if} \enspace \ell(ws_i)<\ell(w).
\end{cases}
\end{equation}
Write $e_i:=e_{s_i}$.  The $e_i$'s satisfy the same braid and commutation relations as the $s_i$'s:
\[e_ie_{i+1}e_i=e_{i+1}e_ie_{i+1} \enspace \text{for all} \enspace i\in[n-2]\enspace  \text{and} \enspace e_ie_j=e_je_i \enspace \text{if} \enspace |i-j|>1.\]
 Additionally, $e_i^2=e_i$. 

Given a word $\mathbf a=(a_1,\ldots,a_k)$ with letters in $[n-1]$, the \mydef{Demazure product}  $\demprod{\mathbf a}$ is the (unique) element of $\SymGp_n$ so that $e_{a_1}\cdots e_{a_k}=e_{\demprod{\mathbf a}}$. If $\demprod{\mathbf a}=w$, we say that $\mathbf a$ is a \mydef{Hecke word}  for $w$.
 Using the relations among the $e_i$'s, any word $\mathbf a$ can be simplified to a reduced word $\mathbf a'$ such that $\demprod{\mathbf a}=\demprod{\mathbf a'}$.

\subsection{Planar histories}
\label{section:planar}

A \mydef{northeast planar history} $\mathcal P$ is a configuration of $n$ pseudo-lines, constrained to a rectangular region which satisfy the following properties:
\begin{enumerate}
\item each path starts at the bottom of the rectangle,
\item each path ends at the right edge of the rectangle, 
\item paths move in a northward or eastward direction at all times, and
\item paths may cross at a point, but do not travel concurrently.
\end{enumerate}
Bumpless pipe dreams are special cases of northeast planar histories.

Suppose $\mathcal P$ has $k$ crossings.  We obtain a word from $\mathcal P$ as follows.  Order the crossings from left to right, breaking ties within columns by starting at the bottom and moving upwards.  Label each crossing by counting the number of paths which pass weakly northeast of the crossing and then subtracting one.  Write $a_i$ for the label of the $i$th crossing.  Then $\mathbf a_{\mathcal P}=(a_1,\ldots,a_k)$ is the \mydef{word} of the planar history.
We define $w_{\mathcal P}=s_{a_1}\cdots s_{a_k}$.  If $\mathbf a_{\mathcal P}$ is a reduced word, we say $\mathcal P$ is \mydef{reduced}.  

To  compute $w_{\mathcal P}$ graphically, label the paths  at the bottom edge of the rectangle from left to right with the numbers $1,\,2,\,\ldots,\, n$.  Extend these labels across crossings using the picture below.
\begin{equation}
\label{eqn:pipecrossred}
\raisebox{-2.75em}{\begin{tikzpicture}[x=1.5em,y=1.5em]
	\draw[thick,rounded corners, color=blue] (-1,2)--(1,2);
	\draw[thick,rounded corners, color=blue] (0,1)--(0,3);
\draw (0,.5) node [align=left]{$j$};
\draw (1.5,2) node [align=left]{$i$};
\draw (-1.5,2) node [align=left]{$i$};
\draw (0,3.5) node [align=left]{$j$};
\end{tikzpicture}}
\end{equation}
  Then $w_{\mathcal P}(i)$ is the final label of the path which ends in the $i$th row.

Additionally, we may consider the Demazure product of $\mathbf a_{\mathcal P}$. For brevity, we write  $\demprod{\mathcal P}:=\demprod{ \mathbf a_{\mathcal P}}$.
We may compute $\demprod{\mathcal P}$ graphically in a similar way as before.  However, at crossings, if $i<j$, the labels change as pictured below.
\begin{equation}
\label{eqn:pipecross}
\raisebox{-2.75em}{\begin{tikzpicture}[x=1.5em,y=1.5em]
	\draw[thick,rounded corners, color=blue] (-1,2)--(1,2);
	\draw[thick,rounded corners, color=blue] (0,1)--(0,3);
\draw (0,.5) node [align=left]{$j$};
\draw (1.5,2) node [align=left]{$i$};
\draw (-1.5,2) node [align=left]{$i$};
\draw (0,3.5) node [align=left]{$j$};
\end{tikzpicture}
\hspace{4em}
\begin{tikzpicture}[x=1.5em,y=1.5em]
	\draw[thick,rounded corners, color=blue] (-1,2)--(1,2);
	\draw[thick,rounded corners, color=blue] (0,1)--(0,3);
\draw (0,.5) node [align=left]{$i$};
\draw (1.5,2) node [align=left]{$i$};
\draw (-1.5,2) node [align=left]{$j$};
\draw (0,3.5) node [align=left]{$j$};
\end{tikzpicture}}
\end{equation}
Then $\demprod{\mathcal P}(i)$ is the label of the path in the $i$th row at the right edge of the rectangle.  Notice that the second configuration in (\ref{eqn:pipecross}) occurs if and only if paths $i$ and $j$ have previously crossed, i.e.,\ the planar history is not reduced.  In particular, $\mathcal P$ is reduced if and only if $\demprod{\mathcal P}=w_\mathcal P$.

\begin{lemma}
\label{lemma:planarmoves}
Applying any of the local moves pictured below to $\mathcal P$ produces a new planar history $\mathcal P'$ so that $w_{\mathcal P}=w_{\mathcal P'}$ and $\demprod{\mathcal P}=\demprod{\mathcal P'}$.
\begin{equation}
\label{eqn:planarmoves1}
\raisebox{-1.25em}{\begin{tikzpicture}[x=.75em,y=.75em]
\draw[thick] (2,0)--(2,4);
\draw[thick,rounded corners, color=blue] (1,1)--(1,3)--(3,3);
\end{tikzpicture}
\hspace{1em} \raisebox{1.3em}{$\leftrightarrow$} \hspace{1em}
\begin{tikzpicture}[x=.75em,y=.75em]
\draw[thick] (2,0)--(2,4);
\draw[thick,rounded corners, color=blue] (1,1)--(3,1)--(3,3);
\end{tikzpicture}
\hspace{2em}
\raisebox{.7em}{
\begin{tikzpicture}[x=.75em,y=.75em]
\draw[thick] (0,2)--(4,2);
\draw[thick,rounded corners, color=blue] (1,1)--(1,3)--(3,3);
\end{tikzpicture}
\hspace{1em} \raisebox{.6em}{$\leftrightarrow$} \hspace{1em}
\begin{tikzpicture}[x=.75em,y=.75em]
\draw[thick] (0,2)--(4,2);
\draw[thick,rounded corners, color=blue] (1,1)--(3,1)--(3,3);
\end{tikzpicture}}
\hspace{2em}
\begin{tikzpicture}[x=.75em,y=.75em]
\draw[thick] (2,0)--(2,4);
\draw[thick] (0,2)--(4,2);
\draw[thick,rounded corners, color=blue] (1,1)--(1,3)--(3,3);
\end{tikzpicture}
\hspace{1em} \raisebox{1.3em}{$\leftrightarrow$} \hspace{1em}
\begin{tikzpicture}[x=.75em,y=.75em]
\draw[thick] (2,0)--(2,4);
\draw[thick] (0,2)--(4,2);
\draw[thick,rounded corners, color=blue] (1,1)--(3,1)--(3,3);
\end{tikzpicture}}
\end{equation}
Furthermore, if $\mathcal P'$ was obtained from $\mathcal P$ by one of the moves below, then $\demprod{\mathcal P}=\demprod{\mathcal P'}$.
\begin{equation}
\label{eqn:planarmoves2}
\raisebox{-1.25em}{\begin{tikzpicture}[x=.75em,y=.75em]
\draw[thick,rounded corners, color=blue] (0,1)--(1,1)--(1,3)--(4,3);
\draw[thick,rounded corners, color=blue] (1,0)--(1,1)--(3,1)--(3,4);
\end{tikzpicture}
\hspace{1em}
\raisebox{1.3em}{$\leftrightarrow$} \hspace{1em}
\begin{tikzpicture}[x=.75em,y=.75em]
\draw[thick,rounded corners, color=blue] (0,1)--(3,1)--(3,4);
\draw[thick,rounded corners, color=blue] (1,0)--(1,3)--(4,3);
\end{tikzpicture}
\hspace{1em} \raisebox{1.3em}{$\leftrightarrow$} \hspace{1em}
\begin{tikzpicture}[x=.75em,y=.75em]
\draw[thick,rounded corners, color=blue] (1,0)--(1,3)--(3,3)--(3,4);
\draw[thick,rounded corners, color=blue] (0,1)--(3,1)--(3,3)--(4,3);
\end{tikzpicture}}
\end{equation}
\end{lemma}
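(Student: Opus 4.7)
The plan is to verify each move by tracing labels through the pipes inside the displayed patch, using rule \eqref{eqn:pipecrossred} to track $w_{\mathcal P}$ and rule \eqref{eqn:pipecross} to track $\demprod{\mathcal P}$. Because every move modifies only the interior of a small patch, it suffices to check that the labels exiting the patch at each of its boundary points agree in $\mathcal P$ and $\mathcal P'$: once these match, the remainder of the planar history is unchanged and propagates the identical output labels to the right edge of the rectangle, yielding the same final permutation. I would set this up by assigning a formal symbol to each pipe entering the patch and then carrying out the label trace explicitly in each configuration.

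For the three moves in \eqref{eqn:planarmoves1}, every crossing inside the patch involves a distinct pair of pipes, so the rules \eqref{eqn:pipecrossred} and \eqref{eqn:pipecross} coincide there. In moves~1 and~2 the elbow pipe crosses the transverse straight pipe exactly once in both configurations, and the first-picture rule sends each label straight through the crossing; both pipes therefore exit the patch with their original labels regardless of whether the elbow is routed above or below. In move~3 the three pairwise crossings of the three pipes occur in opposite spatial orders in the two configurations, but a direct trace shows that all three output labels come out the same in either routing---this is the graphical content of the braid relation at the level of label propagation. For the moves in \eqref{eqn:planarmoves2}, two pipes pass through the patch and cross either once (left, right) or twice (middle); in the middle configuration the second crossing is automatically non-reduced, and rule \eqref{eqn:pipecross} then prescribes the swapping pattern there. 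Tracing the labels, the swap at the second crossing precisely undoes the swap at the first, so the labels exiting the middle patch at each boundary point agree with those exiting the outer patches at the corresponding boundary points, and $\demprod{\mathcal P} = \demprod{\mathcal P'}$.

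The main obstacle is making the label trace in move~3 airtight: three pipes intersecting pairwise give rise to several sub-cases depending on the relative values of the three input labels, and in each sub-case one must confirm that the label carried on each pipe arrives at the same boundary point in both configurations. Each sub-case is a routine application of the two-line first-picture rule, but keeping the bookkeeping correct is the one place where care is genuinely required. Once move~3 is in hand, moves~1 and~2 are strictly simpler special cases, and the \eqref{eqn:planarmoves2} analysis reduces to the same style of trace, enhanced only by using the second-picture (non-reduced) rule at the second crossing in the middle configuration.
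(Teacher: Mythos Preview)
Your approach is correct and is essentially the same as the paper's: both argue that the claims follow immediately from tracing labels through the patch using rules \eqref{eqn:pipecrossred} and \eqref{eqn:pipecross}, with move~3 (the one involving all three pipes) singled out as the case requiring a genuine case analysis. The paper simply notes there are six cases for the Demazure product in move~3 and leaves the verification to the reader, whereas you spell out the strategy in more detail; but the underlying argument is identical.
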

\begin{proof}
This is an immediate application of the labeling schemes from (\ref{eqn:pipecrossred}) and (\ref{eqn:pipecross}).  For the replacements 
\[\begin{tikzpicture}[x=.75em,y=.75em]
\draw[thick] (2,0)--(2,4);
\draw[thick] (0,2)--(4,2);
\draw[thick,rounded corners, color=blue] (1,1)--(1,3)--(3,3);
\end{tikzpicture}
\hspace{1em} \raisebox{1.3em}{$\leftrightarrow$} \hspace{1em}
\begin{tikzpicture}[x=.75em,y=.75em]
\draw[thick] (2,0)--(2,4);
\draw[thick] (0,2)--(4,2);
\draw[thick,rounded corners, color=blue] (1,1)--(3,1)--(3,3);
\end{tikzpicture}\] 
there are six cases to check when computing $\demprod{\mathcal P}$.  The verification is straightforward, so we leave this task to the reader.
\end{proof}

The \mydef{Rothe} BPD for $w$ is the (unique) BPD which has downward elbow tiles in positions $(i,w(i))$ for all $i\in[n]$ and no upward elbow tiles.  In other words, all of its pipes are hooks which bend exactly once.

\begin{lemma}
\label{lemma:permword}
If $\mathcal P$ is the Rothe BPD for $w$, then $\mathbf a_\mathcal P$ is  a reduced word and $\demprod{\mathcal P}=w_{\mathcal P}=w$.
\end{lemma}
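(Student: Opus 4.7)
The plan is to trace the pipes of the Rothe BPD directly, count its crossings, and then invoke the reduced-word criterion to obtain the Demazure product conclusion.

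First I would pin down the trajectory of each pipe. Because $\mathcal P$ has no upward elbow tiles, a pipe can change direction only at a downward elbow; since every pipe enters vertically at the bottom and exits horizontally at the right, it must bend exactly once, at the downward elbow in its starting column. The downward elbows sit at $(i,w(i))$ for $i\in[n]$, so the pipe starting at column $j$ ascends column $j$ until the elbow $(w^{-1}(j),j)$, bends right, and travels along row $w^{-1}(j)$ to the right edge. The remaining tiles are then forced: a cell traversed by both a vertical and a horizontal pipe segment must be a crossing, a cell on only one segment must be a line tile, and an untraversed cell must be blank.

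Given this hook description, the labeling rule (\ref{eqn:pipecrossred}) immediately yields $w_\mathcal P = w$: each pipe's label is preserved across every crossing and equals its starting column, so the pipe starting at column $j$ exits at row $w^{-1}(j)$ carrying the label $j$, i.e., $w_\mathcal P(w^{-1}(j))=j$ for all $j$. To count crossings, note that for $j<j'$ the pipes starting at columns $j$ and $j'$ can meet only at cell $(w^{-1}(j),j')$, and do so precisely when $w^{-1}(j)>w^{-1}(j')$. Hence the total number of crossings equals the number of inversions of $w^{-1}$, namely $\ell(w^{-1})=\ell(w)$, and no pair of pipes crosses more than once. Therefore $\mathbf a_\mathcal P$ has length $\ell(w)=\ell(w_\mathcal P)$, so it is reduced, and by the equivalence recorded immediately after (\ref{eqn:pipecross}), $\demprod{\mathcal P}=w_\mathcal P=w$.

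The only mildly delicate point is the opening tile classification: one must verify that the absence of upward elbow tiles really forces the hook shape, and that the other tile types are then unambiguously determined by the requirement that the diagram form a valid BPD. Everything else reduces to routine inversion bookkeeping.
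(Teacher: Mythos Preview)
Your proof is correct and follows essentially the same approach as the paper: establish that each pipe is a hook bending once at $(w^{-1}(j),j)$, observe that hooks cross pairwise at most once so the word is reduced, and trace the labels to read off $w_\mathcal P=w$. The paper is terser (it simply asserts the hook shape ``by assumption'' and does not bother with the explicit inversion count), but the logical structure is identical.
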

\begin{proof}

By assumption, all pipes of $\mathcal P$ are hooks.  As such, pairwise, they cross at most once. Therefore, $\mathcal P$ is reduced (and so $\demprod{\mathcal P}=w_{\mathcal P}$).
 Starting from the bottom edge of the grid, the pipe labeled $i$ travels upwards and then turns right in cell $(w^{-1}(i),i)$.  Finally it hits the right edge of the grid in row $w^{-1}(i)$.  Therefore, $\demprod{\mathcal P}(w^{-1}(i))=i$ for all $i\in[n]$ which implies $\demprod{\mathcal P}=w$.
\end{proof}

\subsection{$\beta$-double Grothendieck polynomials}
\label{subsec:betagroth}

We now recall the $\beta$-double Grothendieck polynomials of \cite{Fomin.Kirillov}.
Let $R=\mathbb Z[\beta][y_1,\ldots,y_n]$. We write  $\mathbb Z[\beta][\mathbf x;\mathbf y]:=R[x_1,\ldots,x_n]$.
The symmetric group $\SymGp_n$ acts on $\mathbb Z[\beta][\mathbf x;\mathbf y]$ by \[w\cdot f=f(x_{w_1},\,x_{w_2},\,\ldots,\,x_{w_n};\mathbf y).\] We define an operator $\pi_i$ which acts on $\mathbb Z[\beta][\mathbf x;\mathbf y]$ by  
\[\pi_i(f)=\frac{(1+\beta x_{i+1})f-(1+\beta x_i)s_i\cdot f}{x_i-x_{i+1}}.\]
The operators $\pi_i$ satisfy the same braid and commutation relations as the simple reflections in $\SymGp_n$,  as well as the following Leibniz rule:
\begin{equation}
\label{eqn:leibniz}
\pi_i(fg)=\pi_i(f)g+(s_i\cdot f)(\pi_i(g)+\beta g).
\end{equation}
Notice that $\pi_i(f)$ is symmetric in $x_i$ and $x_{i+1}$.  Furthermore, for any $g$, if $f=s_i\cdot f$ then $\pi_i(fg)=f\pi_i(g)$;  setting $g=1$ yields $\pi_i(f)=-\beta f$. Thus, $\pi_i^2=-\beta\pi_i$.

The \mydef{$\beta$-double Grothendieck polynomials} are defined as follows.  For the longest permutation, we set \[\Groth^{(\beta)}_{w_0}(\mathbf x;\mathbf y)=\prod_{1<i+j\leq n}(x_i\oplus y_j),\] where $x_i\oplus y_j=x_i+y_j+\beta x_iy_j$. If $w\in \SymGp_n$ with $w_i>w_{i+1}$,  define 
\begin{equation}
\label{eq:betagrothdef}
\Groth^{(\beta)}_{ws_i}(\mathbf x;\mathbf y)=\pi_i(\Groth^{(\beta)}_{w}(\mathbf x;\mathbf y)).
\end{equation}
Notice if $w_i<w_{i+1}$ then $\Groth^{(\beta)}_w(\mathbf x;\mathbf y)$ is symmetric in $x_i$ and $x_{i+1}$.  Therefore,
\begin{equation}
\label{eqn:dividedbetaterm}
\pi_i(\Groth^{(\beta)}_w(\mathbf x;\mathbf y))=-\beta\Groth^{(\beta)}_w(\mathbf x;\mathbf y).
\end{equation}

Specializing $\beta=-1$, in $\Groth^{(\beta)}_w(\mathbf x;\mathbf y)$ yields the \mydef{double Grothendieck polynomials}.  We may further specialize each $y_i$ to $0$ to obtain the (single) \mydef{Grothendieck polynomial}.
We get the \mydef{double Schubert polynomials} from $\beta$-double Grothendieck polynomials by setting $\beta$ to $0$ and replacing each $y_i$ with $-y_i$, i.e.,\ $\Schub_w(\mathbf x;\mathbf y):=\Groth_w^{(0)}(\mathbf x;-\mathbf y)$.  Likewise, the (single) \mydef{Schubert polynomials} are obtained by setting $\beta$ and each of the $y_i$'s equal to $0$.

\subsection{Transition equations}
\label{section:introtransition}
In this section, we recall Lascoux's transition equations for double Grothendieck polynomials.  Note that the statement of transition in \cite{Lascoux:ice} differs from our conventions here by a change of variables.

 Say $(i,j)$ is a \mydef{pivot} of $(a,b)$ in $w$ if:
\begin{enumerate}
\item $w(i)=j$,
\item $i<a$ and $j<b$, and
\item if $(i',j')\in [i,a]\times [j,b]-\{(i,j),(a,b)\}$ then $w(i')\neq j'$.
\end{enumerate}

 The permutation $w$ has a \mydef{descent} in position $i$ if $w_i>w_{i+1}$.  Write \[\des(w)=\max(\{0\}\cup \{i:w_i>w_{i+1}\}).\]  Notice $\des(w)=0$ if and only if $w=\id$.

Fix a permutation $w\in \SymGp_n$ such that $w\neq \id$. Let $a=\des(w)$ and set \[b=\max\{j:(a,j)\in D(w)\},\] i.e.,\ $(a,b)$ is the rightmost cell in the last row of $D(w)$. We call $(a,b)$ the \mydef{maximal corner} of $w$ and denote it $\mc(w)$.  For the identity, the maximal corner is undefined.
 Let \[\phi(w)=\{i:(i,j) \enspace \text{is a pivot of} \enspace \mc(w) \enspace \text{in} \enspace w\}.\]
If $I=\{i_1,\ldots, i_k\}$ with $1\leq i_1<i_2<\cdots<i_k<a$, write $c^{(a)}_{I}$ for the cycle $(a\, i_k\, i_{k-1}\, \ldots\, i_1)$.
Fix $I\subseteq \phi(w)$ and  write $b'=w^{-1}(b)$. We define \[w_I=w t_{a \,b'} c^{(a)}_{I}.\] 
Notice $\ell(w_I)=\ell(w)+|I|-1$ (see Lemma~\ref{lemma:transitiondeflation}).

\begin{theorem}
\label{thm:transition}
Keeping the above notation,
\[\Groth^{(\beta)}_w(\mathbf x;\mathbf y)=(x_a\oplus y_b)\Groth^{(\beta)}_{w_\emptyset}(\mathbf x;\mathbf y)+(1+\beta( x_a\oplus y_b))\sum_{I\subseteq \phi(w):I\neq\emptyset}\beta^{|I|-1}\Groth^{(\beta)}_{w_I}(\mathbf x;\mathbf y).\]
\end{theorem}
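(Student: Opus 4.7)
The plan is to prove Theorem~\ref{thm:transition} by induction, using the defining divided difference recursion (\ref{eq:betagrothdef}) for $\Groth^{(\beta)}_w$ together with the Leibniz rule (\ref{eqn:leibniz}). A preliminary step (Lemma~\ref{lemma:transitiondeflation}, already cited above the statement) establishes that $\ell(w_I) = \ell(w) + |I| - 1$. This amounts to counting how the cycle $c_I^{(a)}$, composed with the transposition $t_{a,b'}$, creates and cancels inversions, and it follows from the defining conditions on pivots: each pivot row $i \in \phi(w)$ contributes exactly one new inversion when its value is ``rotated'' into position, while the deflation by $t_{a,b'}$ removes exactly one.

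For a base case, I would handle a permutation in which the maximal corner lies in an extreme position, for example when $w_\emptyset = \id$ or is dominant, or when $b'$ is as large as possible. In such a base case the pivot set $\phi(w)$ is either empty or a singleton, the sum over $I$ collapses, and the identity can be checked directly using the explicit product formula for $\Groth^{(\beta)}_{w_0}$ and iterated divided differences.

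For the inductive step, choose an auxiliary permutation $u$ with $\ell(u) = \ell(w) - 1$ and $\pi_i(\Groth^{(\beta)}_u) = \Groth^{(\beta)}_w$ for some simple reflection $s_i$, preferably with $i \neq a$ so that $\pi_i$ acts trivially on the factor $(x_a \oplus y_b)$. Apply $\pi_i$ to the inductively known transition formula for $u$: the Leibniz rule distributes the operator across each product $(x_a \oplus y_b)\Groth^{(\beta)}_{u_\emptyset}$ and $(1 + \beta(x_a \oplus y_b))\Groth^{(\beta)}_{u_I}$. Then match the resulting sum, term-by-term, against the claimed transition formula for $w$. Using the fact that $(x_a \oplus y_b)$ and $1 + \beta(x_a \oplus y_b)$ are symmetric in $x_i, x_{i+1}$ (when $i \neq a$) collapses many of the Leibniz correction terms, and the remaining terms assemble into the $\Groth^{(\beta)}_{w_J}$ appearing on the right.

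The main obstacle is the combinatorial bookkeeping. Multiplying $u$ by $s_i$ changes the pivot set in a way that depends subtly on the position of $i$ relative to $a$, $b'$, and the existing pivots of $u$; the cycles $c_I^{(a)}$ and the transposition $t_{a,b'}$ conjugate nontrivially with $s_i$; and the descent position $a$ and maximal corner $(a,b)$ can shift. Several subcases — depending on whether $i$ equals $a$, lies within the range spanned by pivots, or sits strictly outside — must be analysed individually to show that the output of $\pi_i$ applied to the transition for $u$ reassembles exactly into the transition for $w$, with the correct permutations $w_J$, the correct cycle multiplicities, and the correct powers of $\beta$. This delicate case analysis is presumably what Lascoux left implicit and what Appendix~\ref{appendix:transitionproof} is devoted to filling in.
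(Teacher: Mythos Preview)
Your overall strategy---apply a divided difference operator to an inductively known transition identity and use the Leibniz rule to distribute it---is the right one, and is what the paper does. But as written the induction runs in the wrong direction. You take $u$ with $\ell(u)=\ell(w)-1$ and $\pi_i(\Groth^{(\beta)}_u)=\Groth^{(\beta)}_w$. Divided differences \emph{lower} length: by (\ref{eq:betagrothdef}), $\pi_i(\Groth^{(\beta)}_u)$ is either $\Groth^{(\beta)}_{us_i}$ with $\ell(us_i)=\ell(u)-1$, or $-\beta\Groth^{(\beta)}_u$. Neither produces a polynomial indexed by a longer permutation, so there is no way to reach $\Groth^{(\beta)}_w$ from a shorter $u$ this way. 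The paper instead inducts \emph{downward} in length: the base case is $w_0$, and for general $w$ one sets $v=ws_i$ with $\ell(v)=\ell(w)+1$, assumes transition holds for $v$, and applies $\pi_i$ to both sides. Your proposed base cases (``$w_\emptyset=\id$'', ``$\phi(w)$ a singleton'') are at the wrong end of the order.

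There is a second structural gap. The paper does not choose $i$ arbitrarily with $i\neq a$: it takes $i$ to be the position with $w_i=1$ (subject to $i<a$). This specific choice is what makes Lemmas~\ref{lemma:technicaldivdifA}--\ref{lemma:technicaldivdif2} go through cleanly, since the value $1$ forces a descent at $i$ in every $v_I$ that matters, and controls exactly how $\phi(v)$ relates to $\phi(w)$. When no such $i$ exists---that is, when $w_j>1$ for all $j\le a$---the divided-difference step is unavailable, and the paper uses a separate reduction (Lemma~\ref{lemma:GrothAppendBoxes}, stripping the first column of the Rothe diagram) to pass to $\SymGp_{n-1}$. The argument is therefore a \emph{double} induction, on $n$ and on length descending, and your sketch is missing this outer layer entirely.
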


By making the appropriate specializations of Theorem~\ref{thm:transition},
one immediately recovers transition formulas for Schubert and Grothendieck polynomials.  We state the version for double Schubert polynomials below.

\begin{corollary} 
\label{cor:transtiondoubleschubert}
Given $w\in \SymGp_{n}$ with $\mc(w)=(a,b)$,
\[ \Schub_w(\mathbf x;\mathbf y)=(x_a-y_b)\Schub_{w_\emptyset}+\sum_{i\in \phi(w)}\Schub_{w_{\{i\}}}(\mathbf x;\mathbf y).\]
\end{corollary}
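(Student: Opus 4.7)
The plan is to obtain Corollary~\ref{cor:transtiondoubleschubert} as a direct specialization of Theorem~\ref{thm:transition}, using the recipe $\Schub_w(\mathbf x;\mathbf y) = \Groth_w^{(0)}(\mathbf x; -\mathbf y)$ recorded in Section~\ref{subsec:betagroth}.

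First I would set $\beta = 0$ on both sides of Theorem~\ref{thm:transition}. Under this specialization we have $x_a \oplus y_b = x_a + y_b + \beta x_a y_b \big|_{\beta=0} = x_a + y_b$, and the prefactor $1 + \beta(x_a \oplus y_b)$ collapses to $1$. Moreover, in the sum
\[
\sum_{\emptyset \neq I \subseteq \phi(w)} \beta^{|I|-1}\, \Groth^{(\beta)}_{w_I}(\mathbf x;\mathbf y),
\]
the factor $\beta^{|I|-1}$ vanishes at $\beta=0$ for every $I$ with $|I| \geq 2$, so only the singletons $I = \{i\}$ for $i \in \phi(w)$ survive, contributing $\Groth_{w_{\{i\}}}^{(0)}(\mathbf x;\mathbf y)$ each. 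This gives
\[
\Groth_w^{(0)}(\mathbf x;\mathbf y) = (x_a + y_b)\,\Groth_{w_\emptyset}^{(0)}(\mathbf x;\mathbf y) + \sum_{i \in \phi(w)} \Groth_{w_{\{i\}}}^{(0)}(\mathbf x;\mathbf y).
\]

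Next I would perform the substitution $y_j \mapsto -y_j$ throughout. On the left this produces $\Schub_w(\mathbf x;\mathbf y)$ by definition, on the right it turns $x_a + y_b$ into $x_a - y_b$ and each $\Groth_{w_I}^{(0)}(\mathbf x; \mathbf y)$ into $\Schub_{w_I}(\mathbf x; \mathbf y)$, yielding exactly the claimed identity. There is essentially no obstacle here: the combinatorial data $(a, b) = \mc(w)$, $\phi(w)$, and the permutations $w_\emptyset, w_{\{i\}}$ depend only on $w$ (not on $\beta$ or $\mathbf y$), so they pass through the specialization unchanged, and the length statement $\ell(w_{\{i\}}) = \ell(w)$ from Lemma~\ref{lemma:transitiondeflation} (with $|I| = 1$) confirms that each $\Schub_{w_{\{i\}}}$ has the expected degree balancing with $(x_a - y_b)\Schub_{w_\emptyset}$.
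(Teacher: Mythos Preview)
The proposal is correct and follows exactly the approach the paper indicates: the corollary is obtained by specializing Theorem~\ref{thm:transition} at $\beta=0$ (which kills all terms with $|I|\ge 2$ and reduces $x_a\oplus y_b$ to $x_a+y_b$) and then substituting $\mathbf y\mapsto -\mathbf y$ to pass from $\Groth_w^{(0)}$ to $\Schub_w$. Your write-up simply makes explicit the two-line computation the paper leaves to the reader.
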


The statement of Theorem~\ref{thm:transition} is well known to experts (especially the case $\beta=-1$).   For completeness, we include a proof in  Appendix~\ref{appendix:transitionproof}. The proof for single Grothendieck polynomials can be found in \cite{Lascoux:transition} and \cite[Corollary~3.10]{Lenart:Monk}. 
Corollary~\ref{cor:transtiondoubleschubert} can be obtained from the double version of Monk's rule in \cite[Proposition~4.1]{Kohnert.Veigneau}.  See \cite{Knutson.Yong} for a graphical description of transition in terms of Rothe diagrams.

\subsection{Partitions and tableaux}
\label{section:partandtab}

We will frequently refer to objects positioned in an $n\times n$ grid or the infinite grid $\mathbb P\times \mathbb P$.  When represented graphically, the cell $(1,1)$ is positioned in the northwest corner of the grid. We write $(i,j)$ for the cell which is in the $i$th row from the top and the $j$th column from the left.

Recall, a partition is a weakly decreasing tuple of nonnegative integers $\lambda=(\lambda_1,\lambda_2,\ldots,\lambda_k)$.  We will often conflate $\lambda$ with the subset of cells in the $\mathbb P\times \mathbb P$ grid \[\yd{\lambda}=\{(i,j): i\in [k] \enspace\text{and}\enspace j\in[\lambda_i]\}.\]  Write $|\lambda|=|\yd{\lambda}|$.

We now discuss tableaux.  We refer the reader to \cite{Fulton.YT} for a reference.
A \mydef{tableau of shape $\lambda$} is a filling of $\yd{\lambda}$ with elements of $\mathbb P$, i.e.,\ a function $T:\yd{\lambda}\rightarrow \mathbb P$. We say $T$ is \mydef{semistandard} if its entries weakly increase along rows (reading left to right) and strictly increase along columns (reading top to bottom).  Explicitly, 
\begin{equation}
\label{eq:ssdef}
T(i,j)\leq T(i,j+1) \enspace \text{and} \enspace T(i,j)<T(i+1,j)
\end{equation}
 whenever both sides of these inequalities are defined. 
Given a tuple $\mathbf f=(f_1,\,\ldots,\,f_k)$, we say $T$ is \mydef{flagged by $\mathbf f$} if $T(i,j)\leq f_i$ for all $(i,j)\in\yd{\lambda}$.  Write $\FSSYT(\lambda,\mathbf f)$ for the set of semistandard tableaux of shape $\lambda$ which are flagged by $\mathbf f$.

A tableau is \mydef{decreasing} if its entries strictly decrease along rows and columns.  Write $\DT(n)$ for the set of decreasing tableaux which have entries in $[n]$.
Let $\DT(\lambda,n)$ be the set decreasing tableaux of shape $\lambda$.
  Given $T\in \DT(n)$, we form its {\bf column reading word} by reading its entries along columns from bottom to top, working from left to right (see Figure~\ref{figure:hecketab}).  Say $T\in\DT(n)$ is a {\bf reduced word tableau} if its column reading word is reduced.  Write $\RWT(n)$ for set the reduced word tableaux in $\DT(n)$.  Likewise, we write $\RWT(\lambda,n)$ for the reduced word tableaux of shape $\lambda$ with entries in $[n]$.

 A \mydef{set-valued} tableau $\mathbf T$ is a filling of $\yd{\lambda}$ with nonempty, finite subsets of $\mathbb P$.  Write \[|\mathbf T|=\sum_{(i,j)\in \yd{\lambda}}|\mathbf T(i,j)|\] for the total number of entries in $\mathbf T$.

  We may produce a tableau $T$ from a set-valued tableau $\mathbf T$ by selecting a single entry from each cell.
In particular, define $\flatten(\mathbf T)$ by $\flatten(\mathbf T)(i,j)=\min(\mathbf T(i,j))$ for all $(i,j)\in \yd{\lambda}$.  Equivalently, this is the map from set-valued tableaux to ordinary tableaux which forgets all but the smallest entry in each cell.

  We apply the adjective semistandard  to $\mathbf T$ if each (ordinary) tableau which can obtained by restricting entries in $\mathbf T$ is itself semistandard.  
In the same way, the definition of a flagged tableau extends  to the setting of set-valued tableaux.
Let $\FSet(\lambda,\mathbf f)$ be the set-valued semistandard tableaux of shape $\lambda$ which are flagged by $\mathbf f$.

\section{Bumpless pipe dreams through ice}
\label{section:ice}

 As previously discussed, bumpless pipe dreams are in transparent bijection with osculating lattice paths.  These in turn, are in bijection with numerous objects from statistical mechanics.  See \cite{Propp} for a survey on alternating sign matrices and related objects.
In this section, we review the bijections between bumpless pipe dreams, alternating sign matrices,  square ice configurations, and corner sum matrices.  We also collect related facts.

\subsection{Alternating sign matrices  and Rothe diagrams}

An \mydef{alternating sign matrix} (ASM) is a square matrix with entries in $\{-1,0,1\}$ so that within each row and  column, the nonzero entries alternate in sign and sum to 1.  Write $\asm(n)$ for the set of ASMs of size $n$.

An ASM with no negative entries is called a {\bf permutation matrix}.  The subset of permutation matrices may be identified with $\SymGp_n$ via the map
$\SymGp_n\hookrightarrow {\sf ASM}(n)$ which takes $w\in \SymGp_n$ to the matrix which has 1's in positions $(i,w_i)$ for each $i\in [n]$ and zeros elsewhere.

We often represent $A\in \asm(n)$  in an $n\times n$ grid by plotting
\begin{enumerate}
\item a filled dot $\bullet$ in row $i$ and column $j$ if $A_{i\,j}=1$ and
\item an open dot $\circ$ in row $i$ and column $j$ if $A_{i\,j}=-1$.
\end{enumerate}  
Call this the \mydef{graph} of $A$.

 We now describe the \emph{Rothe diagram} of an ASM.  Start from the graph of  $A\in \asm(n)$.  For each filled dot, send out a line segment to its right and below it.  These segments terminate when they reach an open dot or the edge of the grid. Call these  {\bf defining line segments} for $A$.
The set of the coordinates of cells which do not contain any defining line segments  is the {\bf Rothe diagram}\footnote{In \cite{weigandt2017prism}, the author studied Rothe diagrams of ASMs using a different convention.  There, \emph{negative inversions} were included in the diagram.  This alternative definition allows for a natural generalization of Fulton's \emph{essential set} to ASMs.} of $A$, denoted $D(A)$.  See Figure~\ref{figure:rothe} for an example.  
Let \[N(A)=\{(i,j):A_{i\,j}=-1\}.\] 
The set $D(A)$ corresponds to the \emph{positive inversions} of $A$ as defined in \cite{robbins1986determinants} while $N(A)$ is the set of \emph{negative inversions}.
\begin{figure}	
	\begin{center}
		\raisebox{1\height}{$\left(\begin{array}{cccc}
			0&0&1&0\\
			0&1&0&0\\
			1&0&-1&1\\
			0&0&1&0
			\end{array}\right)$} \hspace {4em}
		\begin{tikzpicture}[x=1.5em,y=1.5em]
		\draw[step=1,gray, thin] (0,1) grid (4,5);
		\draw[color=black, thick](0,1)rectangle(4,5);
		\draw[thick] (.5,1)--(.5,2.5)--(2.5,2.5)--(2.5,4.5)--(4,4.5);
		\draw[thick] (1.5,1)--(1.5,3.5)--(4,3.5);
		\draw[thick] (2.5,1)--(2.5,1.5)--(4,1.5);
		\draw[thick] (3.5,1)--(3.5,2.5)--(4,2.5);
		\filldraw[color=black, fill=gray!40, thick](0,4)rectangle(1,5);
		\filldraw[color=black, fill=gray!40, thick](1,4)rectangle(2,5);
		\filldraw[color=black, fill=gray!40, thick](0,3)rectangle(1,4);
		\filldraw [black](1.5,3.5)circle(.1);
		\filldraw [black](3.5,2.5)circle(.1);
		\filldraw [color=black,fill=white,thick](2.5,2.5)circle(.1);
		\filldraw [black](2.5,4.5)circle(.1);
		\filldraw [black](.5,2.5)circle(.1);
		\filldraw [black](2.5,1.5)circle(.1);
		\end{tikzpicture}
	\end{center}
\caption{Pictured to the left is $A\in \asm(n)$.  On the right is the graph of $A$ pictured with its defining line segments.  The cells in $D(A)=\{(1,1),(1,2),(2,1)\}$ have been shaded gray for emphasis.}
\label{figure:rothe}
\end{figure}
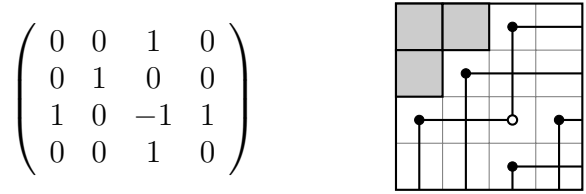

We define a map $\asmtobpd:{\sf ASM}(n)\rightarrow {\sf BPD}(n)$ by ``smoothing'' out bends in the defining segments, i.e.,\ by making the replacements pictured below.
\[\begin{tikzpicture}[x=1.5em,y=1.5em]
\draw[color=black, thick](0,1)rectangle(1,2);
\draw[thick] (.5,1)--(.5,1.5)--(1,1.5);
\filldraw [black](.5,1.5)circle(.1);
\end{tikzpicture} 
\quad
\raisebox{.5em}{$\mapsto$}
\quad	
\begin{tikzpicture}[x=1.5em,y=1.5em]
\draw[color=black, thick](0,1)rectangle(1,2);
\draw[thick,rounded corners, color=blue] (.5,1)--(.5,1.5)--(1,1.5);
\end{tikzpicture} 
\hspace{5em} 
\begin{tikzpicture}[x=1.5em,y=1.5em]
\draw[color=black, thick](0,1)rectangle(1,2);
\draw[thick] (.5,2)--(.5,1.5)--(0,1.5);
\filldraw [color=black,fill=white,thick](.5,1.5)circle(.1);
\end{tikzpicture}
\quad
\raisebox{.5em}{$\mapsto$}
\quad
\begin{tikzpicture}[x=1.5em,y=1.5em]
\draw[color=black, thick](0,1)rectangle(1,2);
\draw[thick,rounded corners, color=blue] (.5,2)--(.5,1.5)--(0,1.5);
\end{tikzpicture}\]

\begin{lemma}
	\label{lemma:bijection}
	The map $\asmtobpd:{\sf ASM}(n)\rightarrow {\sf BPD}(n)$ is a bijection.
\end{lemma}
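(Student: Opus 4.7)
The plan is to exhibit an explicit inverse $\Psi\colon\bpd{n}\to\asm(n)$ and verify that $\Psi\circ\asmtobpd=\id$ and $\asmtobpd\circ\Psi=\id$. Define $\Psi(\mathcal P)$ to be the $n\times n$ matrix whose entry at $(i,j)$ is $+1$ if the tile of $\mathcal P$ at $(i,j)$ is the bottom-to-right elbow (the image under $\asmtobpd$ of a filled dot), $-1$ if it is the top-to-left elbow (the image of an open dot), and $0$ for the four remaining tiles. The heart of the proof is showing that $\Psi(\mathcal P)$ really is an ASM.

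To verify the row alternating-sign condition I would use a simple telescoping argument. For row $i$, let $r_{ij}\in\{0,1\}$ record whether the tile at $(i,j)$ has a horizontal pipe segment on its right edge, and set $r_{i,0}=0$. The BPD axioms force $r_{i,n}=1$, since the $n$ pipes must terminate at $n$ distinct rows on the right edge. A case check against the six tiles in (\ref{eqn:sixtiles}) shows that $r_{ij}-r_{i,j-1}$ equals $+1$ at bottom-to-right elbows, $-1$ at top-to-left elbows, and $0$ at each of the other four tiles, so this increment is $\Psi(\mathcal P)_{ij}$. Because $(r_{i,0},\ldots,r_{i,n})$ is a $\{0,1\}$-valued sequence running from $0$ to $1$, its nonzero increments alternate in sign, begin and end with $+1$, and sum to $1$. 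A symmetric argument using the bottom-edge indicator $b_{ij}$ handles columns.

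The compositions are then routine. For $\Psi\circ\asmtobpd=\id$, the smoothing rules place a bottom-to-right elbow at each filled dot and a top-to-left elbow at each open dot, which $\Psi$ reads back directly. For $\asmtobpd\circ\Psi=\id$, the telescoping analysis shows that the tiles with $r_{ij}=1$ in row $i$ of $\mathcal P$ form a disjoint union of maximal runs, each starting at a bottom-to-right elbow and continuing up to the cell just before the next top-to-left elbow or the right edge. This matches exactly the smoothing rule by which $\asmtobpd$ turns the defining segments of an ASM into horizontal pipe segments, and the analogous statement for columns follows in the same way.

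The step I expect to be the main obstacle sits one level earlier: confirming that $\asmtobpd$ is even well-defined, i.e.\ that the defining line segments of any $A\in\asm(n)$ assemble into only the six tiles in (\ref{eqn:sixtiles}) and that every column of the image has a vertical segment reaching the bottom edge, so that the output really has $n$ pipes all starting at the bottom. Both points rely on the alternating sign property: between two consecutive filled dots in a column the sign alternation forces an intervening open dot, which eliminates potentially problematic tile configurations, and the bottom-most nonzero entry in each column is necessarily $+1$.
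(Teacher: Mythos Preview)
Your proof is correct and takes a genuinely different route from the paper. The paper's proof is essentially a citation: it observes that bumpless pipe dreams are in transparent bijection with osculating lattice paths (replace each crossing tile by a bumping tile), and then invokes the known bijection between osculating lattice paths and ASMs from \cite{Behrend}. Your argument, by contrast, is self-contained: you construct the inverse $\Psi$ directly and verify the alternating sign condition via the telescoping sequence $(r_{i,0},\ldots,r_{i,n})$ of right-edge indicators, whose $\{0,1\}$-valued walk from $0$ to $1$ forces the nonzero increments to alternate and sum to $1$. This is precisely the content of the osculating-path/ASM bijection unwound in the BPD language, so you are not doing anything exotic, but you are supplying what the paper outsources. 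The paper's approach is shorter and situates the result in the existing literature; yours has the advantage of making explicit \emph{why} the six-tile constraint is equivalent to the alternating sign condition, and it requires no external reference. Your closing remark about the well-definedness of $\asmtobpd$ is apt: the paper takes this for granted, and the alternating sign property is indeed what guarantees that the defining segments of $A$ assemble only into the six allowed tiles and that each column has a segment reaching the bottom edge.
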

\begin{proof}
Bumpless pipe dreams are in transparent bijection with osculating lattice paths, which are in turn in bijection with ASMs (see \cite[Section~4]{Behrend}).  To change a BPD to an osculating path, simply replace each crossing tile with a bumping tile.  Using this correspondence, Lemma~\ref{lemma:bijection} follows.  
\end{proof}

In light of this bijection, we say $A\in {\sf ASM}(n)$ is {\bf reduced} if $\Phi(A)$ is a  reduced BPD.  Furthermore, notice $U(\asmtobpd(A))=N(A)$ and $D(\asmtobpd(A))=D(A)$.  Thus, our weights on BDPs are compatible with the weights on ASMs found in \cite{Lascoux:ice}, up to changing conventions.

\subsection{Corner sums}

The {\bf corner sum function} of $A\in {\sf ASM}(n)$ is 
\begin{equation}
r_A(i,j):=\sum_{a=1}^i\sum_{b=1}^j A_{a\,b}.
\end{equation}
By convention, we define $r_A(i,j)=0$ whenever $i=0$ or $j=0$.
Write \[{\sf R}(n):=\{r_A:A\in{\sf ASM}(n)\}.\]
\begin{lemma}[{\cite[Lemma~1]{robbins1986determinants}}]
	\label{lemma:cornersumchar}
	Corner sums of $n\times n$ ASMs are  characterized by the following properties.
	\begin{enumerate}
		\item $r_A(i,n)=r_A(n,i)=i$ for $i\in[n]$ and
		\item $r_A(i,j)-r_A(i-1,j)$ and $r_A(i,j)-r_A(i,j-1)$ are $0$ or $1$ for all $i,j\in [n]$.
	\end{enumerate}
\end{lemma}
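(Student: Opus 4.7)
The plan is to prove both directions, with the reverse direction given by an explicit inversion formula. Namely, I would show that the map $A \mapsto r_A$ on $\asm(n)$ is inverted by the inclusion-exclusion formula
\[
A_{i,j} = r(i,j) - r(i-1,j) - r(i,j-1) + r(i-1,j-1),
\]
and that both directions preserve the stated properties.

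For the forward direction, property (1) is immediate: since each row of $A$ sums to $1$, we have $r_A(i,n) = \sum_{a=1}^i \sum_{b=1}^n A_{a,b} = i$, and the column sum condition gives $r_A(n,i) = i$ symmetrically. For property (2), I would observe that $r_A(i,j) - r_A(i-1,j) = \sum_{b=1}^j A_{i,b}$ is a partial row sum. Because the nonzero entries of row $i$ alternate in sign starting with $+1$ and sum to $1$, this partial sum oscillates between $0$ and $1$ as $j$ varies, so it always lies in $\{0,1\}$. The argument for $r_A(i,j) - r_A(i,j-1)$ is identical.

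For the converse, given $r$ satisfying (1) and (2), I would define $A$ by the boxed formula and verify it is an ASM with $r_A = r$. Setting $p(i,j) := r(i,j) - r(i-1,j)$, which lies in $\{0,1\}$ by (2), one rewrites $A_{i,j} = p(i,j) - p(i,j-1)$, so $A_{i,j} \in \{-1,0,1\}$. A telescoping sum gives $\sum_{b=1}^j A_{i,b} = p(i,j) - p(i,0) = p(i,j)$, where $p(i,0) = 0$ because $r$ vanishes on the boundary $j=0$. The full row sum is $p(i,n) = r(i,n) - r(i-1,n) = i - (i-1) = 1$ by (1). Since every partial row sum lies in $\{0,1\}$ and the total is $1$, the nonzero entries of row $i$ must alternate in sign beginning with $+1$. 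Columns are handled symmetrically using $q(i,j) := r(i,j) - r(i,j-1)$, and a second telescoping confirms $r_A = r$.

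I do not anticipate a real obstacle here: the entire argument reduces to telescoping, and the one conceptual observation is that the constraint \emph{``all partial row (resp.\ column) sums lie in $\{0,1\}$''} is exactly equivalent to the alternating-sign condition when combined with the prescribed total sum. The only mild subtlety is remembering that property (2) must be supplemented by the boundary convention $r(0,j) = r(i,0) = 0$, but this is built into the hypotheses via (1) at the extremes.
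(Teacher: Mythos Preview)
Your argument is correct and is the standard proof of this result. Note that the paper does not actually prove this lemma: it is stated with a citation to Robbins--Rumsey and no proof is given. The paper does, however, record immediately afterward the same inclusion--exclusion inverse formula you use (equation~(\ref{eqn:cornersuminverse})), so your approach is exactly the one the paper is invoking by reference.

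One small quibble: your closing sentence says the boundary convention $r(0,j)=r(i,0)=0$ is ``built into the hypotheses via (1) at the extremes,'' but property~(1) only constrains the values along $i=n$ and $j=n$. The vanishing at $i=0$ and $j=0$ is a separate convention, stated explicitly in the paper just before the lemma, and is needed for property~(2) to make sense at $i=1$ or $j=1$. This does not affect the correctness of your argument, only the justification in that last remark.
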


The map $A\mapsto r_A$ places ${\sf ASM}(n)$ and ${\sf R}(n)$  in bijection.  By \cite{robbins1986determinants}, the explicit inverse map is obtained by setting 
\begin{equation}
\label{eqn:cornersuminverse}
A_{i\,j}=r_A(i,j)+r_A(i-1,j-1)-r_A(i-1,j)-r_A(i,j-1).
\end{equation}

Corner sum matrices have a natural poset structure defined by entry wise comparison. Say $r_A\leq r_B$ if $r_A(i,j)\leq r_B(i,j)$ for all $1\leq i,j\leq n$.  
This induces a poset structure on ASMs by declaring \[A\leq B \enspace \text{if and only if} \enspace r_A\geq r_B.\]

 The restriction of the poset $\asm(n)$ to $\SymGp_n$ produces the (strong) Bruhat order on the symmetric group.  Indeed, ${\sf ASM}(n)$ is the smallest lattice with this property \cite{lascoux1996treillis}.

We  will also need notation for partial row and column sums of $A$.  Write 
\begin{equation}
{\rm row}_{A,i}(j)=\sum_{k=1}^j A_{i\,k} \enspace \text{ and } \enspace {\rm col}_{A,j}(i)=\sum_{k=1}^i A_{k\,j}.
\end{equation}
Notice that
\begin{equation}
\label{eqn:cornersumbyrowandcol}
r_A(i,j)=\sum_{a=1}^i {\rm row}_{A,a}(j)=\sum_{b=1}^j{\rm col}_{A,b}(i).
\end{equation}

We call $\{(i,j)\in D(A):r_A(i,j)=0\}$ the \mydef{dominant} part of $D(A)$ (and $D(\asmtobpd(A))$).  Notice by Lemma~\ref{lemma:cornersumchar}, the dominant part of the diagram is always top-left justified and forms a partition shape.   

\subsection{Square ice}
We now recall the bijection between ASMs and square ice configurations.  An {\bf ice model} is an orientation of the edges of the square lattice (or a subset thereof) so that at each 4-valent vertex, two edges point inwards and two point outwards.  At each 4-valent vertex, there are  six  possible configurations,  pictured below. 
\begin{equation}
\label{eqn:sixice}
\begin{tikzpicture}[x=1.5em,y=1.5em]
\draw[thick,decoration={markings, mark=at position 0.6 with {\arrow[scale=1]{>}}}, postaction={decorate}] (-1,0)--(0,0);
\draw[thick,decoration={markings, mark=at position 0.6 with {\arrow[scale=1]{>}}}, postaction={decorate}]  (1,0)--(0,0);
\draw[thick,decoration={markings, mark=at position 0.6 with {\arrow[scale=1]{<}}}, postaction={decorate}]  (0,-1)--(0,0);
\draw[thick,decoration={markings, mark=at position 0.6 with {\arrow[scale=1]{<}}}, postaction={decorate}]  (0,1)--(0,0);
\filldraw [black](-1,0)circle(.1);
\filldraw [black](1,0)circle(.1);
\filldraw [black](0,-1)circle(.1);
\filldraw [black](0,1)circle(.1);
\filldraw [black](0,0)circle(.1);
\end{tikzpicture}
\hspace{2em}
\begin{tikzpicture}[x=1.5em,y=1.5em]
\draw[thick,decoration={markings, mark=at position 0.6 with {\arrow[scale=1]{<}}}, postaction={decorate}] (-1,0)--(0,0);
\draw[thick,decoration={markings, mark=at position 0.6 with {\arrow[scale=1]{<}}}, postaction={decorate}]  (1,0)--(0,0);
\draw[thick,decoration={markings, mark=at position 0.6 with {\arrow[scale=1]{>}}}, postaction={decorate}]  (0,-1)--(0,0);
\draw[thick,decoration={markings, mark=at position 0.6 with {\arrow[scale=1]{>}}}, postaction={decorate}]  (0,1)--(0,0);
\filldraw [black](-1,0)circle(.1);
\filldraw [black](1,0)circle(.1);
\filldraw [black](0,-1)circle(.1);
\filldraw [black](0,1)circle(.1);
\filldraw [black](0,0)circle(.1);
\end{tikzpicture}
\hspace{2em}
\begin{tikzpicture}[x=1.5em,y=1.5em]
\draw[thick,decoration={markings, mark=at position 0.6 with {\arrow[scale=1]{<}}}, postaction={decorate}] (-1,0)--(0,0);
\draw[thick,decoration={markings, mark=at position 0.6 with {\arrow[scale=1]{>}}}, postaction={decorate}]  (1,0)--(0,0);
\draw[thick,decoration={markings, mark=at position 0.6 with {\arrow[scale=1]{<}}}, postaction={decorate}]  (0,-1)--(0,0);
\draw[thick,decoration={markings, mark=at position 0.6 with {\arrow[scale=1]{>}}}, postaction={decorate}]  (0,1)--(0,0);
\filldraw [black](-1,0)circle(.1);
\filldraw [black](1,0)circle(.1);
\filldraw [black](0,-1)circle(.1);
\filldraw [black](0,1)circle(.1);
\filldraw [black](0,0)circle(.1);
\end{tikzpicture}
\hspace{2em}
\begin{tikzpicture}[x=1.5em,y=1.5em]
\draw[thick,decoration={markings, mark=at position 0.6 with {\arrow[scale=1]{>}}}, postaction={decorate}] (-1,0)--(0,0);
\draw[thick,decoration={markings, mark=at position 0.6 with {\arrow[scale=1]{<}}}, postaction={decorate}]  (1,0)--(0,0);
\draw[thick,decoration={markings, mark=at position 0.6 with {\arrow[scale=1]{>}}}, postaction={decorate}]  (0,-1)--(0,0);
\draw[thick,decoration={markings, mark=at position 0.6 with {\arrow[scale=1]{<}}}, postaction={decorate}]  (0,1)--(0,0);
\filldraw [black](-1,0)circle(.1);
\filldraw [black](1,0)circle(.1);
\filldraw [black](0,-1)circle(.1);
\filldraw [black](0,1)circle(.1);
\filldraw [black](0,0)circle(.1);
\end{tikzpicture}
\hspace{2em}
\begin{tikzpicture}[x=1.5em,y=1.5em]
\draw[thick,decoration={markings, mark=at position 0.6 with {\arrow[scale=1]{<}}}, postaction={decorate}] (-1,0)--(0,0);
\draw[thick,decoration={markings, mark=at position 0.6 with {\arrow[scale=1]{>}}}, postaction={decorate}]  (1,0)--(0,0);
\draw[thick,decoration={markings, mark=at position 0.6 with {\arrow[scale=1]{>}}}, postaction={decorate}]  (0,-1)--(0,0);
\draw[thick,decoration={markings, mark=at position 0.6 with {\arrow[scale=1]{<}}}, postaction={decorate}]  (0,1)--(0,0);
\filldraw [black](-1,0)circle(.1);
\filldraw [black](1,0)circle(.1);
\filldraw [black](0,-1)circle(.1);
\filldraw [black](0,1)circle(.1);
\filldraw [black](0,0)circle(.1);
\end{tikzpicture}
\hspace{2em}
\begin{tikzpicture}[x=1.5em,y=1.5em]
\draw[thick,decoration={markings, mark=at position 0.6 with {\arrow[scale=1]{>}}}, postaction={decorate}] (-1,0)--(0,0);
\draw[thick,decoration={markings, mark=at position 0.6 with {\arrow[scale=1]{<}}}, postaction={decorate}]  (1,0)--(0,0);
\draw[thick,decoration={markings, mark=at position 0.6 with {\arrow[scale=1]{<}}}, postaction={decorate}]  (0,-1)--(0,0);
\draw[thick,decoration={markings, mark=at position 0.6 with {\arrow[scale=1]{>}}}, postaction={decorate}]  (0,1)--(0,0);
\filldraw [black](-1,0)circle(.1);
\filldraw [black](1,0)circle(.1);
\filldraw [black](0,-1)circle(.1);
\filldraw [black](0,1)circle(.1);
\filldraw [black](0,0)circle(.1);
\end{tikzpicture}
\end{equation}

As our underlying graph, take an $(n+2)\times(n+2)$ square subset of the  lattice.  From this, remove the outer-most edges and the four corner vertices.  Call this a {\bf framed square grid}.  The 1-valent vertices are {\bf boundary vertices} and the edges which are adjacent to them are {\bf boundary edges}.  We refer to the other edges (and vertices) as being {\bf interior edges}  (and \mydef{vertices}).  
A {\bf square ice configuration} (of size $n$)  is an orientation of this graph so that
\begin{enumerate}
	\item horizontal boundary edges point inwards,
	\item vertical boundary edges point outwards, and
	\item at every interior vertex, two edges point in and two edges point out.
\end{enumerate}
Write ${\sf Ice}(n)$ for the set of square ice configurations of size $n$.  
To map from a square ice configuration to an ASM,  replace each interior vertex with an element from $\{-1,0,1\}$ as indicated below.  
\begin{equation}
\label{eqn:IceASM}
\begin{tabular}{|c|c|c|c|c|c|} \hline
\rule[.75em]{0pt}{3em}

\begin{tikzpicture}[x=1.5em,y=1.5em]
\draw[thick,decoration={markings, mark=at position 0.6 with {\arrow[scale=1]{>}}}, postaction={decorate}] (-1,0)--(0,0);
\draw[thick,decoration={markings, mark=at position 0.6 with {\arrow[scale=1]{>}}}, postaction={decorate}]  (1,0)--(0,0);
\draw[thick,decoration={markings, mark=at position 0.6 with {\arrow[scale=1]{<}}}, postaction={decorate}]  (0,-1)--(0,0);
\draw[thick,decoration={markings, mark=at position 0.6 with {\arrow[scale=1]{<}}}, postaction={decorate}]  (0,1)--(0,0);
\filldraw [black](-1,0)circle(.1);
\filldraw [black](1,0)circle(.1);
\filldraw [black](0,-1)circle(.1);
\filldraw [black](0,1)circle(.1);
\filldraw [black](0,0)circle(.1);
\end{tikzpicture}
&
\begin{tikzpicture}[x=1.5em,y=1.5em]
\draw[thick,decoration={markings, mark=at position 0.6 with {\arrow[scale=1]{<}}}, postaction={decorate}] (-1,0)--(0,0);
\draw[thick,decoration={markings, mark=at position 0.6 with {\arrow[scale=1]{<}}}, postaction={decorate}]  (1,0)--(0,0);
\draw[thick,decoration={markings, mark=at position 0.6 with {\arrow[scale=1]{>}}}, postaction={decorate}]  (0,-1)--(0,0);
\draw[thick,decoration={markings, mark=at position 0.6 with {\arrow[scale=1]{>}}}, postaction={decorate}]  (0,1)--(0,0);
\filldraw [black](-1,0)circle(.1);
\filldraw [black](1,0)circle(.1);
\filldraw [black](0,-1)circle(.1);
\filldraw [black](0,1)circle(.1);
\filldraw [black](0,0)circle(.1);
\end{tikzpicture}
&
\begin{tikzpicture}[x=1.5em,y=1.5em]
\draw[thick,decoration={markings, mark=at position 0.6 with {\arrow[scale=1]{<}}}, postaction={decorate}] (-1,0)--(0,0);
\draw[thick,decoration={markings, mark=at position 0.6 with {\arrow[scale=1]{>}}}, postaction={decorate}]  (1,0)--(0,0);
\draw[thick,decoration={markings, mark=at position 0.6 with {\arrow[scale=1]{<}}}, postaction={decorate}]  (0,-1)--(0,0);
\draw[thick,decoration={markings, mark=at position 0.6 with {\arrow[scale=1]{>}}}, postaction={decorate}]  (0,1)--(0,0);
\filldraw [black](-1,0)circle(.1);
\filldraw [black](1,0)circle(.1);
\filldraw [black](0,-1)circle(.1);
\filldraw [black](0,1)circle(.1);
\filldraw [black](0,0)circle(.1);
\end{tikzpicture}
&
\begin{tikzpicture}[x=1.5em,y=1.5em]
\draw[thick,decoration={markings, mark=at position 0.6 with {\arrow[scale=1]{>}}}, postaction={decorate}] (-1,0)--(0,0);
\draw[thick,decoration={markings, mark=at position 0.6 with {\arrow[scale=1]{<}}}, postaction={decorate}]  (1,0)--(0,0);
\draw[thick,decoration={markings, mark=at position 0.6 with {\arrow[scale=1]{>}}}, postaction={decorate}]  (0,-1)--(0,0);
\draw[thick,decoration={markings, mark=at position 0.6 with {\arrow[scale=1]{<}}}, postaction={decorate}]  (0,1)--(0,0);
\filldraw [black](-1,0)circle(.1);
\filldraw [black](1,0)circle(.1);
\filldraw [black](0,-1)circle(.1);
\filldraw [black](0,1)circle(.1);
\filldraw [black](0,0)circle(.1);
\end{tikzpicture}
&
\begin{tikzpicture}[x=1.5em,y=1.5em]
\draw[thick,decoration={markings, mark=at position 0.6 with {\arrow[scale=1]{<}}}, postaction={decorate}] (-1,0)--(0,0);
\draw[thick,decoration={markings, mark=at position 0.6 with {\arrow[scale=1]{>}}}, postaction={decorate}]  (1,0)--(0,0);
\draw[thick,decoration={markings, mark=at position 0.6 with {\arrow[scale=1]{>}}}, postaction={decorate}]  (0,-1)--(0,0);
\draw[thick,decoration={markings, mark=at position 0.6 with {\arrow[scale=1]{<}}}, postaction={decorate}]  (0,1)--(0,0);
\filldraw [black](-1,0)circle(.1);
\filldraw [black](1,0)circle(.1);
\filldraw [black](0,-1)circle(.1);
\filldraw [black](0,1)circle(.1);
\filldraw [black](0,0)circle(.1);
\end{tikzpicture}
&
\begin{tikzpicture}[x=1.5em,y=1.5em]
\draw[thick,decoration={markings, mark=at position 0.6 with {\arrow[scale=1]{>}}}, postaction={decorate}] (-1,0)--(0,0);
\draw[thick,decoration={markings, mark=at position 0.6 with {\arrow[scale=1]{<}}}, postaction={decorate}]  (1,0)--(0,0);
\draw[thick,decoration={markings, mark=at position 0.6 with {\arrow[scale=1]{<}}}, postaction={decorate}]  (0,-1)--(0,0);
\draw[thick,decoration={markings, mark=at position 0.6 with {\arrow[scale=1]{>}}}, postaction={decorate}]  (0,1)--(0,0);
\filldraw [black](-1,0)circle(.1);
\filldraw [black](1,0)circle(.1);
\filldraw [black](0,-1)circle(.1);
\filldraw [black](0,1)circle(.1);
\filldraw [black](0,0)circle(.1);
\end{tikzpicture} \\\hline
$1$&$-1$&$0$&$0$&$0$&$0$\\ \hline
\end{tabular}
\end{equation}  See Figure~\ref{figure:iceasm} for an example.
This map defines a bijection from square ice configurations to ASMs (see \cite[Section~7]{elkies1992alternatingII}).  

\begin{figure}

	\begin{center}
		\raisebox{-0.45\height}{\begin{tikzpicture}[x=1.5em,y=1.5em]
			\draw[thick](0,0)--(3,0);
			\draw[thick](0,1)--(3,1);
			\draw[thick](0,2)--(3,2);
			\draw[thick](0,3)--(3,3);
			\draw[thick](0,0)--(0,3);
			\draw[thick](1,0)--(1,3);
			\draw[thick](2,0)--(2,3);
			\draw[thick](3,0)--(3,3);
			\draw[black,thick,decoration={markings, mark=at position 0.6 with {\arrow[scale=1]{>}}}, postaction={decorate}] (-1,0)--(0,0);
			\draw[black,thick,decoration={markings, mark=at position 0.6 with {\arrow[scale=1]{>}}}, postaction={decorate}] (-1,1)--(0,1);
			\draw[black,thick,decoration={markings, mark=at position 0.6 with {\arrow[scale=1]{>}}}, postaction={decorate}] (-1,2)--(0,2);
			\draw[black,thick,decoration={markings, mark=at position 0.6 with {\arrow[scale=1]{>}}}, postaction={decorate}] (-1,3)--(0,3);
			\draw[black,thick,decoration={markings, mark=at position 0.6 with {\arrow[scale=1]{>}}}, postaction={decorate}] (4,0)--(3,0);
			\draw[black,thick,decoration={markings, mark=at position 0.6 with {\arrow[scale=1]{>}}}, postaction={decorate}] (4,1)--(3,1);
			\draw[black,thick,decoration={markings, mark=at position 0.6 with {\arrow[scale=1]{>}}}, postaction={decorate}] (4,2)--(3,2);
			\draw[black,thick,decoration={markings, mark=at position 0.6 with {\arrow[scale=1]{>}}}, postaction={decorate}] (4,3)--(3,3);
			\draw[black,thick,decoration={markings, mark=at position 0.6 with {\arrow[scale=1]{<}}}, postaction={decorate}] (0,-1)--(0,0);
			\draw[black,thick,decoration={markings, mark=at position 0.6 with {\arrow[scale=1]{<}}}, postaction={decorate}] (1,-1)--(1,0);
			\draw[black,thick,decoration={markings, mark=at position 0.6 with {\arrow[scale=1]{<}}}, postaction={decorate}] (2,-1)--(2,0);
			\draw[black,thick,decoration={markings, mark=at position 0.6 with {\arrow[scale=1]{<}}}, postaction={decorate}] (3,-1)--(3,0);
			\draw[black,thick,decoration={markings, mark=at position 0.6 with {\arrow[scale=1]{<}}}, postaction={decorate}] (0,4)--(0,3);
			\draw[black,thick,decoration={markings, mark=at position 0.6 with {\arrow[scale=1]{<}}}, postaction={decorate}] (1,4)--(1,3);
			\draw[black,thick,decoration={markings, mark=at position 0.6 with {\arrow[scale=1]{<}}}, postaction={decorate}] (2,4)--(2,3);
			\draw[black,thick,decoration={markings, mark=at position 0.6 with {\arrow[scale=1]{<}}}, postaction={decorate}] (3,4)--(3,3);
			
			\draw[thick,decoration={markings, mark=at position 0.6 with {\arrow[scale=1]{<}}}, postaction={decorate}] (0,0)--(0,1);	
			\draw[thick,decoration={markings, mark=at position 0.6 with {\arrow[scale=1]{>}}}, postaction={decorate}] (0,1)--(0,2);		
			\draw[thick,decoration={markings, mark=at position 0.6 with {\arrow[scale=1]{>}}}, postaction={decorate}] (0,2)--(0,3);
			
			\draw[thick,decoration={markings, mark=at position 0.6 with {\arrow[scale=1]{<}}}, postaction={decorate}] (1,0)--(1,1);	
			\draw[thick,decoration={markings, mark=at position 0.6 with {\arrow[scale=1]{<}}}, postaction={decorate}] (1,1)--(1,2);		
			\draw[thick,decoration={markings, mark=at position 0.6 with {\arrow[scale=1]{>}}}, postaction={decorate}] (1,2)--(1,3);	
			
			\draw[thick,decoration={markings, mark=at position 0.6 with {\arrow[scale=1]{>}}}, postaction={decorate}] (2,0)--(2,1);	
			\draw[thick,decoration={markings, mark=at position 0.6 with {\arrow[scale=1]{<}}}, postaction={decorate}] (2,1)--(2,2);		
			\draw[thick,decoration={markings, mark=at position 0.6 with {\arrow[scale=1]{<}}}, postaction={decorate}] (2,2)--(2,3);	
			
			\draw[thick,decoration={markings, mark=at position 0.6 with {\arrow[scale=1]{<}}}, postaction={decorate}] (3,0)--(3,1);	
			\draw[thick,decoration={markings, mark=at position 0.6 with {\arrow[scale=1]{>}}}, postaction={decorate}] (3,1)--(3,2);		
			\draw[thick,decoration={markings, mark=at position 0.6 with {\arrow[scale=1]{>}}}, postaction={decorate}] (3,2)--(3,3);	
			
			\draw[thick,decoration={markings, mark=at position 0.6 with {\arrow[scale=1]{>}}}, postaction={decorate}] (0,0)--(1,0);	
			\draw[thick,decoration={markings, mark=at position 0.6 with {\arrow[scale=1]{>}}}, postaction={decorate}] (1,0)--(2,0);		
			\draw[thick,decoration={markings, mark=at position 0.6 with {\arrow[scale=1]{<}}}, postaction={decorate}] (2,0)--(3,0);	
			
			\draw[thick,decoration={markings, mark=at position 0.6 with {\arrow[scale=1]{<}}}, postaction={decorate}] (0,1)--(1,1);	
			\draw[thick,decoration={markings, mark=at position 0.6 with {\arrow[scale=1]{<}}}, postaction={decorate}] (1,1)--(2,1);		
			\draw[thick,decoration={markings, mark=at position 0.6 with {\arrow[scale=1]{>}}}, postaction={decorate}] (2,1)--(3,1);
			
			\draw[thick,decoration={markings, mark=at position 0.6 with {\arrow[scale=1]{>}}}, postaction={decorate}] (0,2)--(1,2);	
			\draw[thick,decoration={markings, mark=at position 0.6 with {\arrow[scale=1]{<}}}, postaction={decorate}] (1,2)--(2,2);		
			\draw[thick,decoration={markings, mark=at position 0.6 with {\arrow[scale=1]{<}}}, postaction={decorate}] (2,2)--(3,2);
			
			\draw[thick,decoration={markings, mark=at position 0.6 with {\arrow[scale=1]{>}}}, postaction={decorate}] (0,3)--(1,3);	
			\draw[thick,decoration={markings, mark=at position 0.6 with {\arrow[scale=1]{>}}}, postaction={decorate}] (1,3)--(2,3);		
			\draw[thick,decoration={markings, mark=at position 0.6 with {\arrow[scale=1]{<}}}, postaction={decorate}] (2,3)--(3,3);

			\filldraw [black](0,0)circle(.1);
			\filldraw [black](1,0)circle(.1);
			\filldraw [black](2,0)circle(.1);
			\filldraw [black](3,0)circle(.1);
			\filldraw [black](0,1)circle(.1);
			\filldraw [black](1,1)circle(.1);
			\filldraw [black](2,1)circle(.1);
			\filldraw [black](3,1)circle(.1);
			\filldraw [black](0,2)circle(.1);
			\filldraw [black](1,2)circle(.1);
			\filldraw [black](2,2)circle(.1);
			\filldraw [black](3,2)circle(.1);
			\filldraw [black](0,3)circle(.1);
			\filldraw [black](1,3)circle(.1);
			\filldraw [black](2,3)circle(.1);
			\filldraw [black](3,3)circle(.1);	
			\filldraw [black](0,-1)circle(.1);
			\filldraw [black](1,-1)circle(.1);
			\filldraw [black](2,-1)circle(.1);
			\filldraw [black](3,-1)circle(.1);
			\filldraw [black](0,4)circle(.1);
			\filldraw [black](1,4)circle(.1);
			\filldraw [black](2,4)circle(.1);
			\filldraw [black](3,4)circle(.1);
			\filldraw [black](-1,0)circle(.1);
			\filldraw [black](-1,1)circle(.1);
			\filldraw [black](-1,2)circle(.1);
			\filldraw [black](-1,3)circle(.1);
			\filldraw [black](4,0)circle(.1);
			\filldraw [black](4,1)circle(.1);
			\filldraw [black](4,2)circle(.1);
			\filldraw [black](4,3)circle(.1);
			\end{tikzpicture}}
		\hspace{5em} 
		$\left(\begin{array}{cccc}
		0&0&1&0\\
		0&1&0&0\\
		1&0&-1&1\\
		0&0&1&0
		\end{array}\right)$
	\end{center}
	\caption{Pictured on the left is a square ice configuration $\mathcal I$. 	The ASM on the right is the image of  $\mathcal I$ under the map defined by (\ref{eqn:IceASM}).}
\label{figure:iceasm}
\end{figure}
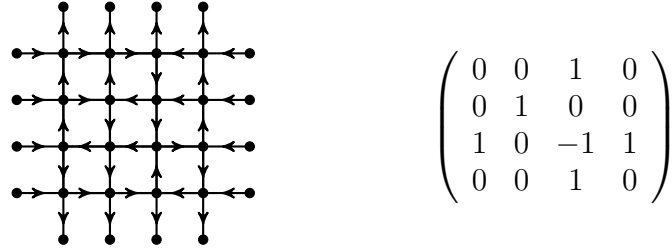

Just as there are six vertex states in  square ice configurations, bumpless pipe dreams consist of six types of tiles.  The bijection from square ice to ASMs, composed with the bijection from ASMs to BPDs, produces the following tile-by-tile correspondence.
\begin{equation}
\label{eqn:IceBump}
\begin{tabular}{|c|c|c|c|c|c|} \hline
\rule[.75em]{0pt}{3em}

\begin{tikzpicture}[x=1.5em,y=1.5em]
\draw[thick,decoration={markings, mark=at position 0.6 with {\arrow[scale=1]{>}}}, postaction={decorate}] (-1,0)--(0,0);
\draw[thick,decoration={markings, mark=at position 0.6 with {\arrow[scale=1]{>}}}, postaction={decorate}]  (1,0)--(0,0);
\draw[thick,decoration={markings, mark=at position 0.6 with {\arrow[scale=1]{<}}}, postaction={decorate}]  (0,-1)--(0,0);
\draw[thick,decoration={markings, mark=at position 0.6 with {\arrow[scale=1]{<}}}, postaction={decorate}]  (0,1)--(0,0);
\filldraw [black](-1,0)circle(.1);
\filldraw [black](1,0)circle(.1);
\filldraw [black](0,-1)circle(.1);
\filldraw [black](0,1)circle(.1);
\filldraw [black](0,0)circle(.1);
\end{tikzpicture}
&
\begin{tikzpicture}[x=1.5em,y=1.5em]
\draw[thick,decoration={markings, mark=at position 0.6 with {\arrow[scale=1]{<}}}, postaction={decorate}] (-1,0)--(0,0);
\draw[thick,decoration={markings, mark=at position 0.6 with {\arrow[scale=1]{<}}}, postaction={decorate}]  (1,0)--(0,0);
\draw[thick,decoration={markings, mark=at position 0.6 with {\arrow[scale=1]{>}}}, postaction={decorate}]  (0,-1)--(0,0);
\draw[thick,decoration={markings, mark=at position 0.6 with {\arrow[scale=1]{>}}}, postaction={decorate}]  (0,1)--(0,0);
\filldraw [black](-1,0)circle(.1);
\filldraw [black](1,0)circle(.1);
\filldraw [black](0,-1)circle(.1);
\filldraw [black](0,1)circle(.1);
\filldraw [black](0,0)circle(.1);
\end{tikzpicture}
&
\begin{tikzpicture}[x=1.5em,y=1.5em]
\draw[thick,decoration={markings, mark=at position 0.6 with {\arrow[scale=1]{<}}}, postaction={decorate}] (-1,0)--(0,0);
\draw[thick,decoration={markings, mark=at position 0.6 with {\arrow[scale=1]{>}}}, postaction={decorate}]  (1,0)--(0,0);
\draw[thick,decoration={markings, mark=at position 0.6 with {\arrow[scale=1]{<}}}, postaction={decorate}]  (0,-1)--(0,0);
\draw[thick,decoration={markings, mark=at position 0.6 with {\arrow[scale=1]{>}}}, postaction={decorate}]  (0,1)--(0,0);
\filldraw [black](-1,0)circle(.1);
\filldraw [black](1,0)circle(.1);
\filldraw [black](0,-1)circle(.1);
\filldraw [black](0,1)circle(.1);
\filldraw [black](0,0)circle(.1);
\end{tikzpicture}
&
\begin{tikzpicture}[x=1.5em,y=1.5em]
\draw[thick,decoration={markings, mark=at position 0.6 with {\arrow[scale=1]{>}}}, postaction={decorate}] (-1,0)--(0,0);
\draw[thick,decoration={markings, mark=at position 0.6 with {\arrow[scale=1]{<}}}, postaction={decorate}]  (1,0)--(0,0);
\draw[thick,decoration={markings, mark=at position 0.6 with {\arrow[scale=1]{>}}}, postaction={decorate}]  (0,-1)--(0,0);
\draw[thick,decoration={markings, mark=at position 0.6 with {\arrow[scale=1]{<}}}, postaction={decorate}]  (0,1)--(0,0);
\filldraw [black](-1,0)circle(.1);
\filldraw [black](1,0)circle(.1);
\filldraw [black](0,-1)circle(.1);
\filldraw [black](0,1)circle(.1);
\filldraw [black](0,0)circle(.1);
\end{tikzpicture}
&
\begin{tikzpicture}[x=1.5em,y=1.5em]
\draw[thick,decoration={markings, mark=at position 0.6 with {\arrow[scale=1]{<}}}, postaction={decorate}] (-1,0)--(0,0);
\draw[thick,decoration={markings, mark=at position 0.6 with {\arrow[scale=1]{>}}}, postaction={decorate}]  (1,0)--(0,0);
\draw[thick,decoration={markings, mark=at position 0.6 with {\arrow[scale=1]{>}}}, postaction={decorate}]  (0,-1)--(0,0);
\draw[thick,decoration={markings, mark=at position 0.6 with {\arrow[scale=1]{<}}}, postaction={decorate}]  (0,1)--(0,0);
\filldraw [black](-1,0)circle(.1);
\filldraw [black](1,0)circle(.1);
\filldraw [black](0,-1)circle(.1);
\filldraw [black](0,1)circle(.1);
\filldraw [black](0,0)circle(.1);
\end{tikzpicture}
&
\begin{tikzpicture}[x=1.5em,y=1.5em]
\draw[thick,decoration={markings, mark=at position 0.6 with {\arrow[scale=1]{>}}}, postaction={decorate}] (-1,0)--(0,0);
\draw[thick,decoration={markings, mark=at position 0.6 with {\arrow[scale=1]{<}}}, postaction={decorate}]  (1,0)--(0,0);
\draw[thick,decoration={markings, mark=at position 0.6 with {\arrow[scale=1]{<}}}, postaction={decorate}]  (0,-1)--(0,0);
\draw[thick,decoration={markings, mark=at position 0.6 with {\arrow[scale=1]{>}}}, postaction={decorate}]  (0,1)--(0,0);
\filldraw [black](-1,0)circle(.1);
\filldraw [black](1,0)circle(.1);
\filldraw [black](0,-1)circle(.1);
\filldraw [black](0,1)circle(.1);
\filldraw [black](0,0)circle(.1);
\end{tikzpicture} \\\hline
\rule[.75em]{0pt}{1em}

\begin{tikzpicture}[x=1.5em,y=1.5em]
\draw[color=black, thick](0,1)rectangle(1,2);
\draw[thick,color=blue,rounded corners] (.5,1)--(.5,1.5)--(1,1.5);
\end{tikzpicture} &
\begin{tikzpicture}[x=1.5em,y=1.5em]
\draw[color=black, thick](0,1)rectangle(1,2);
\draw[thick,color=blue,rounded corners] (.5,2)--(.5,1.5)--(0,1.5);
\end{tikzpicture} &
\begin{tikzpicture}[x=1.5em,y=1.5em]
\draw[color=black, thick](0,1)rectangle(1,2);
\draw[thick,color=blue,rounded corners] (0,1.5)--(1,1.5);
\draw[thick,color=blue,rounded corners] (.5,1)--(.5,2);
\end{tikzpicture} &
\begin{tikzpicture}[x=1.5em,y=1.5em]
\draw[color=black, thick](0,1)rectangle(1,2);
\end{tikzpicture}&
\begin{tikzpicture}[x=1.5em,y=1.5em]
\draw[color=black, thick](0,1)rectangle(1,2);
\draw[thick,color=blue,rounded corners] (0,1.5)--(1,1.5);
\end{tikzpicture} &
\begin{tikzpicture}[x=1.5em,y=1.5em]
\draw[color=black, thick](0,1)rectangle(1,2);
\draw[thick,color=blue,rounded corners] (.5,1)--(.5,2);
\end{tikzpicture}  \\\hline
\end{tabular}
\end{equation}
Equivalently, the map draws pipes along edges which point to the left and the edges which point down.

\begin{lemma}
\label{lemma:icepipe}
	The map defined by (\ref{eqn:IceBump})  is a bijection from square ice configurations to bumpless pipe dreams.
\end{lemma}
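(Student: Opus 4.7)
The plan is to exploit the fact that the rule in (\ref{eqn:IceBump}) is entirely local: it converts one ice vertex to one BPD tile. I would first reformulate the rule uniformly as ``draw a pipe segment along any edge whose ice arrow points left or down, and leave the edge blank if its arrow points right or up.'' In this form, inspection of the six cases shows that the map is a bijection between the six admissible vertex states of (\ref{eqn:sixice}) and the six BPD tile types of (\ref{eqn:sixtiles}). Thus the correspondence is locally invertible, and everything hinges on checking that the global compatibility conditions on both sides match up.

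Next I would verify global compatibility. Every interior edge of the framed square grid belongs to exactly two adjacent vertices/tiles, and in an ice configuration each such edge has a single well-defined direction. Under the local rule the same edge is therefore either in every adjacent tile the boundary of a pipe segment, or in none of them; so the tiles glue into a consistent pipe diagram in the $n\times n$ grid. Conversely, starting from a BPD, orient each edge leftward/downward if it borders a pipe segment and rightward/upward otherwise; this orientation is consistent because pipe occupancy of a grid edge is a property of the edge itself, and the six tile types yield precisely the six ice vertices.

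It then remains to match boundary conditions. The ice rule ``horizontal boundary edges point inwards'' translates to left-boundary arrows pointing right (no pipe enters from the left) and right-boundary arrows pointing left (each right-boundary edge carries a pipe exiting the grid); ``vertical boundary edges point outwards'' translates to top-boundary arrows pointing up (no pipe exits the top) and bottom-boundary arrows pointing down (each bottom-boundary edge carries a pipe entering the grid). These are exactly BPD conditions (2) and (3). Condition (1), that there are $n$ pipes in total, is then forced: each of the $n$ bottom-boundary edges initiates a pipe segment, and since no segment is ever drawn along a top- or left-boundary edge, each such pipe must eventually reach the right boundary, one per row.

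The only genuinely non-formal step is checking that the incoming segments at each interior vertex truly glue into pipes (as opposed to, say, dead ends); but this is immediate from the case-by-case list (\ref{eqn:IceBump}), since each tile has exactly as many segments on its boundary as the corresponding ice vertex has incoming/outgoing arrows prescribed by the ice rule. Assembling these observations gives the bijection. I expect the main (minor) obstacle to be writing the boundary-condition match cleanly; everything else is a direct inspection of the six-case table.
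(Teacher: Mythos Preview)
Your argument is correct. You give a direct, self-contained proof: the local rule is a bijection between the six ice-vertex states and the six BPD tiles; an interior edge's orientation (equivalently, pipe occupancy) is a property of the edge, so tiles glue consistently; and the ice boundary conditions translate exactly to the BPD boundary conditions.

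The paper takes a different, shorter route. It does not write out a proof of this lemma at all; rather, the sentence immediately preceding the table (\ref{eqn:IceBump}) already says that this tile-by-tile correspondence is \emph{by definition} the composition of the known bijection ${\sf Ice}(n)\to{\sf ASM}(n)$ from (\ref{eqn:IceASM}) (see \cite{elkies1992alternatingII}) with the bijection $\Phi:{\sf ASM}(n)\to{\sf BPD}(n)$ of Lemma~\ref{lemma:bijection}. A composition of bijections is a bijection, so the lemma follows with no further work. Your approach has the advantage of being self-contained and of making the edge-level description (``draw a pipe along left/down arrows'') explicit, which is conceptually clarifying; the paper's approach is more economical given that both constituent bijections are already on hand.
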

\begin{remark}
	Defining segments of an ASM provide an easy description of the map ${\sf ASM}(n)\rightarrow {\sf Ice}(n)$.  Orient horizontal (vertical) interior edges pointing to the left (downwards) if and only if they coincide with a defining line segment of the Rothe diagram.  Due to (\ref{eqn:IceBump}) and the description of the map $\Phi$, this is the inverse to the map given in (\ref{eqn:IceASM}). 
\end{remark}

The next lemma provides the explicit connection between square ice configurations and corner sum functions.  
\begin{lemma}
\label{lemma:rankice}
		Take $i,j\in [n]$ and write $r_A(i,j)=k$.  If $A\mapsto \mathcal I$,  the restriction of $r_A$ to $\{i-1,i\}\times\{j-1,j\}$ corresponds to the interior vertex at $(i,j)$ of $\mathcal I$ as pictured below.
		\begin{center}
			\begin{tabular}{|c|c|c|c|c|c|} \hline
				\rule[.75em]{0pt}{3em}
				\begin{tikzpicture}[x=1.5em,y=1.5em]
				\draw[thick,decoration={markings, mark=at position 0.6 with {\arrow[scale=1]{>}}}, postaction={decorate}] (-1,0)--(0,0);
				\draw[thick,decoration={markings, mark=at position 0.6 with {\arrow[scale=1]{>}}}, postaction={decorate}]  (1,0)--(0,0);
				\draw[thick,decoration={markings, mark=at position 0.6 with {\arrow[scale=1]{<}}}, postaction={decorate}]  (0,-1)--(0,0);
				\draw[thick,decoration={markings, mark=at position 0.6 with {\arrow[scale=1]{<}}}, postaction={decorate}]  (0,1)--(0,0);
				\filldraw [black](-1,0)circle(.1);
				\filldraw [black](1,0)circle(.1);
				\filldraw [black](0,-1)circle(.1);
				\filldraw [black](0,1)circle(.1);
				\filldraw [black](0,0)circle(.1);
				\end{tikzpicture}
				&
				\begin{tikzpicture}[x=1.5em,y=1.5em]
				\draw[thick,decoration={markings, mark=at position 0.6 with {\arrow[scale=1]{<}}}, postaction={decorate}] (-1,0)--(0,0);
				\draw[thick,decoration={markings, mark=at position 0.6 with {\arrow[scale=1]{<}}}, postaction={decorate}]  (1,0)--(0,0);
				\draw[thick,decoration={markings, mark=at position 0.6 with {\arrow[scale=1]{>}}}, postaction={decorate}]  (0,-1)--(0,0);
				\draw[thick,decoration={markings, mark=at position 0.6 with {\arrow[scale=1]{>}}}, postaction={decorate}]  (0,1)--(0,0);
				\filldraw [black](-1,0)circle(.1);
				\filldraw [black](1,0)circle(.1);
				\filldraw [black](0,-1)circle(.1);
				\filldraw [black](0,1)circle(.1);
				\filldraw [black](0,0)circle(.1);
				\end{tikzpicture}
				&
				\begin{tikzpicture}[x=1.5em,y=1.5em]
				\draw[thick,decoration={markings, mark=at position 0.6 with {\arrow[scale=1]{<}}}, postaction={decorate}] (-1,0)--(0,0);
				\draw[thick,decoration={markings, mark=at position 0.6 with {\arrow[scale=1]{>}}}, postaction={decorate}]  (1,0)--(0,0);
				\draw[thick,decoration={markings, mark=at position 0.6 with {\arrow[scale=1]{<}}}, postaction={decorate}]  (0,-1)--(0,0);
				\draw[thick,decoration={markings, mark=at position 0.6 with {\arrow[scale=1]{>}}}, postaction={decorate}]  (0,1)--(0,0);
				\filldraw [black](-1,0)circle(.1);
				\filldraw [black](1,0)circle(.1);
				\filldraw [black](0,-1)circle(.1);
				\filldraw [black](0,1)circle(.1);
				\filldraw [black](0,0)circle(.1);
				\end{tikzpicture}
				&
				\begin{tikzpicture}[x=1.5em,y=1.5em]
				\draw[thick,decoration={markings, mark=at position 0.6 with {\arrow[scale=1]{>}}}, postaction={decorate}] (-1,0)--(0,0);
				\draw[thick,decoration={markings, mark=at position 0.6 with {\arrow[scale=1]{<}}}, postaction={decorate}]  (1,0)--(0,0);
				\draw[thick,decoration={markings, mark=at position 0.6 with {\arrow[scale=1]{>}}}, postaction={decorate}]  (0,-1)--(0,0);
				\draw[thick,decoration={markings, mark=at position 0.6 with {\arrow[scale=1]{<}}}, postaction={decorate}]  (0,1)--(0,0);
				\filldraw [black](-1,0)circle(.1);
				\filldraw [black](1,0)circle(.1);
				\filldraw [black](0,-1)circle(.1);
				\filldraw [black](0,1)circle(.1);
				\filldraw [black](0,0)circle(.1);
				\end{tikzpicture}
				&
				\begin{tikzpicture}[x=1.5em,y=1.5em]
				\draw[thick,decoration={markings, mark=at position 0.6 with {\arrow[scale=1]{<}}}, postaction={decorate}] (-1,0)--(0,0);
				\draw[thick,decoration={markings, mark=at position 0.6 with {\arrow[scale=1]{>}}}, postaction={decorate}]  (1,0)--(0,0);
				\draw[thick,decoration={markings, mark=at position 0.6 with {\arrow[scale=1]{>}}}, postaction={decorate}]  (0,-1)--(0,0);
				\draw[thick,decoration={markings, mark=at position 0.6 with {\arrow[scale=1]{<}}}, postaction={decorate}]  (0,1)--(0,0);
				\filldraw [black](-1,0)circle(.1);
				\filldraw [black](1,0)circle(.1);
				\filldraw [black](0,-1)circle(.1);
				\filldraw [black](0,1)circle(.1);
				\filldraw [black](0,0)circle(.1);
				\end{tikzpicture}
				&
				\begin{tikzpicture}[x=1.5em,y=1.5em]
				\draw[thick,decoration={markings, mark=at position 0.6 with {\arrow[scale=1]{>}}}, postaction={decorate}] (-1,0)--(0,0);
				\draw[thick,decoration={markings, mark=at position 0.6 with {\arrow[scale=1]{<}}}, postaction={decorate}]  (1,0)--(0,0);
				\draw[thick,decoration={markings, mark=at position 0.6 with {\arrow[scale=1]{<}}}, postaction={decorate}]  (0,-1)--(0,0);
				\draw[thick,decoration={markings, mark=at position 0.6 with {\arrow[scale=1]{>}}}, postaction={decorate}]  (0,1)--(0,0);
				\filldraw [black](-1,0)circle(.1);
				\filldraw [black](1,0)circle(.1);
				\filldraw [black](0,-1)circle(.1);
				\filldraw [black](0,1)circle(.1);
				\filldraw [black](0,0)circle(.1);
				\end{tikzpicture} \\\hline
				\rule[-1.5em]{0pt}{4em}
								
				$\scriptsize \left[\begin{array}{cc}k-1&k-1\\k-1&k\end{array}\right]$ & 
				$\scriptsize \left[\begin{array}{cc}k-1&k\\k&k\end{array}\right]$&	
				$\scriptsize \left[\begin{array}{cc}k-2&k-1\\k-1&k\end{array}\right]$ &
				$\scriptsize \left[\begin{array}{cc}k&k\\k&k\end{array}\right]$&
				$\scriptsize \left[\begin{array}{cc}k-1&k-1\\k&k\end{array}\right]$&
				$\scriptsize \left[\begin{array}{cc}k-1&k\\k-1&k\end{array}\right]$\\ \hline 
			\end{tabular}
		\end{center}
	\end{lemma}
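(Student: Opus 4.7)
The plan is to reduce the claim to a direct case analysis on the six vertex configurations by first decoding each interior edge orientation in terms of partial row or column sums of $A$. Concretely, I will show that
\begin{itemize}
\item the horizontal edge between interior vertices $(i,j-1)$ and $(i,j)$ points leftward if and only if $\row_{A,i}(j-1) = 1$, and
\item the vertical edge between interior vertices $(i-1,j)$ and $(i,j)$ points downward if and only if $\col_{A,j}(i-1) = 1$.
\end{itemize}
Both statements follow by a short induction along a row (resp.\ column), using the boundary conventions (horizontal boundary edges point inward, vertical boundary edges point outward) as base case and the six-vertex condition at each intermediate vertex as the inductive step. The key point is that $\row_{A,i}(\cdot)$ takes values only in $\{0,1\}$ by the alternating sign condition, and it flips its value exactly at the positions where the edge orientation must flip to maintain two-in/two-out.

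Next I would rewrite these partial sums as corner-sum differences. From (\ref{eqn:cornersumbyrowandcol}),
\begin{equation*}
\col_{A,j}(i) = r_A(i,j) - r_A(i,j-1), \qquad \row_{A,i}(j) = r_A(i,j) - r_A(i-1,j),
\end{equation*}
with analogous formulas at $(i-1,j)$ and $(i,j-1)$. Thus, each of the four edges incident to the vertex $(i,j)$ has its orientation controlled by a single difference of two entries of $r_A$ drawn from the $2\times 2$ block $\{i-1,i\}\times\{j-1,j\}$, and each such difference lies in $\{0,1\}$ by Lemma~\ref{lemma:cornersumchar}.

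With this dictionary in hand, the proof becomes a straightforward enumeration: for each of the six vertex states in (\ref{eqn:sixice}), the four incident arrow directions pin down the four partial sums, hence pin down the three unknown entries of $r_A$ on the $2\times 2$ block in terms of $r_A(i,j) = k$. For example, state $1$ (horizontal arrows pointing inward, vertical arrows pointing outward) forces $\row_{A,i}(j-1) = 0$, $\row_{A,i}(j) = 1$, $\col_{A,j}(i-1) = 0$, $\col_{A,j}(i) = 1$, giving $r_A(i-1,j-1) = r_A(i-1,j) = r_A(i,j-1) = k-1$; this matches the first column of the table and is consistent with $A_{ij} = 1$ via (\ref{eqn:cornersuminverse}). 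The remaining five cases are analogous.

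The only real content is the correspondence between edge orientations and partial sums, which is where the six-vertex/alternating-sign dictionary lives; the rest is arithmetic. A minor care point is the boundary behavior when $i=1$ or $j=1$, where the $0$-th row or column of $r_A$ is taken to be $0$ by convention; this matches the inward/outward orientation prescribed at boundary edges, so no separate argument is needed. Once the six cases are tabulated, comparison with the table in the statement completes the proof, and as a sanity check each case recovers the correct value of $A_{ij}$ from (\ref{eqn:cornersuminverse}).
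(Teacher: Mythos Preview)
Your proposal is correct and follows essentially the same approach as the paper: both arguments hinge on the observation that each edge orientation at the vertex $(i,j)$ is encoded by a partial row or column sum ($\row{A}{i}{j}$ or $\col{A}{j}{i}$), which is then rewritten as a difference of corner sums, after which the six cases are read off directly. The only organizational difference is that you establish the edge--partial-sum correspondence uniformly by induction before the case analysis, whereas the paper first dispatches the $A_{ij}=\pm 1$ vertices directly from (\ref{eqn:cornersuminverse}) and (\ref{eqn:IceASM}) and then invokes the partial-sum observation only for the four $A_{ij}=0$ cases.
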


	\begin{proof} 
		Suppose that $A\mapsto \mathcal I$.  
		We have fixed $r_A(i,j)=k$.  Furthermore, since \[r_A(i,j)-r_A(i-1,j)\in\{0,1\} \enspace \text{and} \enspace  r_A(i,j)-r_A(i-1,j)\in\{0,1\},\] there are only six possibilities for \[\left[\begin{array}{cc}r_A(i-1,j-1)&r_A(i-1,j)\\r_A(i,j-1)&r_A(i,j)\end{array}\right].\] These are precisely the types of matrices which appear in the statement of the lemma. 
		
		By (\ref{eqn:cornersuminverse}) and (\ref{eqn:IceASM}), we must have the correspondences pictured below.
		\[\begin{tikzpicture}[x=1.5em,y=1.5em]
		\draw[thick,decoration={markings, mark=at position 0.6 with {\arrow[scale=1]{>}}}, postaction={decorate}] (-1,0)--(0,0);
		\draw[thick,decoration={markings, mark=at position 0.6 with {\arrow[scale=1]{>}}}, postaction={decorate}]  (1,0)--(0,0);
		\draw[thick,decoration={markings, mark=at position 0.6 with {\arrow[scale=1]{<}}}, postaction={decorate}]  (0,-1)--(0,0);
		\draw[thick,decoration={markings, mark=at position 0.6 with {\arrow[scale=1]{<}}}, postaction={decorate}]  (0,1)--(0,0);
		\filldraw [black](-1,0)circle(.1);
		\filldraw [black](1,0)circle(.1);
		\filldraw [black](0,-1)circle(.1);
		\filldraw [black](0,1)circle(.1);
		\filldraw [black](0,0)circle(.1);
		\end{tikzpicture} \hspace{1em} \raisebox{1\height}{$\longleftrightarrow \enspace \left[\begin{array}{cc}k-1&k-1\\k-1&k\end{array}\right] $}
		\hspace {3em}
		\begin{tikzpicture}[x=1.5em,y=1.5em]
		\draw[thick,decoration={markings, mark=at position 0.6 with {\arrow[scale=1]{<}}}, postaction={decorate}] (-1,0)--(0,0);
		\draw[thick,decoration={markings, mark=at position 0.6 with {\arrow[scale=1]{<}}}, postaction={decorate}]  (1,0)--(0,0);
		\draw[thick,decoration={markings, mark=at position 0.6 with {\arrow[scale=1]{>}}}, postaction={decorate}]  (0,-1)--(0,0);
		\draw[thick,decoration={markings, mark=at position 0.6 with {\arrow[scale=1]{>}}}, postaction={decorate}]  (0,1)--(0,0);
		\filldraw [black](-1,0)circle(.1);
		\filldraw [black](1,0)circle(.1);
		\filldraw [black](0,-1)circle(.1);
		\filldraw [black](0,1)circle(.1);
		\filldraw [black](0,0)circle(.1);
		\end{tikzpicture} \hspace{1em}\raisebox{1\height}{$\longleftrightarrow \enspace \left[\begin{array}{cc}k-1&k\\k&k\end{array}\right] $}
		\]

		From (\ref{eqn:IceASM}), we see that adjacent horizontal (and vertical) edges change direction at an interior vertex $(i,j)$ if and only if $A_{i\,j}\in \{-1,1\}$.  Assume  $A_{i\,j}=0$. Then 
		\begin{enumerate}
			\item ${\rm row}_{A,i}(j)=1$ if and only if the adjacent horizontal edges to $(i,j)$ in $\mathcal I$ point left, and
			\item ${\rm col}_{A,j}(i)=1$ if and only if the adjacent vertical edges to $(i,j)$ in $\mathcal I$ point down.
		\end{enumerate}
		Since $r_A(i,j)=r_A(i-1,j)+{\rm row}_{A,i}(j)$, the horizontal edges point to the left if and only if $r_A(i-1,j)=k-1$.  Similarly, the vertical edges point down if and only if $r_A(i,j-1)=k-1$.  These assertions are enough to determine the last  four correspondences.
	\end{proof}

\subsection{Additional properties of bumpless pipe dreams}

We may now apply the above bijections to derive facts about bumpless pipe dreams. 
\begin{lemma}
	\label{lemma:uniquelydet}
	A bumpless pipe dream is uniquely determined by knowing any of the following:
	\begin{enumerate}
		\item the locations of its upward and downward elbow tiles,
		\item the locations of its crossing and blank tiles, or
		\item the locations of its horizontal and vertical tiles.
	\end{enumerate}
\end{lemma}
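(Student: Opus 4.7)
The plan is to reduce all three statements to reconstructing the corner sum function $r_A$ of the associated ASM $A = \asmtobpd^{-1}(\mathcal P)$. By (\ref{eqn:cornersuminverse}) and Lemma~\ref{lemma:bijection}, knowing $r_A$ determines $A$ and hence $\mathcal P$, so in each part it suffices to recover $r_A$ on the entire $n \times n$ grid from the boundary conditions $r_A(i,0) = r_A(0,j) = 0$ of Lemma~\ref{lemma:cornersumchar} together with the specified data.

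Part~(1) is immediate from Lemma~\ref{lemma:rankice}: downward elbow tiles correspond exactly to cells with $A_{i,j} = +1$, upward elbow tiles to $A_{i,j} = -1$, and the four remaining tile types all to $A_{i,j} = 0$, so the set of elbow locations determines $A$ outright, without needing to pass through $r_A$.

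For parts~(2) and~(3), my strategy is to identify a scalar invariant of the $2 \times 2$ block in Lemma~\ref{lemma:rankice} that separates the tile types in the right way, and then propagate $r_A$ from the boundary. Inspecting the six blocks, one checks that $r_A(i,j) - r_A(i-1,j-1)$ takes the value $2$ at crossings, $0$ at blanks, and $1$ at each of the four remaining tile types; and that $r_A(i-1,j) - r_A(i,j-1)$ takes the value $-1$ at horizontal tiles, $+1$ at vertical tiles, and $0$ at each of the other four types. For part~(2), knowing the crossing and blank locations therefore pins down the first invariant at every cell, and a straightforward induction on $\min(i,j)$ using $r_A(i,j) = r_A(i-1,j-1) + \Delta(i,j)$ recovers $r_A$ from the boundary outward. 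For part~(3), knowing the horizontal and vertical tile locations pins down the second invariant, and one can walk $r_A$ along each anti-diagonal $\{(i,j) : i+j = c\}$ one cell at a time, starting from its boundary endpoint where $r_A = 0$. The main obstacle is purely computational: one must carefully verify from the six-row table of Lemma~\ref{lemma:rankice} that these two scalar invariants really do take only the claimed values on exactly the specified tile types; once that case check is in hand, the two inductive reconstructions are immediate.
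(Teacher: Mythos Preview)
Your proposal is correct and follows essentially the same approach as the paper: part~(1) via the bijection with ASMs, and parts~(2) and~(3) by identifying the diagonal and antidiagonal increments $r_A(i,j)-r_A(i-1,j-1)$ and $r_A(i-1,j)-r_A(i,j-1)$ from Lemma~\ref{lemma:rankice}, then reconstructing $r_A$ from the boundary. The only minor imprecision is that for antidiagonals $i+j=c$ with $c>n$ the known endpoint value comes from $r_A(n,i)=r_A(i,n)=i$ rather than from a zero, but this does not affect the argument.
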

\begin{proof}
  \noindent (1) This follows immediately from the bijection between ASMs and bumpless pipe dreams.
  
  \noindent (2) Fix $A\in{\sf ASM}(n)$ and let $\mathcal P=\Phi(A)$.  By Lemma~\ref{lemma:rankice}, 
  \begin{equation}
  \label{eqn:diagrankjump}
  r_A(a,b)-r_A(a-1,b-1)=
  \begin{cases}
  0& \text{if}\enspace \mathcal P_{a\,b} \enspace \text{is blank}\\
  2&\text{if}\enspace \mathcal P_{a\,b} \enspace \text{is crossing, and}\\
  1& \text{otherwise}.
  \end{cases}  
  \end{equation}
  As such, $r_A$ can be uniquely recovered starting from the initial conditions $r_A(i,0)=r_A(0,i)=0$ using (\ref{eqn:diagrankjump}).   Since $r_A$ is determined by this information, so is $\mathcal P=\Phi(A)$.
  
  \noindent (3) By an analogous argument to the previous part, $r_A$ can be recovered by working along antidiagonals, using the positions of the horizontal and vertical edges in $\Phi(A)$.
   Explicitly, \begin{equation}
  \label{eqn:antidiagrankjump}
  r_A(a-1,b)-r_A(a,b-1)=
  \begin{cases}
  -1& \text{if}\enspace \mathcal P_{a\,b} \enspace \text{is a horizontal pipe,}\\
  1&\text{if}\enspace \mathcal P_{a\,b} \enspace \text{is a vertical pipe, and}\\
  0& \text{otherwise}. 
  \end{cases}  
  \end{equation}
Thus, the result follows.
\end{proof}

It is well known (say from the perspective of square ice) that there are as many crossing tiles as blank tiles in each BPD.  Likewise, there are the same number of horizontal and vertical tiles (see \cite[Section~2.1]{behrend.weighted}).  We will make use of the following refinement.
\begin{lemma}
  Let $\mathcal P\in{\sf BPD}(n)$.  
  \begin{enumerate}
  \item Within each diagonal of $\mathcal P$, there are as many crossing tiles as blank tiles.
  \item Within each antidiagonal of $\mathcal P$, there are as many horizontal tiles as vertical tiles.
  \end{enumerate}
\end{lemma}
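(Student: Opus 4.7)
The plan is to exploit Lemma~\ref{lemma:rankice}, which ties local tile types of $\mathcal P = \asmtobpd(A)$ to jumps of the corner sum function $r_A$. The proof of Lemma~\ref{lemma:uniquelydet} already extracts the two identities I will need: formula (\ref{eqn:diagrankjump}) expresses the diagonal jump $r_A(a,b) - r_A(a-1,b-1)$ as $0$, $2$, or $1$ depending on whether the $(a,b)$-tile is blank, crossing, or otherwise, while (\ref{eqn:antidiagrankjump}) expresses the antidiagonal jump $r_A(a-1,b) - r_A(a,b-1)$ as $-1$, $1$, or $0$ depending on whether the tile is horizontal, vertical, or neither. Both parts of the lemma then reduce to summing the appropriate identity along a (anti)diagonal and observing that the resulting sum telescopes.

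For part (1), I would fix a diagonal $\{(a,b) : b - a = d\}$; take $d \geq 0$, the case $d < 0$ being symmetric. Summing (\ref{eqn:diagrankjump}) along this diagonal telescopes to $r_A(n-d, n) - r_A(0, d) = (n-d) - 0 = n-d$ by the boundary conditions of Lemma~\ref{lemma:cornersumchar}. This is exactly the number of cells on the diagonal. If $p$, $q$, $r$ denote the numbers of crossing, blank, and other tiles on the diagonal, the right-hand side of the summed identity is $2p + r$, while the number of cells is $p + q + r$. Equating these gives $p = q$, as desired.

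For part (2), I would fix an antidiagonal $\{(a,b) : a + b = s\}$ and sum (\ref{eqn:antidiagrankjump}) along it. The telescoping here runs sideways: as $(a,b)$ sweeps across the antidiagonal, both the positive terms $r_A(a-1, b)$ and the negative terms $r_A(a, b-1)$ lie on the previous antidiagonal $s - 1$, and consecutive entries match and cancel pairwise. What survives at each end is a boundary value of $r_A$. For $s \leq n+1$ these endpoints lie in the zeroth row or column, where $r_A = 0$. For $s \geq n+2$ they lie in the bottom row or rightmost column, where by Lemma~\ref{lemma:cornersumchar} the two surviving values both equal $s - n - 1$ and cancel. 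In every case the total sum is zero, so if the antidiagonal contains $h$ horizontal tiles and $v$ vertical tiles, then $v - h = 0$.

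The argument poses no real obstacle beyond bookkeeping: the only subtlety is verifying that the residual boundary terms in the antidiagonal telescoping truly cancel in both regimes $s \leq n + 1$ and $s \geq n + 2$, which is immediate from the normalization $r_A(i, n) = r_A(n, i) = i$ in Lemma~\ref{lemma:cornersumchar}.
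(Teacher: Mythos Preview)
Your proof is correct and follows essentially the same argument as the paper: both telescope the identities (\ref{eqn:diagrankjump}) and (\ref{eqn:antidiagrankjump}) along a diagonal or antidiagonal, then use the boundary values of $r_A$ from Lemma~\ref{lemma:cornersumchar} to evaluate the sum. Your phrasing of part (1) via $2p + r = p + q + r$ is just a restatement of the paper's observation that a sum of values in $\{0,1,2\}$ equaling the number of summands forces equally many $0$'s and $2$'s, and your treatment of the two regimes $s \le n+1$ and $s \ge n+2$ in part (2) matches the paper's two displayed telescoping sums.
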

\begin{proof}

  \noindent (1) Fix $i\geq 0$.  Then $r_A(0,i)=0$ and $r_A(n-i,n)=n-i$.  Then by (\ref{eqn:diagrankjump}), \[n-i=r_A(n-i,n)-r_A(0,i)=\sum_{k=i+1}^{n}r_A(k-i,k)-r_A(k-i-1,k-1).\]  Since there are $n-i$ summands, taking values in $\{0,1,2\}$, there are as many $0$'s as $2$'s.  Therefore, $\mathcal P$ has as many blank tiles as crossing tiles in this diagonal.  A similar argument works for the diagonal containing $(i,0)$ and $(n,n-i)$.
  
  \noindent (2) We have \[0=r_A(0,i)-r_A(i,0)=\sum_{k=0}^{i-1}r_A(i-1-k,1+k)-r_A(i-k,0+k).\]
  By (\ref{eqn:antidiagrankjump}), the summands take values in $\{-1,0,1\}$.  Thus, there must be the same number of $-1$'s as $1$'s in the sum.
  Likewise,
  \[0=r_A(i,n)-r_A(n,i)=\sum_{k=0}^{n-i-1}r_A(n-1-k,i+1+k)-r_A(n-k,i+k).\]
  Again, there are as many $1$'s as $-1$'s in the sum.
Thus, in each antidiagonal, there are as many horizontal tiles as vertical tiles.
\end{proof}

\begin{lemma}
\label{lemma:rankcrossing}
Fix $A\in \asm(n)$ and let $\mathcal P=\asmtobpd(A)$. The label of the crossing $(i,j)\in \mathcal P$ is $r_A(i,j)-1$.
\end{lemma}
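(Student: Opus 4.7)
My plan is to match the label of the crossing (defined in Section~\ref{section:planar} as the number of paths passing weakly northeast of the crossing, minus one) with $r_A(i,j) - 1$, by routing both counts through a description of pipes crossing the horizontal grid line between rows $i$ and $i+1$.

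First, by (\ref{eqn:cornersumbyrowandcol}), $r_A(i,j) = \sum_{b=1}^j \col{A}{b}{i}$, and by Lemma~\ref{lemma:rankice} together with (\ref{eqn:IceBump}), each indicator $\col{A}{b}{i}\in\{0,1\}$ equals $1$ exactly when a pipe of $\mathcal P$ crosses the horizontal grid line $L$ between rows $i$ and $i+1$ in column $b$ (equivalently, when the corresponding ice-model vertical edge points downward). Hence $r_A(i,j)$ equals the number of pipes crossing $L$ in columns at most $j$, with the vertical pipe at $(i,j)$ being the rightmost such, crossing $L$ at column $j$.

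Next, I would identify these pipes with those passing weakly northeast of $(i,j)$. Since pipes move only upward and rightward, a pipe crossing $L$ at column $c \leq j$ continues to visit column $j$ at some row $\leq i$, placing it weakly northeast of $(i,j)$; conversely, a pipe passing weakly northeast of $(i,j)$ must eventually reach column $j$ at some row $\leq i$, and by monotonicity its unique crossing of $L$ occurs at a column $\leq j$ (a pipe crossing $L$ east of column $j$ remains strictly east of it afterward and so cannot reach column $j$ in the upper half of the grid). Subtracting one yields the label $r_A(i,j) - 1$.

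The main subtlety is the equivalence between ``passes weakly northeast of the crossing'' and ``crosses $L$ in a column $\leq j$''. The subtle point is that a pipe starting in a column $\leq j$ can still take an up-elbow detour to cross $L$ east of column $j$, in which case it should not be counted; the correct invariant is the column at which the pipe crosses $L$, and this is precisely what the ice-model identification of Lemma~\ref{lemma:rankice} tracks via the column-sum formula.
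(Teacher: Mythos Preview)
Your steps (1)--(3) are sound: by (\ref{eqn:cornersumbyrowandcol}) together with the ice-model dictionary behind Lemma~\ref{lemma:rankice}, $r_A(i,j)$ does count the pipes of $\mathcal P$ that cross the horizontal grid line $L$ between rows $i$ and $i+1$ in some column $b\le j$.

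The gap is in the converse half of your step (4). You assert that a pipe passing weakly northeast of $(i,j)$ must reach column $j$ at some row $\le i$ and hence cross $L$ in a column $\le j$. This fails whenever the pipe starts in a column strictly greater than $j$: such a pipe never visits column $j$ at all, yet if it exits the grid in a row $\le i$ it certainly visits cells weakly northeast of $(i,j)$. Concretely, in the Rothe BPD for $w=321$, take the crossing $(i,j)=(3,2)$. Pipe $3$ starts in column $3$ and ends in row $1$, so it passes through $(1,3)$, which is weakly northeast of $(3,2)$; but it crosses $L$ only in column $3>2$. Under your equivalence the count would be $3$, whereas $r_A(3,2)=2$.

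The source of the mismatch is that the phrase in Section~\ref{section:planar} must be read as ``weakly northwest'' (the path visits some cell of $[1,i]\times[1,j]$); this is exactly how the paper's own proof operates, identifying $r_A(i-1,j-1)$ with the number of pipes passing \emph{strictly northwest} of $(i,j)$ and accounting for the two pipes at the crossing via $r_A(i,j)=r_A(i-1,j-1)+2$. With that reading, your argument goes through cleanly: a pipe visits $[1,i]\times[1,j]$ iff it crosses $L$ at a column $\le j$ (forward: crossing $L$ at column $c\le j$ places the pipe in cell $(i,c)$; converse: visiting $(i',j')$ with $i'\le i$, $j'\le j$ forces the crossing of $L$ to have occurred at some column $c\le j'\le j$, by monotonicity). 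This horizontal-line route is a legitimate alternative to the paper's diagonal identity and is arguably more transparent about which pipes are being counted.
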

\begin{proof}
Since $(i,j)$ is a crossing tile,\[r_A(i,j)=r_A(i-1,j-1)+2.\]
Thus, the lemma follows from Lemma~\ref{lemma:rankice} by noting $r_A(i-1,j-1)$ counts the number of pipes which past strictly northwest of $(i,j)$.
\end{proof}

\section{Keys via bumpless pipe dreams}

\label{section:key}

\subsection{Inflation on alternating sign matrices}
\label{section:inflation}

Lascoux \cite{Lascoux:ice} defined a procedure to iteratively remove the negative entries from an ASM.  
Fix $A\in \asm(n)$ and pick $(a,b)\in [n]\times[n]$. The pivots of $(a,b)$ are those positions $(i,j)\neq (a,b)$  so that
\begin{enumerate}
	\item $(i,j)$ sits weakly northwest of $(a,b)$,
	\item $A_{i\,j}=1$, and
	\item the only nonzero entries of $A$ in the region $[i,r]\times[j,s]$ are $A_{i\,j}$ and possibly $A_{a \, b}$.
\end{enumerate} 
Now suppose $A_{a\,b}=-1$.  Write $(i_1,j_1),\,\ldots,\,(i_k,j_k)$ for  the pivots of $(a,b)$.  Furthermore, assume \[i_1<i_2<\cdots<i_k \enspace \text{and} \enspace j_1>j_2>\cdots >j_k.\]
We say $A_{a\,b}$ is a \mydef{removable} $-1$ if there are no negative entries (excluding $A_{a\,b}$) in the region
\begin{equation}
\label{eqn:inflationregion}
\bigcup_{\ell=1}^{k-1}[i_{\ell},a]\times [j_{\ell+1},b].
\end{equation}
In particular, if $A_{a\,b}$ is removable,  the only nonzero entries in this region happen at $(a,b)$ and its pivots.  A northeast most negative entry within $A$ is always removable.  

\mydef{Inflation} sets all of these entries to $0$ and then places $1$'s in the positions \[(i_1,j_2),\,\ldots,\, (i_{k-1},j_k).\] See Figure~\ref{figure:inflation} for an example. Applying inflation to a removable $-1$ produces a well-defined ASM with one less negative entry than the original ASM.    If $A\mapsto A'$ by inflation, also call the map $\asmtobpd(A)\mapsto \asmtobpd(A')$ inflation.
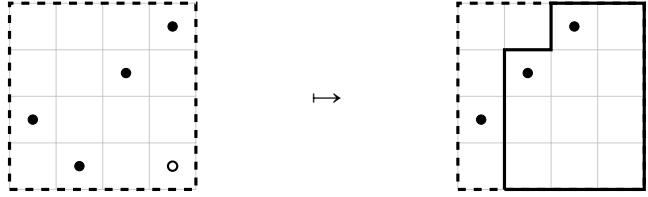
\begin{figure}
	\begin{center}
		\begin{tikzpicture}[x=1.5em,y=1.5em]
		\draw[step=1,gray!40, thin] (0,1) grid (4,5);
		\draw[color=black, very thick,dashed](0,1)rectangle(4,5);
		\filldraw [black](1.5,1.5)circle(.1);
		\filldraw [black](3.5,4.5)circle(.1);
		\filldraw [black](2.5,3.5)circle(.1);
		\filldraw [black](.5,2.5)circle(.1);
		\filldraw [color=black,fill=white,thick](3.5,1.5)circle(.1);
		\end{tikzpicture}
		\hspace{3em}
		\raisebox{2.75em}{$\mapsto$}
		\hspace{3em}
		\begin{tikzpicture}[x=1.5em,y=1.5em]
		\draw[step=1,gray!40, thin] (0,1) grid (4,5);
		\draw[color=black, very thick,dashed](0,1)rectangle(4,5);
		\draw[color=black,very thick](1,1)--(1,4)--(2,4)--(2,5)--(3,5)--(4,5)--(4,1)--(1,1);
		\filldraw [black](1.5,3.5)circle(.1);
		\filldraw [black](2.5,4.5)circle(.1);
		\filldraw [black](.5,2.5)circle(.1);
		\end{tikzpicture}
	\end{center}	
\caption{Pictured above is (a portion of) the graph an ASM before and after applying inflation.  The region from (\ref{eqn:inflationregion}) has been outlined for emphasis. }
\label{figure:inflation}
\end{figure}
  The \mydef{key} of $A$, denoted $\mkey{A}$, is the permutation  obtained by working along rows in $A$ from left to right (starting with the top row) and removing each removable $-1$ using inflation. See Figure~\ref{figure:inflate} for an example.
\begin{figure}
	\begin{center}
		\begin{tikzpicture}[x=1.5em,y=1.5em]
		\draw[step=1,gray!40, thin] (0,0) grid (7,7);
		\draw[color=black, thick](0,0)rectangle(7,7);
		\filldraw[color=black,fill=white,thick](5.5,2.5)circle(.1);
		\filldraw[color=black,fill=white,thick](2.5,1.5)circle(.1);
		\filldraw [color=black,thick](5.5,6.5)circle(.1);
		\filldraw [color=black,thick](6.5,2.5)circle(.1);
		\filldraw [color=black,thick](2.5,.5)circle(.1);
		\filldraw [color=black,thick](5.5,1.5)circle(.1);
		\filldraw [color=black,thick](.5,1.5)circle(.1);
		\filldraw [color=black,thick](1.5,2.5)circle(.1);
		\filldraw [color=black,thick](2.5,5.5)circle(.1);
		\filldraw [color=black,thick](4.5,4.5)circle(.1);
		\filldraw [color=black,thick](3.5,3.5)circle(.1);
		\end{tikzpicture}
		\hspace{1em}
	\begin{tikzpicture}[x=1.5em,y=1.5em]
		\draw[step=1,gray!40, thin] (0,0) grid (7,7);
		\draw[color=black, thick](0,0)rectangle(7,7);
		\filldraw[color=black,fill=white,thick](2.5,1.5)circle(.1);
		\filldraw [color=black,thick](4.5,6.5)circle(.1);
		\filldraw [color=black,thick](6.5,2.5)circle(.1);
		\filldraw [color=black,thick](2.5,.5)circle(.1);
		\filldraw [color=black,thick](5.5,1.5)circle(.1);
		\filldraw [color=black,thick](1.5,3.5)circle(.1);
		\filldraw [color=black,thick](.5,1.5)circle(.1);
		\filldraw [color=black,thick](2.5,5.5)circle(.1);
		\filldraw [color=black,thick](3.5,4.5)circle(.1);
		\end{tikzpicture}
		\hspace{1em}
		\begin{tikzpicture}[x=1.5em,y=1.5em]
		\draw[step=1,gray!40, thin] (0,0) grid (7,7);
		\draw[color=black, thick](0,0)rectangle(7,7);
		\filldraw [color=black,thick](6.5,2.5)circle(.1);
		\filldraw [color=black,thick](2.5,.5)circle(.1);
		\filldraw [color=black,thick](5.5,1.5)circle(.1);
		\filldraw [color=black,thick](.5,3.5)circle(.1);
		\filldraw [color=black,thick](1.5,5.5)circle(.1);
		\filldraw [color=black,thick](4.5,6.5)circle(.1);
		\filldraw [color=black,thick](3.5,4.5)circle(.1);
		\end{tikzpicture}
	\end{center}
	\caption{	Above, we show the reduction of the graph of an ASM to the graph of its key, $5241763$. The ASM on the left corresponds to the BPD pictured in Figure~\ref{figure:demprod}.}
	\label{figure:inflate}  
\end{figure}

Lascoux's choice of the name key was meant to invoke an analogy with keys of semistandard tableaux.  If we identify an ASM with its \emph{monotone triangle}, the two definitions are compatible \cite[Corollary~9]{Aval2007}. Furthermore, an ASM is always (weakly) larger than its key in the poset of ASMs, i.e.,\ $\mkey{A}\leq A$ \cite[Proposition~3]{Aval2007}.  

\begin{remark}
	Note that $\mkey{A}$ need not be maximal among permutations below $A$.  For instance
	\[A=\left(\begin{array}{cccc}
	0&1&0&0\\
	1&-1&1&0\\
	0&1&-1&1\\
	0&0&1&0\\
	\end{array}\right)
	>
	\left(\begin{array}{cccc}
	0&1&0&0\\
	1&0&0&0\\
	0&0&0&1\\
	0&0&1&0\\
	\end{array}\right)
	>
	\left(\begin{array}{cccc}
	1&0&0&0\\
	0&1&0&0\\
	0&0&0&1\\
	0&0&1&0\\
	\end{array}\right)=\mkey{A}.
	\]
	In particular, this illustrates that if $A>B$, it is not necessarily true  that $\mkey{A}>\mkey{B}$. 
\end{remark}

\subsection{Keys of ASMs via Demazure products}

In this section, we connect Lascoux's key  to Demazure products of words defined by bumpless pipe dreams.

\begin{lemma}
\label{lemma:compatinflation}
Suppose $A'$ is obtained from $A$ by removing a removable negative 1.  
\begin{enumerate}
\item  $\demprod{\asmtobpd(A)}=\demprod{\asmtobpd(A')}$.
\item If the $-1$ in $A$ has $k$ pivots, then $ |\mathbf a_{\asmtobpd(A')}|+k-2=|\mathbf a_{\asmtobpd(A)}|.$
\end{enumerate}

\end{lemma}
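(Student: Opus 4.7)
My plan is to analyze inflation via its effect on the corner sum function $r_A$.  A preliminary observation is that $i_k=a$ and $j_1=b$: since $(a,b)$ is a $-1$, the alternating-sign constraint on row $a$ forces a $+1$ to immediately precede $(a,b)$ in that row, and this $+1$ is automatically a pivot; a symmetric argument for column $b$ handles $j_1$.  Setting $\Delta r:=r_{A'}-r_A$, each entry modified by inflation contributes $\pm 1$ to $\Delta r(i,j)$ on a quadrant of the form $\{(i',j'):i'\ge i_0,\ j'\ge j_0\}$.  Summing these contributions and telescoping along the pivot staircase, with $p(i):=\max\{\ell:i_\ell\le i\}$, one obtains
\[
\Delta r(i,j)=\begin{cases}1 & \text{if }i_1\le i<a\text{ and }j_{p(i)+1}\le j<b,\\ 0 & \text{otherwise.}\end{cases}
\]
Write $\Omega$ for the support of $\Delta r$.

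For part~(2), a cell is a crossing in $\asmtobpd(B)$ iff its diagonal rank-jump $r_B(i,j)-r_B(i-1,j-1)$ equals $2$ by (\ref{eqn:diagrankjump}).  The change in this rank-jump under inflation equals $\Delta r(i,j)-\Delta r(i-1,j-1)\in\{-1,0,1\}$ and is supported on the diagonal boundary of $\Omega$.  I would enumerate this boundary via the pivot coordinates.  On the NW boundary sit the $k-1$ new $+1$ positions $(i_\ell,j_{\ell+1})$, each blank in $A$ (rank-jump $0$) and becoming an elbow in $A'$ (rank-jump $1$, no new crossings), together with the $k-2$ interior pivots $(i_\ell,j_\ell)$ for $2\le\ell\le k-1$, each a d-elbow in $A$ (rank-jump $1$) that becomes a crossing in $A'$ (rank-jump $2$).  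On the SE boundary sit the cells $(i_\ell,b)$ for $2\le\ell\le k-1$ and $(a,j_\ell)$ for $2\le\ell\le k-1$, each a crossing in $A$ whose rank-jump drops to $1$ in $A'$, together with the endpoint $(a,b)$ (not a crossing in $A$, since its rank-jump is $1$).  Combining, $|\mathbf{a}_{\asmtobpd(A)}|-|\mathbf{a}_{\asmtobpd(A')}|=2(k-2)-(k-2)=k-2$.

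For part~(1), outside $\Omega$ (and its translate by $(1,1)$) the corner sums agree, so the tile types and, via Lemma~\ref{lemma:rankcrossing}, the crossing labels all coincide between $\asmtobpd(A)$ and $\asmtobpd(A')$.  In particular, pipes enter and exit the L-region of inflation at identical positions carrying identical labels in both BPDs.  What remains is a local check that the pipes' traversal of the L-region, with crossings interpreted via (\ref{eqn:pipecross}), produces the same permutation of the boundary labels.  Inside the L-region, the pipes of $\asmtobpd(A)$ travel along the staircase of $k$ pivots and reflect once at the u-elbow at $(a,b)$, whereas those of $\asmtobpd(A')$ travel along the shifted staircase of $k-1$ new $+1$'s with no u-elbow.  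The u-elbow of $A$ encodes a redundant second crossing of one pipe pair, which, upon passing to $A'$, collapses to a single crossing via the relation $e_i^2=e_i$ and therefore leaves the Demazure product unchanged.

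The main obstacle is the local Demazure check for part~(1): identifying precisely which pipe pair has its second crossing absorbed at the u-elbow and confirming that the rerouting through the shifted staircase carries the same boundary labels.  If K-theoretic droops have already been introduced by this point of the paper, a cleaner alternative is to realize inflation as a composition of droop and K-theoretic droop moves, each Demazure-preserving by the local-move reasoning of Lemma~\ref{lemma:planarmoves}.
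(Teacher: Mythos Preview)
Your corner-sum approach is a genuinely different route from the paper's, and your formula for $\Delta r=r_{A'}-r_A$ (supported on the staircase $\Omega$) is correct. However, the execution of both parts has gaps.

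For part~(2), the diagonal boundary of $\Omega$ contains many more cells than the ones you list. Already for $k=2$ the NW boundary of $\Omega=[i_1,a-1]\times[j_2,b-1]$ is its entire top row and left column, not just the single corner $(i_1,j_2)$. You implicitly assume the unlisted boundary cells are blank in $A$, but removability only forbids $-1$'s inside the inflation region; it does not prevent horizontal or vertical pipes originating from $+1$'s \emph{outside} the region from passing straight through. Each such external pipe produces an extra rank-jump-$1$ cell on the NW boundary (gaining a crossing in $A'$) and a matching rank-jump-$2$ cell on the SE boundary (losing one), so the contributions cancel and your final count $k-2$ is correct---but this cancellation must be argued, not assumed. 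A concrete $5\times 5$ example with $k=2$ and a single external pipe already exhibits two gained and two lost crossings that your enumeration misses.

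For part~(1), the direct argument is not a proof as written. The claim that ``the u-elbow of $A$ encodes a redundant second crossing of one pipe pair\ldots via $e_i^2=e_i$'' does not match your own crossing count: for $k>2$, inflation removes $k-2$ crossings, so what is absorbed is not a single Hecke relation but several, applied to distinct pipe pairs. The ``local check'' you allude to would require tracking all $k$ pivot pipes plus any external pipes through the region, and cannot be dispatched in one line.

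The paper takes precisely the route you flag as the cleaner alternative: it realizes inflation directly as a sequence of the planar-history moves of Lemma~\ref{lemma:planarmoves}. (K-theoretic droops have not yet been introduced at this point of the paper, but the underlying moves in~(\ref{eqn:planarmoves1}) and~(\ref{eqn:planarmoves2}) are already available.) For $k=2$ only moves from~(\ref{eqn:planarmoves1}) are needed, preserving both $w_{\mathcal P}$ and $\partial(\mathcal P)$; each additional pivot beyond the second requires one move from~(\ref{eqn:planarmoves2}), which preserves the Demazure product while deleting exactly one crossing. This establishes~(1) and~(2) simultaneously and sidesteps the external-pipe bookkeeping entirely.
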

\begin{proof}
\noindent (1) If the removable $-1$ has just two pivots,  inflation amounts to iteratively applying the moves in (\ref{eqn:planarmoves1}).  (See the example below.)
\[\begin{tikzpicture}[x=1em,y=1em]
\draw[thick] (2,-1)--(2,6);
\draw[thick] (1,-1)--(1,6);
\draw[thick] (-1,2)--(4,2);
\draw[thick] (-1,4)--(4,4);
\draw[thick,rounded corners,color=blue] (0,-1)--(0,0)--(3,0)--(3,5)--(4,5);
\end{tikzpicture}
\hspace{1em}\raisebox{3em}{$\rightarrow$}\hspace{1em}
\begin{tikzpicture}[x=1em,y=1em]
\draw[thick] (2,-1)--(2,6);
\draw[thick] (1,-1)--(1,6);
\draw[thick] (-1,2)--(4,2);
\draw[thick] (-1,4)--(4,4);
\draw[thick,rounded corners,color=blue] (0,-1)--(0,0)--(1.5,0)--(1.5,3)--(2.5,3)--(3.5,3)--(3.5,5)--(4,5);
\end{tikzpicture}
\hspace{1em}\raisebox{3em}{$\rightarrow$}\hspace{1em}
\begin{tikzpicture}[x=1em,y=1em]
\draw[thick] (2,-1)--(2,6);
\draw[thick] (1,-1)--(1,6);
\draw[thick] (-1,2)--(4,2);
\draw[thick] (-1,4)--(4,4);
\draw[thick,rounded corners,color=blue] (0,-1)--(0,0)--(1.5,0)--(1.5,5)--(4,5);
\end{tikzpicture}
\hspace{1em}\raisebox{3em}{$\rightarrow$}\hspace{1em}
\begin{tikzpicture}[x=1em,y=1em]
\draw[thick] (2,-1)--(2,6);
\draw[thick] (1,-1)--(1,6);
\draw[thick] (-1,2)--(4,2);
\draw[thick] (-1,4)--(4,4);
\draw[thick,rounded corners,color=blue] (0,-1)--(0,3)--(1.5,3)--(1.5,5)--(4,5);
\end{tikzpicture}
\hspace{1em}\raisebox{3em}{$\rightarrow$}\hspace{1em}
\begin{tikzpicture}[x=1em,y=1em]
\draw[thick] (2,-1)--(2,6);
\draw[thick] (1,-1)--(1,6);
\draw[thick] (-1,2)--(4,2);
\draw[thick] (-1,4)--(4,4);
\draw[thick,rounded corners,color=blue] (0,-1)--(0,5)--(4,5);
\end{tikzpicture}
\]
Otherwise, we can find a configuration as pictured below (with some number of irrelevant horizontal / vertical black pipes).

\[
\begin{tikzpicture}[x=1em,y=1em]
\draw[thick] (2,-1)--(2,9);
\draw[thick] (1,-1)--(1,9);
\draw[thick] (-3,2)--(4,2);
\draw[thick] (-3,4)--(4,4);
\draw[thick] (-3,7)--(4,7);
\draw[thick,rounded corners,color=Plum] (-3,0)--(3,0)--(3,8)--(4,8);
\draw[thick,rounded corners,color=blue] (-1,-1)--(-1,6)--(4,6);
\end{tikzpicture}
\hspace{1em}\raisebox{3em}{$\rightarrow$}\hspace{1em}
\begin{tikzpicture}[x=1em,y=1em]
\draw[thick] (2,-1)--(2,9);
\draw[thick] (1,-1)--(1,9);
\draw[thick] (-3,2)--(4,2);
\draw[thick] (-3,4)--(4,4);
\draw[thick] (-3,7)--(4,7);
\draw[thick,rounded corners,color=blue] (-1,-1)--(-1,6)--(4,6);
\draw[thick,rounded corners,color=Plum] (-3,0)--(-2,0)--(-2,5)--(0,5)--(0,8)--(4,8);
\end{tikzpicture}
\hspace{1em}\raisebox{3em}{$\rightarrow$}\hspace{1em}
\begin{tikzpicture}[x=1em,y=1em]
\draw[thick] (2,-1)--(2,9);
\draw[thick] (1,-1)--(1,9);
\draw[thick] (-3,2)--(4,2);
\draw[thick] (-3,4)--(4,4);
\draw[thick] (-3,7)--(4,7);
\draw[thick,rounded corners,color=blue] (-1,-1)--(-1,6)--(0,6)--(0,8)--(4,8);
\draw[thick,rounded corners,color=Plum] (-3,0)--(-2,0)--(-2,5)--(0,5)--(0,6)--(4,6);
\end{tikzpicture}
\hspace{1em}\raisebox{3em}{$\rightarrow$}\hspace{1em}
\begin{tikzpicture}[x=1em,y=1em]
\draw[thick] (2,-1)--(2,9);
\draw[thick] (1,-1)--(1,9);
\draw[thick] (-3,2)--(4,2);
\draw[thick] (-3,4)--(4,4);
\draw[thick] (-3,7)--(4,7);
\draw[thick,rounded corners,color=blue] (-1,-1)--(-1,8)--(4,8);
\draw[thick,rounded corners,color=Plum] (-3,0)--(3,0)--(3,6)--(4,6);
\end{tikzpicture}
\]
By Lemma~\ref{lemma:planarmoves}, the above moves all preserve the Demazure product.
At the level of ASMs, we are reduced to a negative entry with one less pivot.  Continue in this way until the negative entry has just two pivots.  At this point, we are reduced to the previous argument.

\noindent (2) From the proof of part (1), if $k=2$, the number of crossings is preserved.  For each additional pivot beyond the second, a crossing is removed (due to the application of one of the moves pictured in (\ref{eqn:planarmoves2})).
\end{proof}

\begin{theorem}
\label{theorem:demkey}
Fix $A\in \asm(n)$.  Then
 $\demprod{\asmtobpd(A)}=\mkey{A}$.  
\end{theorem}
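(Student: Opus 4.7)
The plan is to proceed by induction on the number of negative entries of $A$, with Lemma~\ref{lemma:compatinflation} doing all the heavy lifting.

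For the base case, suppose $A$ has no negative entries. Then $A$ is the permutation matrix of some $w\in\SymGp_n$, and $\mkey{A}=w$ by definition (inflation performs no operations). In this case, I claim $\asmtobpd(A)$ is precisely the Rothe BPD for $w$: the graph of the permutation matrix $A$ has a filled dot at $(i,w(i))$ and no open dots, so its defining segments form hooks with a single bend at each $(i,w(i))$. Under the smoothing map $\asmtobpd$, these bends become downward elbow tiles, and $\asmtobpd(A)$ has no upward elbow tiles. Thus $\asmtobpd(A)$ is the Rothe BPD for $w$, and Lemma~\ref{lemma:permword} gives $\demprod{\asmtobpd(A)}=w=\mkey{A}$.

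For the inductive step, suppose $A$ has at least one negative entry. Following Lascoux's procedure, scan the rows of $A$ from top to bottom and (within rows) left to right to locate the first removable $-1$; call the resulting inflated ASM $A'$. By definition of $\mkey{\,\cdot\,}$, iterating the procedure on $A'$ produces the same permutation as iterating on $A$, so $\mkey{A}=\mkey{A'}$. Since $A'$ has strictly fewer negative entries than $A$, the inductive hypothesis yields $\demprod{\asmtobpd(A')}=\mkey{A'}$. Meanwhile, Lemma~\ref{lemma:compatinflation}(1) gives $\demprod{\asmtobpd(A)}=\demprod{\asmtobpd(A')}$. Chaining these equalities gives $\demprod{\asmtobpd(A)}=\mkey{A}$, completing the induction.

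The only real content here is Lemma~\ref{lemma:compatinflation}(1), which was already established; the rest is bookkeeping. The one point deserving care is confirming that the process in the definition of $\mkey{\,\cdot\,}$ genuinely amounts to iterated inflation at removable $-1$'s, so that the two-step reduction $A\rightsquigarrow A'\rightsquigarrow\mkey{A'}$ produces the same end result as the one-step reduction $A\rightsquigarrow\mkey{A}$. This follows because the scanning order is fixed and deterministic, and a northeastmost negative entry is always removable, so the procedure never gets stuck and the final output is independent of whether we express the reduction as a single iteration or peel off the first step.
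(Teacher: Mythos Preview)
Your proof is correct and follows essentially the same approach as the paper: both reduce to the permutation-matrix base case via Lemma~\ref{lemma:permword}, then use Lemma~\ref{lemma:compatinflation}(1) to show the Demazure product is invariant along the chain of inflations defining $\mkey{A}$. The only cosmetic difference is that the paper writes out the full chain $A=A^{(0)}\mapsto\cdots\mapsto A^{(|N(A)|)}=\mkey{A}$ explicitly, while you phrase the same argument as an induction on $|N(A)|$.
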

\begin{proof}
By Lemma~\ref{lemma:permword}, $\demprod{\asmtobpd(w)}=w=\kappa(w)$. 

If $A\in \asm(n)$ is not a permutation matrix, then
there exists a sequence 
\[A= A^{(0)}\mapsto A^{(1)} \mapsto \cdots \mapsto A^{(|N(A)|)}=\mkey{A} \]
where $A^{(i)}\mapsto A^{(i+1)}$ is inflation of the leftmost $-1$ in the topmost row of $A^{(i)}$ with negative entries.  Furthermore, by definition, $\mkey{A}=\mkey{A^{(i)}}$ for all $i$.

By repeated application of Lemma~\ref{lemma:compatinflation}, $\demprod{\asmtobpd(A)}=\demprod{\asmtobpd(A^{(i)})}$ for all $i$.  Therefore, \[\demprod{\asmtobpd(A)}=\demprod{\asmtobpd(A^{(|N(A)|)})}=\mkey{A}. \qedhere\]
\end{proof}

Indeed, as long as a negative entry of $A$ is removable, it does not matter what order we apply inflation. 
\begin{corollary}
\label{cor:inflationorder}
If $A'$ is obtained from $A$ by applying inflation to a removable $-1$ then $\mkey{A}=\mkey{A'}$.
\end{corollary}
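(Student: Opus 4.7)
The plan is to observe that Corollary~\ref{cor:inflationorder} follows almost immediately from Theorem~\ref{theorem:demkey} combined with Lemma~\ref{lemma:compatinflation}(1), so the proof should be quite short. The content of Theorem~\ref{theorem:demkey} is precisely that the key of an ASM agrees with the Demazure product of the associated BPD, and the Demazure product is manifestly an invariant of the ASM, independent of any choice of order in which inflations are applied. Lemma~\ref{lemma:compatinflation}(1) supplies the crucial fact that a single inflation step preserves this Demazure product.

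More concretely, I would argue as follows. Suppose $A'$ is obtained from $A$ by inflating some removable $-1$ of $A$ (not necessarily the one prescribed by the canonical left-to-right, top-to-bottom procedure used to define the key). Applying Theorem~\ref{theorem:demkey} to each of $A$ and $A'$ yields
\[
\mkey{A} = \demprod{\asmtobpd(A)} \qquad \text{and} \qquad \mkey{A'} = \demprod{\asmtobpd(A')}.
\]
By Lemma~\ref{lemma:compatinflation}(1), we have $\demprod{\asmtobpd(A)} = \demprod{\asmtobpd(A')}$, since a single inflation step at a removable $-1$ preserves the Demazure product of the corresponding BPD. Combining these three equalities gives $\mkey{A} = \mkey{A'}$.

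The only point that deserves a moment of care is that Theorem~\ref{theorem:demkey} is applicable to \emph{any} ASM, not just to ASMs encountered along the canonical inflation sequence in the definition of the key; this is clear from the statement of the theorem, whose proof iteratively chose a specific removable $-1$ (the leftmost in the topmost row with negatives) but whose conclusion is a statement about arbitrary $A \in \asm(n)$. Since the argument above is just a three-step chain of equalities, I do not foresee any obstacle beyond making sure to cite Theorem~\ref{theorem:demkey} and Lemma~\ref{lemma:compatinflation}(1) cleanly.
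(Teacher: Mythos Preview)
Your proof is correct and is essentially identical to the paper's own proof: the paper applies Lemma~\ref{lemma:compatinflation} to get $\demprod{\asmtobpd(A)}=\demprod{\asmtobpd(A')}$ and then invokes Theorem~\ref{theorem:demkey} on both sides to conclude $\mkey{A}=\mkey{A'}$. Your extra remark about the applicability of Theorem~\ref{theorem:demkey} to arbitrary ASMs is accurate and harmless.
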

\begin{proof}
 By Lemma~\ref{lemma:compatinflation}, $\demprod{\asmtobpd(A)}=\demprod{\asmtobpd(A')}$.  Applying Theorem~\ref{theorem:demkey}, 
\[\mkey{A}=\demprod{\asmtobpd(A)}=\demprod{\asmtobpd(A')}=\mkey{A'}. \qedhere\]
\end{proof}
Corollary~\ref{cor:inflationorder} implies  \cite[Corollary~4]{Lascoux:ice}, which was stated without proof.

\subsection{A procedure for generating BPDs}
\label{section:deflate}

If $A\mapsto A'$ is an inflation, we call the replacement $A'\mapsto A$ \mydef{deflation}.  Notice deflation corresponds to selecting $(a,b)\in D(A)$ and some subset of the  pivots of $(a,b)$. 
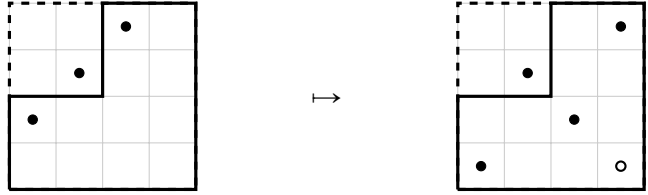
\begin{figure}
	\begin{center}
		\hspace{3em}
		\begin{tikzpicture}[x=1.5em,y=1.5em]
		\draw[step=1,gray!40, thin] (0,1) grid (4,5);
		\draw[color=black, very thick,dashed](0,1)rectangle(4,5);
		\draw[color=black,very thick](0,1)--(0,3)--(2,3)--(2,5)--(3,5)--(4,5)--(4,1)--(0,1);
		\filldraw [black](1.5,3.5)circle(.1);
		\filldraw [black](2.5,4.5)circle(.1);
		\filldraw [black](.5,2.5)circle(.1);
		\end{tikzpicture}
		\hspace{3em}
		\raisebox{2.75em}{$\mapsto$}
		\hspace{3em}
		\begin{tikzpicture}[x=1.5em,y=1.5em]
		\draw[step=1,gray!40, thin] (0,1) grid (4,5);
		\draw[color=black,very thick,dashed](0,1)rectangle(4,5);
		\draw[color=black,very thick](0,1)--(0,3)--(2,3)--(2,5)--(3,5)--(4,5)--(4,1)--(0,1);
		\filldraw [black](1.5,3.5)circle(.1);
		\filldraw [black](2.5,2.5)circle(.1);
		\filldraw [black](3.5,4.5)circle(.1);
		\filldraw [black](.5,1.5)circle(.1);
		\filldraw [color=black,fill=white,thick](3.5,1.5)circle(.1);
		\end{tikzpicture}
	\end{center}
\caption{Pictured above is deflation on an ASM, using the first and third pivots, (assuming the bottom right corner is in the diagram).}
\end{figure}
If $A\mapsto A'$ by deflation, then call the map $\asmtobpd(A)\mapsto \asmtobpd(A')$ deflation as well.
If  deflation uses exactly one pivot, we call it a \mydef{simple} deflation.  Interpreted in terms of bumpless pipe dreams, simple deflations are the \emph{droops} of \cite{Lam.Lee.Shimozono}.  See the example below.
\[
\begin{tikzpicture}[x=1.5em,y=1.5em]
\draw[step=1,gray!40, thin] (0,0) grid (5,4);
\draw[black, very thick,dashed] (0,0) rectangle (5,4);
\draw[color=black, fill=gray!40, thick](4,0)rectangle(5,1);
	\draw[thick,rounded corners, color=blue] (.5,0)--(.5,3.5)--(5,3.5);
\end{tikzpicture}
\hspace{3em}
		\raisebox{2.75em}{$\mapsto$}
		\hspace{3em}
\begin{tikzpicture}[x=1.5em,y=1.5em]
\draw[step=1,gray!40, thin] (0,0) grid (5,4);
\draw[black, very thick,dashed] (0,0) rectangle (5,4);
\draw[color=black, fill=gray!40, thick](0,3)rectangle(1,4);
	\draw[thick,rounded corners, color=blue] (.5,0)--(.5,.5)--(4.5,.5)--(4.5,3.5)--(5,3.5);
\end{tikzpicture}
\]
It is permissible to have more (unpictured) pipes in the rectangular region, but none of these may include elbow tiles.
  By \cite[Proposition~5.3]{Lam.Lee.Shimozono}, any  $\mathcal P\in\rpipes{w}$ can be reached from $\asmtobpd(w)$ by a sequence of droops.

We now introduce \mydef{K-theoretic droops}, which are moves of the form pictured below.
\[
\begin{tikzpicture}[x=1.5em,y=1.5em]
\draw[step=1,gray!40, thin] (0,0) grid (5,4);
\draw[black, very thick,dashed] (0,0) rectangle (5,4);
	\draw[thick,rounded corners, color=blue] (.5,0)--(.5,3.5)--(5,3.5);
	\draw[thick,rounded corners, color=blue] (2.5,0)--(2.5,.5)--(4.5,.5)--(4.5,4);
\end{tikzpicture}
\hspace{2em}
		\raisebox{2.75em}{$\mapsto$}
		\hspace{2em}
\begin{tikzpicture}[x=1.5em,y=1.5em]
\draw[step=1,gray!40, thin] (0,0) grid (5,4);
\draw[black, very thick,dashed] (0,0) rectangle (5,4);
\draw[color=black, fill=gray!40, thick](0,3)rectangle(1,4);
	\draw[thick,rounded corners, color=blue] (.5,0)--(.5,.5)--(4.5,.5)--(4.5,3.5)--(4.5,4);
	\draw[thick,rounded corners, color=blue] (2.5,0)--(2.5,3.5)--(5,3.5);
\end{tikzpicture}
\]
\[
\begin{tikzpicture}[x=1.5em,y=1.5em]
\draw[step=1,gray!40, thin] (0,0) grid (5,4);
\draw[black, very thick,dashed] (0,0) rectangle (5,4);
\draw[thick,rounded corners, color=blue] (.5,0)--(.5,3.5)--(5,3.5);	
\draw[thick,rounded corners, color=blue] (0,.5)--(4.5,.5)--(4.5,1.5)--(5,1.5);
\end{tikzpicture}
\hspace{2em}
		\raisebox{2.75em}{$\mapsto$}
		\hspace{2em}
\begin{tikzpicture}[x=1.5em,y=1.5em]
\draw[step=1,gray!40, thin] (0,0) grid (5,4);
\draw[black, very thick,dashed] (0,0) rectangle (5,4);
\draw[color=black, fill=gray!40, thick](0,3)rectangle(1,4);
\draw[thick,rounded corners, color=blue] (.5,0)--(.5,1.5)--(5,1.5);	
\draw[thick,rounded corners, color=blue] (0,.5)--(4.5,.5)--(4.5,3.5)--(5,3.5);
\end{tikzpicture}\]
Again, there may be other unpictured pipes, but these must not have elbow tiles within the pictured rectangle.

\begin{proposition}
\label{prop:droops}
The set $\pipes{w}$ is closed under applying droops and K-theoretic droops. Furthermore, any $\mathcal P\in \pipes{w}$ can be reached from $\asmtobpd(w)$ by applying a sequence of droops and K-theoretic droops.
\end{proposition}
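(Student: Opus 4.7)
The plan is to split the argument into closure and connectivity.

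For closure, I will show that both types of moves preserve $\demprod{\mathcal P}$. Droops are precisely the simple deflations from Section~\ref{section:deflate}, i.e., inverses of two-pivot inflations, and Lemma~\ref{lemma:compatinflation} already guarantees that such inflations preserve the Demazure product; hence so do droops. For K-theoretic droops, I will read off the two pictured templates and verify that each local change is realized by the planar moves in~(\ref{eqn:planarmoves2}) (possibly composed with ambient moves from~(\ref{eqn:planarmoves1}) to carry surrounding pipes along), so that Lemma~\ref{lemma:planarmoves} delivers the conclusion.

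For connectivity, let $A = \asmtobpd^{-1}(\mathcal P)$, so that $\mkey{A} = w$ by Theorem~\ref{theorem:demkey}. I will induct on $|N(A)|$. When $|N(A)| = 0$, $A$ is the permutation matrix of $w$, $\mathcal P$ is reduced, and \cite[Proposition~5.3]{Lam.Lee.Shimozono} says that reduced BPDs of $w$ are connected to $\asmtobpd(w)$ by droops alone. When $|N(A)| \geq 1$, I will pick a removable $-1$ in $A$ (say the northwest-most one), let $A'$ be obtained from $A$ by inflating this entry, and apply the inductive hypothesis to $A'$; this is valid because $\mkey{A'} = w$ by Corollary~\ref{cor:inflationorder} and $|N(A')| = |N(A)| - 1$. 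It will then suffice to exhibit a sequence of droops and K-theoretic droops carrying $\asmtobpd(A')$ to $\asmtobpd(A)$, that is, realizing the single deflation $A' \to A$.

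The main obstacle is this final step: realizing a multi-pivot deflation as a sequence of droops and K-theoretic droops. A one-pivot deflation is a droop by definition, so the real issue is when the deflated $-1$ has $k \geq 2$ pivots. To handle this, I will follow the structure of the proof of Lemma~\ref{lemma:compatinflation}, where a $k$-pivot inflation is decomposed into a sequence of ``K-theoretic'' rearrangements drawn from~(\ref{eqn:planarmoves2}) together with moves from~(\ref{eqn:planarmoves1}), culminating in a two-pivot inflation. Reversing this decomposition realizes the $k$-pivot deflation as a droop preceded by the reversed K-theoretic rearrangements. What remains is to match each reverse rearrangement, case by case, with one of the two K-theoretic droop templates pictured above (up to repositioning surrounding pipes via additional droops). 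This pictorial matching is the delicate part of the proof, though conceptually no new idea is needed beyond what already appears in the proof of Lemma~\ref{lemma:compatinflation}.
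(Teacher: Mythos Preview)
Your overall structure (closure via planar-history moves; connectivity by induction on the number of $-1$'s, reducing each step to realizing a single deflation by droops and K-theoretic droops) matches the paper's. Two points where you diverge:

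First, your base case is confused. When $|N(A)|=0$, the matrix $A$ is a permutation matrix, so $\mathcal P=\asmtobpd(w)$ \emph{already}; there is nothing to prove and no need to invoke \cite[Proposition~5.3]{Lam.Lee.Shimozono}. (Reduced BPDs with upward elbows have $|N(A)|>0$ and are handled by the inductive step.) This is harmless but unnecessary.

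Second, and more substantively: for the inductive step you propose to reverse the decomposition in the proof of Lemma~\ref{lemma:compatinflation} and then ``match each reverse rearrangement'' to a K-theoretic droop template. The difficulty is that the intermediate planar histories in that proof are not BPDs, so the primitive moves from~(\ref{eqn:planarmoves1}) and~(\ref{eqn:planarmoves2}) are not themselves droops or K-theoretic droops. What you would actually need to check is that each coarse BPD-to-BPD stage of the reversed decomposition (the $(k{-}1)$-pivot $\to k$-pivot expansion) is a single K-theoretic droop; you flag this as ``delicate'' but do not carry it out, and it does not follow from Lemma~\ref{lemma:compatinflation}, which only asserts preservation of the Demazure product. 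The paper sidesteps this entirely with a direct construction: droop the topmost pivot involved in the deflation to the target cell, then apply K-theoretic droops to the remaining pivots in order from top to bottom. This produces the required sequence explicitly as BPD-level moves, with no reversal or template-matching needed, and the verification is immediate from the pictures.
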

\begin{proof}
That $\pipes{w}$ is closed under droops and K-theoretic droops follows from observing each of these moves can be realized as a composition of the moves from Lemma~\ref{lemma:planarmoves}.

We know by Theorem~\ref{theorem:demkey} and the definition of the key,  starting from $\mathcal P\in \pipes{w}$, there exists a sequence of inflations taking $\mathcal P$ to $\asmtobpd(w)$.  Suppose $\mathcal P\mapsto \mathcal P'$ is an inflation.  Suppose further that there was a sequence of droops and K-theoretic droops from $\asmtobpd(w)$ to $\mathcal P'$.

Notice that each deflation move on a BPD can be realized as a composition of a droop, combined with some number of K-theoretic droops.  Simply droop the first pivot (from the top) involved in deflation, then apply K-theoretic droops to the rest in order. 
\end{proof}

We conclude with a lemma.
Given $\mathcal P\in {\sf BPD}(n)$,  write \[\Mute(\mathcal P)=\bigcup_{(i,j)\in D(\mathcal P)}[1,i]\times[1,j]\] for the {\bf mutable region} of $\mathcal P$. We write $\lambda^{(w)}:=\Mute(\asmtobpd(w))$.

\begin{lemma}
	\label{lemma:muteable}
	Suppose $\mathcal P'$ is obtained from $\mathcal P$ by deflation.  Then:
	\begin{enumerate}
		\item $\Mute(\mathcal P')\subseteq \Mute(\mathcal P)$.
		\item $\mathcal P_{i\,j}=\mathcal P'_{i\,j}$ for all $(i,j) \not \in \Mute(\mathcal P)$.
		\item Given $\mathcal P\in {\sf Pipes}(w)$, we have $\mathcal P_{i\,j}=\asmtobpd(w)_{i\,j}$ for all $(i,j)\not \in \yd{\lambda^{(w)}}$.
	\end{enumerate}
\end{lemma}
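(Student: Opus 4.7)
The plan is to reduce the entire lemma to a single structural observation about one deflation step: if $\mathcal P \mapsto \mathcal P'$ is a deflation carried out at a blank tile $(a,b) \in D(\mathcal P)$, then every tile modified by the move lies inside the rectangle $[1,a]\times[1,b]$. I would verify this by unpacking the reverse move, inflation, at the ASM level — inflation touches only the $-1$ at $(a,b)$, the selected pivots $(i_\ell,j_\ell)$ (each weakly NW of $(a,b)$), and the bridging $1$'s at $(i_\ell, j_{\ell+1})$, all of which lie in $[1,a]\times[1,b]$. The crucial point is that, by the definition of $\Mute$, this rectangle is itself contained in $\Mute(\mathcal P)$, since $(a,b) \in D(\mathcal P)$.

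Given this observation, (1) and (2) follow quickly. For (2), a cell $(i,j)\notin \Mute(\mathcal P)$ lies outside $[1,a]\times[1,b]$, so its tile is untouched. For (1), I would take any $(i',j') \in D(\mathcal P')$ and split into two cases. If $(i',j')$ lies outside $[1,a]\times[1,b]$, then the tile there was not modified, so $(i',j') \in D(\mathcal P)$ and hence $[1,i']\times[1,j'] \subseteq \Mute(\mathcal P)$ directly. Otherwise $(i',j') \in [1,a]\times[1,b]$, which gives $[1,i']\times[1,j'] \subseteq [1,a]\times[1,b] \subseteq \Mute(\mathcal P)$. Taking the union over $D(\mathcal P')$ yields $\Mute(\mathcal P') \subseteq \Mute(\mathcal P)$.

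Part (3) is then a short telescoping argument. By Proposition~\ref{prop:droops}, there is a sequence $\asmtobpd(w) = \mathcal P^{(0)} \mapsto \mathcal P^{(1)} \mapsto \cdots \mapsto \mathcal P^{(m)} = \mathcal P$ of droops and K-theoretic droops. Iterating (1) gives $\Mute(\mathcal P^{(k)}) \subseteq \Mute(\asmtobpd(w)) = \yd{\lambda^{(w)}}$ for every $k$. Hence for any $(i,j)\notin \yd{\lambda^{(w)}}$, the cell lies outside $\Mute(\mathcal P^{(k-1)})$ at every stage, and (2) gives $\mathcal P^{(k)}_{i\,j} = \mathcal P^{(k-1)}_{i\,j}$. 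Chaining these equalities yields $\mathcal P_{i\,j} = \asmtobpd(w)_{i\,j}$.

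The main, and really the only, obstacle is the initial local-modification claim. This is visually clear from the pictures of droops and K-theoretic droops, but a careful write-up should either argue directly from the inflation formula at the ASM level — showing that the positions listed above are the only entries changed — or unfold a general deflation into its constituent simple deflations (a droop followed by K-theoretic droops on the additional pivots) and verify locality for each piece in turn.
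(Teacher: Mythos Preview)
Your argument for (1) and (2) is correct and is essentially the paper's proof, only spelled out in more detail: the paper simply asserts that ``deflation takes place entirely within $\Mute(\mathcal P)$,'' which is exactly your structural observation that the modified entries all lie in $[1,a]\times[1,b]\subseteq\Mute(\mathcal P)$.

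There is, however, a small but genuine gap in your proof of (3). You invoke Proposition~\ref{prop:droops} to obtain a chain of droops and K-theoretic droops and then apply (1) and (2) at every step. Droops are simple deflations, so (1) and (2) cover them; but K-theoretic droops are \emph{not} deflations in the sense of the lemma --- in the ``before'' picture of a K-droop there is no blank tile at all inside the local rectangle, so there is no $(a,b)\in D(\mathcal P^{(k-1)})$ anchoring your locality claim. Thus you have not shown that the modification region of a K-droop lies in $\Mute(\mathcal P^{(k-1)})$, and the telescoping chain $\Mute(\mathcal P^{(k)})\subseteq\Mute(\mathcal P^{(k-1)})$ is unjustified at those steps.

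The fix is to bypass Proposition~\ref{prop:droops} entirely. Since $\mathcal P\in\pipes{w}$, the key construction (Theorem~\ref{theorem:demkey}) gives a sequence of \emph{inflations} from $\mathcal P$ to $\asmtobpd(w)$; reversing it yields a sequence of genuine deflations $\asmtobpd(w)=\mathcal P^{(0)}\mapsto\cdots\mapsto\mathcal P^{(m)}=\mathcal P$. Now (1) and (2) apply verbatim at each step, and your telescoping argument goes through. This is precisely what the paper does.
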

\begin{proof}
\noindent(1) By definition, it is immediate  that deflation of pivots does not create any diagram boxes outside of $\Mute(\mathcal P)$.

\noindent(2) This is again clear, since deflation takes place entirely within $\Mute(\mathcal P)$.

\noindent(3) Since $\mathcal P\in \pipes{w}$, we can reach $\mathcal P$ from $\asmtobpd(w)$ by a sequence of deflation moves.  Thus, the claim follows by repeated application of (2).
\end{proof}

\subsection{Bumpless pipe dreams supported on a partition}

Fix a partition $\lambda$.  We write $\bpd{\lambda}$ for the set of tilings of $\yd{\lambda}$ with the tiles pictured in (\ref{eqn:sixtiles}) so that each pipe:
\begin{enumerate}
\item starts in the bottommost cell of a column of $\yd{\lambda}$ and 
\item ends in the rightmost cell of a row of $\yd{\lambda}$.
\end{enumerate}
Note that there is no requirement on the number of pipes which appear in the tiling.  We define sets $D(\mathcal P)$ and $U(\mathcal P)$ in the same way as before.

If $\mathcal P\in \bpd{n}$ and $\yd{\lambda}\subseteq [n]\times[n]$,  restricting $\mathcal P$ to $\yd{\lambda}$ produces a well-defined element of $\bpd{\lambda}$.  We write $\restrictBPD_\lambda$ for this map.
Starting with $\mathcal P\in \bpd{\lambda}$, we would like to complete it to a square BPD in a way which preserves $D(\mathcal P)$ and $U(\mathcal P)$.

\begin{lemma}
Fix a partition $\lambda\subseteq [m]\times [n]$.  Take $\mathcal P\in \bpd{\lambda}$ and suppose there are $k$ pipes in $\mathcal P$.  There exists a unique $\widetilde{\mathcal P}\in \bpd{m+n}$ so that $\restrictBPD_\lambda(\widetilde{\mathcal P})=\mathcal P$ and $D(\widetilde{\mathcal P}),U(\widetilde{\mathcal P})\subseteq \yd{\lambda}$. 
\end{lemma}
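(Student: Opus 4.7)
The plan is to extend $\mathcal{P}$ cell-by-cell in a northwest-to-southeast sweep, processing cells $(i,j) \notin \yd{\lambda}$ in any linear order refining the partial order $(i',j') \preceq (i,j) \iff i' \leq i$ and $j' \leq j$ (for instance by increasing $i+j$). When the procedure reaches such a cell, the top and left neighbors have already been determined: by $\mathcal{P}$ if the neighbor lies in $\yd{\lambda}$, by a previous step of the sweep if it lies in the complement, or (for cells in row $1$ or column $1$) by the NW grid boundary, which carries no pipe since pipes of a BPD neither start nor end there. Thus the pair $(l,t) \in \{0,1\}^2$ recording whether the left and top edges of $(i,j)$ carry a pipe is known at each step.

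The hypothesis $D(\widetilde{\mathcal P}), U(\widetilde{\mathcal P}) \subseteq \yd{\lambda}$ forbids blank and upward-elbow tiles outside $\yd{\lambda}$, and the four remaining tiles are matched bijectively with the four values of $(l,t)$:
\[
(0,0)\mapsto\text{downward elbow},\quad (1,0)\mapsto\text{horizontal},\quad (0,1)\mapsto\text{vertical},\quad (1,1)\mapsto\text{crossing}.
\]
So the tile, and hence its outgoing right and bottom edges, is forced at every step; this proves uniqueness, and running the same recipe builds a candidate $\widetilde{\mathcal{P}}$ which by design satisfies $\restrictBPD_\lambda(\widetilde{\mathcal P}) = \mathcal P$ and $D, U \subseteq \yd{\lambda}$. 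Reading the table, the top-row top edges and left-column left edges carry no pipe, so the only remaining obligation is to check that each bottom-row cell has a pipe on its bottom edge and each right-column cell has a pipe on its right edge.

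For this I would pass to the corner-sum function $r$ of the construction via Lemma~\ref{lemma:rankice}: every one of the six tile types has row and column increments $r(i,j)-r(i-1,j), r(i,j)-r(i,j-1) \in \{0,1\}$, and $r(0,\cdot) = r(\cdot,0) = 0$ by initialization, so by Lemma~\ref{lemma:cornersumchar} the outstanding boundary conditions $r(m+n,j) = j$ and $r(i,m+n) = i$ collapse into the single identity $r(m+n, m+n) = m+n$. This is the main obstacle. I would argue it by pipe tracing: each of the $k$ pipes of $\mathcal{P}$ extends uniquely through the forced tiles to a pipe of $\widetilde{\mathcal{P}}$ which continues straight down from the south boundary of $\yd{\lambda}$ to the bottom of the grid and straight east from the east boundary of $\yd{\lambda}$ to the right of the grid (the no-upward-elbow restriction rules out any bend in the complement), while each of the remaining $m+n-k$ bottom-row columns originates a fresh L-shaped pipe that rises through the complement, turns east at a downward elbow, and exits to the right without ever entering $\yd{\lambda}$. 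An inductive check along the sweep order confirms that these $m+n$ pipes fit together into a consistent BPD, whence $r(m+n,m+n) = m+n$ and $\widetilde{\mathcal{P}} \in \bpd{m+n}$.
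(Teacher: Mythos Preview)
Your argument is correct and, for existence, essentially matches the paper's: both extend the pipes of $\mathcal P$ straight to the grid boundary and then place hook-shaped pipes in the leftover cells. The paper organizes the construction row by row, which makes it transparent that exactly one downward elbow is placed in each row not already served by a pipe of $\mathcal P$, hence every row carries a pipe out the right edge; your cell-by-cell sweep produces the identical tiling but leaves that count slightly implicit (the ``inductive check along the sweep order'' you gesture at is exactly the paper's row-by-row verification). The reduction of both boundary conditions to the single identity $r(m+n,m+n)=m+n$ via the $\{0,1\}$ increment bounds is a nice touch not in the paper.

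Where you genuinely diverge is uniqueness. The paper invokes an external structural fact about ASMs (that $A$ is determined by the values of $r_A$ at the southeast corners of the connected components of $N(A)\cup D(A)$). Your forced-tile argument is more elementary and entirely self-contained: since banning blank and upward-elbow tiles leaves exactly one tile compatible with each pair $(l,t)$ of incoming edge states, the extension is determined step by step. This is cleaner and avoids the citation.
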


\begin{proof}
We start by constructing $\widetilde{\mathcal P}$.  Embed $\mathcal P$ in the $(m+n)\times (m+n)$ grid so that it is top-left justified.  Extend all of the horizontal and vertical pipes at the boundary edge of $\yd{\lambda}$ so that they hit the right and bottom edges of the grid.

Let $i$ be the index of the first row (from the top) which has a blank tile outside of $\yd{\lambda}$.  Let $j$ be the index of the left-most tile in  row $i$ so that $(i,j)\not \in\yd{\lambda}$. In this cell, place a downward elbow tile.  The only tiles to the right of position $(i,j)$ are blank or vertical.  Thus, we may extend the downward elbow to the right edge of the grid.  Likewise, the only tiles below position $(i,j)$ are blank or horizontal.  So we extend the pipe to reach the bottom edge of the grid.

Continue to the next row which has a blank tile outside of $\yd{\lambda}$ and repeat the procedure given above.  This construction results in a tiling which has no blank tiles or upward elbows outside of $\yd{\lambda}$.  Furthermore, there are $m+$ pipes, each connecting the bottom row of the grid to the right column.  Finally, a downward elbow was placed outside of $\yd{\lambda}$ if and only if there was no pipe which exited $\lambda$  in that row.  Thus, a pipe exits every row.  From this, we see the construction produced a well-defined element of $\bpd{m+n}$.

Uniqueness follows from the bijection between $\asm(m+n)$ and $\bpd{m+n}$ combined with the fact that ASMs are determined by the restriction of $r_A$ to the southeast most corners of each connected component of $N(A)\cup D(A)$ (see \cite[Proposition~3.11]{weigandt2017prism}).
\end{proof}

Often, this construction produces a BPD which larger than necessary.  If the bottom right corner is a downward elbow, we are free to remove the last row and column to produce a well-defined BPD of one size smaller.  With this in mind, we define $\completebpd(\mathcal P)$ to be the smallest possible square BPD which restricts to $\mathcal P$.  See Figure~\ref{figure:partitionbpd} for an example.

\begin{figure}
\[\begin{tikzpicture}[x=1.5em,y=1.5em]
\draw[color=white, thick](0,1)rectangle(4,5);
\draw[step=1,gray, thin] (0,3) grid (3,5);
\draw[step=1,gray, thin] (0,2) grid (2,5);
\draw[color=black,very thick] (0,2)--(0,5)--(3,5)--(3,3)--(2,3)--(2,2)--(0,2)--(0,3);
\draw[thick,color=blue,rounded corners] (1.5,2)--(1.5,2.5)--(1.5,3.5)--(2.5,3.5)--(2.5,4.5)--(3,4.5);
\draw[thick,color=blue,rounded corners](.5,2)--(.5,2.5)--(2,2.5);
\end{tikzpicture}
\hspace{3em}
\begin{tikzpicture}[x=1.5em,y=1.5em]
\draw[step=1,gray, thin] (0,1) grid (4,5);
\draw[color=black, thick](0,1)rectangle(4,5);
\draw[thick,color=blue,rounded corners] (1.5,1)--(1.5,2.5)--(1.5,3.5)--(2.5,3.5)--(2.5,4.5)--(4,4.5);
\draw[thick,color=blue,rounded corners](.5,1)--(.5,2.5)--(4,2.5);
\draw[thick,color=blue,rounded corners](2.5,1)--(2.5,1.5)--(4,1.5);
\draw[thick,color=blue,rounded corners](3.5,1)--(3.5,3.5)--(4,3.5);
\end{tikzpicture}
\]
\caption{ Let $\lambda=(3,3,2)$.  Pictured  on the left an element  $\mathcal P\in\bpd{\lambda}$ and on the right is $\completebpd(\mathcal P)$.}
\label{figure:partitionbpd}
\end{figure}
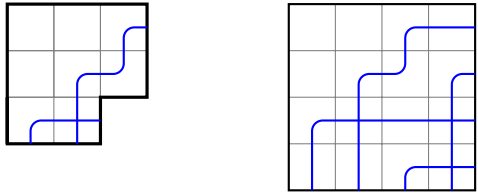

\begin{lemma}
\label{lemma:compatcompletion}
Take $\mathcal P\in \bpd{\lambda}$.  If  $\completebpd(P)\in \pipes{w}$, then \[\pipes{w}\subseteq \{\completebpd(\mathcal Q):\mathcal Q\in \bpd{\lambda}\}.\]
\end{lemma}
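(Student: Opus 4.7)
The plan is: fix an arbitrary $\mathcal R\in\pipes{w}$ and set $\mathcal Q:=\restrictBPD_\lambda(\mathcal R)$. I will argue $\mathcal Q\in\bpd{\lambda}$ and $\completebpd(\mathcal Q)=\mathcal R$, so that $\mathcal R$ indeed lies in $\completebpd(\bpd{\lambda})$. That $\mathcal Q\in\bpd{\lambda}$ is automatic from the partition property: pipes of any $n\times n$ BPD travel monotonically northeast, and $\yd{\lambda}$ is closed under moves to the northwest, so every pipe of $\mathcal R$ restricted to $\yd{\lambda}$ enters only through the bottommost cell of a column of $\yd{\lambda}$ and exits only through the rightmost cell of a row, matching the boundary conditions for $\bpd{\lambda}$.

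For the identity $\completebpd(\mathcal Q)=\mathcal R$, by the uniqueness statement for the completion it suffices to verify $D(\mathcal R),U(\mathcal R)\subseteq\yd{\lambda}$. Applying Lemma~\ref{lemma:muteable}(3) to $\mathcal R\in\pipes{w}$ yields $\mathcal R_{i,j}=\asmtobpd(w)_{i,j}$ for every $(i,j)\notin\yd{\lambda^{(w)}}$; since $\asmtobpd(w)$ has no upward elbows and no blanks outside $\yd{\lambda^{(w)}}$, this forces $D(\mathcal R),U(\mathcal R)\subseteq\yd{\lambda^{(w)}}$. Hence the problem reduces to the inclusion $\yd{\lambda^{(w)}}\subseteq\yd{\lambda}$, which, using that $\lambda$ is a partition, is equivalent to $D(w)\subseteq\yd{\lambda}$.

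The main obstacle is establishing this last inclusion, and it is where the hypothesis $\completebpd(\mathcal P)\in\pipes{w}$ is used essentially. The construction of $\completebpd$ guarantees $D(\completebpd(\mathcal P))\subseteq\yd{\lambda}$, and by Theorem~\ref{theorem:demkey} the ASM $A$ associated to $\completebpd(\mathcal P)$ satisfies $\kappa(A)=w$, so a sequence of Lascoux inflations reduces $A$ to the permutation matrix of $w$, whose diagram is exactly $D(w)$. The plan is to track the diagram through each inflation step: because inflation of a removable $-1$ repositions its pivot $+1$'s to a strictly northwest configuration inside the pivot rectangle of that $-1$, and because $\yd{\lambda}$ is closed under moves to the northwest, an inductive argument based on the evolution of the corner-sum function $r_A$ shows that the diagram stays inside $\yd{\lambda}$ at every step. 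Applied to the terminal ASM, this yields $D(w)\subseteq\yd{\lambda}$.

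Once $D(w)\subseteq\yd{\lambda}$ is in hand, we obtain $\yd{\lambda^{(w)}}\subseteq\yd{\lambda}$, so $D(\mathcal R),U(\mathcal R)\subseteq\yd{\lambda}$, and the uniqueness of the completion gives $\completebpd(\mathcal Q)=\mathcal R$. The hard part is the inductive control on the diagram under inflation; the rest is bookkeeping with Lemma~\ref{lemma:muteable} and the explicit description of $\completebpd$.
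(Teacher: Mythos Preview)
Your proposal is correct and follows essentially the same route as the paper: both establish $\yd{\lambda^{(w)}}\subseteq\yd{\lambda}$ from the hypothesis via inflation, then invoke Lemma~\ref{lemma:muteable}(3) and the uniqueness of completion to conclude that every $\mathcal R\in\pipes{w}$ is $\completebpd$ of its restriction. The only difference is in how the key inclusion is argued: the paper observes directly that $U(\completebpd(\mathcal P))\subseteq\yd{\lambda}$ by construction, and since each inflation modifies only tiles weakly northwest of the removable $-1$ (hence inside the partition $\yd{\lambda}$), no tile outside $\yd{\lambda}$ ever changes on the way to $\asmtobpd(w)$---so there is no need to track the diagram or corner sums separately.
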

\begin{proof}
Take $\mathcal P\in \bpd{\lambda}$.  Let $w=\demprod{\completebpd(\mathcal P)}$.  Notice, by construction, the tiles outside of $\yd{\lambda}$ in $\completebpd(\mathcal P)$ only depend on which pipes exit the southeast boundary of $\yd{\lambda}$ in $\mathcal P$.  In particular, if $\mathcal P'\in \bpd{\lambda}$ is obtained from $\mathcal P$ by applying inflation (or deflation), their completions agree outside of $\yd{\lambda}$.

By construction, $U(\completebpd(\mathcal P))\subseteq \yd{\lambda}$.
This means any inflation on $\completebpd(\mathcal P)$ happens entirely within $\yd{\lambda}$.  As such, we apply a sequence of inflations to $\completebpd(\mathcal P)$ to reach $\asmtobpd(w)$.  Furthermore,  each replacement along the way is confined to $\yd{\lambda}$.  Thus, we see $\yd{\lambda^{(w)}}\subseteq \yd{\lambda}$ and the result follows.
\end{proof}

\section{Transition on bumpless pipe dreams}
\label{section:transitionBPD}
This section follows the techniques outlined in \cite{Lascoux:ice}, translated to the language of bumpless pipe dreams.  We provide additional details to make the arguments self-contained.  Throughout this section, we adopt the notation from Section~\ref{section:introtransition}.

\subsection{Preliminaries}
Given $\mathcal P\in \pipes{w}$, define $\mc(\mathcal P)=\mc(w)$.  Notice $\mc(\asmtobpd({\rm id}))$ is not defined.
  Define a map $\asmtransition:\bpd{n}\rightarrow \bpd{n}$ as follows.
Let $\asmtransition(\id)=\id$. Otherwise, write  $(a,b):=\mc(\mathcal P)$. We know since $\pipes{w}$ is connected by sequences of deflations, $(a,b)$ is a blank tile or an upward elbow.  If it is a blank tile, make the replacements pictured below involving the blue pipes in the rectangle $[a,w^{-1}(b)]\times [b,w(a)]$: 
\begin{equation}
\label{eq:transitionpicture1}
\raisebox{-3em}{
\begin{tikzpicture}[x=1.5em,y=1.5em]
\draw[step=1,gray!40, thin] (0,0) grid (5,4);
\draw[black, very thick,dashed] (0,0) rectangle (5,4);
\draw[color=black, fill=gray!40, thick](0,3)rectangle(1,4);
\draw[thick,rounded corners,color=blue] (.5,0)--(.5,.5)--(5,.5);
\draw[thick,rounded corners,color=blue] (4.5,0)--(4.5,3.5)--(5,3.5);
\end{tikzpicture}
\hspace{3em}
		\raisebox{2.75em}{$\mapsto$}
		\hspace{3em}
\begin{tikzpicture}[x=1.5em,y=1.5em]
\draw[step=1,gray!40, thin] (0,0) grid (5,4);
\draw[black, very thick,dashed] (0,0) rectangle (5,4);
	\draw[thick,rounded corners,color=blue] (.5,0)--(.5,3.5)--(5,3.5);
	\draw[thick,rounded corners,color=blue] (4.5,0)--(4.5,.5)--(5,.5);
\end{tikzpicture}}.
\end{equation}
If $\mc(\mathcal P)$ is an upward elbow, make the following replacements:
\begin{equation}
\label{eq:transitionpicture2}
\raisebox{-3em}{\begin{tikzpicture}[x=1.5em,y=1.5em]
\draw[step=1,gray!40, thin] (0,0) grid (5,4);
\draw[black, very thick,dashed] (0,0) rectangle (5,4);
	\draw[thick,rounded corners,color=blue] (0,3.5)--(.5,3.5)--(.5,4);
	\draw[thick,rounded corners,color=blue] (.5,0)--(.5,.5)--(5,.5);
\draw[thick,rounded corners,color=blue] (4.5,0)--(4.5,3.5)--(5,3.5);
\end{tikzpicture}
\hspace{3em}
		\raisebox{2.75em}{$\mapsto$}
		\hspace{3em}
\begin{tikzpicture}[x=1.5em,y=1.5em]
\draw[step=1,gray!40, thin] (0,0) grid (5,4);
\draw[black, very thick,dashed] (0,0) rectangle (5,4);
	\draw[thick,rounded corners,color=blue] (.5,0)--(.5,4);
	\draw[thick,rounded corners,color=blue] (0,3.5)--(5,3.5);	
\draw[thick,rounded corners,color=blue] (4.5,0)--(4.5,.5)--(5,.5);
\end{tikzpicture}}.
\end{equation}
All other pipes in the region (not pictured in the diagrams) remain the same.

\begin{lemma}
\label{lemma:transitiondeflation}
Take $w\in \SymGp_n$.  Fix $I\subseteq \phi(w)$. Suppose $\mathcal P$ is obtained from $\asmtobpd(w)$ by deflating the pivots of $\mc(w)$ which sit in the rows of $I$.  Then $\asmtransition(\mathcal P)=\asmtobpd(w_I)$.  Furthermore, $\ell(w_I)=\ell(w)+|I|-1$.
\end{lemma}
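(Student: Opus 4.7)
My plan is to prove the length formula $\ell(w_I) = \ell(w) + |I| - 1$ by a direct inversion count and the identity $\asmtransition(\mathcal P) = \asmtobpd(w_I)$ by comparing underlying ASMs before and after the transition move. Write $I = \{i_{\ell_1} < \cdots < i_{\ell_m}\}$ with corresponding pivot columns $j_{\ell_1} > \cdots > j_{\ell_m}$, and set $S = \{a, b', i_{\ell_1}, \ldots, i_{\ell_m}\}$. A direct computation from $w_I = w t_{a,b'} c^{(a)}_I$ yields $w_I(a) = j_{\ell_m}$, $w_I(i_{\ell_s}) = j_{\ell_{s-1}}$ for $s \geq 2$, $w_I(i_{\ell_1}) = b$, $w_I(b') = w(a)$, and $w_I = w$ off $S$.

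For the length formula, only inversions involving a position in $S$ can change. Counting inversions within $S$, using $j_{\ell_1} > \cdots > j_{\ell_m}$ and $j_{\ell_s} < b < w(a)$, yields a net gain of $m - 1$. For pairs with one position in $S$ and one outside $S$, the contributions cancel: since $a = \des(w)$, the sequence $w(a+1), \ldots, w(n)$ is strictly increasing, which kills contributions from positions $q > a$ not in $S$; and for $q$ strictly between consecutive elements of $S$ that are $\leq a$, the pivot condition on $(i_{\ell_k}, j_{\ell_k})$ forbids $w(q) \in (j_{\ell_k}, b)$, which is precisely the range where a nontrivial contribution could arise. Thus $\ell(w_I) = \ell(w) + m - 1 = \ell(w) + |I| - 1$.

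For the BPD identification, Proposition~\ref{prop:droops} gives $\demprod{\mathcal P} = w$, so $\mc(\mathcal P) = (a, b)$. When $I = \emptyset$, $\mathcal P = \asmtobpd(w)$ and $(a, b)$ is blank; the maximal-corner property forces the rectangle $[a, b'] \times [b, w(a)]$ to have downward elbows only at $(b', b)$ and $(a, w(a))$, with a crossing at $(b', w(a))$---matching the ``before'' picture of~(\ref{eq:transitionpicture1}). Applying $\asmtransition$ swaps these elbows to $(a, b)$ and $(b', w(a))$, yielding the Rothe BPD of $w t_{a, b'} = w_\emptyset$. When $I \neq \emptyset$, deflation modifies only cells weakly northwest of $(a, b)$, so the tiles at $(b', b)$, $(a, w(a))$, and $(b', w(a))$ are inherited from $\asmtobpd(w)$, while $(a, b)$ becomes an upward elbow from the newly placed $-1$. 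This matches the ``before'' of~(\ref{eq:transitionpicture2}); applying $\asmtransition$ removes the $-1$ at $(a, b)$ and the $1$'s at $(b', b)$, $(a, w(a))$, and adds a $1$ at $(b', w(a))$. Matching the resulting $1$'s position-by-position against the permutation matrix of $w_I$---which has $1$'s at $(a, j_{\ell_m})$, $(i_{\ell_s}, j_{\ell_{s-1}})$ for $s \geq 2$, $(i_{\ell_1}, b)$, $(b', w(a))$, and unchanged positions elsewhere---confirms the identity.

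The main obstacle is the bookkeeping around deflation: the underlying ASM of $\mathcal P$ differs from $w$'s permutation matrix at several positions (removing $1$'s at the used pivots $(i_{\ell_s}, j_{\ell_s})$, placing a $-1$ at $(a, b)$, and introducing new $1$'s at the cascaded positions $(a, j_{\ell_m})$, $(i_{\ell_s}, j_{\ell_{s-1}})$ for $s \geq 2$, and $(i_{\ell_1}, b)$), and the cumulative effect of deflation followed by $\asmtransition$ must be verified to coincide with the cycle structure of $w_I = w t_{a, b'} c^{(a)}_I$. The maximal-corner property is used throughout, both to control the tile structure of the relevant rectangle and to invoke the monotonicity of $w$ after position $a$.
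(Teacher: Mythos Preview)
Your argument is correct. The identification $\asmtransition(\mathcal P)=\asmtobpd(w_I)$ proceeds exactly as in the paper: you track the downward elbows (equivalently, the $1$'s of the underlying ASM) through deflation and then through the transition replacement, and match them against the permutation matrix of $w_I$. The paper is simply terser, recording the elbow positions~(\ref{eqn:deflatepivots}) and observing that the unique upward elbow at $(a,b)$ is removed by $\asmtransition$, so the result is a Rothe BPD whose downward elbows one reads off directly.

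Where you genuinely diverge is in the length formula. The paper derives $\ell(w_I)=\ell(w)+|I|-1$ as a consequence of Lemma~\ref{lemma:compatinflation}, which says that inflating a $-1$ with $k$ pivots shortens the reading word by $k-2$; here the $-1$ created by deflation has $|I|+1$ pivots, giving the count immediately. Your direct inversion count is a valid alternative and is more elementary in that it avoids the BPD machinery entirely. The only places your write-up could be tightened: for $a<q<b'$, the vanishing of the cross-contribution uses not just that $w$ is increasing on $[a+1,n]$ but specifically that $w(b')=b$, forcing $w(q)<b$; and your citation of Proposition~\ref{prop:droops} to get $\demprod{\mathcal P}=w$ is fine but somewhat indirect---Corollary~\ref{cor:inflationorder} (deflation preserves the key) is the more immediate reference. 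The trade-off is that the paper's route is shorter once Lemma~\ref{lemma:compatinflation} is in hand and keeps the argument inside the BPD framework, while yours is self-contained permutation combinatorics.
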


\begin{proof}
Write $\mc(w)=(a,b)$.

If $I=\emptyset$ then no deflation occurred.  As such, we are in the situation of $(\ref{eq:transitionpicture1})$.  At the level of permutation matrices, we are simply swapping rows to remove the inversion $\mc(w)$ and so $\asmtobpd(w_\emptyset)=\asmtransition(\mathcal P)$.

Now suppose $I\neq\emptyset$. 
 Write $I=\{i_1,i_2,\ldots,i_k\}$ with $i_1<i_2<\cdots<i_k$.  Since $\mathcal P$ was obtained from $\asmtobpd(w)$ by deflation of the pivots in $I$, we know $\mathcal P$ has downward elbows in positions 
\begin{equation}
\label{eqn:deflatepivots}
(i_1,b), \, (i_2,w(i_1)),\,\ldots,\,(i_{k},w(i_{k-1})),\,(a,w(i_k)).
\end{equation}
Furthermore, $\mathcal P$ has a unique upward elbow tile which sits in $\mc(w)$.  This is removed when applying  $\asmtransition$.  So $\asmtransition(\mathcal P)$ is a Rothe BPD.

From the description in (\ref{eqn:deflatepivots}), we confirm $\asmtransition(\mathcal P)=\asmtobpd(w_\emptyset\cdot c_I^{(a)})=\asmtobpd(w_I)$.
That $\ell(w_I)=\ell(w)+|I|-1$ is a consequence of Lemma~\ref{lemma:compatinflation}.
\end{proof}

Below, we translate \cite[Proposition~7]{Lascoux:ice} into the language of bumpless pipe dreams.  We include a proof for completeness.
\begin{proposition}\label{prop:asmtransition}
Fix a permutation $w\neq \id$. The restriction of $\asmtransition$ to $\pipes{w}$ defines a bijection
\[\widetilde{\asmtransition}:\pipes{w}\rightarrow \bigcup_{I\subseteq \phi(w)}\pipes{w_I}.\] 
\end{proposition}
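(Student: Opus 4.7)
The plan is to construct an explicit two-sided inverse. Granting the forward map is well-defined (checked first), I describe the inverse, then verify both compositions are the identity by a local pipe-tracing argument.

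First I check that $\asmtransition$ is well-defined on $\pipes{w}$ and that its image lands in the correct target. Since $(a,b) = \mc(w)$ is determined by $w$ alone and sits in $D(w)$, Lemma~\ref{lemma:muteable} places $(a,b)$ inside $\lambda^{(w)}$. In the Rothe BPD $\asmtobpd(w)$ the tile at $(a,b)$ is a downward elbow, and tracking how droops and K-theoretic droops act at the \emph{bottommost} diagram cell of $w$ shows the tile at $(a,b)$ in any $\mathcal P \in \pipes{w}$ is either blank or an upward elbow, so exactly one of (\ref{eq:transitionpicture1}) or (\ref{eq:transitionpicture2}) applies. For the target, the base case is Lemma~\ref{lemma:transitiondeflation}: if $\mathcal P$ is the specific deflation of $\asmtobpd(w)$ using the pivots indexed by $I$, then $\asmtransition(\mathcal P) = \asmtobpd(w_I)$. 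For a general $\mathcal P$, I write $\mathcal P$ as a sequence of deflations starting from $\asmtobpd(w)$ (Proposition~\ref{prop:droops}) and separate those deflations into two classes: those whose northwest corner lies in the pivot positions of $\mc(w)$ (these determine $I$), and all others. The key lemma is that $\asmtransition$ \emph{commutes} with deflations of the second type, because such a deflation acts in a rectangle disjoint from or nested inside the forward-map rectangle $R = [a, w^{-1}(b)] \times [b, w(a)]$ in a way that is compatible with the replacements in (\ref{eq:transitionpicture1}) and (\ref{eq:transitionpicture2}). Combining these two observations yields $\demprod{\asmtransition(\mathcal P)} = w_I$.

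To construct the inverse, I reverse the local picture. Given $\mathcal Q \in \pipes{w_I}$ for some $I \subseteq \phi(w)$, the data $w$ and $I$ determine both $(a,b) = \mc(w)$ and the rectangle $R$. I examine the two distinguished pipes in $\mathcal Q$ that, in $\asmtobpd(w_I)$, produce the right-hand configurations in (\ref{eq:transitionpicture1}) and (\ref{eq:transitionpicture2}): one enters the right edge of $R$ in row $a$, and one enters the bottom edge of $R$ in column $b$ (plus, in the elbow case, an additional pipe meeting the left edge of $R$ in row $a$). Reversing the local replacement produces a candidate BPD $\mathcal P$ whose tile at $(a,b)$ is blank or an upward elbow. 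Proving $\mathcal P \in \pipes{w}$ then reduces, via the planar-history moves of Lemma~\ref{lemma:planarmoves}, to checking that the Demazure product changes by $t_{a, w^{-1}(b)}^{-1} (c_I^{(a)})^{-1}$; this is the reverse of the cycle computation that was used for the forward direction.

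The two compositions are the identity on the nose, since each is a sequence of local replacements in $R$ inverse to the other. The disjointness of the union $\bigcup_{I \subseteq \phi(w)} \pipes{w_I}$ follows because $I \mapsto w_I$ is injective on subsets of $\phi(w)$: two distinct choices $I \neq J$ produce distinct cycles $c_I^{(a)} \neq c_J^{(a)}$ and hence $w_I \neq w_J$. Combining these facts establishes the claimed bijection.

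The main obstacle will be the commutation step: showing cleanly that every deflation of $\mathcal P$ that is not a deflation of a pivot of $\mc(w)$ corresponds, after applying $\asmtransition$, to an analogous deflation of $\asmtobpd(w_I)$. This requires a case analysis based on the position of the deflation rectangle relative to $R$ (disjoint, nested, or straddling the boundary), and uses the characterization of pivots in $\phi(w)$ to rule out configurations in which a deflation rectangle would overlap $R$ in an inadmissible way. Once this commutation is in hand, the rest of the proof is straightforward bookkeeping.
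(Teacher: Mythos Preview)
Your overall strategy matches the paper's: reduce to Lemma~\ref{lemma:transitiondeflation} by factoring a sequence of deflations through $\mathcal P_I$, and argue that the remaining deflations commute with $\asmtransition$. However, two points deserve correction.

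First, a small factual slip: in $\asmtobpd(w)$ the tile at $(a,b)=\mc(w)$ is \emph{blank}, not a downward elbow, since $(a,b)\in D(w)$. The downward elbows of the Rothe BPD sit at $(i,w(i))$, and $(a,b)$ is not of this form.

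Second, and more importantly, you are over-engineering the commutation step. You flag it as ``the main obstacle'' and propose a case analysis on how a deflation rectangle can sit relative to $R=[a,w^{-1}(b)]\times[b,w(a)]$. The paper avoids this entirely via one observation: $R\cap\yd{\lambda^{(w)}}=\{(a,b)\}$. Indeed, $a=\des(w)$ is the last row of $\lambda^{(w)}$ and $\lambda^{(w)}_a=b$, so $R$ touches $\lambda^{(w)}$ only at its northwest corner. Now any deflation targets a blank cell $(a',b')$ in the current diagram, and $D(\mathcal P')\subseteq\lambda^{(w)}$ for every $\mathcal P'\in\pipes{w}$ by Lemma~\ref{lemma:muteable}. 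Once the first deflation has turned $(a,b)$ into an upward elbow (producing $\mathcal P_I$), every subsequent deflation has $(a',b')\in\lambda^{(w)}\setminus\{(a,b)\}$, and its entire replacement region lies weakly northwest of $(a',b')$, hence disjoint from $R$. So the ``same deflation moves'' can be performed verbatim on the $\asmtransition$-side, with no case analysis needed. This single observation also gives injectivity immediately: $\asmtransition$ alters only the cell $(a,b)$ inside $\lambda^{(w)}$, and everything outside $\lambda^{(w)}$ is already fixed by $w$.

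With this in hand you do not need to build the inverse explicitly or invoke Lemma~\ref{lemma:planarmoves} for the Demazure-product change; the paper simply checks injectivity, well-definedness, and surjectivity separately, each in a few lines.
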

\begin{proof}

Let $(a,b):=\mc(w)$.
Given $I\subseteq\phi(w)$, let $\mathcal P_I$ be the BPD obtained from $\asmtobpd(w)$ by applying deflation to the pivots of $\mc(w)$ in $\asmtobpd(w)$ which sit in the rows of $I$.

\noindent {\bf Claim:}  $\widetilde{\asmtransition}$ is injective.

Fix $\mathcal P\in \pipes{w}$.
The only cell of $\mathcal P$ in $\yd{\lambda^{(w)}}$ affected by the map $\asmtransition$ is $\mc(w)$. Thus, knowing the result of applying $\widetilde{\asmtransition}$ to $\mathcal P$ uniquely determines $\mathcal P$.

\noindent {\bf Claim}: $\widetilde{\asmtransition}$ is well-defined.

Fix $\mathcal P\in \pipes{w}$.  We know there exists a sequence of deflations \[\mathcal P=\mathcal P^{(0)}\mapsto \cdots \mapsto \mathcal P^{(k)}\mapsto \asmtobpd(w).\]  We may assume, without loss of generality, that $\mathcal P^{(k)}=\mathcal P_I$ for some $I\subseteq\phi(w)$.  (In the case $I=\emptyset$,  $\mathcal P^{(k)}=\asmtobpd(w)$.)
Furthermore, the sequence
\[\asmtransition(\mathcal P^{(0)})\mapsto \cdots \mapsto \asmtransition(\mathcal P^{(k)})\]
consists of the \emph{same} deflation moves, since the transition replacements happen in a disjoint region of the grid from the deflations.
By Lemma~\ref{lemma:transitiondeflation}, $\asmtransition(\mathcal P^{(k)})=\asmtransition(\mathcal P_I)=\asmtobpd(w_I)$.  Thus, $\asmtransition(\mathcal P)\in \pipes{w_I}$.

\noindent {\bf Claim}: $\widetilde{\asmtransition}$ is surjective.

Take $\mathcal P\in \pipes{w_I}$.  Then we know we can find a sequence of deflations so that \[\mathcal P=\mathcal P^{(0)}\mapsto \cdots \mapsto \mathcal P^{(k)}\mapsto \asmtobpd(w_I).\]

By Lemma~\ref{lemma:transitiondeflation}, we know $\widetilde{\asmtransition}(\mathcal P_I)=\asmtobpd(w_I)$. Furthermore, by Lemma~\ref{lemma:muteable}, $\mathcal P^{(i)}$ agrees with $\asmtobpd(w_I)$ on all tiles outside of $\yd{\lambda^{(w_I)}}$.  Then, we may make the same replacements in the rectangle $[a,w^{-1}(b)]\times[b,w(a)]$ to get from $\mathcal P^{(i)}$ to some $\mathcal P'^{(i)}$ as one does to go from $\asmtobpd(w_I)$ to $\mathcal P_I$.  This produces a valid BPD.  

We now claim $\mathcal P'\in \pipes{w}$.  To see this, notice 
 \[\mathcal P'=\mathcal P'^{(0)}\mapsto \cdots \mapsto \mathcal P'^{(k)}\mapsto \mathcal P_I\] is a valid sequence of deflations.  Since $\mathcal P_I\in \pipes{w}$, so is $\mathcal P'$.  
Furthermore, it is immediate that $\widetilde{\asmtransition}(\mathcal P')=\mathcal P$ and so we are done.
\end{proof}

\begin{lemma}
\label{lemma:bpdtransitionweights}
Fix  $w\in \SymGp_n$ with $\mc(w)=(a,b)$ and take $\mathcal P\in \pipes{w}$.  Then,
\[\wt{\mathcal P}=
\begin{cases} 
\beta(x_a\oplus y_b)\wt{\asmtransition(\mathcal P)}&\text{if}\enspace \asmtransition(\mathcal P)\in \bpd{w_\emptyset}, \enspace \text{and}\\
\left(1+\beta(x_a\oplus y_b)\right)\wt{\asmtransition(\mathcal P)}&\text{otherwise}.
\end{cases}\]
\end{lemma}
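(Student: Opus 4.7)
Set $R := [a,\, w^{-1}(b)] \times [b,\, w(a)]$. The approach rests on two observations: $\asmtransition$ modifies $\mathcal P$ only within $R$, and inside $R$ the BPD $\mathcal P$ agrees with $\asmtobpd(w)$ at every cell except possibly $(a,b)$. For the second point, note that $\yd{\lambda^{(w)}}$ lies in rows $1,\ldots,a = \des(w)$ and $(a,b) = \mc(w)$ is the rightmost $D(w)$-cell in row $a$, so every cell of $R$ other than $(a,b)$ sits outside $\yd{\lambda^{(w)}}$; by Lemma~\ref{lemma:muteable}(3) such cells match $\asmtobpd(w)$. Consequently the tile at $(a,b)$ in $\mathcal P$ is either blank or an upward elbow, and these two possibilities correspond exactly to the two cases of the lemma.

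Suppose first that $(a,b)$ is blank in $\mathcal P$. Then $\mathcal P|_R = \asmtobpd(w)|_R$, so $\asmtransition(\mathcal P)|_R = \asmtobpd(w_\emptyset)|_R$, while outside $R$ we have $\asmtransition(\mathcal P) = \mathcal P$. Comparing the Rothe diagrams of $w$ and $w_\emptyset = wt_{a,w^{-1}(b)}$ position-by-position yields $D(w) \setminus D(w_\emptyset) = \{(a,b)\}$ and $D(w_\emptyset) \subseteq D(w)$; the crucial input is the maximality of $\mc(w) = (a,b)$, which forces $w^{-1}(j) < a$ for every $j \in (b, w(a))$ and rules out the creation of any new diagram cell in row $w^{-1}(b)$ of $w_\emptyset$. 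Since the substitution in (\ref{eq:transitionpicture1}) introduces no upward elbows, $U(\mathcal P) = U(\asmtransition(\mathcal P))$, and the weight ratio is exactly $\beta(x_a \oplus y_b)$.

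When $(a,b)$ is an upward elbow, $\mathcal P|_R$ differs from $\asmtobpd(w)|_R$ only at $(a,b)$, and I plan to enumerate each tile change effected by (\ref{eq:transitionpicture2}): $(a,b)$ becomes a crossing, $(w^{-1}(b),b)$ becomes vertical, $(a,w(a))$ becomes horizontal, $(w^{-1}(b),w(a))$ becomes a downward elbow; the cells $(a,j)$ with $b < j < w(a)$ pass from vertical to crossing (the new horizontal strand of row $a$ is added while column $j$'s vertical persists since $w^{-1}(j) < a$); the cells $(i,b)$ with $a < i < w^{-1}(b)$ pass from horizontal to crossing; the cells $(w^{-1}(b),j)$ with $b < j < w(a)$ pass from crossing to vertical; and the cells $(i,w(a))$ with $a < i < w^{-1}(b)$ pass from crossing to horizontal. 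None of these transitions removes or introduces a blank, and the only net change in $U$ is the loss of $(a,b)$. Hence $D(\mathcal P) = D(\asmtransition(\mathcal P))$ and $U(\mathcal P) = U(\asmtransition(\mathcal P)) \sqcup \{(a,b)\}$, giving the ratio $1 + \beta(x_a \oplus y_b)$.

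The main obstacle is the cell-by-cell bookkeeping in the upward elbow case, in particular verifying that the cells $(w^{-1}(b),j)$ with $b < j < w(a)$ really turn into vertical tiles (and not blanks) after the move. This requires $w^{-1}(j) < a < w^{-1}(b)$, established in the analysis of the blank case, which guarantees that column $j$'s pipe carries its vertical strand through $(w^{-1}(b), j)$ both before and after the substitution.
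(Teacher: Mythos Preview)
Your proof is correct and follows the same approach as the paper's: identify the dichotomy at the tile $\mc(w)=(a,b)$ and read off the weight change from the local pictures (\ref{eq:transitionpicture1}) and (\ref{eq:transitionpicture2}). The paper's proof is two sentences, deferring the blank/elbow dichotomy to the argument inside Proposition~\ref{prop:asmtransition} and then simply asserting that the weight relation is visible from the replacement pictures; you have supplied the detailed tile-by-tile verification that the paper omits (in particular your use of Lemma~\ref{lemma:muteable}(3) to pin down $\mathcal P|_{R\setminus\{(a,b)\}}$, and your check that no spurious blanks or upward elbows appear along the boundary rows and columns of $R$, are exactly the content hidden behind the paper's appeal to the pictures).
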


\begin{proof}
By the argument in the proof of Proposition~\ref{prop:asmtransition}, we see that $\asmtransition(\mathcal P)\in \pipes{w_\emptyset}$ if and only if $\mc(\mathcal P)$ is a blank tile.  So the result follows from (\ref{eq:transitionpicture1}) and (\ref{eq:transitionpicture2}) and the definition of the weights of BPDs.
\end{proof}

\subsection{Proof of Theorem~\ref{thm:main}}
\label{section:pfmain}
We now prove our main theorem.
\begin{proof}
The formula is trivial to verify for  the case $\mc(w)=(1,1)$. Fix $w\in \SymGp_n$ with $\mc(w)=(a,b)$.  Assume the bumpless pipe dream formula holds for all permutations $v$ so that $\mc(v)<\mc(w)$ in lexicographical order.

We have for all $I\subseteq \phi(w)$ that $\mc(w_I)<\mc(w)$.  
By Theorem~\ref{thm:transition},
\[\Groth^{(\beta)}_w(\mathbf x;\mathbf y)=(x_a\oplus y_b)\Groth^{(\beta)}_{w_\emptyset}(\mathbf x;\mathbf y)+(1+\beta( x_a\oplus y_b))\sum_{I\subseteq \phi(w):I\neq\emptyset}\beta^{|I|-1}\Groth^{(\beta)}_{w_I}(\mathbf x;\mathbf y).\]
Applying the inductive hypothesis,
\begin{align*}
\Groth^{(\beta)}_w(\mathbf x;\mathbf y)
&=(x_a\oplus y_b)\left(\beta^{-\ell(w_\emptyset)}\sum_{\mathcal P\in \pipes{w_\emptyset}}\wt{\mathcal P}\right)\\
&\quad+(1+\beta( x_a\oplus y_b))\sum_{I\subseteq \phi(w):I\neq\emptyset}\beta^{|I|-1}\left(\beta^{-\ell(w_I)}\sum_{\mathcal P\in \pipes{w_I}}\wt{\mathcal P}\right)\\
&=\beta^{-\ell(w)} \sum_{\mathcal P\in \pipes{w_\emptyset}}\beta(x_a\oplus y_b)\wt{\mathcal P}  \\
&\quad+\beta^{-\ell(w)}\sum_{I\subseteq \phi(w):I\neq\emptyset}\sum_{\mathcal P\in \pipes{w_I}}(1+\beta( x_a\oplus y_b))\wt{\mathcal P}\\
&=\beta^{-\ell(w)}\sum_{\mathcal P\in \pipes{w}}\wt{\mathcal P}.
\end{align*}
The last equality is a  consequence of Proposition~\ref{prop:asmtransition} and Lemma~\ref{lemma:bpdtransitionweights}.
\end{proof}

\subsection{Specialization  to double Schubert polynomials}

We conclude by giving a new proof of the Lam-Lee-Shimozono formula for double Schubert polynomials.
\begin{proof}[Proof of Theorem~\ref{theorem:bumplessSchubert}]
Fix $\mathcal P\in \pipes{w}$.  Then 
\begin{align*}
\beta^{-\ell(w)}\,\wt{\mathcal P}&=\beta^{-\ell(w)}\left( \prod_{(i,j)\in D(\mathcal P)}\beta(x_i\oplus y_j)\right) \left(\prod_{(i,j)\in U(\mathcal P)} 1+\beta(x_i\oplus y_j) \right)\\
&=\beta^{|D(\mathcal P)|-\ell(w)}\left( \prod_{(i,j)\in D(\mathcal P)}(x_i\oplus y_j)\right) \left(\prod_{(i,j)\in U(\mathcal P)} 1+\beta(x_i\oplus y_j) \right).
\end{align*}
We know by Lemma~\ref{lemma:uniquelydet} that $|D(\mathcal P)|=|\mathbf a_{\mathcal P}|\geq\ell(w)$.  Therefore,
\[\left(\beta^{-\ell(w)}\,\wt{\mathcal P}\right)|_{\beta=0}=
\begin{cases}
 \displaystyle \prod_{(i,j)\in D(\mathcal P)}(x_i+y_j)&\text{if} \enspace |D(\mathcal P)|=\ell(w), \enspace \text{and}\\
0&\text{otherwise}.
\end{cases}
\]
Since $|D(\mathcal P)|=|\mathbf a_{\mathcal P}|$, we know $|D(\mathcal P)|=\ell(w)$ if and only if $\mathbf a_{\mathcal P}$ is a reduced word, i.e.,\ $\mathcal P$ is a reduced bumpless pipe dream.  Thus,
\[\Schub_w(\mathbf x;\mathbf y)=\Groth^{(0)}_w(\mathbf x;-\mathbf y)=\sum_{\mathcal P\in \rpipes{w}}\prod_{(i,j)\in D(\mathcal P)}(x_i-y_j). \qedhere\]
\end{proof}

\section{Comparisons between ordinary pipe dreams and bumpless pipe dreams}
\label{section:comparisons}

We now discuss connections between BPDs and  pipe dreams which appeared earlier in the literature (see \cite{Bergeron.Billey}, \cite{Fomin.Kirillov}, \cite{Knutson.Miller}). We call these objects ordinary pipe dreams to distinguish them from BPDs. 

\subsection{Ordinary pipe dreams}

Write  $\delta^{(n)}=(n-1,\, n-2,\, \ldots,\, 1)$ for the \mydef{staircase partition}.
An \mydef{ordinary pipe dream} of size $n$ is a tiling of $\yd{\delta^{(n)}}$ with the tiles pictured below.
\[	\begin{tikzpicture}[x=1.5em,y=1.5em]
	\draw[color=black, thick](0,1)rectangle(1,2);
	\draw[thick,rounded corners,color=blue] (0,1.5)--(1,1.5);
	\draw[thick,rounded corners,color=blue] (.5,1)--(.5,2);
	\end{tikzpicture}
\hspace{2em} 
	\begin{tikzpicture}[x=1.5em,y=1.5em]
	\draw[color=black, thick](0,1)rectangle(1,2);
	\draw[thick,rounded corners, color=blue] (.5,2)--(.5,1.5)--(0,1.5);
	\draw[thick,rounded corners, color=blue] (1,1.5)--(.5,1.5)--(.5,1);
	\end{tikzpicture} 
\]
We form a planar history by placing upward elbow tiles in each of the cells \[\{(n,1),\,(n-1,2),\,\ldots,\,(1,n)\}.\]  Any such tiling produces a network of $n$ pipes.  Each pipe  starts at the left edge of the grid and ends at the top.  We can define a reading word and Demazure product in an entirely analogous way as we did for bumpless pipe dreams.  This time read crossings along rows from right to left, starting at the top and working down.  An ordinary pipe dream is reduced if pairwise pipes cross at most one time. We refer the reader to \cite{Knutson.Miller.subword} for further details.

We identify each ordinary pipe dream with a subset $\mathcal  S\subseteq \yd{\delta^{(n)}}$  by recording the positions of the crossing tiles.  Write $\demprod{\mathcal S}$ for the Demazure product of the reading word of $\mathcal S$.  Define \[\wt{\mathcal S}=\prod_{(i,j)\in \mathcal S}\beta(x_i\oplus y_j).\]
With this weight, we recall the following theorem.
\begin{theorem}[{\cite{Fomin.Kirillov,Knutson.Miller.subword}}]
\label{thm:ordinarypipe}
The $\beta$-double Grothendieck polynomial is the sum
\[\mathfrak G^{(\beta)}_w(\mathbf x;\mathbf y)=\beta^{-\ell(w)}\sum_{\mathcal S:\demprod{\mathcal S}=w}\wt{\mathcal S}.\]
\end{theorem}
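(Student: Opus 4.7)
The plan is to verify the formula by induction on $\ell(w)$, showing that
\[
F_w := \beta^{-\ell(w)} \sum_{\mathcal S:\,\demprod{\mathcal S}=w} \wt{\mathcal S}
\]
satisfies the same divided difference recursion~(\ref{eq:betagrothdef}) that defines $\mathfrak G^{(\beta)}_w$. For the base case $w = w_0$, the staircase $\yd{\delta^{(n)}}$ has exactly $\binom{n}{2} = \ell(w_0)$ cells, and since the Demazure product of a word of length $k$ has Coxeter length at most $k$, the unique $\mathcal S$ with $\demprod{\mathcal S} = w_0$ is the full staircase. This gives $F_{w_0} = \prod_{i+j\leq n}(x_i \oplus y_j) = \mathfrak G^{(\beta)}_{w_0}$.

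For the inductive step, I would verify that $\pi_i F_w = F_{ws_i}$ whenever $w_i > w_{i+1}$. Following the \emph{ladder move} strategy of Bergeron-Billey, I would pair up pipe dreams for $ws_i$ via a local involution acting on rows $i$ and $i+1$, and then compute how both sides of the putative identity decompose under this involution. The K-theoretic Leibniz rule~(\ref{eqn:leibniz}) produces a correction term $\beta g$ when applying $\pi_i$; this must be matched against pipe dreams whose reading word is not reduced but still has Demazure product $ws_i$, arising when a crossing is duplicated without altering the Demazure product.

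The main obstacle is the bookkeeping needed to align reduced and non-reduced contributions, since non-reduced pipe dreams do not arise in the classical cohomological setting at $\beta = 0$. An alternative is to mirror the transition-based strategy of Section~\ref{section:pfmain}, defining an analog of $\asmtransition$ acting on ordinary pipe dreams near the cell $\mc(w)$ and its pivots; however, this faces the complication that $\mc(w)$ need not lie inside the staircase $\yd{\delta^{(n)}}$ in which ordinary pipe dreams are drawn, so the local geometry is considerably less transparent than in the BPD setting, where the maximal corner is always a distinguished tile of the pipe dream itself.
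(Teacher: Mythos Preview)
The paper does not prove this theorem at all: it is stated with the citation \cite{Fomin.Kirillov,Knutson.Miller.subword} and introduced with the phrase ``we recall the following theorem.'' There is no proof in the paper to compare against.

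Your sketch is nonetheless in the right spirit --- the divided-difference/ladder-move approach you outline is essentially how the cited references establish the result. However, what you have written is a plan, not a proof: you explicitly identify the ``main obstacle'' (bookkeeping the non-reduced contributions against the $\beta$-correction from the Leibniz rule) without resolving it, and then discuss an alternative transition-based approach only to note its own complications. A complete proof along your first line would require actually constructing the involution on rows $i$ and $i+1$ and verifying that the weighted sum over its orbits reproduces $\pi_i$ acting on $F_w$; this is done carefully in the subword-complex framework of \cite{Knutson.Miller.subword}. If you intend to supply a self-contained argument rather than defer to the literature, that local computation must be carried out in full.
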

Again, specializing $\beta=0$ and substituting $\mathbf y\mapsto -\mathbf y$ amounts to summing over reduced ordinary pipe dreams with the usual weights.  This gives a formula for double Schubert polynomials.

\subsection{Bijections from ordinary pipe dreams to bumpless pipe dreams}

In light of Theorem~\ref{thm:main} and Theorem~\ref{thm:ordinarypipe}, we have two different sets of objects whose weight generating functions produce the same polynomial. In particular:
\begin{proposition}
Marked BPDs for $w$ are equinumerous with ordinary pipe dreams for $w$.
\end{proposition}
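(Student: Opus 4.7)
The plan is to extract the equinumeracy by specializing the two combinatorial formulas for $\Groth^{(\beta)}_w$ to a point where each collapses into a plain count. Concretely, I would take the identity
\[\beta^{\ell(w)}\,\Groth^{(\beta)}_w(\mathbf x;\mathbf y) = \sum_{\mathcal P \in \pipes{w}} \wt{\mathcal P}\]
from Theorem~\ref{thm:main}, together with the analogous identity
\[\beta^{\ell(w)}\,\Groth^{(\beta)}_w(\mathbf x;\mathbf y) = \sum_{\mathcal S\,:\,\demprod{\mathcal S}=w}\wt{\mathcal S}\]
from Theorem~\ref{thm:ordinarypipe}, and specialize both at $\beta=1$, $x_i=1$ for all $i$, and $y_j=0$ for all $j$. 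Under this specialization one computes $\beta(x_i\oplus y_j)=1$ uniformly, so every atomic factor in either weight becomes completely explicit.

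On the ordinary-pipe-dream side, each crossing contributes a factor of $1$, so the weighted sum reduces to $|\{\mathcal S : \demprod{\mathcal S} = w\}|$, i.e.\ the number of ordinary pipe dreams for $w$. On the BPD side, each blank tile contributes $\beta(x_i\oplus y_j)=1$ and each upward-elbow tile contributes $1+\beta(x_i\oplus y_j)=2$, so $\wt{\mathcal P}$ becomes $2^{|U(\mathcal P)|}$. Summing over $\mathcal P \in \pipes{w}$ yields $\sum_{\mathcal P \in \pipes{w}} 2^{|U(\mathcal P)|}$, and since $2^{|U(\mathcal P)|}$ counts the subsets $\mathcal S\subseteq U(\mathcal P)$, this sum is exactly $|\mpipes{w}|$.

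Comparing the two specialized identities gives $|\mpipes{w}| = \Groth^{(1)}_w(\mathbf 1;\mathbf 0) = |\{\mathcal S : \demprod{\mathcal S} = w\}|$, which is the desired equinumeracy. The only bookkeeping step is to confirm that the chosen specialization is valid in $\mathbb Z[\beta][\mathbf x;\mathbf y]$ (it clearly is, as one simply evaluates a polynomial at integer values), and that the identities in Theorems~\ref{thm:main} and~\ref{thm:ordinarypipe} are genuine polynomial identities rather than only formal, so we are allowed to substitute.

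There is essentially no technical obstacle here: the statement is a direct corollary of the two weighted formulas, and the argument goes through without needing any of the delicate structural results like transition or droops. As the author emphasizes elsewhere in Section~\ref{section:comparisons}, what is genuinely hard is producing an explicit weight-preserving bijection between $\mpipes{w}$ and ordinary pipe dreams for $w$; mere equinumeracy falls out immediately from the existence of two enumerators for the same polynomial.
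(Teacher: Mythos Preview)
Your proposal is correct and matches the paper's own argument: both specialize at $\beta=1$, $\mathbf x=\mathbf 1$, $\mathbf y=\mathbf 0$ and read off the counts from Theorem~\ref{thm:main} and Theorem~\ref{thm:ordinarypipe}. The paper's write-up is slightly terser, phrasing the BPD side as ``each marked BPD has weight $1$'' (implicitly via Corollary~\ref{cor:main}) rather than your intermediate step of computing $\wt{\mathcal P}=2^{|U(\mathcal P)|}$ and then summing over subsets, but this is the same computation unpacked differently.
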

\begin{proof}
Consider  $\mathfrak G_w^{(1)}(\mathbf 1;\mathbf 0)$.  Under this specialization, each marked BPD and ordinary BPD has a weight of $1$.  Applying  Theorem~\ref{thm:main} and Theorem~\ref{thm:ordinarypipe}, we see immediately that $\mathfrak G_w^{(1)}(\mathbf 1;\mathbf 0)=|\mpipes{w}|=|\{S: \demprod{\mathcal S}=w\}|.$
\end{proof}
   Indeed, one could hope for a weight preserving bijection which connects marked BPDs to ordinary pipe dreams. 
  In the dominant case there is such a bijection, though for trivial reasons.  When $w$ is dominant, 
\begin{equation}
\mathfrak G^{(\beta)}_w(\mathbf x;\mathbf y)=\prod_{(i,j)\in D(w)}(x_i\oplus y_j).
\end{equation}
 In particular, there is a single ordinary pipe dream and a single marked BPD, both of which have the same weight.
Outside of the dominant case, there are clear obstructions to the existence of a general weight preserving bijection, illustrated by the following example.

\begin{example}
\label{ex:132}
 The elements of $\mpipes{132}$ are pictured below.
\[\begin{tikzpicture}[x=1.5em,y=1.5em]
	\draw[step=1,gray, thin] (0,0) grid (3,3);
	\draw[color=black, thick](0,0)rectangle(3,3);
	\draw[thick,rounded corners,color=blue] (.5,0)--(.5,2.5)--(3,2.5);
	\draw[thick,rounded corners,color=blue] (1.5,0)--(1.5,.5)--(3,.5);
	\draw[thick,rounded corners,color=blue] (2.5,0)--(2.5,1.5)--(3,1.5);
	\end{tikzpicture}
\hspace{3em}
	\begin{tikzpicture}[x=1.5em,y=1.5em]
	\draw[step=1,gray, thin] (0,0) grid (3,3);
	\draw[color=black, thick](0,0)rectangle(3,3);
	\draw[thick,rounded corners,color=blue] (.5,0)--(.5,1.5)--(1.5,1.5)--(1.5,2.5)--(3,2.5);
\draw[thick,rounded corners,color=blue] (1.5,0)--(1.5,.5)--(3,.5);
\draw[thick,rounded corners,color=blue] (2.5,0)--(2.5,1.5)--(3,1.5);
	\end{tikzpicture}
\hspace{3em}
	\begin{tikzpicture}[x=1.5em,y=1.5em]
	\draw[step=1,gray, thin] (0,0) grid (3,3);
\draw[color=black,fill=lightgray, thick](1,1)rectangle(2,2);
	\draw[color=black, thick](0,0)rectangle(3,3);
	\draw[thick,rounded corners,color=blue] (.5,0)--(.5,1.5)--(1.5,1.5)--(1.5,2.5)--(3,2.5);
\draw[thick,rounded corners,color=blue] (1.5,0)--(1.5,.5)--(3,.5);
\draw[thick,rounded corners,color=blue] (2.5,0)--(2.5,1.5)--(3,1.5);

	\end{tikzpicture}\]
We compare these to the ordinary pipe dreams for 132.
\[\begin{tikzpicture}[x=1.5em,y=1.5em]
	\draw[step=1,gray, thin] (0,0) grid (3,3);
	\draw[color=black, thick](0,0)rectangle(3,3);
	\draw[thick,rounded corners,color=blue] (0,2.5)--(.5,2.5)--(.5,3);
\draw[thick,rounded corners,color=blue] (0,1.5)--(1.5,1.5)--(1.5,2.5)--(2.5,2.5)--(2.5,3);
\draw[thick,rounded corners,color=blue] (0,.5)--(.5,.5)--(.5,2.5)--(1.5,2.5)--(1.5,3);
	\end{tikzpicture}
\hspace{3em}
	\begin{tikzpicture}[x=1.5em,y=1.5em]
	\draw[step=1,gray, thin] (0,0) grid (3,3);
	\draw[color=black, thick](0,0)rectangle(3,3);
\draw[thick,rounded corners,color=blue] (0,2.5)--(.5,2.5)--(.5,3);
\draw[thick,rounded corners,color=blue] (0,1.5)--(.5,1.5)--(.5,2.5)--(2.5,2.5)--(2.5,3);
\draw[thick,rounded corners,color=blue] (0,.5)--(.5,.5)--(.5,1.5)--(1.5,1.5)--(1.5,3);
	\end{tikzpicture}
\hspace{3em}
	\begin{tikzpicture}[x=1.5em,y=1.5em]
	\draw[step=1,gray, thin] (0,0) grid (3,3);
	\draw[color=black, thick](0,0)rectangle(3,3);
\draw[thick,rounded corners,color=blue] (0,2.5)--(.5,2.5)--(.5,3);
\draw[thick,rounded corners,color=blue] (0,1.5)--(1.5,1.5)--(1.5,2.5)--(1.5,2.5)--(1.5,3);
\draw[thick,rounded corners,color=blue] (0,.5)--(.5,.5)--(.5,2.5)--(2.5,2.5)--(2.5,3);
	\end{tikzpicture}\]
We have ordered each set of pipe dreams so that the $\mathbf x$ weights are the same from left to right.  However, there is no bijection which preserves both $\mathbf x$ and $\mathbf y$ weights simultaneously.
\end{example}

Adjusting expectations, once could hope for a direct bijection which respects the single $\mathbf x$ weights (with the $\mathbf y$ variables specialized to 0).  Even this seems difficult. Lenart, Robinson, and Sottile  discussed the problem in \cite{Lenart.Robinson.Sottile} saying:
\begin{quote}
There is one construction of double Grothendieck polynomials which we do
not know how to relate to the classical construction in terms of RC-graphs. This is due to Lascoux and involves certain alternating sign matrices. Our attempts to do so suggest that any bijective relation between these formulas will be quite subtle.
\end{quote}
Knutson \cite{Knutson:cotransition} attributes this difficulty to the fact that, while ordinary pipe dreams are naturally compatible with the \emph{co-transition} formula:
\begin{quote}
...one might expect it to be very difficult to connect the pipe dream formula to the transition formula, requiring ``Little bumping algorithms'' and the like, and essentially impossible if one wants to include the $y$ variables. 
\end{quote}
Even if it is too much to hope for a bijective understanding of the double formulas, a direct bijection from ordinary pipe dreams to marked bumpless pipe dreams which preserves the $\mathbf x$ weights would be highly interesting.

We note that there are bijections in the literature between (ordinary) reduced pipe dreams for vexillary permutations and flagged tableaux. A proof for permutations of the form $1^k\times w_0$ is found in work of Serrano and Stump (see \cite[Section~3.1]{Serrano.Stump}).  More generally, Lenart stated a bijection (without proof) which holds for all vexillary permutations (see \cite[Remark~4.12]{Lenart}). These, combined with Theorem~\ref{thm:ssyttovexbpdbij}, produce a bijection from reduced pipe dreams for $v$ to $\pipes{v}$, when $v$ is vexillary.  Furthermore, this bijection preserves the $\mathbf x$ weights.

\section{Vexillary Grothendieck polynomials}
\label{s:vex}

In this section, we restrict our attention to vexillary permutations.  There are many interconnected combinatorial formulas in the literature for expressing vexillary Schubert and Grothendieck polynomials.  Notably, the flagged set-valued tableaux of Knutson, Miller, and Yong \cite{Knutson.Miller.Yong} provide a link between the work of Wachs \cite{Wachs} on flagged tableaux and Buch \cite{Buch} on set-valued tableaux.  Knutson, Miller, and Yong also gave geometric meaning to these tableaux by describing a weight preserving bijection with \emph{diagonal pipe dreams}.  Reduced diagonal pipe dreams  naturally label irreducible components of  diagonal Gr\"obner degenerations of vexillary \emph{matrix Schubert varieties}.

Diagonal pipe dreams are closely related to the excited Young diagrams of \cite{Ikeda.Naruse}.  In the reduced case, they are complements of the families of non-intersecting lattice paths of  Kreiman \cite{Kreiman}.  We will see that these lattice paths are in transparent bijection with vexillary bumpless pipe dreams.  We make this connection explicit in Section~\ref{section:paths}.  We use Kreiman's lattice paths as a bridge between set-valued tableaux and vexillary bumpless pipe dreams.  Kreiman's choice of flagging is different from the one we use here, but equivalent.  We refer the reader to \cite[Section~3.3]{Morales.Pak.Panova.I}, in particular, Remark~3.10.  See also   \cite[Section~3]{Morales.Pak.Panova.II}.

\subsection{Vexillary Grothendieck polynomials}

If $v$ is vexillary, write $\mu^{(v)}$ for the partition obtained by sorting the entries of $\code_v$ to form a weakly decreasing sequence.  Equivalently, $\mu^{(v)}$ is the (unique) partition which has as many cells in each diagonal of $\yd{\mu^{(v)}}$ as there are elements of $D(v)$ in the same diagonal (see Lemma~\ref{lemma:diagonalswork}).

 Recall, $\lambda^{(v)}$ is the smallest partition so that $D(v)\subseteq \yd{\lambda^{(v)}}$.  
We define the flag $\mathbf f^{(v)}$ by setting $f_i^{(v)}$ to be the maximum row index $j$ so that $(j,j-i+\mu^{(v)}_i)\in \yd{\lambda^{(v)}}.$  In other words, this is row of the southeast most cell in $\yd{\lambda^{(v)}}$ which sits in the same diagonal as $(i,\mu^{(v)}_i)$.

Abbreviate $\FSSYT(v):=\FSSYT(\mu^{(v)},\mathbf f^{(v)})$ and $\FSet(v):=\FSet(\mu^{(v)},\mathbf f^{(v)})$. 
We weight a set-valued tableau as follows:
\[\wt{\mathbf T}=\beta^{|\mathbf T|-|\lambda|}\prod_{(i,j)\in \yd{\lambda}}\prod_{k\in \mathbf T(i,j)} x_k\oplus y_{k+j-i}.\]
In Theorem~\ref{theorem:flaggedtovexmarkedbpdbij}, we will show if $v$ is vexillary, there is a weight preserving bijection \[\flaggedtovexmarkedbpd:\FSet(v)\rightarrow \mpipes{v}.\]  We use this to recover a theorem of Knutson-Miller-Yong from the marked BPD formula for Grothendieck polynomials.
\begin{theorem}[\cite{Knutson.Miller.Yong}]
\label{thm:KMY}
If $v\in \SymGp_n$ is vexillary, then 
\[\mathfrak G^{(\beta)}_v(\mathbf x;\mathbf y)=\sum_{\mathbf T\in \FSet(v)} \wt{\mathbf T}.\]
\end{theorem}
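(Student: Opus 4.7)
The plan is to derive Theorem~\ref{thm:KMY} as an essentially immediate consequence of two results already in hand: the marked BPD formula of Corollary~\ref{cor:main} and the weight-preserving bijection $\flaggedtovexmarkedbpd : \FSet(v) \to \mpipes{v}$ of Theorem~\ref{theorem:flaggedtovexmarkedbpdbij}. The overall strategy is to rewrite the right-hand side of Corollary~\ref{cor:main} through the substitution $(\mathcal{P},\mathcal{S}) = \flaggedtovexmarkedbpd(\mathbf{T})$ and check that the weights agree term by term.

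First I would invoke Corollary~\ref{cor:main} to write
\[
\Groth^{(\beta)}_v(\mathbf x;\mathbf y)
= \sum_{(\mathcal P,\mathcal S)\in \mpipes{v}}
\beta^{|D(\mathcal P)|+|\mathcal S|-\ell(v)}
\prod_{(i,j)\in D(\mathcal P)\cup \mathcal S}(x_i\oplus y_j).
\]
Now I would apply Theorem~\ref{theorem:flaggedtovexmarkedbpdbij}, which is a bijection $\flaggedtovexmarkedbpd : \FSet(v) \to \mpipes{v}$, to re-index the sum over set-valued tableaux $\mathbf{T} \in \FSet(v)$.

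Next, I would verify that the weights match. By the description of $\flaggedtovexmarkedbpd$, if $\mathbf{T} \mapsto (\mathcal{P},\mathcal{S})$ then
\[
D(\mathcal P)=\{(\flatten(\mathbf T)(i,j),\;\flatten(\mathbf T)(i,j)+j-i):(i,j)\in\yd{\mu^{(v)}}\},
\]
so $|D(\mathcal P)|=|\mu^{(v)}|$, while the marked cells in $\mathcal{S}$ correspond to the non-minimum entries of $\mathbf{T}$, giving $|\mathcal{S}|=|\mathbf T|-|\mu^{(v)}|$. Since $v$ is vexillary, $|\mu^{(v)}|=|\code_v|=|D(v)|=\ell(v)$, so the exponent of $\beta$ becomes
\[
|D(\mathcal P)|+|\mathcal S|-\ell(v) \;=\; |\mu^{(v)}|+(|\mathbf T|-|\mu^{(v)}|)-|\mu^{(v)}| \;=\; |\mathbf T|-|\mu^{(v)}|,
\]
matching the $\beta$-exponent in the tableau weight. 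For the monomial factor, each entry $k\in\mathbf T(i,j)$ corresponds (by construction of $\flaggedtovexmarkedbpd$) to a cell of $D(\mathcal P)\cup\mathcal S$ in row $k$ and column $k+j-i$, contributing precisely the factor $(x_k\oplus y_{k+j-i})$. Hence
\[
\beta^{|D(\mathcal P)|+|\mathcal S|-\ell(v)}\!\!
\prod_{(i,j)\in D(\mathcal P)\cup \mathcal S}(x_i\oplus y_j)
\;=\;
\beta^{|\mathbf T|-|\mu^{(v)}|}\prod_{(i,j)\in\yd{\mu^{(v)}}}\prod_{k\in\mathbf T(i,j)}(x_k\oplus y_{k+j-i})
\;=\;\wt{\mathbf T}.
\]

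Substituting this into the sum completes the proof. In this sense the main obstacle is not in this theorem itself but in the prior results it relies on — the transition-based proof of Theorem~\ref{thm:main} and the construction and verification of the bijection $\flaggedtovexmarkedbpd$. Once those are available, Theorem~\ref{thm:KMY} is merely a bookkeeping step comparing the two weight definitions under the bijection.
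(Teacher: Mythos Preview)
Your proposal is correct and follows exactly the paper's approach: invoke Corollary~\ref{cor:main} for the marked BPD formula and then transport it through the weight-preserving bijection of Theorem~\ref{theorem:flaggedtovexmarkedbpdbij}. The only difference is cosmetic---you spell out explicitly why the weights agree (the $\beta$-exponent and monomial bookkeeping), whereas the paper packages this into the phrase ``weight preserving'' in the statement of Theorem~\ref{theorem:flaggedtovexmarkedbpdbij} and simply says the result ``follows immediately.''
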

\begin{proof}
Suppose $v$ is vexillary.
By Corollary~\ref{cor:main}, \[\Groth^{(\beta)}_w(\mathbf x;\mathbf y)=\sum_{(\mathcal P,\mathcal S)\in \mpipes{w}}\beta^{|D(\mathcal P)|+|\mathcal S|-\ell(w)}\left(\prod_{(i,j)\in D(\mathcal P)\cup \mathcal S}(x_i\oplus y_j)\right).\]
By Theorem~\ref{theorem:flaggedtovexmarkedbpdbij}, there is a weight preserving bijection \[\flaggedtovexmarkedbpd:\FSet(v)\rightarrow \mpipes{v}.\]
From this, the result follows immediately.
\end{proof}

\subsection{Vexillary bumpless pipe dreams}

 \label{section:vex}

Write $C(\mathcal P)$ for the set of the coordinates of the crossing tiles in $\mathcal P$.

\begin{lemma}
\label{lemma:vexnoncrossing}
Fix $\mathcal P\in \pipes{w}$.  Then $\mathcal P$ has a crossing tile in $\yd{\lambda^{(w)}}$ if and only if $w$ is not vexillary.  Furthermore, if $w$ is vexillary then $\rpipes{w}=\pipes{w}$.
\end{lemma}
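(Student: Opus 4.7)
The plan is to reduce the biconditional for arbitrary $\mathcal P\in\pipes{w}$ to the corresponding statement for the Rothe BPD $\asmtobpd(w)$, using Lemma~\ref{lemma:muteable} and the diagonal-balance lemma from Section~\ref{section:ice}; the ``furthermore'' is then handled separately by showing that no K-theoretic droop is applicable when $w$ is vexillary.

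First, the Rothe case.  Since every pipe of $\asmtobpd(w)$ is a hook (by the proof of Lemma~\ref{lemma:permword}), a direct trace shows that $(i,j)$ is a crossing of $\asmtobpd(w)$ iff $w(i)<j$ and $w^{-1}(j)<i$.  Such a crossing lies in $\yd{\lambda^{(w)}}$ iff some $(a,b)\in D(w)$ satisfies $i\le a$ and $j\le b$; unpacking the inequalities $w(i)<j\le b<w(a)$ and $w^{-1}(j)<i\le a<w^{-1}(b)$, and ruling out $i=a$ (which would give $w(i)=w(a)>b\ge j>w(i)$), exhibits a $2143$ pattern at positions $w^{-1}(j)<i<a<w^{-1}(b)$ with values $j,\,w(i),\,w(a),\,b$.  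Conversely, a $2143$ pattern at $i_1<i_2<i_3<i_4$ with values $j_2<j_1<j_4<j_3$ forces $(i_3,j_4)\in D(w)$ and makes $(i_2,j_1)$ a crossing of $\asmtobpd(w)$ lying in $[1,i_3]\times[1,j_4]\subseteq\yd{\lambda^{(w)}}$.  So the Rothe BPD has a crossing in $\yd{\lambda^{(w)}}$ iff $w$ is not vexillary.

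To globalize, Lemma~\ref{lemma:muteable}(3) says every $\mathcal P\in\pipes{w}$ agrees with $\asmtobpd(w)$ outside $\yd{\lambda^{(w)}}$, so the outside crossings are independent of $\mathcal P$.  Combined with the total blank--crossing balance $|D(\mathcal P)|=|C(\mathcal P)|$, the Rothe identity $|D(w)|=\ell(w)$, and $|C(\mathcal P)|=|\mathbf a_{\mathcal P}|\ge \ell(w)$ with equality iff $\mathcal P$ is reduced, one obtains
\[\bigl|C(\mathcal P)\cap \yd{\lambda^{(w)}}\bigr|\;=\;\bigl|C(\asmtobpd(w))\cap \yd{\lambda^{(w)}}\bigr|\;+\;\bigl(|D(\mathcal P)|-\ell(w)\bigr).\]
If $w$ is not vexillary the first term on the right is at least $1$, so every $\mathcal P\in\pipes{w}$ has a crossing in $\yd{\lambda^{(w)}}$.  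If $w$ is vexillary the first term vanishes, and the identity reduces to $|C(\mathcal P)\cap \yd{\lambda^{(w)}}|=|D(\mathcal P)|-\ell(w)$, which is $0$ precisely when $\mathcal P$ is reduced.

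The main obstacle is the furthermore: we must rule out non-reduced $\mathcal P\in\pipes{w}$ when $w$ is vexillary.  I plan to invoke Proposition~\ref{prop:droops}, which reaches every $\mathcal P\in\pipes{w}$ from $\asmtobpd(w)$ by a sequence of droops and K-theoretic droops; droops preserve reducedness by \cite[Proposition~5.3]{Lam.Lee.Shimozono}, so the only way to leave $\rpipes{w}$ is via a K-theoretic droop.  Inspection of the two K-theoretic droop patterns displayed in Section~\ref{section:deflate} shows that each ``before'' configuration contains a crossing tile in its local rectangle, and Lemma~\ref{lemma:muteable}(2) forces that rectangle (which supports all the tiles the move changes) to sit inside $\Mute(\mathcal P)\subseteq \yd{\lambda^{(w)}}$.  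Since the identity above gives zero crossings in $\yd{\lambda^{(w)}}$ for every reduced $\mathcal P\in\pipes{w}$, no K-theoretic droop can ever be triggered, so only droops are applied and $\pipes{w}=\rpipes{w}$.  The most delicate step is this last one — confirming, case by case, that each K-theoretic droop pattern really does contain an interior crossing tile that must sit inside $\yd{\lambda^{(w)}}$.
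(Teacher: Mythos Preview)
Your approach is essentially the same as the paper's: establish the Rothe case directly via the $2143$ characterization, push to arbitrary $\mathcal P$ using Lemma~\ref{lemma:muteable}(3) together with the blank--crossing balance, and then invoke Proposition~\ref{prop:droops} to show that no K-theoretic droop ever applies in the vexillary case. Your explicit identity
\[
\bigl|C(\mathcal P)\cap \yd{\lambda^{(w)}}\bigr|=\bigl|C(\asmtobpd(w))\cap \yd{\lambda^{(w)}}\bigr|+\bigl(|D(\mathcal P)|-\ell(w)\bigr)
\]
is a clean packaging of exactly the counting the paper does in its Cases~1 and~2.

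There is one imprecision in your final step. You write that Lemma~\ref{lemma:muteable}(2) forces the K-theoretic droop rectangle to sit inside $\Mute(\mathcal P)$. That lemma is stated for deflations, and a single K-theoretic droop is not a deflation; moreover, even for deflations the lemma only locates the \emph{changed} tiles in $\Mute(\mathcal P)$, whereas the crossing tile you need (e.g.\ the top-right cell in the first template) is unchanged by the move. The clean fix---which the paper also leaves implicit---is to use Lemma~\ref{lemma:muteable}(3) instead: since $\asmtobpd(w)$ has no upward elbows, every upward elbow of $\mathcal P$ lies in $\yd{\lambda^{(w)}}$. In each K-theoretic droop ``before'' template the required crossing sits weakly northwest of the upward elbow in that template (same column in the first pattern, same row in the second), so it too lies in the partition shape $\yd{\lambda^{(w)}}$. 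With that correction your argument goes through.
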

\begin{proof}
We first prove the claim for Rothe bumpless pipe dreams.  Suppose $\asmtobpd(w)$ has a crossing tile at $(a,b)\in\yd{\lambda^{(w)}}$.  Then this crossing appears weakly northwest of some $(c,d)\in D(w)$. Furthermore, since $\asmtobpd(w)$ is a Rothe pipe dream, $(a,b)$ must be strictly northwest of $(c,d)$.  To have a crossing at $(a,b)$, necessarily $w^{-1}_b<a$ and $w_a<b$.  Furthermore, since $(c,d)\in D(w)$, $c<w^{-1}_d$ and $d<w_c$.  Thus, $ w^{-1}_b<a<c<w^{-1}_d$ and $w_a<b<d<w_c$ which implies $w$ contains $2143$.

	Conversely, suppose $w$ contains $2143$.  Then there exist $1\leq i_1<i_2<i_3<i_4\leq n$ so that $w_{i_2}<w_{i_1}<w_{i_4}<w_{i_3}$. In particular, this implies $(i_3,w_{i_4})\in D(w)$ and $(i_2,w_{i_1})\in C(\asmtobpd(w))$.  Since $i_2<i_3$ and $w_{i_1}<w_{i_4}$, it follows that $(i_2,w_{i_1})\in \yd{\lambda^{(w)}}$.

We now break the remainder of the argument into cases.

\noindent Case 1: Suppose $w$ is vexillary. 
	
	 If $\mathcal P\in \pipes{w}$ is reduced, then $|C(\mathcal P)|=|C(\asmtobpd(w))|$.  By Lemma~\ref{lemma:muteable}, $\mathcal P$ agrees with $\asmtobpd(w)$ outside of $\yd{\lambda^{(w)}}$.  Therefore, all crossings of $\mathcal P$ occur outside of the mutable region.
In particular, there are no K-theoretic droops available to apply to $\mathcal P$.  Thus by Proposition~\ref{prop:droops},
 we conclude $\rpipes{w}=\pipes{w}$.

\noindent  Case 2: Suppose $w$ is not vexillary.  
	
	$\mathcal P$ agrees with $\asmtobpd(w)$ outside of $\yd{\lambda^{(w)}}$.  In particular, since $\asmtobpd(w)$ has less than $\ell(w)$ crossings outside of  $\yd{\lambda^{(w)}}$, so does $\mathcal P$.  Since $\mathcal P$ has at least as many crossing tiles as $\ell(w)$, it follows that $\mathcal P$ must have some crossing in $\yd{\lambda^{(w)}}$.
\end{proof}

\begin{lemma}
\label{lemma:diagonalswork}
Each diagonal of $\yd{\mu^{(v)}}$ has as many cells as the corresponding diagonal in $D(v)$.
\end{lemma}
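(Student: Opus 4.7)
The plan is to realize $D(v)$ as an \emph{excited Young diagram} obtained from $\yd{\mu^{(v)}}$, placed top-left-justified inside $\yd{\lambda^{(v)}}$, by a sequence of ``excited moves.'' An excited move replaces a cell $(i,j)\in D$ by $(i+1,j+1)$ provided $(i,j+1)$, $(i+1,j)$, $(i+1,j+1)$ all lie in the ambient shape and none of them lies in $D$. Since $(j+1)-(i+1)=j-i$, such a move preserves the diagonal index of the moved cell, and hence preserves the multiset $\{\,j-i:(i,j)\in D\,\}$. Consequently, every configuration reachable from $\yd{\mu^{(v)}}$ by iterated excited moves has the same diagonal multiset as $\yd{\mu^{(v)}}$ itself.

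To finish, I would show that $D(v)$ is itself reachable from $\yd{\mu^{(v)}}$ by such a sequence. This is the content of the standard correspondence, valid when $v$ is vexillary, between the Rothe diagram and an excited configuration inside $\yd{\lambda^{(v)}}$; it can be established via the non-intersecting lattice path model of Kreiman or, equivalently, via the (reduced) diagonal pipe dreams of Knutson--Miller--Yong. For a self-contained treatment, one could instead induct on $\ell(v)$: if $v$ is dominant then $\mu^{(v)}=\code_v$ and $D(v)=\yd{\code_v}=\yd{\mu^{(v)}}$ directly, and otherwise a local swap of cells in $D(v)$ along a single diagonal produces a vexillary $v'$ with $\mu^{(v')}=\mu^{(v)}$ and $v'$ closer to dominant in a suitable statistic (e.g., a measure of how far $\code_v$ is from weakly decreasing).

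The main obstacle is verifying this reachability --- equivalently, that each code-sorting step can be implemented by a legal, diagonal-preserving move while staying inside $\yd{\lambda^{(v)}}$. This is exactly where $2143$-avoidance plays its role: it guarantees the ambient shape is large enough and the forbidden cells configured so that the requisite sequence of moves exists. Without the vexillary hypothesis the diagonal multisets can genuinely disagree, as with $v=2143$, where $D(v)=\{(1,1),(3,3)\}$ has two cells on diagonal $0$ while $\yd{\mu^{(v)}}=\yd{(1,1)}$ has one cell each on diagonals $0$ and $-1$.
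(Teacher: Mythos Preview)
Your inductive strategy---reduce to the dominant case by moves that simultaneously preserve the diagonal multiset of $D(\cdot)$ and the sorted code $\mu^{(\cdot)}$---is exactly the paper's approach. The gap is that you have not actually constructed the inductive step: you defer to Kreiman or Knutson--Miller--Yong for the ``reachability,'' or to an unspecified ``local swap'' producing a vexillary $v'$. The first option is circular here, since this very lemma is what the paper uses to set up the bijections with Kreiman's lattice paths and with flagged tableaux. (A minor slip: induction on $\ell(v)$ cannot work, since $\ell(v)=|\mu^{(v)}|$ is invariant under sorting the code; you need the alternative statistic you allude to, and the paper uses $|\yd{\lambda^{(v)}}|$.)

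The paper fills the gap with the BPD machinery already in hand. Given $v$ vexillary but not dominant, apply a droop to a southeast-most box of $D(\asmtobpd(v))$ and then the transition replacement of~(\ref{eq:transitionpicture2}); the result is the Rothe BPD of some $v'$ with $\yd{\lambda^{(v')}}\subsetneq\yd{\lambda^{(v)}}$. Because $v$ is vexillary, Lemma~\ref{lemma:vexnoncrossing} says there are no crossing tiles in $\yd{\lambda^{(v)}}$, so the droop is one of the local moves~(\ref{eqn:vexmoves}) and hence preserves the number of blank tiles in every diagonal; the transition toggle does as well. Thus $D(v')$ and $D(v)$ have the same diagonal counts. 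On the code side, this operation merely permutes two entries of $\code_v$, so $\mu^{(v')}=\mu^{(v)}$. This is precisely the ``local swap'' you were reaching for; the point is that the BPD picture makes it easy to see why the swap is available, why it stays inside the ambient shape, and why $v'$ remains vexillary.
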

\begin{proof}
If $v$ is dominant, $\code_v$ is already decreasing and the result is immediate.

  Whenever $v$ is vexillary, but not dominant, there exists $v'$, also vexillary, so that $\yd{\lambda^{(v')}}\subsetneq \yd{\lambda^{(v)}}$.  We can obtain such a $v'$ by applying a droop move to a southeast most box of $\asmtobpd(v)$ and then ``toggling transition,'' i.e.,\ performing the replacement pictured in (\ref{eq:transitionpicture2}).

Since $v$ is vexillary, this  droop does not involve crossing tiles.  Thus it also preserves the count of blank tiles within each diagonal.
Thus, $D(v')$ has the same number of cells in each of its diagonals as $D(v)$. 
Furthermore, $\code_v$ and $\code_{v'}$ only differ by permuting two entries (the indices of the top and bottom rows of the droop).   Thus, $\mu^{(v)}=\mu^{(v')}$.

 Continuing in this way, we may construct a chain of vexillary permutations to reach a dominant permutation.
\end{proof}

For the discussion that follows, we will need a more local notion of a droop.  Call the replacements pictured below {\bf local moves}.  
\begin{equation}
\label{eqn:vexmoves}
\raisebox{-.75em}{\begin{tikzpicture}[x=1em,y=1em]
	\draw[step=1,gray, thin] (0,0) grid (2,2);
	\draw[color=black, thick](0,0)rectangle(2,2);
	\draw[thick,rounded corners,color=blue] (.5,0)--(.5,1.5)--(2,1.5);
	\end{tikzpicture}
	\hspace{.1em}
	\raisebox{.75em}{$\mapsto$}
	\hspace{.1em}
	\begin{tikzpicture}[x=1em,y=1em]
	\draw[step=1,gray, thin] (0,0) grid (2,2);
	\draw[color=black, thick](0,0)rectangle(2,2);
	\draw[thick,rounded corners,color=blue] (.5,0)--(.5,.5)--(1.5,.5)--(1.5,1.5)--(2,1.5);
	\end{tikzpicture}
	\hspace{1em}
	\begin{tikzpicture}[x=1em,y=1em]
	\draw[step=1,gray, thin] (0,0) grid (2,2);
	\draw[color=black, thick](0,0)rectangle(2,2);
	\draw[thick,rounded corners,color=blue] (0,.5)--(.5,.5)--(.5,1.5)--(2,1.5);
	\end{tikzpicture}
	\hspace{.1em}
	\raisebox{.75em}{$\mapsto$}
	\hspace{.1em}
	\begin{tikzpicture}[x=1em,y=1em]
	\draw[step=1,gray, thin] (0,0) grid (2,2);
	\draw[color=black, thick](0,0)rectangle(2,2);
	\draw[thick,rounded corners,color=blue] (0,.5)--(1.5,.5)--(1.5,1.5)--(2,1.5);
	\end{tikzpicture}
	\hspace{1em}
	\begin{tikzpicture}[x=1em,y=1em]
	\draw[step=1,gray, thin] (0,0) grid (2,2);
	\draw[color=black, thick](0,0)rectangle(2,2);
	\draw[thick,rounded corners,color=blue] (.5,0)--(.5,1.5)--(1.5,1.5)--(1.5,2);
	\end{tikzpicture}
	\hspace{.1em}
	\raisebox{.75em}{$\mapsto$}
	\hspace{.1em}
	\begin{tikzpicture}[x=1em,y=1em]
	\draw[step=1,gray, thin] (0,0) grid (2,2);
	\draw[color=black, thick](0,0)rectangle(2,2);
	\draw[thick,rounded corners,color=blue] (.5,0)--(.5,.5)--(1.5,.5)--(1.5,2);
	\end{tikzpicture}
	\hspace{1em}
	\begin{tikzpicture}[x=1em,y=1em]
	\draw[step=1,gray, thin] (0,0) grid (2,2);
	\draw[color=black, thick](0,0)rectangle(2,2);
	\draw[thick,rounded corners,color=blue] (0,.5)--(.5,.5)--(.5,1.5)--(1.5,1.5)--(1.5,2);
	\end{tikzpicture}
	\hspace{.1em}
	\raisebox{.75em}{$\mapsto$}
	\hspace{.1em}
	\begin{tikzpicture}[x=1em,y=1em]
	\draw[step=1,gray, thin] (0,0) grid (2,2);
	\draw[color=black, thick](0,0)rectangle(2,2);
	\draw[thick,rounded corners,color=blue] (0,.5)--(1.5,.5)--(1.5,2);
	\end{tikzpicture}}
\end{equation}
Applying a local move to a BPD produces a new (well-defined) BPD.  Furthermore local moves do not modify crossing tiles.  As such, if $\mathcal P\mapsto \mathcal P'$ by one of the moves in (\ref{eqn:vexmoves}), it follows that $\demprod{\mathcal P}=\demprod{\mathcal P'}$.
We call the reversed replacements {\bf inverse local moves}.
\begin{equation}
\label{eqn:vexmovesinv}
\raisebox{-.75em}{
	\begin{tikzpicture}[x=1em,y=1em]
	\draw[step=1,gray, thin] (0,0) grid (2,2);
	\draw[color=black, thick](0,0)rectangle(2,2);
	\draw[thick,rounded corners,color=blue] (.5,0)--(.5,.5)--(1.5,.5)--(1.5,1.5)--(2,1.5);
	\end{tikzpicture}
	\hspace{.1em}
	\raisebox{.75em}{$\mapsto$}
	\hspace{.1em}
	\begin{tikzpicture}[x=1em,y=1em]
	\draw[step=1,gray, thin] (0,0) grid (2,2);
	\draw[color=black, thick](0,0)rectangle(2,2);
	\draw[thick,rounded corners,color=blue] (.5,0)--(.5,1.5)--(2,1.5);
	\end{tikzpicture}
	\hspace{1em}
	\begin{tikzpicture}[x=1em,y=1em]
	\draw[step=1,gray, thin] (0,0) grid (2,2);
	\draw[color=black, thick](0,0)rectangle(2,2);
	\draw[thick,rounded corners,color=blue] (0,.5)--(1.5,.5)--(1.5,1.5)--(2,1.5);
	\end{tikzpicture}
	\hspace{.1em}
	\raisebox{.75em}{$\mapsto$}
	\hspace{.1em}
	\begin{tikzpicture}[x=1em,y=1em]
	\draw[step=1,gray, thin] (0,0) grid (2,2);
	\draw[color=black, thick](0,0)rectangle(2,2);
	\draw[thick,rounded corners,color=blue] (0,.5)--(.5,.5)--(.5,1.5)--(2,1.5);
	\end{tikzpicture}
	\hspace{1em}
	\begin{tikzpicture}[x=1em,y=1em]
	\draw[step=1,gray, thin] (0,0) grid (2,2);
	\draw[color=black, thick](0,0)rectangle(2,2);
	\draw[thick,rounded corners,color=blue] (.5,0)--(.5,.5)--(1.5,.5)--(1.5,2);
	\end{tikzpicture}
	\hspace{.1em}
	\raisebox{.75em}{$\mapsto$}	
	\hspace{.1em}
	\begin{tikzpicture}[x=1em,y=1em]
	\draw[step=1,gray, thin] (0,0) grid (2,2);
	\draw[color=black, thick](0,0)rectangle(2,2);
	\draw[thick,rounded corners,color=blue] (.5,0)--(.5,1.5)--(1.5,1.5)--(1.5,2);
	\end{tikzpicture}
	\hspace{1em}
	\begin{tikzpicture}[x=1em,y=1em]
	\draw[step=1,gray, thin] (0,0) grid (2,2);
	\draw[color=black, thick](0,0)rectangle(2,2);
	\draw[thick,rounded corners,color=blue] (0,.5)--(1.5,.5)--(1.5,2);
	\end{tikzpicture}
	\hspace{.1em}
	\raisebox{.75em}{$\mapsto$}
	\hspace{.1em}
	\begin{tikzpicture}[x=1em,y=1em]
	\draw[step=1,gray, thin] (0,0) grid (2,2);
	\draw[color=black, thick](0,0)rectangle(2,2);
	\draw[thick,rounded corners,color=blue] (0,.5)--(.5,.5)--(.5,1.5)--(1.5,1.5)--(1.5,2);
	\end{tikzpicture}}
\end{equation}

The following lemma says that for vexillary permutations, ${\sf Pipes}(v)$ is connected by local moves and inverse local moves.  See Figure~\ref{figure:vexlocalmoves} for an example.
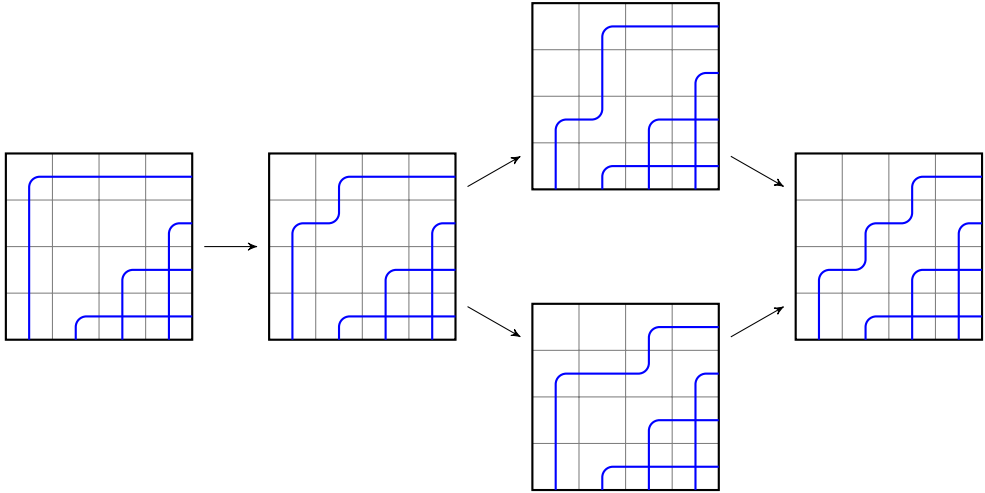
\begin{figure}

\[\begin{tikzpicture}[node distance=1em]

\node (A) at (0, 0) {
\begin{tikzpicture}[x=1.5em,y=1.5em]
\draw[step=1,gray, thin] (0,1) grid (4,5);
\draw[color=black, thick](0,1)rectangle(4,5);

\draw[thick,rounded corners,color=blue] (.5,1)--(.5,4.5)--(4,4.5);
\draw[thick,rounded corners,color=blue] (3.5,1)--(3.5,3.5)--(4,3.5);
\draw[thick,rounded corners,color=blue] (1.5,1)--(1.5,1.5)--(4,1.5);
\draw[thick,rounded corners,color=blue] (2.5,1)--(2.5,2.5)--(4,2.5);

\end{tikzpicture}
};
\node (B) at (3.5, 0) {
\begin{tikzpicture}[x=1.5em,y=1.5em]
\draw[step=1,gray, thin] (0,1) grid (4,5);
\draw[color=black, thick](0,1)rectangle(4,5);

\draw[thick,rounded corners,color=blue] (.5,1)--(.5,3.5)--(1.5,3.5)--(1.5,4.5)--(4,4.5);
\draw[thick,rounded corners,color=blue] (3.5,1)--(3.5,3.5)--(4,3.5);
\draw[thick,rounded corners,color=blue] (1.5,1)--(1.5,1.5)--(4,1.5);
\draw[thick,rounded corners,color=blue] (2.5,1)--(2.5,2.5)--(4,2.5);

\end{tikzpicture}

};
\node (C) at (7, 2) {
\begin{tikzpicture}[x=1.5em,y=1.5em]
\draw[step=1,gray, thin] (0,1) grid (4,5);
\draw[color=black, thick](0,1)rectangle(4,5);

\draw[thick,rounded corners,color=blue] (.5,1)--(.5,2.5)--(1.5,2.5)--(1.5,4.5)--(4,4.5);
\draw[thick,rounded corners,color=blue] (3.5,1)--(3.5,3.5)--(4,3.5);
\draw[thick,rounded corners,color=blue] (1.5,1)--(1.5,1.5)--(4,1.5);
\draw[thick,rounded corners,color=blue] (2.5,1)--(2.5,2.5)--(4,2.5);

\end{tikzpicture}
};
\node (D) at (7, -2) {
\begin{tikzpicture}[x=1.5em,y=1.5em]
\draw[step=1,gray, thin] (0,1) grid (4,5);
\draw[color=black, thick](0,1)rectangle(4,5);

\draw[thick,rounded corners,color=blue] (.5,1)--(.5,3.5)--(2.5,3.5)--(2.5,4.5)--(4,4.5);
\draw[thick,rounded corners,color=blue] (3.5,1)--(3.5,3.5)--(4,3.5);
\draw[thick,rounded corners,color=blue] (1.5,1)--(1.5,1.5)--(4,1.5);
\draw[thick,rounded corners,color=blue] (2.5,1)--(2.5,2.5)--(4,2.5);

\end{tikzpicture}
};
\node (E) at (10.5,0)
{
\begin{tikzpicture}[x=1.5em,y=1.5em]
\draw[step=1,gray, thin] (0,1) grid (4,5);
\draw[color=black, thick](0,1)rectangle(4,5);
\draw[thick,rounded corners,color=blue] (.5,1)--(.5,2.5)--(1.5,2.5)--(1.5,3.5)--(2.5,3.5)--(2.5,4.5)--(4,4.5);
\draw[thick,rounded corners,color=blue] (3.5,1)--(3.5,3.5)--(4,3.5);
\draw[thick,rounded corners,color=blue] (1.5,1)--(1.5,1.5)--(4,1.5);
\draw[thick,rounded corners,color=blue] (2.5,1)--(2.5,2.5)--(4,2.5);
\end{tikzpicture}
};
\draw[->]  (A) edge (B);
\draw[->]  (B) edge (C);
\draw[->]  (B) edge (D);
\draw[->]  (C) edge (E);
\draw[->]  (D) edge (E);

\end{tikzpicture}\]
\caption{The elements of $\pipes{1432}$ are pictured above.
The BPD on the left is $\asmtobpd(1432)$.  The rightmost BPD is $\mathcal T^{(1432)}$.  The arrows between BPDs indicate the application of a local move.}
\label{figure:vexlocalmoves}
\end{figure}

\begin{lemma}
	\label{lemma:vexblanktiles}
	Fix $v\in \SymGp_n$ so that $v$ is vexillary.  
	\begin{enumerate}
		\item Any $\mathcal P\in {\sf Pipes}(v)$ is uniquely determined by the positions of its blank tiles.
		\item  ${\sf Pipes}(v)$ is closed under applications of the moves in (\ref{eqn:vexmoves}).  Furthermore, if $\mathcal P\in {\sf Pipes}(v)$, there is a sequence 
		\[\asmtobpd(v)=\mathcal P_0\mapsto \mathcal P_1\mapsto \cdots \mapsto \mathcal P_k=\mathcal P\] where each $\mathcal P_i\mapsto \mathcal P_{i+1}$ is a local move.
		\item There exists a unique $\mathcal T^{(v)}\in {\sf Pipes}(v)$ so that $D(\mathcal T^{(v)})=\yd{\mu^{(v)}}$. Call $\mathcal T^{(v)}$ the {\bf top} bumpless pipe dream for $v$. 
		\item Any $\mathcal P\in {\sf Pipes}(v)$ can be reached from $\mathcal T^{(v)}$ by a sequence of inverse local moves.
	\end{enumerate}	
\end{lemma}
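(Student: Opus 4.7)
Part (1) is immediate from the results already proved. By Lemma~\ref{lemma:vexnoncrossing} every crossing tile of $\mathcal P\in \pipes{v}$ lies outside the mutable region $\yd{\lambda^{(v)}}$, and by Lemma~\ref{lemma:muteable}(3) the configuration of $\mathcal P$ outside $\yd{\lambda^{(v)}}$ coincides with that of $\asmtobpd(v)$. Hence the set of crossing tiles of $\mathcal P$ depends only on $v$, and Lemma~\ref{lemma:uniquelydet}(2) then recovers $\mathcal P$ from $D(\mathcal P)$.

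For part (2), each of the four moves in (\ref{eqn:vexmoves}) is a $2\times 2$ droop in the sense of Proposition~\ref{prop:droops}, so closure of $\pipes{v}$ under local moves follows from that proposition. For the reachability claim, Proposition~\ref{prop:droops} supplies a sequence of droops and K-theoretic droops from $\asmtobpd(v)$ to $\mathcal P$; the K-theoretic droops must be absent, since they would introduce a crossing in the mutable region, contradicting $\pipes{v}=\rpipes{v}$ (Lemma~\ref{lemma:vexnoncrossing}). It then remains to refine each ordinary droop into a chain of local moves. The vexillary hypothesis forbids crossings in the interior of any droop rectangle (and that interior has no elbows by definition), so one can walk the drooping elbow across the rectangle two cells at a time, with each $2\times 2$ step matching one of the four patterns in (\ref{eqn:vexmoves}).

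For parts (3) and (4) I adopt a common greedy strategy. Set $\Sigma(\mathcal P):=\sum_{(r,c)\in D(\mathcal P)}(r+c)$; each local move slides a blank from $(r+1,c+1)$ to $(r,c)$, strictly decreasing $\Sigma$ by $2$, so every chain of local moves terminates. Local moves also keep each blank tile on its own diagonal, so the diagonal distribution of $D(\mathcal P)$ is invariant; by Lemma~\ref{lemma:diagonalswork}, this invariant matches the diagonal distribution of $\yd{\mu^{(v)}}$. The crux is then to show: if no local move applies to $\mathcal P^\star\in\pipes{v}$, then $D(\mathcal P^\star)\cap\yd{\lambda^{(v)}}$ is a top-left-justified partition shape. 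Granting this, the diagonal invariant pins it down as $\yd{\mu^{(v)}}$, and combined with part (1) this gives existence and uniqueness of $\mathcal T^{(v)}$, completing (3). For (4), starting the greedy procedure at any $\mathcal P$ yields a sequence of local moves ending at $\mathcal T^{(v)}$, which reversed is a sequence of inverse local moves from $\mathcal T^{(v)}$ to $\mathcal P$.

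The main obstacle is the crux claim in the previous paragraph. My plan is a local case analysis on the possible tiles at $(r,c)$ given that $(r+1,c+1)$ is blank and $(r,c)$ is not. By vexillarity there are no crossings in $\yd{\lambda^{(v)}}$, so $(r,c)$ is a horizontal, vertical, downward-elbow, or upward-elbow tile. The downward-elbow case is ruled out because the types of $(r+1,c)$ and $(r,c+1)$ are then forced by pipe continuation into the right categories to match one of the four patterns in (\ref{eqn:vexmoves}), contradicting terminality. The remaining cases propagate: when $(r,c)$ is horizontal, pipe consistency forces $(r+1,c)$ to also be blank, reducing the situation to the analogous question at $(r,c-1)$; iterating westward along row $r$ must eventually encounter a downward-elbow tile (since a horizontal tile cannot occupy the leftmost cell of its row), delivering a contradiction. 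The remaining two cases are handled symmetrically.
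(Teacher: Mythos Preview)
Your argument is essentially correct and follows the same strategy as the paper, but there is one genuine error in the justification for part~(2).

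You claim that each of the four moves in~(\ref{eqn:vexmoves}) is a $2\times 2$ droop. Only the first one is. Recall that a droop (simple deflation) requires the rectangular region to contain no elbow tiles other than the single down-elbow at the northwest corner. Patterns~2, 3, and~4 of~(\ref{eqn:vexmoves}) have upward elbows at the off-diagonal positions, so they are not droops and Proposition~\ref{prop:droops} does not directly give closure under them. The paper instead argues closure from the observation that none of the moves in~(\ref{eqn:vexmoves}) touches a crossing tile, so the reading word (and hence the Demazure product) is unchanged; this is the one-line fix you need. Your reachability argument in part~(2) is fine: the decomposition of a crossing-free droop into $2\times 2$ steps genuinely uses all four patterns (the intermediate steps create up-elbows), and this matches the paper's reasoning.

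Parts~(1), (3), and~(4) are correct. Your proof of~(3) differs slightly from the paper's: the paper picks a northwest-most blank tile in a non-dominant connected component and shows directly that all three of its north, west, and northwest neighbors are occupied (the last because otherwise the north or west neighbor would be forced blank), so a local move applies at that very spot. Your version instead starts from any blank with a non-blank northwest diagonal neighbor and iterates westward or northward until hitting a down-elbow. Both work; the paper's choice avoids the iteration, while yours makes the role of the diagonal invariant more explicit. One small point worth stating in your write-up: when you reduce ``$D(\mathcal P)$ is not a partition'' to your diagonal setup, you should note that a blank at $(a,b)$ with, say, $(a-1,b)$ not blank forces $(a-1,b-1)$ not blank as well (else $(a-1,b)$ would have no pipe on its left or bottom edge and hence be blank), so your starting configuration is always available.
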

\begin{proof}
	\noindent (1) By Lemma~\ref{lemma:uniquelydet}, each $\mathcal P\in \bpd{n}$ is uniquely determined by knowing the positions of its crossing tiles and its blank tiles.  By Lemma~\ref{lemma:vexnoncrossing}, for all $\mathcal P\in {\sf Pipes}(v)$, the crossing tiles occur outside of $\yd{\lambda^{(v)}}$.  In particular, the crossing tiles are in the same location for each $\mathcal P\in {\sf Pipes}(v)$.  This implies $\mathcal P\in {\sf Pipes}(v)$ is uniquely determined by the locations of its blank tiles.
	
	\noindent (2)
	 By Lemma~\ref{lemma:vexnoncrossing}, since $v$ is vexillary, there are no crossing tiles in $\yd{\lambda^{(v)}}$.  Furthermore, ${\sf Pipes}(v)={\sf RPipes}(v)$.  As such, $\mathcal P$ can be reached from $\asmtobpd(v)$ by a sequence of droops.  In particular, these droops do not involve crossing tiles.
	  Therefore, each one is merely a composition of the moves in (\ref{eqn:vexmoves}).

	\noindent (3)  
	If $\mathcal P$ is a Hecke BPD, then by Lemma~\ref{lemma:diagonalswork}, we are done.  Otherwise, there exists  some $(i,j)\in D(\mathcal P)$ so that $(i,j)$ is not in the dominant part of $D(\mathcal P)$. 
By choosing a northwest most tile in its connected component, we may assume WLOG that $(i,j)$ has no blank tile immediately to its left or immediately above.
  Since $(i,j)$ is not in the dominant part of $D(\mathcal P)$, the tiles in $(i-1,j)$ and $(i,j-1)$ must be occupied by some pipe. 
 In particular, $(i-1,j-1)$ cannot be blank. 
 Therefore, we are in one of the situations pictured in (\ref{eqn:vexmoves}) and can apply a local move. 
 We may continue this process until we have reached a Hecke BPD.

By the second part, ${\sf Pipes}(v)$ is connected by local moves.  In particular, this implies that each diagonal contains the same number of blank tiles for each $\mathcal P\in {\sf Pipes}(v)$.  As such, there is at most one Hecke BPD in ${\sf Pipes}(v)$ whose diagonal lengths are uniquely determined by the number of cells in each diagonal of $D(v)$. By Lemma~\ref{lemma:diagonalswork}, this partition coincides with $\mu^{(v)}$.

	\noindent (4) 	This follows immediately from the construction of $\mathcal T^{(v)}$ in the previous part. We may simply reverse the path from $\mathcal P$ to $\mathcal T^{(v)}$ by using inverse local moves.
\end{proof}

\subsection{Families of non-intersecting lattice paths}
\label{section:paths}

We now turn our attention to the families of lattice paths from \cite{Kreiman}.
For this section, place a point in the center of each cell of the $n\times n$ grid.  From this, create a square lattice.  We will consider northeast lattice paths on this grid.

A northeast lattice path $\mathcal L$ is \mydef{supported on $\lambda$} if it:
\begin{enumerate}
\item starts at the lowest cell of a column in $\yd{\lambda}$,
\item moves north or east with each step, and
\item ends at the rightmost cell of a row in $\yd{\lambda}$.
\end{enumerate}

Suppose $\mathcal F=(\mathcal L_1,\,\ldots,\, \mathcal L_k)$ is a tuple of northeast lattice paths supported on $\lambda$.  We say $\mathcal F$ is \mydef{non-intersecting} if no two pairs of paths in $\mathcal F$ share a common vertex.  
It is immediate that there is an injection from tuples of non-intersecting northeast lattice paths supported on $\lambda$ into $\bpd{\lambda}$.  The image is precisely  \[\ncbpd{\lambda}:=\{\mathcal P\in \bpd{\lambda}: \mathcal P \text{ has no crossing tiles}\}.\] 
As such, we conflate $\mathcal F$ with its image in $\ncbpd{\lambda}$ from this point on.
Furthermore, Kreiman's ladder moves (named for the analogous moves of \cite{Bergeron.Billey} on ordinary pipe dreams) are immediately seen to be the moves in (\ref{eqn:vexmovesinv}).

\begin{lemma}
\label{lemma:kreimansupport}
For any $\mu\subseteq \lambda$, there exists a unique $\mathcal P\in \ncbpd{\lambda}$ so that $D(\mathcal P)=\yd{\mu}$.  Write $\mathcal P^{\rm top}_{\lambda}(\mu)$ for this BPD.
\end{lemma}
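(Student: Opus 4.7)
The claim splits into existence and uniqueness, and I will handle them separately.

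For \emph{uniqueness}, I will argue that any element of $\ncbpd{\lambda}$ is determined by its set of blank tiles, so that prescribing $D(\mathcal P)=\yd{\mu}$ pins down $\mathcal P$. This is a direct adaptation of Lemma~\ref{lemma:uniquelydet}(2). Either apply $\completebpd$ to move to a square BPD and invoke that lemma as stated, or work intrinsically on $\yd{\lambda}$: starting from the initial corner sums along the northwest boundary of $\yd{\lambda}$, the diagonal recurrence (\ref{eqn:diagrankjump}) lets one recover the entire corner-sum function $r_A$ (and hence the tiling) from the combined data of the blanks and the crossings. Since $\yd{\lambda}$ is an order ideal in $\mathbb P\times\mathbb P$, every diagonal enters $\yd{\lambda}$ at a boundary cell where the adjacent corner sums have already been computed; and since we are in $\ncbpd{\lambda}$ the set of crossings is empty, so knowledge of the blanks alone suffices.

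For \emph{existence}, I will construct $\mathcal P^{\rm top}_\lambda(\mu)$ by a local prescription that makes each pipe ``hug'' the boundary of $\yd{\mu}$ as closely as possible. Place a blank tile at every $(i,j)\in\yd{\mu}$. For $(i,j)\in\yd{\lambda}\setminus\yd{\mu}$, choose the tile according to the local configuration of $\yd{\mu}$ around $(i,j)$: an upward elbow when $(i-1,j-1)\in\yd{\mu}$ while $(i-1,j),(i,j-1)\notin\yd{\mu}$; a downward elbow on the outer northwest boundary of $\yd{\lambda}\setminus\yd{\mu}$, where a new pipe must originate; a horizontal tile when the cell immediately to the west carries a pipe heading east; and a vertical tile when the cell immediately above carries a pipe heading south. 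Equivalently, one can induct on $|\yd{\lambda}|-|\yd{\mu}|$ by removing an outer corner of $\yd{\lambda}\setminus\yd{\mu}$, applying the hypothesis to the smaller partition pair, and gluing back an elbow or straight tile at the corner.

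The main obstacle is the verification in the existence half: showing that the local rules assemble into globally consistent, non-crossing pipes, each starting at the bottom of a column of $\yd{\lambda}$ and ending at the right of a row. The key geometric observation is that because both $\mu$ and $\lambda$ are Young diagrams, the skew region $\yd{\lambda}\setminus\yd{\mu}$ has no concave pinch points forcing two pipes to meet; each inner corner of $\yd{\mu}$ relative to $\yd{\lambda}$ admits precisely one pipe exiting eastward (and then southward along the boundary of $\yd{\mu}$), so the non-crossing condition is automatic. Once global consistency is confirmed, $D(\mathcal P^{\rm top}_\lambda(\mu))=\yd{\mu}$ by construction, and uniqueness from the first paragraph applies.
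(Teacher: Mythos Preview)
Your plan is sound and would yield a correct proof. The paper itself does not argue this lemma at all; it simply cites \cite[Lemma~5.3]{Kreiman}. So you are supplying a self-contained argument where the paper defers to the literature. Your uniqueness argument is the right one: in a non-crossing tiling each cell's tile is forced by its north and west edges together with the single bit ``blank or not'' (since with crossings excluded the four north/west patterns pick out exactly one of blank, down-elbow, horizontal, vertical, up-elbow), and that induction is precisely the content of the diagonal recurrence~(\ref{eqn:diagrankjump}). For existence, the greedy fill you describe is exactly Kreiman's construction, and your observation that $\yd{\mu}$ being a Young diagram prevents a pipe from ever running south or west into a blank cell is the crux of why the local rules glue into a globally valid element of $\bpd{\lambda}$.

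One small caveat on the $\completebpd$ shortcut for uniqueness: invoking Lemma~\ref{lemma:uniquelydet}(2) \emph{as stated} needs both the blanks and the crossings of the square BPD, and the crossings of $\completebpd(\mathcal P)$ lie outside $\yd{\lambda}$ and are not a priori known to coincide for two candidates with the same blanks. What actually works is to pass to $\completebpd(\mathcal P)$, then run the diagonal recurrence only over $\yd{\lambda}$ (where there are no crossings) to recover $r_A$ on that region; this determines the tiles on $\yd{\lambda}$ and hence $\mathcal P$. This is of course your ``intrinsic'' alternative, so the point is only that the two options you offer are really the same argument.
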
 
\begin{proof}
This is \cite[Lemma~5.3]{Kreiman}.
\end{proof}

Write $\paths_\lambda(\mu)$ for the set of $\mathcal P\in \ncbpd{\lambda}$ which can be reached from  $\mathcal P^{\rm top}_{\lambda}(\mu)$ by a sequence of inverse local moves.

\begin{proposition}
\label{prop:vexcomp}
\begin{enumerate}
\item If $\mathcal P\in \ncbpd{\lambda}$, then $\completebpd(\mathcal P)$ is a vexillary BPD.
\item The completion map defines a bijection from $\paths_\lambda(\mu)$ to $\pipes{v}$ for some vexillary permutation $v$.
\end{enumerate}
\end{proposition}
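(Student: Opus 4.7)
The plan for (1) is to combine the inclusion $\yd{\lambda^{(v)}} \subseteq \yd{\lambda}$ established inside the proof of Lemma~\ref{lemma:compatcompletion} with the equivalence in Lemma~\ref{lemma:vexnoncrossing}. Set $\mathcal Q = \completebpd(\mathcal P)$ and $v = \demprod{\mathcal Q}$. The completion procedure does not touch any tile of $\yd{\lambda}$, so $\restrictBPD_\lambda(\mathcal Q) = \mathcal P$; hence $\mathcal Q$ has no crossings in $\yd{\lambda}$ because $\mathcal P \in \ncbpd{\lambda}$. The proof of Lemma~\ref{lemma:compatcompletion} shows $\yd{\lambda^{(v)}} \subseteq \yd{\lambda}$, so $\mathcal Q$ also has no crossings in $\yd{\lambda^{(v)}}$, and Lemma~\ref{lemma:vexnoncrossing} then forces $v$ to be vexillary.

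For (2), let $v = \demprod{\completebpd(\mathcal P^{\rm top}_\lambda(\mu))}$, which is vexillary by (1). Injectivity of $\completebpd\colon \paths_\lambda(\mu) \to \pipes{v}$ is immediate since $\restrictBPD_\lambda$ is a left inverse. For well-definedness, I observe that each inverse local move from (\ref{eqn:vexmovesinv}) sits in a $2\times 2$ subregion of $\yd{\lambda}$, so it can be carried out identically on the completion without altering any tile outside $\yd{\lambda}$; Lemma~\ref{lemma:planarmoves} then guarantees that the Demazure product is preserved, so $\completebpd(\mathcal P) \in \pipes{v}$ for every $\mathcal P \in \paths_\lambda(\mu)$.

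For surjectivity, given $\mathcal Q \in \pipes{v}$, Lemma~\ref{lemma:compatcompletion} produces $\mathcal P \in \bpd{\lambda}$ with $\completebpd(\mathcal P) = \mathcal Q$. To see $\mathcal P \in \ncbpd{\lambda}$, I combine Lemma~\ref{lemma:vexnoncrossing} (no crossings of $\mathcal Q$ in $\yd{\lambda^{(v)}}$ because $v$ is vexillary) with Lemma~\ref{lemma:muteable}(3): on $\yd{\lambda} \setminus \yd{\lambda^{(v)}}$ both $\mathcal Q$ and $\completebpd(\mathcal P^{\rm top}_\lambda(\mu))$ coincide with $\asmtobpd(v)$, and the latter BPD was shown to be crossing-free on $\yd{\lambda}$ in the proof of (1). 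To show $\mathcal P$ is reachable from $\mathcal P^{\rm top}_\lambda(\mu)$ by inverse local moves, I would first identify $\mathcal T^{(v)}$ with $\completebpd(\mathcal P^{\rm top}_\lambda(\mu))$: local moves preserve the diagonal counts of $D$, so these two BPDs, both in $\pipes{v}$, have the same diagonal counts, and since a partition is determined by its diagonal counts (implicit in Lemma~\ref{lemma:diagonalswork}), this yields $\mu = \mu^{(v)}$; the uniqueness in Lemma~\ref{lemma:vexblanktiles}(3) then makes the identification. Lemma~\ref{lemma:vexblanktiles}(4) provides a sequence of inverse local moves from $\mathcal T^{(v)}$ to $\mathcal Q$, and because every intermediate BPD lies in $\pipes{v}$ and hence agrees with $\asmtobpd(v)$ outside $\yd{\lambda^{(v)}}$, each move is confined to $\yd{\lambda^{(v)}} \subseteq \yd{\lambda}$; restricting this sequence to $\yd{\lambda}$ produces the required path in $\ncbpd{\lambda}$ from $\mathcal P^{\rm top}_\lambda(\mu)$ to $\mathcal P$.

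The main obstacle is the identification $\mathcal T^{(v)} = \completebpd(\mathcal P^{\rm top}_\lambda(\mu))$, which ties Kreiman's top non-crossing BPD to the top BPD singled out in Lemma~\ref{lemma:vexblanktiles}. This requires both the invariance of diagonal counts of $D$ under local moves and the fact that a partition is determined by its diagonal counts, and once this identification is in hand the rest of the surjectivity argument follows cleanly.
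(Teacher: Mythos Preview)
Your proof is correct and follows the same approach as the paper: part~(1) via Lemma~\ref{lemma:vexnoncrossing} together with the containment $\yd{\lambda^{(v)}}\subseteq\yd{\lambda}$ from the proof of Lemma~\ref{lemma:compatcompletion}, and part~(2) via injectivity from $\restrictBPD_\lambda$, well-definedness from Demazure-product invariance of local moves, and surjectivity from Lemma~\ref{lemma:compatcompletion}.

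The one notable difference is in the surjectivity step. The paper simply asserts that surjectivity follows from Lemma~\ref{lemma:compatcompletion}, leaving implicit the verification that the preimage lands in $\paths_\lambda(\mu)$ rather than merely in $\bpd{\lambda}$. You correctly recognize that this requires two further checks---that the preimage has no crossings and that it is reachable from $\mathcal P^{\rm top}_\lambda(\mu)$ by inverse local moves---and you supply both, including the identification $\mathcal T^{(v)}=\completebpd(\mathcal P^{\rm top}_\lambda(\mu))$ via diagonal counts and the observation that the inverse local moves in Lemma~\ref{lemma:vexblanktiles}(4) stay inside $\yd{\lambda^{(v)}}\subseteq\yd{\lambda}$. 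Your treatment is more complete than the paper's on this point.
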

\begin{proof}
\noindent (1) Fix $\mathcal P\in \ncbpd{\lambda}$.  By assumption, $\mathcal P$ has no crossing tiles.  Therefore, $\completebpd(\mathcal P)$ does not have crossing tiles in $\Mute(\demprod{\completebpd(\mathcal P)})$.  Therefore, by Lemma~\ref{lemma:vexnoncrossing}, $\demprod{\completebpd(\mathcal P)}$ is vexillary.

\noindent (2)  Write $v=\demprod{\mathcal P^{\rm top}_{\lambda}(\mu)}$.  It is immediate that for all $\mathcal P\in \paths_\lambda(\mu)$, the completion $\completebpd(\mathcal P)\in \pipes{v}$.  Furthermore, by construction,  the  completion map is injective.  
Surjectivity follows from Lemma~\ref{lemma:compatcompletion}.
\end{proof}

\subsection{Tableaux to pipes}
\label{section:tabpipes}
Recall the map $\FSSYTtovexbpd:\FSSYT(v)\rightarrow \pipes{v}$ which takes $T\in \FSSYT(v)$ to (the unique) $\mathcal P\in \pipes{v}$ so that 
\[D(\mathcal P)=\{(T(i,j),T(i,j)-i+j):(i,j)\in \yd{\mu^{(v)}}\}.\]

\begin{theorem}
\label{thm:ssyttovexbpdbij}  If $v\in \SymGp_n$ is vexillary, 
the map $$\FSSYTtovexbpd:\FSSYT(v)\rightarrow \pipes{v}$$ is a bijection.
\end{theorem}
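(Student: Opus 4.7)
The plan is to deduce this bijection from Kreiman's bijection between flagged semistandard tableaux and non-intersecting lattice paths, combined with the completion map of Proposition~\ref{prop:vexcomp} and the fact that vexillary BPDs are determined by their blank tiles.

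First I would check that $\FSSYTtovexbpd$ is well-defined. Given $T \in \FSSYT(v)$, set
\[ D := \{(T(i,j),\,T(i,j) + j - i) : (i,j) \in \yd{\mu^{(v)}}\}. \]
Each position $(T(i,j),\,T(i,j) + j - i)$ lies on the same diagonal $j - i$ as $(i,j)$, so $D$ has the same diagonal distribution as $\yd{\mu^{(v)}}$, which by Lemma~\ref{lemma:diagonalswork} matches that of $D(v)$. The strict column-increase condition $T(i,j) < T(i+1,j)$ guarantees the resulting positions on a fixed diagonal are distinct and weakly increase in row index as $j$ increases (since weak row-increase $T(i,j) \leq T(i,j+1)$ forces the diagonal shift by $+1$ to remain consistent). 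The flagging $T(i,j) \leq f^{(v)}_i$ ensures each point lies in $\yd{\lambda^{(v)}}$, by the definition of $\mathbf{f}^{(v)}$ as the maximum row compatible with staying inside $\lambda^{(v)}$.

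Second, I would invoke Kreiman's bijection \cite{Kreiman} between $\FSSYT(v)$ and $\ncbpd{\lambda^{(v)}}$ with dominant part $\yd{\mu^{(v)}}$: read the $i$th row $(T(i,1),\ldots,T(i,\mu^{(v)}_i))$ as a northeast lattice path whose ``elbow'' positions are precisely the cells $(T(i,j),\,T(i,j)+j-i)$. Semistandardness (weak row-increase, strict column-increase) is equivalent to the paths being non-intersecting, and the flagging $T(i,j) \leq f^{(v)}_i$ is equivalent to the support condition for $\yd{\lambda^{(v)}}$; the conversion between our flag and the one in \cite{Kreiman} is the bookkeeping described in \cite[Remark~3.10]{Morales.Pak.Panova.I}. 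By Lemma~\ref{lemma:kreimansupport}, the minimum tableau maps to $\mathcal P^{\rm top}_{\lambda^{(v)}}(\mu^{(v)})$, and all others are reached from it by inverse local moves (Kreiman's ``ladder moves'' are precisely the inverse moves in~(\ref{eqn:vexmovesinv})). Hence Kreiman provides a bijection $\FSSYT(v) \cong \paths_{\lambda^{(v)}}(\mu^{(v)})$ in which the blank tile positions of the image BPD are exactly $D$.

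Third, I would compose with the completion map $\completebpd : \paths_{\lambda^{(v)}}(\mu^{(v)}) \to \pipes{v}$, which is a bijection by Proposition~\ref{prop:vexcomp}(2), using that $\completebpd(\mathcal P^{\rm top}_{\lambda^{(v)}}(\mu^{(v)})) = \mathcal T^{(v)}$ (both are uniquely characterized by having diagram $\yd{\mu^{(v)}}$, by Lemma~\ref{lemma:vexblanktiles}(3)) and that $\pipes{v}$ is generated from $\mathcal T^{(v)}$ by inverse local moves (Lemma~\ref{lemma:vexblanktiles}(4)). Since completion does not introduce new blank tiles inside $\yd{\lambda^{(v)}}$ and adds none outside it, the blank tiles of $\completebpd(\mathcal P)$ inside $\yd{\lambda^{(v)}}$ coincide with those of $\mathcal P$, so the resulting BPD has diagram exactly $D$. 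By Lemma~\ref{lemma:vexblanktiles}(1), this BPD is the unique element of $\pipes{v}$ with diagram $D$, so the composite map equals $\FSSYTtovexbpd$.

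The main obstacle is the careful translation between Kreiman's conventions for flagged non-intersecting lattice paths and the bumpless pipe dream picture — specifically, matching his flag with $\mathbf{f}^{(v)}$ and verifying that the positions his paths trace out are exactly what we call $D(\mathcal P)$. Once this dictionary is fixed, the bijection follows by assembling the three bijections above: flagged SSYTs $\leftrightarrow$ non-intersecting lattice paths $\leftrightarrow$ $\paths_{\lambda^{(v)}}(\mu^{(v)}) \leftrightarrow \pipes{v}$.
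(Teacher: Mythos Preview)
Your proposal is correct and follows essentially the same route as the paper: invoke Kreiman's bijection between flagged semistandard tableaux and non-intersecting lattice paths (which the paper cites via \cite[Proposition~3.6]{Morales.Pak.Panova.I}) and then compose with the completion bijection of Proposition~\ref{prop:vexcomp}, using Lemma~\ref{lemma:vexblanktiles}(1) to identify the composite with $\FSSYTtovexbpd$. One small slip worth fixing: in your second step the cells $(T(i,j),\,T(i,j)+j-i)$ are the \emph{blank} tiles of the resulting element of $\ncbpd{\lambda^{(v)}}$, not ``elbow positions'' of the paths themselves; you use the correct statement in the next sentence, so this does not affect the argument.
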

\begin{proof}
This is a consequence of \cite[Proposition~3.6]{Morales.Pak.Panova.I}  and Proposition~\ref{prop:vexcomp}.  
\end{proof}
In Figure~\ref{figure:flaggedtabs}, we list the elements of $\FSSYT(1432)$.  They are positioned in the same relative order as their counterparts in Figure~\ref{figure:vexlocalmoves}.  Notice that applying a local move to $\mathcal P\in \pipes{1432}$ corresponds to decreasing the value of a cell within  $\FSSYTtovexbpd(\mathcal P)$ by one.

\begin{figure} 
\[\begin{tikzpicture}[node distance=1em]

\node (A) at (0, 0) {
$\begin{ytableau}
2&2\\3
\end{ytableau}$
};
\node (B) at (3.5, 0) {
$\begin{ytableau}
1&2\\3
\end{ytableau}$
};
\node (C) at (7, 2) {
$\begin{ytableau}
1&2\\2
\end{ytableau}$
};
\node (D) at (7, -2) {
$\begin{ytableau}
1&1\\3
\end{ytableau}$
};
\node (E) at (10.5,0)
{
$\begin{ytableau}
1&1\\2
\end{ytableau}$
};
\draw[->]  (A) edge (B);
\draw[->]  (B) edge (C);
\draw[->]  (B) edge (D);
\draw[->]  (C) edge (E);
\draw[->]  (D) edge (E);

\end{tikzpicture}\]
\caption{We list the flagged tableaux for $v=1432$.  In this case, $\mu^{(v)}=(2,1)$, $\lambda^{(v)}=(3,3,2)$, and $\mathbf f^{(v)}=(2,3)$.}
\label{figure:flaggedtabs}
\end{figure}
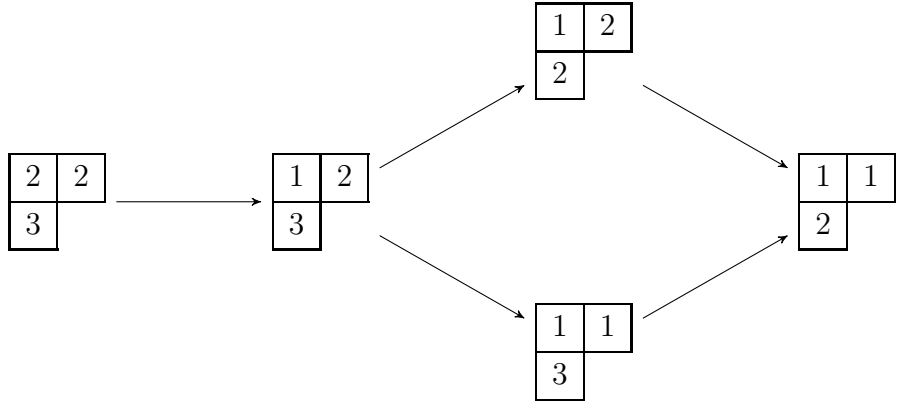

Now recall \[\mathcal S_{\mathbf T}=\{(k,k-i+j):k \in \mathbf T(i,j) \text{ and } k\neq \min(\mathbf T(i,j)) \}.\]  Define $\flaggedtovexmarkedbpd:\FSet(v)\rightarrow \mpipes{v}$ by \[\flaggedtovexmarkedbpd(\mathbf T)=(\FSSYTtovexbpd(\flatten(\mathbf T)),\mathcal S_{\mathbf T}).\]

We seek to show $\flaggedtovexmarkedbpd$ is a bijection.
We start by introducing an auxiliary set of tableaux.
Call $\mathbf T\in \FSet(\lambda,\mathbf f)$ \mydef{saturated} if for all $(i,j)\in \yd{\lambda}$, there is no $k>\min(\mathbf T(i,j))$ for which adding $k$ to $\mathbf T(i,j)$ produces an element of $\FSet(\lambda,\mathbf f)$.  Write $\SatFSet(\lambda,\mathbf f)$ for the set of saturated tableaux in $\FSet(\lambda,\mathbf f)$.  We abbreviate $\SatFSet(v):=\SatFSet(\mu^{(v)},\mathbf f^{(v)})$.

\begin{lemma}
\label{lemma:satflattenbij}
The map  $\flatten:\SatFSet(\lambda,\mathbf f)\rightarrow \FSSYT(\lambda,\mathbf f)$  is a bijection.
\end{lemma}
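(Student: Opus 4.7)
The plan is to exhibit an explicit inverse to $\flatten$. Given $T \in \FSSYT(\lambda,\mathbf f)$, I would construct $\Gamma(T) \in \SatFSet(\lambda,\mathbf f)$ by filling each cell with the longest initial interval of integers starting at $T(i,j)$ that is still compatible with the set-valued semistandard and flag conditions. Explicitly, for each $(i,j) \in \yd{\lambda}$ set
\[
\Gamma(T)(i,j) := \{k \in \mathbb P : T(i,j) \leq k \leq M(i,j)\}, \quad M(i,j) := \min\bigl(f_i,\, T(i,j+1),\, T(i+1,j)-1\bigr),
\]
with the convention that any term coming from a cell outside $\yd{\lambda}$ is $+\infty$. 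The claim is that $\Gamma$ is the two-sided inverse of $\flatten$.

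First I would check that $\flatten$ really lands in $\FSSYT(\lambda,\mathbf f)$: restricting a semistandard set-valued tableau to minima produces an ordinary semistandard tableau, and the flag condition on $\mathbf T$ restricts to the flag condition on $\flatten(\mathbf T)$. Next I would verify that $\Gamma(T)$ is a legitimate element of $\FSet(\lambda,\mathbf f)$. The set-valued semistandard conditions are exactly $\max \mathbf T(i,j) \leq \min \mathbf T(i,j+1)$ and $\max \mathbf T(i,j) < \min \mathbf T(i+1,j)$; both hold by the definition of $M(i,j)$, since the first two arguments of the $\min$ enforce the row and flag bounds and the third enforces the column bound. Saturation of $\Gamma(T)$ is immediate, because $M(i,j)$ is by construction the largest integer that can be added to cell $(i,j)$ without breaking one of these three constraints.

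It remains to show the maps are mutually inverse. The identity $\flatten \circ \Gamma = \id$ is trivial since $T(i,j)$ is the minimum of the interval $\Gamma(T)(i,j)$. For $\Gamma \circ \flatten = \id$ on $\SatFSet(\lambda,\mathbf f)$, fix $\mathbf T \in \SatFSet(\lambda,\mathbf f)$ with $\flatten(\mathbf T) = T$. The semistandard and flag conditions on $\mathbf T$ force every element of $\mathbf T(i,j)$ to lie in the interval $[T(i,j), M(i,j)]$, so $\mathbf T(i,j) \subseteq \Gamma(T)(i,j)$. Saturation of $\mathbf T$ then upgrades this inclusion to equality: any integer in $\Gamma(T)(i,j) \setminus \mathbf T(i,j)$ could be added to $\mathbf T(i,j)$ while remaining in $\FSet(\lambda,\mathbf f)$, contradicting saturation.

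The only mild subtlety is the bookkeeping around cells on the southern or eastern rim of $\yd{\lambda}$, where one or both of $T(i,j+1)$ and $T(i+1,j)$ are undefined; the $+\infty$ convention handles this uniformly, but this is where I would be most careful when writing out the details. No real obstacle is anticipated.
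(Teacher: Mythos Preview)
Your proof is correct and follows essentially the same idea as the paper's, though packaged differently. The paper proves injectivity and surjectivity separately: injectivity by the same saturation contradiction you use in the $\Gamma\circ\flatten=\id$ step, and surjectivity by starting from the singleton-valued tableau and iteratively adding entries until saturation is reached. Your version sharpens the surjectivity argument into an explicit closed-form inverse $\Gamma(T)(i,j)=[T(i,j),M(i,j)]$, which makes the structure of the saturated tableau transparent; the paper's argument establishes existence without writing down this formula. Both proofs rest on the identical observation that saturation forces $\mathbf T(i,j)$ to contain every integer compatible with the bounds coming from $f_i$, $\min\mathbf T(i,j{+}1)$, and $\min\mathbf T(i{+}1,j)$.
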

\begin{proof}
Take $\mathbf T,\mathbf T'\in \SatFSet(\lambda,\mathbf f)$ so that $\flatten(\mathbf T)=\flatten(\mathbf T')$.  
Suppose for contradiction, there exists $k\in \mathbf T(i,j)\setminus\mathbf T'(i,j)$. Then  $k\leq \min(\mathbf T(i,j+1))=\min(\mathbf T'(i,j+1))$, $k<\min(\mathbf T(i+1,j))=\min(\mathbf T'(i+1,j))$, and $k\leq \mathbf f_i$.  As such, we can add $k$ to $\mathbf T'(i,j)$ to produce a valid element of $\FSet(\lambda,\mathbf f)$  But then $\mathbf T'$ is not saturated, which is a contradiction.  An identical argument shows there is no $k\in \mathbf T'(i,j)\setminus\mathbf T(i,j)$.  Thus, $\mathbf T=\mathbf T'$.

We now show the map is surjective.
Start with $T\in \FSSYT(\lambda,\mathbf f)$.  Let $\mathbf T$ be the set-valued tableau so that $\mathbf T(i,j)=\{T(i,j)\}$ for all $(i,j)\in \yd{\lambda}$. If $\mathbf T$ is saturated, the result is automatic. If not, by definition, there is some $(i,j)\in \yd{\lambda}$ and $k\in \mathbb P$ so that $k>\min(\mathbf T(i,j))$ and adding $k$ to $\mathbf T(i,j)$ produces an element of $\FSet(\lambda,\mathbf f)$.  Continue this process, until the resulting tableau is saturated.  This tableau maps to $T$ by construction.
\end{proof}

As an immediate consequence, $\SatFSet(v)$ is in bijection with $\pipes{v}$.
We now show the excess entries in a saturated tableau for $v$ tell us the positions of the upward elbows in the corresponding BPD.
\begin{lemma}
\label{lemma:elbows}
 If $\mathbf T\in \SatFSet(v)$ then $\mathcal S_{\mathbf T}=U(\FSSYTtovexbpd(\flatten(\mathbf T)))$.
\end{lemma}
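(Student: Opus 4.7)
My plan is to prove the equality by inducting on the number of local moves from (\ref{eqn:vexmoves}) needed to reach $\mathcal P := \FSSYTtovexbpd(\flatten(\mathbf T))$ starting from the Rothe BPD $\asmtobpd(v)$. By Lemma~\ref{lemma:vexblanktiles}(2) such a sequence always exists, and each local move corresponds to decreasing a single tableau entry by one (the blank moves from the SE to the NW corner of a $2\times 2$ region).

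For the \emph{base case} $\mathcal P = \asmtobpd(v)$, the Rothe BPD has no upward elbows (Lemma~\ref{lemma:permword}), so I must show the saturated tableau $\mathbf T_{\mathrm{bot}}$ with $\flatten(\mathbf T_{\mathrm{bot}}) =: T_{\mathrm{bot}}$ has no extras. The tableau $T_{\mathrm{bot}}$ is componentwise maximal in $\FSSYT(v)$: raising any entry would produce another valid flagged SSYT, whose BPD would be reached from $\asmtobpd(v)$ by an inverse local move — impossible since Rothe sits at the bottom of the droop poset. Maximality means that for every $(i,j)\in\yd{\mu^{(v)}}$, no $k > T_{\mathrm{bot}}(i,j)$ satisfies the semistandard and flag constraints for addition as an extra, so $\mathbf T_{\mathrm{bot}} = T_{\mathrm{bot}}$ and $\mathcal S_{\mathbf T_{\mathrm{bot}}} = \emptyset = U(\asmtobpd(v))$.

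For the \emph{inductive step}, suppose $\mathcal P$ arises from $\mathcal P' = \FSSYTtovexbpd(\flatten(\mathbf T'))$ by a single local move decreasing $T'(i_0,j_0) = t+1$ to $T(i_0,j_0) = t$; set $d := j_0-i_0$. This move is one of Moves~1--4 in (\ref{eqn:vexmoves}), each affecting the $2\times 2$ subgrid at rows $\{t,t+1\}$ and columns $\{t+d,\,t+1+d\}$. Inspecting the four moves shows that in every case the SE corner $(t+1,\,t+1+d)$ goes from blank to upward elbow; the NE corner $(t,\,t+1+d)$ is either horizontal (Moves 1,\,2) becoming DR, or upward elbow (Moves 3,\,4) becoming vertical; and the SW corner $(t+1,\,t+d)$ is either vertical (Moves 1,\,3) becoming DR, or upward elbow (Moves 2,\,4) becoming horizontal. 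Hence
\[
U(\mathcal P) \;=\; \bigl(U(\mathcal P') \setminus \{(t,\,t+1+d),\,(t+1,\,t+d)\}\bigr) \cup \{(t+1,\,t+1+d)\}.
\]
On the tableau side, since $T$ and $T'$ agree outside $(i_0,j_0)$, saturation can only change at $(i_0,j_0)$ and its neighbors $(i_0-1,j_0)$, $(i_0,j_0-1)$. A direct check of the saturation conditions gives exactly three candidate changes: (a) $t+1$ becomes a new extra at $(i_0,j_0)$, adding $(t+1,\,t+1+d)$ to $\mathcal S$; (b) the constraint $k\leq \min \mathbf T(i_0,j_0)$ tightens from $t+1$ to $t$, potentially removing the extra $t+1$ at $(i_0,j_0-1)$ and hence $(t+1,\,t+d)$ from $\mathcal S$; (c) the constraint $k<\min \mathbf T(i_0,j_0)$ tightens from $t+1$ to $t$, potentially removing the extra $t$ at $(i_0-1,j_0)$ and hence $(t,\,t+1+d)$ from $\mathcal S$. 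A short injectivity argument — using the strict increase of $T$ along diagonals of $\yd{\mu^{(v)}}$ — shows that on the diagonals in question, $(i_0,j_0-1)$ and $(i_0-1,j_0)$ are the \emph{only} cells that could carry those specific extras, so the inductive hypothesis $\mathcal S_{\mathbf T'} = U(\mathcal P')$ forces the tableau-side updates to match the BPD-side updates exactly, yielding $\mathcal S_{\mathbf T} = U(\mathcal P)$.

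The \emph{main obstacle} is the case analysis linking the four moves in (\ref{eqn:vexmoves}) to whether upward elbows occur at the NE and SW corners of the $2\times 2$ in $\mathcal P'$, and matching these to the presence or absence of the corresponding extras $t$ at $(i_0-1,j_0)$ and $t+1$ at $(i_0,j_0-1)$ in $\mathbf T'$. The injectivity claim — that a value $k$ is an extra at at most one cell of any fixed diagonal of $\yd{\mu^{(v)}}$ — is the key technical lemma that locks this bijection in place.
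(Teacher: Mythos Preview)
Your proof is correct and follows the same inductive strategy as the paper---base case at the Rothe BPD, inductive step along a single local move---and in fact you carry out the case analysis of the four moves in (\ref{eqn:vexmoves}) and their effect on $U(\mathcal P)$ and on the saturated tableau more explicitly than the paper does. One small point of terminology: what you call the ``injectivity argument'' is a bit more than uniqueness along a diagonal; you also need that the unique cell is specifically $(i_0,j_0-1)$ (resp.\ $(i_0-1,j_0)$), which follows by combining diagonal strictness of $T'$ with the location $T'(i_0,j_0)=t+1$ on the adjacent diagonal~$d$---you clearly have this in mind, but it is worth stating.
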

\begin{proof}
First notice that  \[\FSSYTtovexbpd\circ\flatten:\SatFSet(v)\rightarrow \pipes{v}\]  is a bijection by Theorem~\ref{thm:ssyttovexbpdbij} and Lemma~\ref{lemma:satflattenbij}.

We proceed by induction on local moves from $\asmtobpd(v)$.
In the base case, let $T\in \FSSYT(v)$ so that $\FSSYTtovexbpd(T)=\asmtobpd(v)$.  None of the entries of $T$ can be increased without violating the flagging or semistandardness conditions defining $\FSSYT(v)$.  In particular, this means $T$ maps to the saturated tableau $\mathbf T$ defined by $\mathbf T(i,j)=\{T(i,j)\}$ for all $(i,j)\in \yd{\mu^{(v)}}$.  Thus, $\mathcal S_{\mathbf T}=\emptyset=U(\asmtobpd(v))$.

Now fix $T\in \FSSYT(v)$.  Let $\mathcal P=\FSSYTtovexbpd(T)$.  If $\mathcal P=\asmtobpd(v)$ we are done, so assume not.  Then  by Lemma~\ref{lemma:vexblanktiles}, there is $\mathcal P'\in \pipes{v}$ so that $\mathcal P'\rightarrow \mathcal P$ is a local move.  Let $T'\in \FSSYT(v)$ so that  $\FSSYTtovexbpd(T')=\mathcal P'$ and let $\mathbf T'\in \SatFSet(v)$ so that $\flatten(\mathbf T')=T$.  

Suppose $\mathcal S_{\mathbf T'}=U(\mathcal P')$.  We seek to show $\mathcal S_{\mathbf T}=U(\mathcal P)$.  We proceed by analyzing the local moves in (\ref{eqn:vexmoves}).
The effect of each local move $T'\mapsto T$ is to take some label $k+1=T'(i,j)$ and replace it with $k$.  As such, $\mathbf T(i,j)=\mathbf T'(i,j)\cup\{k\}$.  
Furthermore, $\mathbf T(i,j-1)=\mathbf T'(i,j-1)-\{k+1\}$ and  $\mathbf T(i-1,j)=\mathbf T'(i,j-1)-\{k\}$. All other entries of $\mathbf T$ and $\mathbf T'$ agree.

These replacements are compatible with the local moves in (\ref{eqn:vexmoves}).  As such, $\mathcal S_{\mathbf T}=U(\mathcal P)$.
\end{proof}

We conclude by proving the main theorem of this section.

\begin{proof}[Proof of Theorem~\ref{theorem:flaggedtovexmarkedbpdbij}]
That the map $\flaggedtovexmarkedbpd$ is weight preserving is immediate from the definitions.  Furthermore, $\FSet(v)$ and $\pipes{v}$ are in bijection (by Lemma~\ref{lemma:satflattenbij} and Theorem~\ref{thm:ssyttovexbpdbij}.)

Elements of $\FSet(v)$ are uniquely determined by first fixing $\mathbf T\in \SatFSet(v)$ and then freely selecting a subset of the non-minimal elements of each cell.  Likewise, elements of $\mpipes{v}$ are determined by the choice of $\mathcal P\in \pipes{v}$ and a subset $\mathcal S\subseteq U(\mathcal P)$.  

By Lemma~\ref{lemma:elbows}, $\mathcal S_{\mathbf T}=U(\FSSYTtovexbpd(\flatten(\mathbf T)))$.
Thus, the choice of non-minimal elements in each cell uniquely determines a subset of $U(\mathcal P)$ (and vice versa).
\end{proof}

\section{Hecke bumpless pipe dreams}
\label{section:hecketabs}
The goal of this section is to answer a problem posed in \cite{Lam.Lee.Shimozono} relating Edelman-Greene BPDs to increasing tableaux whose row reading words are reduced words. We give a bijection between Hecke BPDs and decreasing tableaux. In the case of reduced BPDs, this restricts to a solution of \cite[Problem~5.19]{Lam.Lee.Shimozono} (up to a convention shift).  

\subsection{Hecke bumpless pipe dreams}

Recall, a Hecke BPD is a BPD whose diagram is top left justified.  In this case, the diagram  necessarily forms a partition called the shape of the Hecke BPD.  Recall $\hbpd{n}$ is the set of  Hecke BPDs of size $n$.  Let $\hbpd{\lambda,n}$ be the subset of Hecke BPDs of shape $\lambda$. 

\begin{lemma}
\label{lemma:pipesinvolution}
Reflecting an ASM across its antidiagonal defines an involution on BPDs.  Furthermore, this map has the effect of changing blank tiles to crossing tiles (and vice versa) and then reflecting these sets across the antidiagonal.
\end{lemma}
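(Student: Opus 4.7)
The plan is to combine the bijection $\asmtobpd \colon \asm(n) \to \bpd{n}$ of Lemma~\ref{lemma:bijection} with the corner-sum description of the six BPD tile types. First I would verify that the map $A \mapsto A^*$ with $A^*_{i\,j} := A_{n+1-j,\,n+1-i}$ is an involution on $\asm(n)$: each row of $A^*$ is a column of $A$ read in reverse order (and symmetrically for columns), so sign-alternation of nonzero entries and the unit row/column sums are preserved, while $(A^*)^* = A$ is immediate from the formula. Composition with $\asmtobpd$ then yields an involution $\mathcal P \mapsto \mathcal P^*$ on $\bpd{n}$, which is the first assertion.

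The rest amounts to identifying which tile of $\mathcal P^*$ sits in position $(i,j)$ in terms of $\mathcal P$. For this I would translate the reflection to the level of corner sums. A routine index change, together with the boundary conditions of Lemma~\ref{lemma:cornersumchar}, gives
\[
r_{A^*}(i,j) \;=\; \sum_{a=n+1-j}^{n}\sum_{b=n+1-i}^{n} A_{a\,b} \;=\; i + j - n + r_A(n-j,\, n-i).
\]
Subtracting the analogous expression at $(i-1,\,j-1)$ then yields the key identity
\[
r_{A^*}(i,j) - r_{A^*}(i-1,\,j-1) \;=\; 2 \;-\; \bigl[\, r_A(n+1-j,\,n+1-i) - r_A(n-j,\,n-i) \,\bigr].
\]

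The diagonal rank-jump formula $(\ref{eqn:diagrankjump})$, established in the proof of Lemma~\ref{lemma:uniquelydet}, asserts that the left-hand side of the above identity equals $0$ when the tile of $\mathcal P^*$ at $(i,j)$ is blank, $2$ when it is a crossing, and $1$ otherwise; the same dichotomy applied to $\mathcal P$ at $(n+1-j,\, n+1-i)$ governs the bracketed difference on the right. Combining these observations, $(i,j)$ is blank in $\mathcal P^*$ exactly when $(n+1-j,\, n+1-i)$ is a crossing of $\mathcal P$, and is a crossing of $\mathcal P^*$ exactly when $(n+1-j,\, n+1-i)$ is blank in $\mathcal P$. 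Since Lemma~\ref{lemma:uniquelydet}(2) says a BPD is uniquely determined by its sets of blank and crossing tiles, this data characterises $\mathcal P^*$ and the lemma follows. The only place requiring care is the index bookkeeping in the corner-sum substitution that produces the displayed identities; once those identities are in hand, the rest is mechanical.
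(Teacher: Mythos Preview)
Your argument is correct. You set up the antidiagonal involution on $\asm(n)$, push it through the corner-sum bijection, and then use the diagonal rank-jump formula $(\ref{eqn:diagrankjump})$ to read off exactly which tiles become blank and which become crossings. The index bookkeeping in your displayed identity for $r_{A^*}(i,j)$ checks out, and the rest is indeed mechanical.

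The paper takes a different route: it passes through the square-ice bijection of Lemma~\ref{lemma:icepipe} rather than corner sums. Geometrically reflecting an ice configuration across the antidiagonal swaps horizontal and vertical edges, which reverses the boundary conventions; composing with a global arrow reversal restores them, and one then inspects the six local configurations in $(\ref{eqn:IceBump})$ to see that exactly the blank and crossing tiles are interchanged. Your corner-sum approach is more algebraic and entirely self-contained given the material already developed in Section~\ref{section:ice}; the paper's ice-model argument is shorter to state but leaves more of the tile-by-tile verification implicit. Either way the content is the same: the involution is determined by what it does to the blank and crossing loci, and both arguments pin that down.
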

\begin{proof}
That the map is an involution is immediate.
On square ice configurations, this map has the effect of reversing arrows all arrows and then transposing across the antidiagonal.    Translated to BPDs,  this map changes crossings to blank tiles
 (and vice versa) and then flips these sets across the antidiagonal.
\end{proof}

Given $\lambda\subseteq \delta^{(n)}$, let $v(\lambda,n)$ be the (unique) vexillary permutation so that \[C(\asmtobpd(v(\lambda,n)))=\{(n-j+1,n-i+1):(i,j)\in \yd{\lambda}\}.\]  Notice $v(\lambda,n)$ is guaranteed to exist by Lemma~\ref{lemma:pipesinvolution}, since there exists a dominant permutation $u$ so that $D(u)=\yd{\lambda}$.  Applying the above involution to $u$ produces the Rothe BPD for a vexillary permutation with the desired set of crossing tiles.  We will make use of the following restriction of the involution on BPDs.
\begin{lemma}
\label{lemma:pipesinvolutioncor}
There is a bijection between $\hbpd{\lambda,n}$ and $\pipes{v(\lambda,n)}$.
\end{lemma}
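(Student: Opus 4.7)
The plan is to use the involution $\sigma$ on $\bpd{n}$ from Lemma~\ref{lemma:pipesinvolution}, which swaps blank and crossing tiles with antidiagonal reflection of positions, and show it restricts to a bijection $\pipes{v(\lambda,n)} \leftrightarrow \hbpd{\lambda,n}$.

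In one direction, take $\mathcal Q \in \pipes{v(\lambda,n)}$. Since $v(\lambda,n)$ is vexillary, Lemma~\ref{lemma:vexnoncrossing} forces every crossing of $\mathcal Q$ to lie outside $\yd{\lambda^{(v(\lambda,n))}}$, and Lemma~\ref{lemma:muteable}(3) forces $\mathcal Q$ to agree with $\asmtobpd(v(\lambda,n))$ outside this mutable region. Combining these, $C(\mathcal Q) = C(\asmtobpd(v(\lambda,n))) = \{(n-j+1,n-i+1) : (i,j) \in \yd{\lambda}\}$. Applying Lemma~\ref{lemma:pipesinvolution}, the blank set $D(\sigma(\mathcal Q))$ is the antidiagonal reflection of $C(\mathcal Q)$, which equals $\yd{\lambda}$; hence $\sigma(\mathcal Q) \in \hbpd{\lambda,n}$. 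Because $\sigma$ is an involution on $\bpd{n}$, this restriction is automatically injective.

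For the reverse direction, take $\mathcal P \in \hbpd{\lambda,n}$, so $D(\mathcal P) = \yd{\lambda}$, and by the same lemma $C(\sigma(\mathcal P))$ equals the antidiagonal reflection of $\yd{\lambda}$, namely $C(\asmtobpd(v(\lambda,n)))$. I anticipate the main obstacle will be upgrading this crossing-set match to the conclusion $\demprod{\sigma(\mathcal P)} = v(\lambda,n)$: matching the crossings alone does not pin down the Demazure product, since pipe routing also depends on the blank pattern, and different blank configurations sharing the same crossings could in principle give different Demazure products.

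My plan to overcome this obstacle is to prove the following characterization: for vexillary $v$, the set $\pipes{v}$ coincides with all BPDs in $\bpd{n}$ whose crossing set equals $C(\asmtobpd(v))$. One inclusion is the forward direction above. For the other, I would use Theorem~\ref{theorem:demkey} to identify the Demazure product with the key of the underlying ASM, then show that for any ASM $A'$ with $C(\asmtobpd(A')) = C(\asmtobpd(v))$, a sequence of inflations reduces $A'$ to the permutation matrix of $v$: since $v$ is vexillary its crossings lie entirely outside the mutable region, so every inflation step is confined to $\yd{\lambda^{(v)}}$ and the final Rothe BPD is forced to be $\asmtobpd(v)$, giving $\kappa(A') = v$. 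Granting this characterization, $\sigma(\mathcal P)$ has the correct crossings and therefore lies in $\pipes{v(\lambda,n)}$; together with the injection from the first paragraph and the involution property $\sigma \circ \sigma = \id$, this upgrades to the desired bijection.
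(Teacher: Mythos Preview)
Your approach is the paper's: restrict the antidiagonal involution $\sigma$ of Lemma~\ref{lemma:pipesinvolution}. Your forward direction is exactly right, and you are also right that the reverse direction (which the paper dismisses as ``immediate'') needs an argument---matching crossing sets does not, in general, pin down the Demazure product.

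The gap is in your inflation argument. You assert that every inflation step is confined to $\yd{\lambda^{(v)}}$, but this requires the upward elbows of $\sigma(\mathcal P)$ to lie in $\yd{\lambda^{(v)}}$, which you have not established; the complement of the crossing region is a top-left justified partition that can be strictly larger than $\lambda^{(v)}$, and upward elbows can sit in that gap. Worse, inflation at a $-1$ with $k>2$ pivots removes $k-2$ crossings (Lemma~\ref{lemma:compatinflation}(2)), so even tracking the crossing set through the inflation sequence is delicate. As written, the argument does not force the terminal Rothe BPD to be $\asmtobpd(v)$.

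There is a cleaner route to the same characterization. The crossing set $C(\asmtobpd(v(\lambda,n)))$ is the antidiagonal reflection of $\yd{\lambda}$, hence bottom-right justified: if $(i,j)$ is a crossing then so are $(i+1,j)$ and $(i,j+1)$ whenever they lie in the grid. Consequently, on each diagonal the crossings form a contiguous block at the southeast end. Now use the boundary values $r_A(i,n)=i$, $r_A(n,j)=j$ together with \eqref{eqn:diagrankjump} to compute $r_A$ backwards along each diagonal: at every crossing the value is forced, independent of where the blank tiles sit further northwest. By Lemma~\ref{lemma:rankcrossing} the crossing labels---and hence the reading word---depend only on the crossing positions. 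Therefore any $\mathcal Q$ with $C(\mathcal Q)=C(\asmtobpd(v(\lambda,n)))$ has $\mathbf a_{\mathcal Q}=\mathbf a_{\asmtobpd(v(\lambda,n))}$, giving $\partial(\mathcal Q)=v(\lambda,n)$. This closes the loop and makes $\sigma$ the desired bijection.
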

\begin{proof}
  When applying the map from  Lemma~\ref{lemma:pipesinvolution}, it is immediate that all Hecke tableaux of shape $\lambda$ map to BPDs for $v(\lambda,n)$ and vice versa.  Since this map is an involution on $\bpd{n}$, the result follows.
\end{proof}
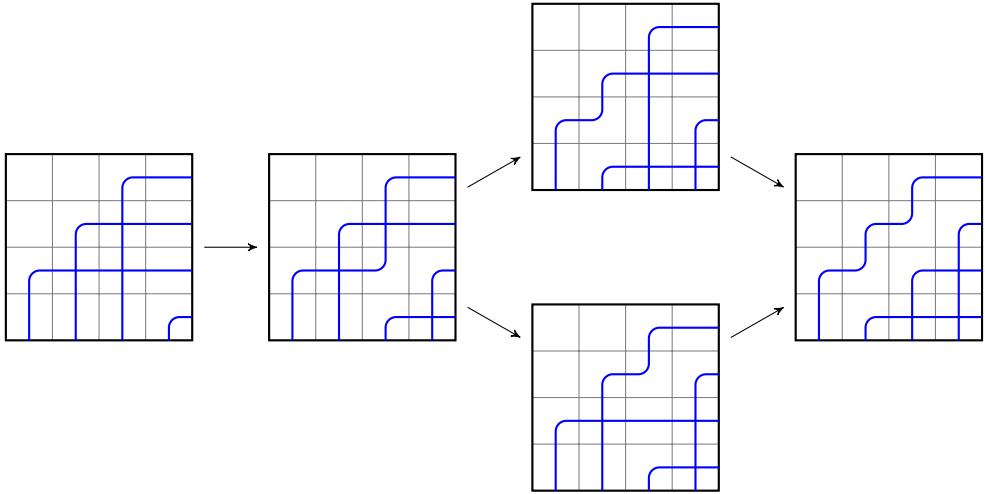
\begin{figure}[h] 
\[\begin{tikzpicture}[node distance=1em]

\node (A) at (0, 0) {
\begin{tikzpicture}[x=1.5em,y=1.5em]
\draw[step=1,gray, thin] (0,1) grid (4,5);
\draw[color=black, thick](0,1)rectangle(4,5);

\draw[thick,rounded corners,color=blue] (3.5,1)--(3.5,1.5)--(4,1.5);
\draw[thick,rounded corners,color=blue] (2.5,1)--(2.5,4.5)--(4,4.5);
\draw[thick,rounded corners,color=blue] (.5,1)--(.5,2.5)--(4,2.5);
\draw[thick,rounded corners,color=blue] (1.5,1)--(1.5,3.5)--(4,3.5);

\end{tikzpicture}
};
\node (B) at (3.5, 0) {

\begin{tikzpicture}[x=1.5em,y=1.5em]
\draw[step=1,gray, thin] (0,1) grid (4,5);
\draw[color=black, thick](0,1)rectangle(4,5);
\draw[thick,rounded corners, color=blue] (.5,1)--(.5,2.5)--(2.5,2.5)--(2.5,4.5)--(4,4.5);
\draw[thick,rounded corners, color=blue] (1.5,1)--(1.5,3.5)--(4,3.5);
\draw[thick,rounded corners, color=blue] (2.5,1)--(2.5,1.5)--(4,1.5);
\draw[thick,rounded corners, color=blue] (3.5,1)--(3.5,2.5)--(4,2.5);
\end{tikzpicture}
};
\node (C) at (7, 2) {
\begin{tikzpicture}[x=1.5em,y=1.5em]
\draw[step=1,gray, thin] (0,1) grid (4,5);
\draw[color=black, thick](0,1)rectangle(4,5);

\draw[thick,rounded corners,color=blue] (.5,1)--(.5,2.5)--(1.5,2.5)--(1.5,3.5)--(4,3.5);
\draw[thick,rounded corners,color=blue] (3.5,1)--(3.5,2.5)--(4,2.5);
\draw[thick,rounded corners,color=blue] (1.5,1)--(1.5,1.5)--(4,1.5);
\draw[thick,rounded corners,color=blue] (2.5,1)--(2.5,4.5)--(4,4.5);

\end{tikzpicture}
};
\node (D) at (7, -2) {
\begin{tikzpicture}[x=1.5em,y=1.5em]
\draw[step=1,gray, thin] (0,1) grid (4,5);
\draw[color=black, thick](0,1)rectangle(4,5);
\draw[thick,color=blue,rounded corners] (1.5,1)--(1.5,2.5)--(1.5,3.5)--(2.5,3.5)--(2.5,4.5)--(4,4.5);
\draw[thick,color=blue,rounded corners](.5,1)--(.5,2.5)--(4,2.5);
\draw[thick,color=blue,rounded corners](2.5,1)--(2.5,1.5)--(4,1.5);
\draw[thick,color=blue,rounded corners](3.5,1)--(3.5,3.5)--(4,3.5);
\end{tikzpicture}
};
\node (E) at (10.5,0)
{
\begin{tikzpicture}[x=1.5em,y=1.5em]
\draw[step=1,gray, thin] (0,1) grid (4,5);
\draw[color=black, thick](0,1)rectangle(4,5);
\draw[thick,rounded corners,color=blue] (.5,1)--(.5,2.5)--(1.5,2.5)--(1.5,3.5)--(2.5,3.5)--(2.5,4.5)--(4,4.5);
\draw[thick,rounded corners,color=blue] (3.5,1)--(3.5,3.5)--(4,3.5);
\draw[thick,rounded corners,color=blue] (1.5,1)--(1.5,1.5)--(4,1.5);
\draw[thick,rounded corners,color=blue] (2.5,1)--(2.5,2.5)--(4,2.5);
\end{tikzpicture}
};
\draw[->]  (A) edge (B);
\draw[->]  (B) edge (C);
\draw[->]  (B) edge (D);
\draw[->]  (C) edge (E);
\draw[->]  (D) edge (E);

\end{tikzpicture}\]
\caption{Pictured above are the elements of $\hbpd{(2,1),4}$.}
\label{figure:heckepipes}
\end{figure}

See Figure~\ref{figure:heckepipes} for the Hecke BPDs which correspond to the elements of $\pipes{1432}$ (pictured in Figure~\ref{figure:flaggedtabs}).

\subsection{From flagged tableaux to decreasing tableaux}

Pair blank tiles and crossing tiles within diagonals by associating the first blank tile with the last crossing tile and so on.
Now given $\mathcal P\in \pipes{v(\lambda,n)}$,  define a tableau of shape $\lambda$ by recording the difference of the row indices of each crossing tile with its corresponding blank tile.  Write \[\vexBPDtodectab:\pipes{v(\lambda,n)}\rightarrow \DT(\lambda,n-1)\] for this map.
In Figure~\ref{figure:decreasing}, we list the decreasing tableaux for $v((2,1),4)=1432$.  They are in the same relative positions as their counterparts in Figure~\ref{figure:flaggedtabs}.  Applying a local move to $\mathcal P\in \pipes{1432}$ corresponds to increasing a label in $\vexBPDtodectab(\mathcal P)$ by one.

\begin{figure}
\[\begin{tikzpicture}[node distance=1em]

\node (A) at (0, 0) {
$\begin{ytableau}
2&1\\1
\end{ytableau}$
};
\node (B) at (3.5, 0) {
$\begin{ytableau}
3&1\\1
\end{ytableau}$
};
\node (C) at (7, 2) {
$\begin{ytableau}
3&1\\2
\end{ytableau}$
};
\node (D) at (7, -2) {
$\begin{ytableau}
3&2\\1
\end{ytableau}$
};
\node (E) at (10.5,0)
{
$\begin{ytableau}
3&2\\2
\end{ytableau}$
};
\draw[->]  (A) edge (B);
\draw[->]  (B) edge (C);
\draw[->]  (B) edge (D);
\draw[->]  (C) edge (E);
\draw[->]  (D) edge (E);

\end{tikzpicture}\]
\caption{
Above are the decreasing tableaux for $v((2,1),4)=1432$.}
\label{figure:decreasing}
\end{figure}
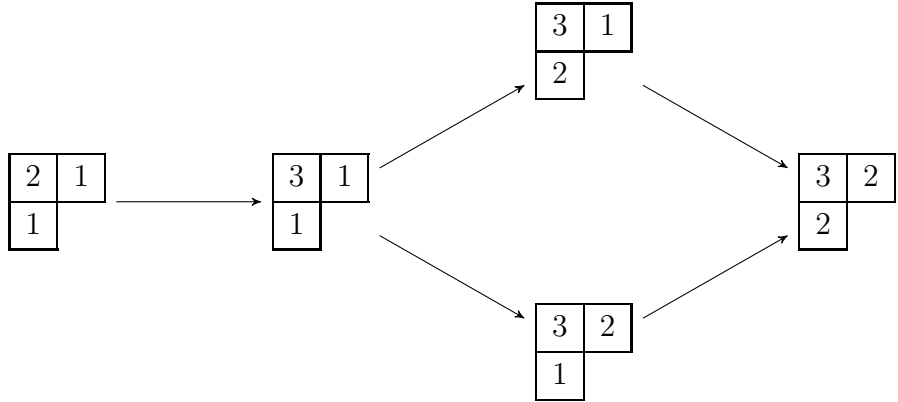

\begin{lemma}
\label{lemma:vexbpdtodectabbij}
The  map $\vexBPDtodectab:\pipes{v(\lambda,n)}\rightarrow \DT(\lambda,n-1)$ is a bijection.
\end{lemma}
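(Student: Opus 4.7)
The plan is to show that the composite map
\[
\FSSYT(v(\lambda, n)) \xrightarrow{\FSSYTtovexbpd} \pipes{v(\lambda, n)} \xrightarrow{\vexBPDtodectab} \DT(\lambda, n-1)
\]
admits an explicit closed form $T \mapsto T'$ with $T'(i, j) = (n+1-j) - T(i, j)$, and that this closed form defines a bijection between flagged semistandard tableaux and decreasing tableaux. Since $\FSSYTtovexbpd$ is already a bijection by Theorem~\ref{thm:ssyttovexbpdbij}, the bijectivity of $\vexBPDtodectab$ will then follow.

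The first step identifies $\mu^{(v(\lambda, n))} = \lambda$, so that the domain consists of tableaux of shape $\lambda$. By construction, the crossings of $\asmtobpd(v(\lambda, n))$ sit at $\{(n-j+1,\, n-i+1) : (i, j) \in \yd{\lambda}\}$, and the reflection $(i, j) \mapsto (n-j+1, n-i+1)$ preserves diagonals. Combined with the fact that every diagonal of any BPD contains equal numbers of crossings and blanks, and with Lemma~\ref{lemma:diagonalswork}, this forces $\mu^{(v(\lambda, n))}$ to have the same diagonal lengths as $\lambda$, hence $\mu^{(v(\lambda, n))} = \lambda$. The second step derives the closed-form formula: for $T \in \FSSYT(v(\lambda, n))$ and $\mathcal P = \FSSYTtovexbpd(T)$, equation~(\ref{eqn:bpdmaptabmapdiagram}) and the strict-column property of $T$ identify $T(i, j)$ as the row of the $i$-th blank (from the top) on diagonal $d = j - i$ of $\mathcal P$. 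By Lemmas~\ref{lemma:muteable} and~\ref{lemma:vexnoncrossing}, the crossings of $\mathcal P$ coincide with those of $\asmtobpd(v(\lambda, n))$; enumerating the cells of $\yd{\lambda}$ on diagonal $d$ in row order as $(r_1, r_1 + d), \ldots, (r_k, r_k + d)$, the crossings on this diagonal of $\mathcal P$ sit at rows $n+1-r_i-d$, with larger $r_i$ giving smaller rows. The pairing rule sends the $i$-th blank to the $(k+1-i)$-th crossing from the top, which is at row $n + 1 - r_i - d = n + 1 - j$, yielding the formula.

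The final step verifies that $T \mapsto T'$ is a bijection $\FSSYT(v(\lambda, n)) \to \DT(\lambda, n-1)$. The weak-row and strict-column semistandard inequalities on $T$ become the strict-decreasing row and column inequalities on $T'$ under the transformation. For the entry range, the upper bound $T'(i, j) \leq n - 1$ is immediate from $T(i, j) \geq 1$, and the lower bound $T'(i, j) \geq 1$ corresponds to $T(i, j) \leq n - j$; using weak monotonicity of $T$ along rows, this is tightest at $j = \lambda_i$, giving $T(i, \lambda_i) \leq n - \lambda_i$, which should match the flag condition.

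The main obstacle will be proving the flag identity $f_i^{(v(\lambda, n))} = n - \lambda_i$, which reduces to determining the bounding partition $\lambda^{(v(\lambda, n))}$. I would exploit the identity $v(\lambda, n) = w_0 u w_0$, where $u$ is the dominant permutation with $D(u) = \yd{\lambda}$ (directly verified from the antidiagonal reflection definition applied to the Rothe BPD of $u$), and compute $D(v(\lambda, n))$ from $D(u)$ to pin down $\lambda^{(v(\lambda, n))}$. A careful bookkeeping argument using this conjugation, together with the definition of $f_i$ in terms of the southeast corner of $\yd{\lambda^{(v)}}$ along the diagonal through $(i, \lambda_i + 1)$, should yield the desired identity.
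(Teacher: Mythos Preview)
Your derivation of the closed form $T'(i,j)=(n+1-j)-T(i,j)$ is correct and is exactly the identity the paper uses (the paper obtains it by induction on local moves rather than by your direct pairing argument, but both are fine; there is a small notational slip in your writeup where you conflate the row index $i$ with the position of $(i,j)$ in the diagonal enumeration, but the computation goes through once you track $r_\ell$ properly).

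The genuine gap is in the last step. The flag identity $f_i^{(v(\lambda,n))}=n-\lambda_i$ that you plan to prove is \emph{false}. Take $\lambda=(1,1)$ and $n=3$: then $v(\lambda,n)=231$, $\mu^{(v)}=\lambda^{(v)}=(1,1)$, and from the definition one computes $f_1^{(v)}=1$, whereas $n-\lambda_1=2$. What \emph{is} true is the weaker statement that the flagging $(f_i^{(v)})$ and the flagging $(n-\lambda_i)$ cut out the same set of semistandard tableaux of shape $\lambda$; one inclusion follows because the topmost crossing on the diagonal through $(i,\lambda_i)$ sits in row $n+1-\lambda_i$ (so $f_i\le n-\lambda_i$), but the reverse inclusion is not a bookkeeping exercise on $\lambda^{(v)}$ and needs its own argument. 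The paper sidesteps this entirely: rather than identifying the image of $T\mapsto T'$ via flags, it proves surjectivity by showing that whenever an entry of $D\in\DT(\lambda,n-1)$ can be decreased to produce another decreasing tableau, the corresponding BPD admits an inverse local move realizing that decrease, so every decreasing tableau is reached from the maximal one. If you want to salvage your route, you would need to prove the equivalence of flaggings directly (or, equivalently, that any SSYT $T$ with $T(i,j)\le n-j$ actually arises from some $\mathcal P\in\pipes{v(\lambda,n)}$), which is essentially the same surjectivity content repackaged.
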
 
\begin{proof}

We start by showing $\vexBPDtodectab$ is well-defined. 
Take $\mathcal P\in \pipes{v(\lambda,n)}$ and write $D=\vexBPDtodectab(\mathcal P)$.  Fix $T\in \FSSYT(v(\lambda,n))$ so that  $\mathcal P=\FSSYTtovexbpd(T)$.

Notice if $\mathcal P$ is the top BPD for $v(\lambda,n)$ we have $T(i,j)+D(i,j)=n-j+1$.  In general, 
any local move on $\mathcal P$ has the effect of decreasing a label in $T(i,j)$ and increasing the corresponding label in $D(i,j)$ (both by $1$). Therefore, $T(i,j)+D(i,j)=n-j+1$  for all $(i,j)\in \yd{\lambda}$ for arbitrary $\mathcal P\in \pipes{v(\lambda,n)}$.
In particular, 
\begin{equation}
\label{eqn:decreastoss}
D(i,j)=n-j-T(i,j)+1.
\end{equation}
 Since $T$ is semistandard, it is quick to verify that $D$ is decreasing.  Furthermore, by definition, $D(i,j)\in[n-1]$ (any diagonal in $[n]\times[n]$ has at most $n$ entries).  Thus, $D\in \DT(\lambda,n-1)$.

It is clear from (\ref{eqn:decreastoss}), that the map is injective.  All that remains is to show surjectivity.
If $\mathcal P$ is the top BPD for $v(\lambda,n)$, then $D$ is the maximal element in $\DT(\lambda,n-1)$, i.e.,\ $D(i,j)=n-i-j+1$ for all $(i,j)\in \yd{\lambda}$.  So it is enough to verify if an entry of $D$ can decrease by $1$ to produce a valid $D'\in \DT(\lambda,n-1)$, then there is a corresponding $\mathcal P'\in \pipes{v(\lambda,n)}$ so that $\mathcal P'\mapsto D'$.

Suppose $D(i,j)$ can be decreased.  This means $T(i,j)$ can be increased without violating semistandardness.  Write $(a,b)$ for  the position of the blank tile in $\mathcal P$ which corresponds to $(i,j)\in \yd{\lambda}$.  Since $T(i,j)$ can be increased, $(a+1,b), (a,b+1),$ and $(a+1,b+1)$ are all not blank tiles in $\mathcal P$ (this follows from semistandardness).  If $(a+1,b+1)$ is a crossing tile, this would imply $D(i,j)=1$.  However this would mean $D(i,j)$ could not be decreased in the first place.  So $(a+1,b+1)$ is also not a crossing tile.  Thus, we can apply an inverse local move to $\mathcal P$ and we obtain the desired $\mathcal P'\in \pipes{v(\lambda,n)}$.
\end{proof}

\subsection{Proof of Theorem~\ref{thm:hecketabs}}

We now recall the map \[\HeckeBPDtodectab:\hbpd{n}\rightarrow \DT(n-1).\] 
 Fix $\mathcal P\in\hbpd{n}$ to $\DT(n-1)$.  In each diagonal, pair the first blank tile  with the last crossing tile, the next with the second to last, and so on.  If a cell in $D(\mathcal P)$  sits in row $i$ and its corresponding crossing tile in row $i'$, we fill the cell with the label $i'-i$ to obtain $\HeckeBPDtodectab(\mathcal P)$.

Notice that if $\mathcal P\in \hbpd{\lambda,n}$, then $\HeckeBPDtodectab(\mathcal P)$ is the result of composing the  map from $\hbpd{n}$ to $\pipes{v(\lambda,n)}$ and the map $\vexBPDtodectab:\pipes{v(\lambda,n)}\rightarrow \DT(\lambda,n-1)$. As such, $\HeckeBPDtodectab$ is well-defined. 

\begin{lemma}
\label{lemma:heckecommutationclass}
The map \[\HeckeBPDtodectab:\hbpd{n}\rightarrow \DT(n-1)\] preserves commutation classes of reading words.
\end{lemma}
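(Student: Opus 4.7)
The plan is to show that the column reading word of $\mathcal P \in \hbpd{\lambda, n}$ and the column reading word of $T := \HeckeBPDtodectab(\mathcal P)$ are linear extensions of a common heap poset, and hence lie in the same commutation class. The diagonal pairing defining $\HeckeBPDtodectab$ gives a bijection between cells $(i, j) \in \yd{\lambda}$ and crossing tiles $(i', j')$ of $\mathcal P$ on the same diagonal, with $T(i, j) = i' - i = j' - j$; everything will be organized around this bijection.

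The first step is label-preservation: I want to verify that the letter carried by the crossing $(i', j')$ in $\mathbf a_{\mathcal P}$, equal to $r_A(i', j') - 1$ by Lemma~\ref{lemma:rankcrossing}, coincides with $T(i,j)$. For a Hecke BPD of shape $\lambda$, the blanks lying in diagonal $d = j - i$ are precisely the cells $(1, 1+d), (2, 2+d), \ldots, (m, m+d)$ of $\yd{\lambda}$, because $\lambda_t - t$ is strictly decreasing in $t$. Summing the diagonal contributions of (\ref{eqn:diagrankjump}) along diagonal $d$ up to and including the crossing at row $c_{m+1-k}$ (paired with the blank at row $k$) will yield $r_A(c_{m+1-k}, c_{m+1-k} + d) = c_{m+1-k} - k + 1$, so the BPD label equals $c_{m+1-k} - k = T(k, k+d)$.

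The second step compares the two reading orders, both of which enumerate letters in lexicographic order of (column, $-$row)---crossings for the BPD, cells of $\yd{\lambda}$ for the tableau. Under the label-preserving pairing, the BPD position $(j', -i')$ is the tableau position $(j, -i)$ shifted by $(T(i,j), -T(i,j))$. I will verify that for any two distinct pairs whose labels $\ell_1, \ell_2$ satisfy $|\ell_1 - \ell_2| \leq 1$, the two induced orders agree. The case $\ell_1 = \ell_2$ is immediate because the shift is uniform. For $\ell_2 = \ell_1 + 1$, a case analysis on $(j_1, j_2)$ isolates two potentially inverting sub-cases: $j_1 = j_2$ with $i_1 < i_2$, and $j_1 = j_2 + 1$ with $i_1 \geq i_2 + 2$. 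The first violates the strict decrease of $T$ down columns. For the second, $i_1 > i_2$ together with $(i_1, j_2 + 1) \in \yd{\lambda}$ forces $(i_2, j_2 + 1) \in \yd{\lambda}$ by the partition shape, and then the decreasing-tableau inequalities along row $i_2$ and column $j_2 + 1$ force $T(i_2, j_2 + 1)$ to be simultaneously $> \ell_1$ and $< \ell_1 + 1$---a contradiction.

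Since letters whose labels differ by more than one commute freely, and letters whose labels differ by at most one appear in identical relative order in both readings, the two words are linear extensions of the same heap and so lie in the same commutation class. The main obstacle is the diagonal corner-sum bookkeeping in the first step: the identity $r_A(i', j') - 1 = i' - i$ uses crucially that all blanks on each diagonal are packed at the top of $\yd{\lambda}$, a feature specific to Hecke BPDs. Once that identity is in place, the order comparison reduces to the short case analysis above.
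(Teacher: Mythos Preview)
Your proof is correct and takes a genuinely different route from the paper's own argument.

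The paper proceeds by induction on $n$: it strips off the first column of $T$, invokes the inductive hypothesis on the resulting smaller Hecke BPD, and then shows that the crossings corresponding to that first column---which are the \emph{last} crossings along each diagonal---can be commuted to the front of $\mathbf a_{\mathcal P}$. The commutation step uses Lemma~\ref{lemma:rankcrossing} together with monotonicity of the corner-sum function to see that any earlier crossing not in this set lies strictly northwest and hence has label at least two smaller.

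You instead give a direct heap argument: you establish once and for all that the label carried by a crossing equals the tableau entry of its paired blank (your Step~1), and then check that under the cell--crossing bijection the two column-reading orders agree on every pair of letters with indices differing by at most one (your Step~2). This dispenses with induction entirely and makes transparent \emph{why} the two words are commutation-equivalent---they are linear extensions of the same labeled heap. Your diagonal corner-sum identity $r_A(i',j')-1=i'-i$ (for $d\ge 0$; the $d<0$ case is symmetric) is the same fact the paper uses implicitly, but you isolate it cleanly and then reduce everything to a short case analysis driven by the decreasing-tableau inequalities. The paper's inductive peeling is shorter to state but leaves the relationship between $\mathcal P$ and the smaller $\mathcal P'$ somewhat implicit; your approach trades a small amount of bookkeeping for a more structural explanation.
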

\begin{proof}
We will induct on $n$.  In the base case $n=1$, there is nothing to show.
Now fix $n>1$ and suppose the statement holds for $n-1$.  

Take $\mathcal P\in\hbpd{n}$ and let $T=\HeckeBPDtodectab(\mathcal P)$.  Let $A$ be the ASM which corresponds to $\mathcal P$.
Let $T'$ be the tableau obtained by removing the first column of $T$.  Let $\mathcal P\in \hbpd{n-1}$ so that $\HeckeBPDtodectab(\mathcal P')=T'$.

 Write $\mathbf a_\mathcal P$ for the reading word of $\mathcal P$. 
By the inductive hypothesis, the reading words of $\mathcal P'$ and $T'$ are in the same commutation class.  So it is enough to show we may apply commutation relations to move the reflections in $\mathbf a_\mathcal P$ which correspond to the first column of $\mathbf T$ to the start of the word (keeping their relative order).  Label the positions of these crossings $(i_1,j_1),\,(i_2,j_2),\,\ldots,\,(i_k,j_k)$.

Fix $(i,j)\in \{(i_1,j_1),\,(i_2,j_2),\,\ldots,\,(i_k,j_k)\}$.  By Lemma~\ref{lemma:rankcrossing}, this corresponds to the reflection $s_{r_A(i,j)-1}$.  We know $(i,j)$ appears first in its respective column.  Furthermore,  any crossing which appears earlier in the reading order, but does not belong to the set $\{(i_1,j_1),(i_2,j_2),\ldots,(i_k,j_k)\}$, must be strictly northwest of $(i,j)$.  If $(i',j')$ is strictly northwest of $(i,j)$, then $r_A(i',j')\leq r_A(i,j)-2$ (this is Lemma~\ref{lemma:rankice} combined with the fact that rank functions of ASMs are weakly increasing).  Thus, the reflection for $(i,j)$ is free to commute past the reflection for $(i',j')$ (by Lemma~\ref{lemma:rankcrossing}).
\end{proof}

\begin{proof}[Proof of Theorem~\ref{thm:hecketabs}]
By Lemma~\ref{lemma:heckecommutationclass}, the map preserves commutation classes. Therefore, it also preserves Hecke products.  That the map is a shape preserving bijection follows from Lemma~\ref{lemma:pipesinvolutioncor} and Lemma~\ref{lemma:vexbpdtodectabbij}.
\end{proof}

From Theorem~\ref{thm:hecketabs}, we obtain the immediate corollary:
\begin{corollary}
The restriction of $\HeckeBPDtodectab$ to Edelman-Greene tableaux in $\bpd{n}$ defines a bijection to $\RWT(n-1)$.  This bijection is shape preserving and takes Edelman-Greene tableaux for $w$ to reduced word tableaux for $w$.
\end{corollary}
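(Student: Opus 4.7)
The plan is to deduce this corollary directly from Theorem~\ref{thm:hecketabs} together with Lemma~\ref{lemma:heckecommutationclass}. I would first recall the definitions: an Edelman-Greene BPD is a Hecke BPD that is additionally reduced, which by the discussion of planar histories in Section~\ref{section:planar} means precisely that its reading word $\mathbf{a}_\mathcal{P}$ is a reduced word. Dually, $\RWT(n-1)\subseteq \DT(n-1)$ is the set of decreasing tableaux whose column reading word is a reduced word. So, once restricted to the appropriate subsets, the claim becomes a statement about preservation of reducedness along $\HeckeBPDtodectab$.

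The key observation is Lemma~\ref{lemma:heckecommutationclass}: the map $\HeckeBPDtodectab$ sends $\mathbf{a}_\mathcal{P}$ to a word in the same commutation class as the column reading word of $\HeckeBPDtodectab(\mathcal{P})$. Since any two words related by commutation relations have the same length and Demazure product, one is reduced if and only if the other is. Thus $\mathcal{P}$ is an Edelman-Greene BPD if and only if $\HeckeBPDtodectab(\mathcal{P})\in \RWT(n-1)$, and restricting the bijection of Theorem~\ref{thm:hecketabs} to these subsets yields a bijection between Edelman-Greene BPDs in $\bpd{n}$ and $\RWT(n-1)$.

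Finally, shape preservation is inherited verbatim from Theorem~\ref{thm:hecketabs}. For the statement about $w$: if $\mathcal{P}\in \rpipes{w}$ is Edelman-Greene, then by Theorem~\ref{thm:hecketabs} the reading word of $\HeckeBPDtodectab(\mathcal{P})$ is a Hecke word for $w$, and by the argument just above it is reduced, hence a reduced word for $w$; that is, $\HeckeBPDtodectab(\mathcal{P})$ is a reduced word tableau for $w$. There is no real obstacle here — the whole content is the observation that commutation preserves reducedness — so the proof amounts to assembling these three ingredients.
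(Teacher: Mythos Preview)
Your proposal is correct and follows essentially the same route as the paper, which simply declares the corollary immediate from Theorem~\ref{thm:hecketabs}; you have just made the reducedness argument explicit via Lemma~\ref{lemma:heckecommutationclass}. One could equally well bypass Lemma~\ref{lemma:heckecommutationclass} by noting that shape preservation forces both reading words to have length $|\lambda|$, so each is reduced exactly when $|\lambda|=\ell(w)$---but this is the same observation in different clothing.
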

This provides a new solution to \cite[Problem 5.19]{Lam.Lee.Shimozono}.

\appendix

\section{Transition equations for $\beta$-double Grothendieck polynomials}
\label{appendix:transitionproof}

In this appendix, we establish transition equations for $\beta$-double Grothendieck polynomials. 
First, we recall some notation.  If
   $k\in \mathbb P$, let $1^k$ be the tuple $(1,\ldots,1,0,0,\ldots)$ which starts with $k$ ones.
If $I=\{i_1,\ldots, i_k\}$, with $1\leq i_1<i_2<\cdots<i_k<a$,  then $c^{(a)}_{I}=(a\, i_k\, i_{k-1}\, \ldots\, i_1)$.  Furthermore, if $w\in \SymGp_n$ and $I\subseteq [n]$,  write $w\cdot I:=\{w_i:i\in I\}$.

\begin{lemma}
\label{lemma:cycles}
Fix $I\subseteq [a-1]$.
\begin{enumerate}
\item If $i\not \in I$ or $i+1 \not \in I$ then $s_ic^{(a)}_{I}s_i=c^{(a)}_{s_i\cdot I}$.
\item If $i,i+1\in I$ then $s_ic^{(a)}_{I}=c^{(a)}_{I-\{i+1\}}$ and $c^{(a)}_{I}s_i=c^{(a)}_{I-\{i\}}$.
\end{enumerate}
\end{lemma}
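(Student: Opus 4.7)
The plan is to prove both parts by direct computation with the explicit description of $c^{(a)}_I$, using the standard conjugation formula for cycles: for any permutation $w$ and any cycle $(a_1\,a_2\,\cdots\,a_m)$,
\[ w\,(a_1\,a_2\,\cdots\,a_m)\,w^{-1} = (w(a_1)\,w(a_2)\,\cdots\,w(a_m)). \]

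For part (1), I would apply this formula with $w = s_i = s_i^{-1}$ to obtain
\[ s_i\,c^{(a)}_I\,s_i = (s_i(a)\,s_i(i_k)\,s_i(i_{k-1})\,\cdots\,s_i(i_1)). \]
Since $i < a$, we have $s_i(a) = a$. The transposition $s_i$ fixes every element of $I$ other than $i$ and $i+1$, so $s_i(I) = s_i \cdot I$ as sets. It remains to check that the elements $s_i(i_1),\,\ldots,\,s_i(i_k)$ appear in strictly increasing order, so that the cycle above is literally $c^{(a)}_{s_i \cdot I}$. If neither $i$ nor $i+1$ lies in $I$ this is trivial. Otherwise exactly one of them lies in $I$ (by hypothesis), and $s_i$ swaps it with the other; since no element of $I$ lies strictly between $i$ and $i+1$, the swap preserves the sorted order.

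For part (2), write $i = i_\ell$ and $i+1 = i_{\ell+1}$ so that
\[ c^{(a)}_I = (a\,i_k\,\cdots\,i_{\ell+2}\,(i+1)\,i\,i_{\ell-1}\,\cdots\,i_1). \]
I would then trace where each element goes under $s_i c^{(a)}_I$ (apply $c^{(a)}_I$, then $s_i$): the element $i+1$ maps to $i$ and then back to $i+1$, so $i+1$ becomes a fixed point; the element $i_{\ell+2}$ maps to $i+1$ and then to $i$; every other element on the cycle is unchanged by $s_i$ because it is not in $\{i,i+1\}$. This exactly produces $(a\,i_k\,\cdots\,i_{\ell+2}\,i\,i_{\ell-1}\,\cdots\,i_1) = c^{(a)}_{I-\{i+1\}}$. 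The second identity $c^{(a)}_I s_i = c^{(a)}_{I-\{i\}}$ is the mirror computation: applying $s_i$ first fixes every element except the swap $i \leftrightarrow i+1$, so $i$ maps to $i+1$ and then to $i$ (becoming a fixed point), while $i+1$ maps to $i$ and then to $i_{\ell-1}$; the resulting cycle drops $i$.

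The main obstacle is simply bookkeeping: keeping track of the composition convention, handling the degenerate cases $\ell = 1$ or $\ell + 1 = k$ where $i_{\ell-1}$ or $i_{\ell+2}$ does not exist (in which case one replaces it by $a$ or $i_1$ respectively, using the cyclic structure), and verifying in part (1) that the reordering of $s_i \cdot I$ agrees with the natural sorted enumeration used to define $c^{(a)}_{s_i \cdot I}$. None of these require more than a case check, so the proof should be short.
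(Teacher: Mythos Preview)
Your proposal is correct and is exactly the kind of elementary verification the paper has in mind; the paper omits the proof entirely, stating only that it is elementary. One tiny quibble: the assertion ``since $i<a$, we have $s_i(a)=a$'' actually needs $i+1\neq a$, but this holds in every use of the lemma (and is forced in part~(2) by $i+1\in I\subseteq[a-1]$).
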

The proof of Lemma~\ref{lemma:cycles} is elementary, so we omit it.

Recall, $\phi(w)=\{i:(i,j) \enspace \text{is a pivot of} \enspace \mc(w) \enspace \text{in} \enspace w\}.$  We typically write $\mc(w)=(a,b)$ and $b'=w^{-1}(b)$.  (In particular, $a=\des(w)$.)
Given $I\subseteq \phi(w)$, define $w_I=w t_{a \,b'} c^{(a)}_{I}.$

\begin{lemma}
\label{lemma:GrothAppendBoxes}
Fix $w\in \SymGp_{n-1}$ and let $a=\des(w)$.  Suppose $a\leq k<n$ and let $w'$ be (the unique) permutation in $\SymGp_n$ so that $\code_{w'}=1^k+\code_w$.  Then
\[\Groth^{(\beta)}_{w'}(\mathbf x;\mathbf y)=\left(\prod_{i=1}^k x_i\oplus y_1\right)\Groth^{(\beta)}_w(x_1,\ldots,x_{n-1};y_2,\ldots,y_{n}). \]
\end{lemma}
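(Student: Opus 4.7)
The plan is to induct on the quantity $\ell(w_0^{(n-1)}) - \ell(w)$, with dominant permutations serving as the base case.

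For the base case, suppose $w$ is dominant. Then $\code_w$ is weakly decreasing, and since $k \geq a = \des(w)$, the sum $1^k + \code_w$ is also weakly decreasing, so $w'$ is dominant as well. A direct inspection of the Rothe diagrams yields the decomposition
\[D(w') = \{(i,1) : 1 \leq i \leq k\} \;\sqcup\; \{(i, j+1) : (i,j) \in D(w)\}.\]
Applying the dominant product formula $\Groth^{(\beta)}_v = \prod_{(i,j) \in D(v)} (x_i \oplus y_j)$ to both sides, the first component contributes $\prod_{i=1}^k (x_i \oplus y_1)$ while the second contributes $\Groth^{(\beta)}_w(x_1,\ldots,x_{n-1};y_2,\ldots,y_n)$, and the identity follows.

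For the inductive step, assume $w$ is non-dominant. I first claim that $w$ has an ascent at some position $j < a$: otherwise $w$ would have descents at every position $1, \ldots, a$ and ascents only at positions $> a$, forcing $w_1 > w_2 > \cdots > w_{a+1}$ followed by $w_{a+1} < w_{a+2} < \cdots$, and a direct computation shows this makes $\code_w$ strictly decreasing through position $a$ and zero thereafter, contradicting non-dominance. Fix such a $j$ and set $\tilde w = ws_j$. Then $\ell(\tilde w) = \ell(w) + 1$, $\tilde w$ has descent at $j$, and since the swap happens strictly before position $a$, the descent at $a$ is preserved and no new descent is created past $a$, giving $\des(\tilde w) \leq a \leq k$. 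A direct check using the one-line description
\[w' = (w_1 + 1,\, w_2 + 1,\, \ldots,\, w_k + 1,\, 1,\, w_{k+1} + 1,\, \ldots,\, w_{n-1} + 1)\]
shows that for $j < k$ the operation $w \mapsto w'$ intertwines with right-multiplication by $s_j$, so $\tilde w' = w' s_j$ and $\tilde w'$ has descent at $j$.

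The inductive hypothesis applies to $\tilde w$ with the same value of $k$, yielding
\[\Groth^{(\beta)}_{\tilde w'} = P \cdot G_{\tilde w},\]
where $P := \prod_{i=1}^k (x_i \oplus y_1)$ and $G_{\tilde w} := \Groth^{(\beta)}_{\tilde w}(x_1,\ldots,x_{n-1};y_2,\ldots,y_n)$. I would then apply $\pi_j$ to both sides. On the left, since $\tilde w'$ has descent at $j$, $\pi_j \Groth^{(\beta)}_{\tilde w'} = \Groth^{(\beta)}_{w'}$. On the right, because $j+1 \leq k$, both $(x_j \oplus y_1)$ and $(x_{j+1} \oplus y_1)$ are factors of $P$, so $P$ is symmetric in $x_j, x_{j+1}$, giving $\pi_j P = -\beta P$ and $s_j P = P$; and $\pi_j$ acts trivially on the $\mathbf y$ variables, so $\pi_j G_{\tilde w} = G_w$. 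The Leibniz rule~(\ref{eqn:leibniz}) then gives
\[\pi_j(P \cdot G_{\tilde w}) = -\beta\, P\, G_{\tilde w} + P\,(G_w + \beta\, G_{\tilde w}) = P \cdot G_w,\]
which is the desired formula for $w$. The main technical obstacle is the combinatorial bookkeeping that establishes $\tilde w' = w' s_j$ and $\des(\tilde w) \leq k$; both follow from the explicit one-line formula above once one carefully tracks positions around the inserted $1$ at position $k+1$.
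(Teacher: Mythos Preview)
Your argument is correct. The paper itself omits the proof of this lemma, remarking only that it ``follows immediately from the ordinary pipe dream formula'' (Theorem~\ref{thm:ordinarypipe}) and adding in a footnote that it ``can also be proved directly from first principles.'' You have supplied exactly such a first-principles proof: downward induction on length via divided differences, with dominant permutations as the base case handled by the explicit product formula.

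By contrast, the pipe-dream route the paper alludes to would observe that for $w'$ with $\code_{w'}=1^k+\code_w$, every ordinary pipe dream for $w'$ has crossing tiles in positions $(1,1),\ldots,(k,1)$, and deleting this first column gives a bijection with pipe dreams for $w$ whose weights pick up the factor $\prod_{i=1}^k(x_i\oplus y_1)$ together with the shift $y_j\mapsto y_{j+1}$. That argument is shorter, but it imports Theorem~\ref{thm:ordinarypipe}. Your approach is longer but entirely self-contained within Section~\ref{subsec:betagroth}, which is precisely why the paper's footnote singles it out: it ensures the proof of transition (Theorem~\ref{thm:transition}), and hence of Theorem~\ref{thm:main}, is not logically dependent on any pipe-dream formula.

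Two small remarks. First, your inductive structure is really strong downward induction on $\ell$: at each non-dominant $w$ you pass to a strictly longer $\tilde w$, and dominant permutations are verified directly regardless of their length. Second, the Leibniz computation at the end can be replaced by the one-line observation from Section~\ref{subsec:betagroth} that $s_j\cdot P=P$ implies $\pi_j(P\,G_{\tilde w})=P\,\pi_j(G_{\tilde w})$; your explicit expansion is of course equivalent.
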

This lemma follows immediately from the ordinary pipe dream formula\footnote{Lemma~\ref{lemma:GrothAppendBoxes} can also be proved directly from first principles, i.e.,\ the following arguments are not logically dependent on pipe dreams.  Regardless, we omit the proof.}.

\begin{lemma}
\label{lemma:technicaldivdifA}
Let $w\in \SymGp_n$.  Suppose $w_i=1$ for some $i\in[\des(w)-2]$.  Let $v=ws_i$.
If $i\not \in \phi(v)$ or $i+1\not \in \phi(v)$  then
\begin{enumerate}
\item $\phi(w)=s_i\cdot \phi(v)$ and 
\item for all $I\subseteq \phi(v)$, we have $\pi_i(\Groth^{(\beta)}_{v_I})=\Groth^{(\beta)}_{w_{s_i\cdot I}}$.
\end{enumerate} 
\end{lemma}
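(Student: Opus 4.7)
The plan is to reduce the lemma to a case analysis on whether $i$ and $i+1$ lie in $\phi(v)$ (respectively, $I$), supported by a careful comparison of the Rothe diagrams of $v$ and $w$ in rows $i, i+1$, and culminating in an algebraic identity relating $w_{s_i\cdot I}$ to $v_I s_i$.

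First I would record the structural data. Since $w_i=1$ is the minimum of $w$, position $i$ is an ascent of $w$, so $v=ws_i$ has $v_i=w_{i+1}$ and $v_{i+1}=1$, with $i$ now a descent of $v$. The assumption $i\leq \des(w)-2$ guarantees $i+1<a:=\des(w)$, so the rightmost descent persists and $\des(v)=a$. Because $v$ and $w$ agree outside rows $i, i+1$, and in those two rows the nonzero entries are merely swapped between columns $1$ and $w_{i+1}$, row $a$ of their Rothe diagrams is identical, yielding $\mc(v)=\mc(w)=(a,b)$ and $b':=v^{-1}(b)=w^{-1}(b)>a$.

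For part (1), I would verify $\phi(w)=s_i\cdot \phi(v)$ by separating pivot rows $k\notin\{i,i+1\}$ from the boundary cases $k\in\{i,i+1\}$. For $k\notin\{i,i+1\}$, since $v_k=w_k$, the only subtle comparison concerns how the rectangle $[k,a]\times[v_k,b]$ picks up obstructions from rows $i$ and $i+1$. A short check shows that row $i$ of $v$ contributes the obstruction $(i,w_{i+1})$ precisely when row $i+1$ of $w$ contributes $(i+1,w_{i+1})$ (both require $w_{i+1}\in[v_k,b]$), and in all other configurations neither row obstructs. Hence membership in $\phi$ for such $k$ is preserved. For $k\in\{i,i+1\}$, I would split into the three allowed subcases determined by $\phi(v)\cap\{i,i+1\}$ and verify that $\phi(w)\cap\{i,i+1\}$ is the $s_i$-image. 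The crucial recurring observation is that if $i+1\in\phi(v)$ but $i\notin\phi(v)$, then necessarily $w_{i+1}>b$: the pivot condition at $(i+1,1)$ clears rows $[i+2,a-1]$ of entries in columns $[1,b]$, so the only way $(i,w_{i+1})$ can fail to be a pivot of $v$ is for the column constraint $w_{i+1}<b$ itself to fail.

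For part (2), the hypothesis forces $I$ to miss at least one of $i, i+1$, so Lemma~\ref{lemma:cycles}(1) gives $s_ic^{(a)}_Is_i=c^{(a)}_{s_i\cdot I}$. Since $t_{a,b'}$ commutes with $s_i$ (as $i+1<a<b'$), a direct manipulation yields
\[
w_{s_i\cdot I}=ws_i\cdot s_i\cdot t_{a,b'}\cdot c^{(a)}_{s_i\cdot I}=vs_i\cdot t_{a,b'}\cdot s_ic^{(a)}_Is_i = v\cdot t_{a,b'}\cdot c^{(a)}_I\cdot s_i=v_I s_i.
\]
To invoke the defining relation (\ref{eq:betagrothdef}), I need $i$ to be a descent of $v_I$. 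Using the explicit cycle $\sigma=t_{a,b'}c^{(a)}_I=(a,i_k,\ldots,i_1,b')$, I would read off $(v_I)_i$ and $(v_I)_{i+1}$ from one-line notation: if $i+1\notin I$, then $(v_I)_{i+1}=1$, while $(v_I)_i$ is either $v_i=w_{i+1}$ or a cycle-image $v_{i_{j-1}}\neq 1$ (the only preimage of $1$ being $i+1$); if instead $i+1\in I$ (forcing $i\notin I$ by hypothesis), then $(v_I)_i=w_{i+1}>b$ by the paragraph above, while $(v_I)_{i+1}$ is either $v_{b'}=b$ or a pivot value $v_{i_{j-1}}<b$. In every case $(v_I)_i>(v_I)_{i+1}$, so (\ref{eq:betagrothdef}) gives $\pi_i\Groth^{(\beta)}_{v_I}=\Groth^{(\beta)}_{v_I s_i}=\Groth^{(\beta)}_{w_{s_i\cdot I}}$.

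The main obstacle I anticipate is the pivot case analysis underlying part (1), specifically correctly tracking the row-$i$/row-$(i+1)$ exchange of obstructions inside each rectangle $[k,a]\times[v_k,b]$. Once that symmetry is established, and the key inequality $w_{i+1}>b$ (in the $i+1\in\phi(v),\ i\notin\phi(v)$ configuration) is extracted, the algebraic steps in part (2) and the descent verification for $v_I$ follow with only routine computation.
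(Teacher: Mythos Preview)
Your proposal is correct and follows essentially the same approach as the paper: both arguments first pin down $\mc(v)=\mc(w)$, then establish $\phi(w)=s_i\cdot\phi(v)$ by analyzing how swapping rows $i$ and $i+1$ affects pivots of $(a,b)$ (with the same key observation that $i+1\in\phi(v)$, $i\notin\phi(v)$ forces $v_i=w_{i+1}>b$), and finally combine the cycle identity $s_ic^{(a)}_Is_i=c^{(a)}_{s_i\cdot I}$ with the commutation $s_it_{a,b'}=t_{a,b'}s_i$ to obtain $v_Is_i=w_{s_i\cdot I}$, checking the descent at $i$ via $(v_I)_{i+1}=1$ or $(v_I)_{i+1}\le b<(v_I)_i$. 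Your case split (generic $k$ versus $k\in\{i,i+1\}$) is organized a bit more systematically than the paper's two cases on $i+1\in\phi(v)$, but the content is the same.
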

\begin{proof}

Throughout, write $(a,b)=\mc(w)$ and let $b'=w^{-1}(b)$.

\noindent (1) $D(v)$ and $D(w)$ only differ in rows $i$ and $i+1$.  Since $i+1<\des(w)$, $\mc(v)=\mc(w)$.  

\noindent Case 1:
If $i+1 \not \in \phi(v)$, then there is some $j\in \phi(v)$ with $j\neq i+1$ so that $(j,v_j)\in [i+1,a]\times [1,b]$.  As such, $(j,v_j)=(j,w_j)\in [i,a]\times [1,b]$ and so $i\not \in \phi(w)$.  
\[
\begin{tikzpicture}[x=1.5em,y=1.5em]
\draw[step=1,gray!40, thin] (0,0) grid (7,6);
\draw[black, very thick,dashed] (0,0) rectangle (7,6);
		\draw[color=black,very thick](0,0)--(0,6);
		\filldraw [black](.5,4.5)circle(.1);
\filldraw [black](2.5,2.5)circle(.1);
\draw[color=black, fill=gray!40, thick](4,0)rectangle(5,1);
\draw (-1,4.5) node  [align=left] {$i$};
\draw (-1,3.5) node  [align=left] {$i+1$};
\draw (-1,2.5) node  [align=left] {$j$};
\draw (-1,0.5) node  [align=left] {$a$};
\draw (.5,-.5) node [align=left]{$w_i$};
\draw (2.5,-.5) node [align=left]{$w_j$};
\draw (4.5,-.5) node [align=left]{$b$};
\end{tikzpicture}
\hspace{3em}
\begin{tikzpicture}[x=1.5em,y=1.5em]
\draw[step=1,gray!40, thin] (0,0) grid (7,6);
\draw[black, very thick,dashed] (0,0) rectangle (7,6);
		\draw[color=black,very thick](0,0)--(0,6);
\filldraw [black](2.5,2.5)circle(.1);
		\filldraw [black](.5,3.5)circle(.1);
\draw[color=black, fill=gray!40, thick](4,0)rectangle(5,1);
\draw (-1,4.5) node  [align=left] {$i$};
\draw (-1,3.5) node  [align=left] {$i+1$};
\draw (-1,2.5) node  [align=left] {$j$};
\draw (-1,0.5) node  [align=left] {$a$};
\draw (.5,-.5) node [align=left]{$v_{i+1}$};
\draw (2.5,-.5) node [align=left]{$v_j$};
\draw (4.5,-.5) node [align=left]{$b$};
\end{tikzpicture}
\]
Furthermore, $i\in \phi(v)$ if and only if $i+1\in \phi(w)$.  No other pivots of $(a,b)$ are created or deleted by swapping rows $i$ and $i+1$.  Therefore, $\phi(w)=s_i\cdot \phi(v)$.

\noindent Case 2:  Now suppose $i+1\in \phi(v)$.  By hypothesis, since $i+1\in \phi(v)$, we must have  $i\not \in \phi(v)$.  This implies $v_i>b$.
\[
\begin{tikzpicture}[x=1.5em,y=1.5em]
\draw[step=1,gray!40, thin] (0,0) grid (7,6);
\draw[black, very thick,dashed] (0,0) rectangle (7,6);
		\draw[color=black,very thick](0,0)--(0,6);
		\filldraw [black](.5,4.5)circle(.1);
\filldraw [black](6.5,3.5)circle(.1);
\draw[color=black, fill=gray!40, thick](4,0)rectangle(5,1);
\draw (-1,4.5) node  [align=left] {$i$};
\draw (-1,3.5) node  [align=left] {$i+1$};
\draw (-1,0.5) node  [align=left] {$a$};
\draw (.5,-.5) node [align=left]{$w_i$};
\draw (6.5,-.5) node [align=left]{$w_{i+1}$};
\draw (4.5,-.5) node [align=left]{$b$};
\end{tikzpicture}
\hspace{3em}
\begin{tikzpicture}[x=1.5em,y=1.5em]
\draw[step=1,gray!40, thin] (0,0) grid (7,6);
\draw[black, very thick,dashed] (0,0) rectangle (7,6);
		\draw[color=black,very thick](0,0)--(0,6);
\filldraw [black](6.5,4.5)circle(.1);
		\filldraw [black](.5,3.5)circle(.1);
\draw[color=black, fill=gray!40, thick](4,0)rectangle(5,1);
\draw (-1,4.5) node  [align=left] {$i$};
\draw (-1,3.5) node  [align=left] {$i+1$};
\draw (-1,0.5) node  [align=left] {$a$};
\draw (.5,-.5) node [align=left]{$v_{i+1}$};
\draw (6.5,-.5) node [align=left]{$v_i$};
\draw (4.5,-.5) node [align=left]{$b$};
\end{tikzpicture}
\]
Again, no other pivots are impacted by exchanging rows $i$ and $i+1$ and so \[\phi(w)=(\phi(v)\cup\{i\})-\{i+1\}=s_i\cdot \phi(v).\]

\noindent (2) If $i+1 \not \in I$, then for all $I\subseteq \phi(v)$, $v_I(i+1)=1$ and so $v_I$ has a descent at $i$.

 On the other hand, suppose $i+1\in \phi(v)$ but $i\not \in \phi(v)$.  Then we  have $v(i)>b$.  Furthermore, for any $I\subseteq \phi(v)$ we have $v_I(i+1)\leq b<v_I(i)$.  Hence, $v_I$ has a descent at $i$.

In both cases, by (\ref{eq:betagrothdef}), $\pi_i(\Groth^{(\beta)}_{v_I})=\Groth^{(\beta)}_{v_Is_i}$.  Applying Lemma~\ref{lemma:cycles}, we see \[v_Is_i=w s_i t_{a\,b'}  c^{(a)}_{I} s_i=w  t_{a\,b'} s_i c^{(a)}_{I} s_i= w t_{a\,b'} c^{(a)}_{s_i\cdot I}=w_{s_i\cdot I}. \qedhere\]
Thus, $\pi_i(\Groth^{(\beta)}_{v_I})=\Groth^{(\beta)}_{w_{s_i\cdot I}}$.
\end{proof}

\begin{lemma}
\label{lemma:technicaldivdifB}
Let $w\in \SymGp_n$.  Suppose $w_i=1$ for some $i\in[\des(w)-2]$.  Let $v=ws_i$.
If $i,i+1\in \phi(v)$ then $\phi(w)=\phi(v)-\{i\}$.
 Furthermore, given $I\subseteq \phi(v)$,
\[\pi_i(\Groth^{(\beta)}_{v_I})=
\begin{cases}
\Groth^{(\beta)}_{v_{I-\{i\}}}&\text{if}\enspace i,i+1\in I,\\
-\beta\Groth^{(\beta)}_{v_{I}}&\text{if}\enspace i\not \in I \enspace \text{and} \enspace i+1\in I, \enspace \text{and}\\
\Groth^{(\beta)}_{w_{s_i\cdot I}}&\text{if} \enspace i+1\not \in I.
\end{cases}\]
\end{lemma}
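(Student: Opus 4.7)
The plan is to establish the two claims sequentially. Write $\mc(v) = (a,b)$ and $b' = v^{-1}(b)$; since $i+1 < \des(w) = a$ and $b' > a$, the swap $v = ws_i$ preserves the maximal corner $(a,b)$ and the position $b'$, and $s_i$ commutes with $t_{a,b'}$. From $w_i = 1$, we get $v(i) = w_{i+1}$ and $v(i+1) = 1$, with $w_{i+1} < b$ since $i \in \phi(v)$. First I will verify $\phi(w) = \phi(v) - \{i\}$ via the pivot-box conditions, then decompose the $\pi_i$ identity into three cases based on whether $i, i+1 \in I$, reducing each to either a descent application of (\ref{eq:betagrothdef}) or an ascent application of (\ref{eqn:dividedbetaterm}).

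For the pivot claim, the two pivots of $v$ in rows $i, i+1$ sit at $(i, w_{i+1})$ and $(i+1, 1)$. Passing to $w$, the candidate $(i, 1)$ is obstructed by the entry $(i+1, w_{i+1})$ lying inside $[i,a]\times[1,b]$, forcing $i \notin \phi(w)$, while $(i+1, w_{i+1})$ remains a pivot of $w$ because $(i,1)$ lies in column $1 < w_{i+1}$ and hence outside $[i+1,a]\times[w_{i+1},b]$. For rows $j < i$ in $\phi(v)$, the pivot condition already forced $v(j) > w_{i+1}$ in order to exclude $(i, w_{i+1})$ from its box, so after the swap both affected entries remain excluded; pivots in rows $j > i+1$ are unaffected since their boxes miss rows $i, i+1$.

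For the $\pi_i$ identity, I will set $\sigma = t_{a,b'}c^{(a)}_I$ so that $v_I = v\sigma$. Direct computation from the cycle structure of $c^{(a)}_I$ shows that $\sigma$ is the identity off $I \cup \{a, b'\}$, with $\sigma(i_1) = b'$, $\sigma(i_\ell) = i_{\ell-1}$ for $\ell \geq 2$, $\sigma(a) = i_k$, and $\sigma(b') = a$. With this explicit description, the three cases proceed as follows. When $i, i+1 \in I$, writing $i = i_\ell$ and $i+1 = i_{\ell+1}$, one finds $v_I(i+1) = w_{i+1}$ and $v_I(i) = v(i_{\ell-1})$ (or $b$ when $\ell = 1$); the pivot-column ordering (or $b > w_{i+1}$) yields a descent at $i$, and Lemma~\ref{lemma:cycles}(2) gives $c^{(a)}_I s_i = c^{(a)}_{I-\{i\}}$, hence $v_I s_i = v_{I-\{i\}}$, so (\ref{eq:betagrothdef}) produces $\Groth^{(\beta)}_{v_{I-\{i\}}}$. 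When $i \notin I$ but $i+1 = i_\ell \in I$, one has $v_I(i) = w_{i+1}$ and $v_I(i+1) = v(i_{\ell-1})$ (or $b$ when $\ell = 1$); the exclusion $i \notin I$ forces $i_{\ell-1} \leq i-1$, whence pivot-column ordering gives $v(i_{\ell-1}) > v(i) = w_{i+1}$, an ascent at $i$, and (\ref{eqn:dividedbetaterm}) gives $-\beta \Groth^{(\beta)}_{v_I}$. Finally, when $i+1 \notin I$, one has $v_I(i+1) = 1$ and $v_I(i) \geq 2$ (being $w_{i+1}$, $v(i_{\ell-1})$, or $b$, each at least $2$), producing a descent; Lemma~\ref{lemma:cycles}(1) gives $s_i c^{(a)}_I s_i = c^{(a)}_{s_i \cdot I}$, and combining with the commutativity of $s_i$ and $t_{a,b'}$ yields $v_I s_i = w_{s_i \cdot I}$, so (\ref{eq:betagrothdef}) produces $\Groth^{(\beta)}_{w_{s_i \cdot I}}$.

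The principal subtlety will lie in the ascent case: verifying $v_I(i+1) > v_I(i)$ when $i \notin I$ and $i+1 \in I$ requires carefully identifying $v_I(i+1)$ as the column of a pivot in some row strictly less than $i$ and then invoking the decreasing-column property of pivots, rather than the tempting but incorrect comparison against $v(i+1) = 1$. The remaining work reduces to routine bookkeeping once the action of $\sigma$ on $\{i, i+1\}$ has been made explicit.
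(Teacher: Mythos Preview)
Your proposal is correct and follows essentially the same approach as the paper: establish the pivot claim by analyzing how the row swap affects the boxes $[j,a]\times[v(j),b]$, then split into the three cases and reduce each to either a descent (applying (\ref{eq:betagrothdef}) and Lemma~\ref{lemma:cycles}) or an ascent (applying (\ref{eqn:dividedbetaterm})). In fact you supply more detail than the paper does---the paper simply asserts $v_I(i)>v_I(i+1)$ in Case~1 and $v_I(i)<v_I(i+1)$ in Case~2, whereas you explicitly compute these values via the action of $\sigma$ and invoke the decreasing-column property of pivots; your identification of the ascent case as the subtle point is apt.
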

\begin{proof}

Write $(a,b)=\mc(w)$ and let  $b'=w^{-1}(b)$.

  Since $i,i+1\in \phi(v)$, if we swap rows $i$ and $i+1$ to obtain $w$, we see that  $(i+1,w(i+1))\in [i,a]\times[1,b]$ and so $i\not \in \phi(w)$.
\[
\begin{tikzpicture}[x=1.5em,y=1.5em]
\draw[step=1,gray!40, thin] (0,0) grid (7,6);
\draw[black, very thick,dashed] (0,0) rectangle (7,6);
		\draw[color=black,very thick](0,0)--(0,6);
		\filldraw [black](.5,4.5)circle(.1);
\filldraw [black](2.5,3.5)circle(.1);
\draw[color=black, fill=gray!40, thick](4,0)rectangle(5,1);
\draw (-1,4.5) node  [align=left] {$i$};
\draw (-1,3.5) node  [align=left] {$i+1$};
\draw (-1,2.5) node  [align=left] {$i_\ell$};
\draw (-1,0.5) node  [align=left] {$a$};
\draw (.5,-.5) node [align=left]{$w_{i}$};
\draw (2.5,-.5) node [align=left]{$w_{i+1}$};
\draw (4.5,-.5) node [align=left]{$b$};
\end{tikzpicture}
\hspace{3em}
\begin{tikzpicture}[x=1.5em,y=1.5em]
\draw[step=1,gray!40, thin] (0,0) grid (7,6);
\draw[black, very thick,dashed] (0,0) rectangle (7,6);
		\draw[color=black,very thick](0,0)--(0,6);
\filldraw [black](2.5,4.5)circle(.1);
		\filldraw [black](.5,3.5)circle(.1);
\draw[color=black, fill=gray!40, thick](4,0)rectangle(5,1);
\draw (-1,4.5) node  [align=left] {$i$};
\draw (-1,3.5) node  [align=left] {$i+1$};
\draw (-1,2.5) node  [align=left] {$i_\ell$};
\draw (-1,0.5) node  [align=left] {$a$};
\draw (.5,-.5) node [align=left]{$v_{i+1}$};
\draw (2.5,-.5) node [align=left]{$v_{i}$};
\draw (4.5,-.5) node [align=left]{$b$};
\end{tikzpicture}
\]
This swap does not affect other pivots.  Therefore, $\phi(w)=\phi(v)-\{i\}$.

 Now fix $I\subseteq \phi(v)$.

\noindent Case 1: Suppose $i,i+1\in I$.

In this case, $v_I(i)>v_I(i+1)$ and so
$\pi_i(\Groth^{(\beta)}_{v_I})=\Groth^{(\beta)}_{v_Is_i}$.
By Lemma~\ref{lemma:cycles}, since $i,i+1\in I$,
\[v_Is_i=vt_{a\,b'}  c^{(a)}_{I} s_i=vt_{a\,b'} c^{(a)}_{I-\{i\}}=v_{I-\{i\}}.\]
Thus,
$\pi_i(\Groth^{(\beta)}_{v_I})=\Groth^{(\beta)}_{v_{I-\{i\}}}$.

\noindent Case 2: Assume $i\not \in I$ and $i+1\in I$.

We have $v_I(i)<v_I(i+1)$.  Thus by (\ref{eqn:dividedbetaterm}), $\pi_i(\Groth^{(\beta)}_{v_I})=-\beta \Groth^{(\beta)}_{v_I}$.

\noindent Case 3: Suppose $i+1 \not \in I$.

If $i+1\not \in I$, then $v_{I}(i+1)=1$.  Thus  $v_I$ has a descent at $i$.  Therefore, $\pi_i(\Groth^{(\beta)}_{v_I})=\Groth^{(\beta)}_{v_Is_i}$. By Lemma~\ref{lemma:cycles}, 
\[v_Is_i=ws_it_{a\,b'} c^{(a)}_{I} s_i=wt_{a\,b'}s_i c^{(a)}_{I} s_i=wt_{a\,b'} c^{(a)}_{s_i\cdot I}=w_{s_i\cdot I}.\]
Therefore, $\pi_i(\Groth^{(\beta)}_{v_I})=\Groth^{(\beta)}_{w_{s_i\cdot I}}$.
\end{proof}

\begin{lemma}
\label{lemma:technicaldivdif2}
Suppose $w\in \SymGp_n$ so that $\mc(w)=(a,b)$ and $w_{a-1}=1$.  Let $v=ws_{a-1}$.  Then the following statements hold.
\begin{enumerate}
\item  $\phi(w)=\phi(v)\sqcup\{a-1\}$.
\item If $I\subseteq \phi(v)$ then $v_I=w_{I\cup\{a-1\}}=w_Is_{a-1}$  and $\pi_{a-1}(\Groth^{(\beta)}_{v_I})=\Groth^{(\beta)}_{w_I}$.
\end{enumerate}
\end{lemma}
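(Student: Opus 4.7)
The plan is to handle parts (1) and (2) in sequence, with part (1) setting up the combinatorial translation between the pivot sets of $w$ and $v$, and part (2) reducing to a cycle identity.

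First I would determine $\mc(v)$.  Since $w_{a-1} = 1$, we have $v_{a-1} = w_a$ and $v_a = 1$; in particular $v$ has a descent at $a-1$ but no descent at any $i \geq a$ (the positions $>a$ agree with $w$, and $v_a = 1$ is less than $v_{a+1} = w_{a+1} \geq 2$).  Hence $\des(v) = a-1$.  A brief check shows $(a-1,b) \in D(v)$ (using $v(a-1) = w_a > b$ and $v^{-1}(b) = w^{-1}(b) = b' > a$), and $b$ remains the rightmost column of $D(v)$ in row $a-1$.  So $\mc(v) = (a-1,b)$.  Next, verify that $(a-1,1)$ is a pivot of $(a,b)$ in $w$: conditions (1) and (2) are immediate, and condition (3) holds because row $a$ has no dot in $[1,b]$ (since $w_a > b$).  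Meanwhile, $v$ has no pivot of $(a-1,b)$ in row $a-1$, because $v(a-1) > b$.  For any $i < a-1$, any putative pivot $(i,j)$ of $(a,b)$ in $w$ must have $j \geq 2$ (else $w(i)=1=w(a-1)$ forces $i = a-1$), so replacing $w$ by $v$ simply removes the dot at $(a-1,1)$ from, and introduces no new dot into, the rectangle $[i,a-1]\times[j,b]$; conversely, enlarging the rectangle to $[i,a]\times[j,b]$ in $w$ adds no dots since $w_a > b$.  This yields the bijection $\phi(v) \leftrightarrow \phi(w) \setminus \{a-1\}$, proving (1).

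For part (2), the key computations are the two cycle identities
\[
 s_{a-1}\, t_{a-1,b'} \;=\; t_{a,b'}\, s_{a-1} \quad\text{and}\quad s_{a-1}\, c^{(a-1)}_I \;=\; c^{(a)}_{I \cup \{a-1\}} \;=\; c^{(a)}_I\, s_{a-1},
\]
both of which are elementary cycle calculations (using $I \subseteq \phi(v) \subseteq [1, a-2]$ and $b' > a$, so all relevant indices lie in disjoint orbits).  Since $v = ws_{a-1}$ and $v^{-1}(b) = b'$, one computes
\[
 v_I \;=\; v\, t_{a-1,b'}\, c^{(a-1)}_I \;=\; w\, s_{a-1}\, t_{a-1,b'}\, c^{(a-1)}_I \;=\; w\, t_{a,b'}\, s_{a-1}\, c^{(a-1)}_I \;=\; w\, t_{a,b'}\, c^{(a)}_{I \cup \{a-1\}} \;=\; w_{I\cup\{a-1\}},
\]
and similarly $w_I s_{a-1} = w\, t_{a,b'}\, c^{(a)}_I s_{a-1} = w\, t_{a,b'}\, c^{(a)}_{I\cup\{a-1\}} = w_{I \cup \{a-1\}}$.

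To conclude the divided-difference identity $\pi_{a-1}(\Groth^{(\beta)}_{v_I}) = \Groth^{(\beta)}_{w_I}$, it suffices (by the definition in (\ref{eq:betagrothdef})) to show that $v_I$ has a descent at $a-1$.  For this I would compare lengths: since $w_{a-1} = 1 < w_a$, $\ell(v) = \ell(w) + 1$, and by Lemma~\ref{lemma:transitiondeflation} applied to both $w$ and $v$, $\ell(v_I) = \ell(v) + |I| - 1 = \ell(w_I) + 1 = \ell(w_I s_{a-1})$.  Since $v_I = w_I s_{a-1}$, multiplying $w_I$ by $s_{a-1}$ strictly increases length, so $v_I = w_I s_{a-1}$ has a descent at $a-1$; hence $\pi_{a-1}(\Groth^{(\beta)}_{v_I}) = \Groth^{(\beta)}_{v_I s_{a-1}} = \Groth^{(\beta)}_{w_I}$.

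The main obstacle is the bookkeeping in part (1): one must carefully track that $\mc(v)$ shifts from $(a,b)$ to $(a-1,b)$ and that the definition of pivot is with respect to the ambient permutation, and then check that the two different rectangles (one in $w$ of height $a-i+1$, one in $v$ of height $a-i$) contain the same dots for $i < a-1$.  Once this translation is made, part (2) is a routine cycle calculation together with the length argument.
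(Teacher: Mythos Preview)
Your proof is correct and follows essentially the same route as the paper's: both establish $\mc(v)=(a-1,b)$, match up pivots row by row, and then use the cycle identities $s_{a-1}t_{a-1,b'}=t_{a,b'}s_{a-1}$ and $s_{a-1}c^{(a-1)}_I=c^{(a)}_{I\cup\{a-1\}}=c^{(a)}_I s_{a-1}$ to obtain $v_I=w_{I\cup\{a-1\}}=w_I s_{a-1}$.

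The one place you diverge is in verifying that $v_I$ has a descent at $a-1$.  You do this by a length comparison, invoking Lemma~\ref{lemma:transitiondeflation} twice (once for $v$, once for $w$) to get $\ell(v_I)=\ell(w_I)+1$.  This is valid and not circular (Lemma~\ref{lemma:transitiondeflation} depends only on the combinatorics of inflation, not on transition), but the paper's argument is more direct: since $I\subseteq\phi(v)\subseteq[a-2]$, neither $t_{a,b'}$ nor $c^{(a)}_I$ moves position $a-1$, so $w_I(a-1)=w(a-1)=1$ and hence $v_I(a)=w_I(a-1)=1$, forcing the descent.  Your length argument works but obscures this simple observation.

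A small wording quibble in part~(1): you write that ``$v$ has no pivot of $(a-1,b)$ in row $a-1$, because $v(a-1)>b$,'' but in fact pivots of $(a-1,b)$ are by definition in rows strictly above $a-1$, so this is automatic and no reason is needed.  Similarly, the dot $(a-1,1)$ is already outside the rectangle $[i,a-1]\times[j,b]$ once you have established $j\geq 2$, so nothing is ``removed'' when passing from $w$ to $v$; the rectangles simply agree.  These are only phrasing issues and do not affect correctness.
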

\begin{proof}
\noindent (1) 
First, notice that $\mc(v)=(a-1,b)$.
\[\begin{tikzpicture}[x=1.5em,y=1.5em]
\draw[step=1,gray!40, thin] (0,0) grid (7,6);
\draw[black, very thick,dashed] (0,0) rectangle (7,6);
		\draw[color=black,very thick](0,0)--(0,6);
		\filldraw [black](.5,1.5)circle(.1);
		\filldraw [black](1.5,3.5)circle(.1);
		\filldraw [black](2.5,4.5)circle(.1);
		\filldraw [black](4.5,5.5)circle(.1);
\draw[color=black, fill=gray!40, thick](6,0)rectangle(7,1);
\draw (-1,5.5) node  [align=left] {$i_1$};
\draw (-1,4.5) node  [align=left] {$i_2$};
\draw (-1,3.5) node  [align=left] {$i_3$};
\draw (-1,1.5) node  [align=left] {$a-1$};
\draw (-1,0.5) node  [align=left] {$a$};
\draw (.2,-.5) node [align=left]{$w_{a-1}$};
\draw (4.5,-.5) node [align=left]{$w_{i_1}$};
\draw (2.5,-.5) node [align=left]{$w_{i_2}$};
\draw (1.5,-.5) node [align=left]{$w_{i_3}$};
\draw (6.5,-.5) node [align=left]{$b$};
\end{tikzpicture}
\hspace{3em}
\begin{tikzpicture}[x=1.5em,y=1.5em]

\draw[step=1,gray!40, thin] (0,0) grid (7,6);
\draw[black, very thick,dashed] (0,0) rectangle (7,6);
		\draw[color=black,very thick](0,0)--(0,6);
		\filldraw [black](.5,.5)circle(.1);
		\filldraw [black](1.5,3.5)circle(.1);
		\filldraw [black](2.5,4.5)circle(.1);
		\filldraw [black](4.5,5.5)circle(.1);
\draw[color=black, fill=gray!40, thick](6,1)rectangle(7,2);
\draw (-1,5.5) node  [align=left] {$i_1$};
\draw (-1,4.5) node  [align=left] {$i_2$};
\draw (-1,3.5) node  [align=left] {$i_3$};
\draw (-1,1.5) node  [align=left] {$a-1$};
\draw (-1,0.5) node  [align=left] {$a$};
\draw (.5,-.5) node [align=left]{$v_{a}$};
\draw (4.5,-.5) node [align=left]{$v_{i_1}$};
\draw (2.5,-.5) node [align=left]{$v_{i_2}$};
\draw (1.5,-.5) node [align=left]{$v_{i_3}$};
\draw (6.5,-.5) node [align=left]{$b$};
\end{tikzpicture}
\]
 Since $w_{a-1}=1$ and $w_a>b$, automatically $a-1\in \phi(w)$.  Because $\mc(v)=(a-1,b)$,  $a-1\not \in \phi(v)$.  Furthermore, for any $i<a-1$, $i\in \phi_{a}(w)$ if and only if $i\in \phi(v)$.  Thus, $\phi(w)=\phi(v)\sqcup\{a-1\}$.

\noindent (2) Take $I\subseteq \phi(v)$.  Notice $b':=w^{-1}(b)=v^{-1}(b)$.    Then we have 
\[v_I=ws_{a-1}t_{a-1\, b'} c^{(a-1)}_I
=wt_{a\,b'} s_{a-1} c^{(a-1)}_I
=wt_{a\,b'} c^{(a)}_{I\cup\{a-1\}}.
\]
Thus, $v_I=w_{I\cup\{a-1\}}$.

Also, 
$s_{a-1}c_I^{(a-1)}=c^{(a)}_Is_{a-1}$ and so
$v_I=wt_{a\,b'}c_I^{(a)}s_{a-1}=w_I^{(a)}s_{a-1}.$
  We know $v_I(a)=(w_Is_{a-1})(a)=w_I(a-1)=1$.  Therefore, $v_I$ has a descent at $a-1$ and so \[\pi_{a-1}(\Groth^{(\beta)}_{v_I})=\Groth^{(\beta)}_{v_Is_{a-1}}=\Groth^{(\beta)}_{w_I}.\qedhere\]
\end{proof}

With these preliminaries, we now prove transition.

\begin{proof}[Proof of Theorem~\ref{thm:transition}]
 We will first induct on $n$. There is nothing to check for $\SymGp_1$.  Now fix $n>1$ and assume transition holds for permutations in $\SymGp_{n-1}$.

Take $w\in \SymGp_n$ and let $(a,b)=\mc(w)$.  
    As a secondary inductive hypothesis,  assume transition holds for permutations $u\in \SymGp_n$ such that $\ell(u)>\ell(w)$.  By direct computation, one may verify transition holds for the base case of $w_0\in \SymGp_n$.

If $w$ is the identity, then there is nothing to show.  So assume not.  We proceed by case analysis.

\noindent Case 1: Assume $w_i>1$ for all $i\in[a-1]$.

We have $\code_w(i)\geq 1$ for all $i\in [a-1]$.  Then there exists $v\in \SymGp_{n-1}$ so that $\code_w=\code_v+1^{a-1}$.   By induction on $n$, transition holds for $v$. 
Write $\widetilde{\mathbf x}=(x_1,\,\ldots,\,x_{n-1})$ and $\widetilde{\mathbf y}=(y_2,\,\ldots,\,y_{n})$.

\noindent Subcase 1: $\code_w(a)=1$.

In this case, $b=1$ and $(a,b)$ is in the dominant part of $D(w)$.  Therefore, $\phi(w)=\emptyset$.  Notice that $\code_{w_\emptyset}=\code_v+1^{a-1}$.  Then applying Lemma~\ref{lemma:GrothAppendBoxes}, we see
\begin{align*}
\Groth^{(\beta)}_w(\mathbf x;\mathbf y)&=\left(\prod_{i=1}^ax_i\oplus y_1\right)\mathfrak G_v(\widetilde{\mathbf x};\widetilde{\mathbf y})\\
&=(x_a\oplus y_1)\left(\prod_{i=1}^{a-1}x_i\oplus y_1\right)\Groth^{(\beta)}_v(\widetilde{\mathbf x};\widetilde{\mathbf y})\\
&=(x_a\oplus y_1) \Groth^{(\beta)}_{w_\emptyset}(\mathbf x;\mathbf y).
\end{align*}

\noindent Subcase 2: $\code_w(a)>1$.

Observe $\mc(v)=(a,b-1)$.  In particular, we have $\phi(w)=\phi(v)$.  Furthermore, for all $I\subseteq \phi(w)$, notice $\code_{w_I}=\code_{v_I}+1^a$.
 By induction on $n$, transition holds for $v$.  Applying Lemma~\ref{lemma:GrothAppendBoxes}, we see
\begin{align*}
\Groth^{(\beta)}_w(\mathbf x;\mathbf y)
&=\left(\prod_{i=1}^ax_i\oplus y_1\right)\Groth^{(\beta)}_v(\widetilde{\mathbf x}; \widetilde{\mathbf y})\\
&=\left(\prod_{i=1}^ax_i\oplus y_1\right)(x_a\oplus y_b)\Groth^{(\beta)}_{v_\emptyset}(\widetilde{\mathbf x};\widetilde{\mathbf y})\\
& \hspace{2em}+\left(\prod_{i=1}^ax_i\oplus y_1\right)(1+\beta (x_a\oplus y_b))\sum_{I\subseteq \phi(v): I\neq  \emptyset}\beta^{|I|-1}\Groth^{(\beta)}_{v_I}(\widetilde{\mathbf x};\widetilde{\mathbf y})\\
&=(x_a\oplus y_b)\Groth^{(\beta)}_{w_\emptyset}(\mathbf x;\mathbf y)+(1+\beta (x_a\oplus y_b))\sum_{I\subseteq \phi(w):I\neq\emptyset}\beta^{|I|-1}\Groth^{(\beta)}_{w_I}(\mathbf x;\mathbf y).
\end{align*}

\noindent Case 2:  Assume there some $i\in [a-2]$ such that  $w_i=1$.

 Set $v=ws_i$.  Since $\ell(v)=\ell(w)+1$, by induction, transition holds for $v$, i.e.,\
\[\Groth^{(\beta)}_{v}=(x_a\oplus y_b)\Groth^{(\beta)}_{v_\emptyset}+(1+\beta (x_a\oplus y_b))\sum_{I\subseteq \phi(v): I\neq  \emptyset}\beta^{|I|-1}\Groth^{(\beta)}_{v_I}.\]
Since $x_a\oplus y_b$ is symmetric in $x_i$ and $x_i+1$, by applying $\pi_i$ to both sides, we obtain:
\begin{equation}
\label{eq:transitioninductstep}
\Groth^{(\beta)}_{w}=(x_a\oplus y_b)\Groth^{(\beta)}_{w_\emptyset}+(1+\beta (x_a\oplus y_b))\sum_{I\subseteq \phi(v): I\neq  \emptyset}\beta^{|I|-1}\pi_i(\Groth^{(\beta)}_{v_I}).
\end{equation}

\noindent Subcase 1: Suppose $i\not \in \phi(v)$ or $i+1 \not \in \phi(v)$.  As an immediate application of Lemma~\ref{lemma:technicaldivdifA},
\[\Groth^{(\beta)}_{w}=(x_a\oplus y_b)\Groth^{(\beta)}_{w_\emptyset}+(1+\beta (x_a\oplus y_b))\sum_{I\subseteq \phi(v): I\neq  \emptyset}\beta^{|I|-1}\Groth^{(\beta)}_{w_{s_i\cdot I}}.\]
Also by Lemma~\ref{lemma:technicaldivdifA}, $\phi(w)=s_i\cdot \phi(v)$.  In particular, \[\{I:I\subseteq \phi(w)\}=\{s_i\cdot I:I\subseteq \phi(v)\}\] and so the result follows.

\noindent Subcase 2: Assume $i,i+1 \in \phi(v)$. 
Take $I\subseteq \phi(v)$ so that $i\not \in I$ and $i+1\in I$.  
By Lemma~\ref{lemma:technicaldivdifA}, 
\begin{equation}
\label{eq:zeroterms}
\beta^{|I|-1}\pi_i(\Groth^{(\beta)}_{v_{ I}})+\beta^{|I\cup\{i\}|-1}\pi_i(\Groth^{(\beta)}_{v_{I\cup\{i\}}})=\beta^{|I|-1}(-\beta)\Groth^{(\beta)}_{v_{ I}}+\beta^{|I\cup\{i\}|-1}\Groth^{(\beta)}_{v_{I}}=0.
\end{equation}
Now, suppose $I\subseteq \phi(v)$ so that $i,i+1\not \in I$.
Then
\begin{equation}
\label{eq:otherterms}
\beta^{|I|-1}\pi_i(\Groth^{(\beta)}_{v_{ I}})+\beta^{|I\cup\{i\}|-1}\pi_i(\Groth^{(\beta)}_{v_{I\cup\{i\}}})
=\beta^{|I|-1}\Groth^{(\beta)}_{w_{I}}+\beta^{|I\cup\{i+1\}|-1}\Groth^{(\beta)}_{w_{I\cup\{i+1\}}}.
\end{equation}
Since, $\phi(w)=\phi(v)-\{i\}$, applying  (\ref{eq:zeroterms}) and (\ref{eq:otherterms}) to (\ref{eq:transitioninductstep}) produces the desired equation.

\noindent Case 3: Assume $w_{a-1}=1$.

Let $v=ws_{a-1}$.  Since $\mc(w)=(a,b)$,  we know $\mc(v)=(a-1,b)$.  Furthermore, $\phi(w)=\phi(v)\cup\{a-1\}$.

Since $\ell(v)>\ell(w)$, by the inductive hypothesis, transition holds for $v$.  Therefore,
\[\Groth^{(\beta)}_v=(x_{a-1}\oplus y_b)\Groth^{(\beta)}_{v_\emptyset}+(1+\beta (x_{a-1}\oplus y_b))\sum_{I\subseteq \phi(v):I\neq  \emptyset}\beta^{|I|-1}\Groth^{(\beta)}_{v_I}.\]
We have  $\pi_{a-1}(x_{a-1}\oplus y_b)=1$ and so,
\begin{align*}
\pi_{a-1}((x_{a-1}\oplus y_b) \Groth^{(\beta)}_{v_\emptyset})
&=\Groth^{(\beta)}_{v_\emptyset}+(x_a\oplus y_b)\pi_i(\Groth^{(\beta)}_{v_\emptyset})+\beta(x_a\oplus y_b) \Groth^{(\beta)}_{v_\emptyset} &\text{(by (\ref{eqn:leibniz}))} \\
&=\left(1+\beta(x_a\oplus y_b)\right)\Groth^{(\beta)}_{w_{\{a-1\}}}+(x_a\oplus y_b)\Groth^{(\beta)}_{w_\emptyset} & \text{(by Lemma~\ref{lemma:technicaldivdif2}).}
\end{align*}
Furthermore, $\pi_{a-1}(1+\beta(x_{a-1}\oplus y_b))=0$.  By Lemma~\ref{lemma:technicaldivdif2},
\begin{align*}
\pi_{a-1}((1+\beta (x_{a-1}\oplus y_b)) \Groth^{(\beta)}_{v_I}(\mathbf x;\mathbf y))
&=(1+\beta (x_a\oplus y_b))(\pi_{a-1}(\Groth^{(\beta)}_{v_I})+\beta \Groth^{(\beta)}_{v_I}) & \text{(by (\ref{eqn:leibniz}))}\\
&=(1+\beta (x_a\oplus y_b))(\Groth^{(\beta)}_{w_I}+\beta \Groth^{(\beta)}_{w_{I\cup\{a-1\}}}).
\end{align*}
\begin{align*}
\Groth^{(\beta)}_w&
=(x_a\oplus y_b)\Groth^{(\beta)}_{w_\emptyset}+\left(1+\beta(x_a\oplus y_b)\right)\Groth^{(\beta)}_{w_{\{a-1\}}}\\
&\quad+(1+\beta x_a\oplus y_b)\sum_{I\subseteq \phi(v):I\neq  \emptyset}\beta^{|I|-1}(\Groth^{(\beta)}_{w_I}+\beta \Groth^{(\beta)}_{w_{I\cup\{a-1\}}})\\
&=(x_a\oplus y_b)\Groth^{(\beta)}_{w_\emptyset}+\left(1+\beta x_a\oplus y_b \right)\sum_{I\subseteq \phi(w):I\neq\emptyset}\beta^{|I|-1}\Groth^{(\beta)}_{w_I}  & \text{(by Lemma~\ref{lemma:technicaldivdif2})}.
\end{align*}
Therefore, transition holds for $w$.
\end{proof}

\section*{Acknowledgments}
This project was inspired by a talk of Thomas Lam which took place at the Ohio State Schubert Calculus Conference (2018).  The author is grateful to Zachary Hamaker and Oliver Pechenik for numerous discussions about bumpless pipe dreams.  The author also benefited from helpful conversations and correspondence with Sara Billey, William Fulton, Allen Knutson, Thomas Lam, Karola M\'esz\'aros, Mark Shimozono, David Speyer, and Alexander Yong.

%
%
\bibliographystyle{amsalpha} 
\bibliography{bumpless}

\end{document}